\definecolor{MyDarkBlue}{rgb}{0, 0.0, 0.45} 
\definecolor{MyDarkRed}{rgb}{0.45, 0.0, 0} 
\definecolor{MyDarkGreen}{rgb}{0, 0.45, 0} 
\definecolor{MyLightGray}{gray}{.90}
\definecolor{MyLightGreen}{rgb}{0.5, 0.99, 0.5}
\theoremstyle{plain}
\newtheorem{thm}{Theorem}[section]
\newtheorem*{theorem-non}{Theorem}
\newtheorem{lem}[thm]{Lemma}
\newtheorem{prop}[thm]{Proposition}
\theoremstyle{definition}
\theoremstyle{remark}
\definecolor{dred}{rgb}{.65, 0, 0.15}
\def\<{\langle} \def\>{\rangle}
\def\Tr{\mathrm{Tr}}
\begin{document}

%\tableofcontents                % This can go after the abstract, too.
%\begin{titlepage}              % Uncomment for a separate title page
\title{On the third moment of $L\big(\tfrac{1}{2}, \chi_{\scriptscriptstyle d}\big)$ {I}: \\ 
the rational function field case}	% type title between braces
\author{Adrian Diaconu \\         % type author(s) between braces
}
\date{}	                % type specific date between braces
\maketitle
%\end{titlepage}

\begin{abstract} 
\noindent
In this note, we prove the existence of a secondary term in the asymptotic formula of the cubic moment of quadratic 
{D}irichlet {$L$}-functions 
$$ 
\sum_{\substack{d_{\scriptscriptstyle 0} - \mathrm{monic \; \& \; sq. \; free} \\ \deg \, d_{\scriptscriptstyle 0} \, = \, D}} \, 
L\big(\tfrac{1}{2}, \chi_{\scriptscriptstyle d_{\scriptscriptstyle 0}}\!\big)^{\scriptscriptstyle 3} 
$$
over rational function fields on the order of $q^{\scriptscriptstyle \frac{3}{4} D}.$ This term is in perfect analogy 
with the $x^{\scriptscriptstyle \frac{3}{4}}$-term indicated in our joint work \cite{DGH} for the corresponding 
asymptotic formula over the rationals. 
\end{abstract}

\tableofcontents 

\line(1,0){100}
\vskip-5pt
\noindent
{\small{School of Mathematics, University of Minnesota, Minneapolis, MN 55455\\
E-mail: cad@umn.edu}}

\section{Introduction} 

{\bf Statement of the main results.} Let $\mathbb{F}$ be a finite field with $q$ elements. \!For simplicity, we will assume throughout that 
$q\equiv 1 \!\pmod 4.$ For monic polynomials $d_{\scriptscriptstyle 0},\, m\in \mathbb{F}[x]$ with $d_{\scriptscriptstyle 0}$ square-free, 
let $\chi_{d_{\scriptscriptstyle 0}}\!(m) = (d_{\scriptscriptstyle 0}\slash m)$ be the usual quadratic residue symbol, and consider the generating series of the cubic moments of the central values of quadratic {D}irichlet {$L$}-functions 
\begin{equation*}
\mathscr{W}(\xi)\, = \sum_{D \ge 0}
\; \Bigg(\, \sum_{\substack{d_{\scriptscriptstyle 0} - \mathrm{monic \; \& \; sq. \; free} \\ \deg \, d_{\scriptscriptstyle 0} \, = \, D}} \, 
L\big(\tfrac{1}{2}, \chi_{\scriptscriptstyle d_{\scriptscriptstyle 0}}\!\big)^{\!\scriptscriptstyle 3} \Bigg)\, \xi^{\scriptscriptstyle D}.
\end{equation*} 
This series is absolutely convergent for complex $\xi$ with sufficiently small (depending upon the size $q$ of $\mathbb{F}$) absolute value.  

The main result of this note is the following 

\vskip10pt
\begin{thm} \label{Main Theorem A} --- The function $\mathscr{W}(\xi)$ has meromorphic continuation to the open disk 
$|\xi| < q^{\scriptscriptstyle - 2\slash 3}.$ It is analytic in this region, except for poles of order seven at $\xi = \pm \, q^{\scriptscriptstyle - 1}$ 
\!and simple poles at $\xi = \pm \, q^{\scriptscriptstyle - 3\slash 4}\!, \, \pm\,  i \, q^{\scriptscriptstyle - 3\slash 4},$ and the principal part at each 
of these poles is explicitly computable.
\end{thm}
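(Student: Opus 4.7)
The plan is to use the ``double Dirichlet series'' strategy, which is particularly clean in the function-field setting since each $L(s, \chi_{d_0})$ is a polynomial in $q^{-s}$. For $d_0$ square-free monic of degree $D$, $L(s, \chi_{d_0})$ is a polynomial in $u = q^{-s}$ satisfying a functional equation $u \mapsto 1/(qu)$; its approximate functional equation (exact, not asymptotic, in the function field case) yields
\begin{equation*}
L\big(\tfrac{1}{2}, \chi_{d_0}\big)^{3} \;=\; \sum_{m \text{ monic}} \tau_3(m)\, \chi_{d_0}(m)\, |m|^{-1/2}\, \phi_{D}\!\big(\deg m\big),
\end{equation*}
where $\phi_D$ is an explicit weight supported on $\deg m \lesssim 3D/2$. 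Substituting into $\mathscr{W}(\xi)$ and interchanging the summations yields an outer sum over $m$ of $\tau_3(m)|m|^{-1/2}$ multiplied by an inner character sum $\sum_{d_0} \chi_{d_0}(m)\, \xi^{\deg d_0}$ with the square-freeness constraint on $d_0$. Decomposing $m = m_1^{2} m_2$ with $m_2$ square-free monic separates the \emph{diagonal} ($m_2 = 1$) from the \emph{off-diagonal} contribution.

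On the diagonal, $\chi_{d_0}(m_1^2)$ equals $1$ when $(d_0, m_1) = 1$ and vanishes otherwise, so the inner sum is a zeta-like generating series $\sum_{(d_0, m_1) = 1,\, d_0 \text{ sqfree}} \xi^{\deg d_0}$ with simple poles at $\xi = \pm q^{-1}$ (both signs appearing because of the parity of $\deg d_0$). Summing over $m_1$ against $\tau_3(m_1^2)|m_1|^{-1}$ brings in the Dirichlet series $\sum_{m_1} \tau_3(m_1^2)|m_1|^{-1-s}$, whose singularity structure at $s = 0$ matches $\zeta_q(s)^6$; this lifts the simple poles to poles of order $7$ at each of $\xi = \pm q^{-1}$.

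For the off-diagonal piece ($m_2 \neq 1$), the assumption $q \equiv 1 \pmod 4$ makes function-field quadratic reciprocity symmetric, so $\chi_{d_0}(m_2) = \chi_{m_2}(d_0)$. The inner sum becomes an $L$-function-type generating series for the character $\chi_{m_2}$ restricted to square-free support. Resumming over $m_1, m_2$ produces a double Dirichlet series in $\xi$ whose functional equation in $\xi$, inherited from the polynomial functional equation of each $L(s, \chi_{m_2})$, reflects the main poles at $\pm q^{-1}$ into four simple poles on the circle $|\xi| = q^{-3/4}$, located at $\{\pm q^{-3/4}, \pm i\, q^{-3/4}\}$. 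The four-fold symmetry comes from combining the parity alternation $\xi \leftrightarrow -\xi$ with the presence of a primitive fourth root of unity in $\mathbb{F}_q^\times$.

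The main obstacle is precisely this last step: closing the double Dirichlet series into a usable closed form, keeping careful track of the Euler factors at primes dividing $m_1 m_2$ and at the ``ramified'' places, and verifying that the four secondary poles are \emph{simple} (rather than higher-order, or accidentally cancelled by zeros of the dual factors). Computing the principal parts explicitly amounts to isolating the residue of an associated Eisenstein-series-like object and carries the main arithmetic content of the theorem. The radius $q^{-2/3}$ in the statement is where a further application of the functional equation would begin to introduce a new layer of singularities, so extending the meromorphic continuation exactly that far requires a careful Phragm\'en--Lindel\"of / convexity argument inside the tube domain provided by the functional equation.
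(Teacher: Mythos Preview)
Your high-level picture is reasonable---diagonal terms produce the order-seven poles at $\xi=\pm q^{-1}$, and a functional equation in the $\xi$-variable is responsible for the simple poles on $|\xi|=q^{-3/4}$---but the sketch has a genuine gap at exactly the hard step.

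The content of the theorem is not locating the poles; it is pushing the meromorphic continuation all the way to $|\xi|<q^{-2/3}$. Your last paragraph treats this as a matter of ``a careful Phragm\'en--Lindel\"of / convexity argument inside the tube domain,'' but convexity alone does not get you there. The paper first M\"obius-sieves to reduce $\mathscr{W}$ to a sum $\sum_h \mu(h)\,\tilde{\mathscr{Z}}(w;h)$ over square-free $h$, then expresses each $\tilde{\mathscr{Z}}(w;h)$ as a finite sum of twisted $D_4$ Weyl group multiple Dirichlet series $Z^{(h)}(\mathbf{s};\chi_{a_2 c_2},\chi_{c_1})$ weighted by explicit local factors $F,G^{(0)},G^{(1)}$ coming from the $p$-parts. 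The convexity bound for these twisted series (which is what Phragm\'en--Lindel\"of gives) is \emph{not} sharp enough: it yields only $|c_3|^{3(1-\Re w)+\delta}$ in the ``inert'' parameter $c_3$, and summing over $h$ with that bound diverges for $\Re w$ near $2/3$. The paper's key step (Proposition~\ref{key-proposition}) is an \emph{inductive} improvement of the $c_3$-exponent to $\max\{3-4\Re w,\,2-\tfrac{5}{2}\Re w\}$, obtained by peeling off one prime $p\mid c_3$ at a time and using pointwise bounds on the local generating functions $f_{\mathrm{odd}},f_{\mathrm{even}}^{\pm}$ (Lemma~\ref{Estimate inverse even part zeven}). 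Only with this subconvex-in-$c_3$ estimate does the $h$-sum converge absolutely on $\Re w>2/3$.

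Your $\tau_3$/diagonal--off-diagonal framework has no analogue of this step: after reciprocity, the off-diagonal becomes a sum over $m_2$ of $L$-series in $\xi$ whose conductors grow, and you need a mechanism to make that sum converge past $|\xi|=q^{-3/4}$. Saying ``Phragm\'en--Lindel\"of'' there is circular---it interpolates between bounds you already have, it does not manufacture the extra decay. (Two smaller points: the fourfold pole pattern comes from the factor $1-q^{3-4w}$ in the $D_4$ denominator, i.e.\ from the Weyl element $\sigma_4\sigma_1\sigma_2\sigma_3\sigma_4\sigma_1\sigma_2\sigma_3\sigma_4$, not from a primitive fourth root in $\mathbb{F}_q^\times$; and the radius $q^{-2/3}$ is not ``where the next layer of singularities would appear'' but simply the boundary of the region where the improved estimate \eqref{eq: basic-estimate-zh} makes the sieved sum absolutely convergent.)
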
 

The principal parts of $\mathscr{W}(\xi)$ at the poles $\xi = \pm \, q^{\scriptscriptstyle - 1}$ can be computed as in \cite[Section~3.2]{DGH}, and accordingly will not be discussed any further here. \!The residues at the 
remaining (simple) poles will be computed in Section \ref{proof Thm A}, see \eqref{eq: generic-residue-3/4}.

Let $\zeta(s) = \zeta_{\scriptscriptstyle \mathbb{F}(x)}(s)$ denote the zeta function of the field $\mathbb{F}(x).$ 
As a consequence of Theorem \ref{Main Theorem A}, we have the following asymptotic formula for the cubic 
moments of the central values of quadratic {D}irichlet {$L$}-functions. 
\vskip10pt
\begin{thm} \label{Main Theorem B} --- For every small $\delta > 0$ and $D \in \mathbb{N},$ we have    
\begin{equation*} 
\sum_{\substack{d_{\scriptscriptstyle 0} - \mathrm{monic \; \& \; sq. \; free} \\ \deg \, d_{\scriptscriptstyle 0} \, = \, D}} \, 
L\big(\tfrac{1}{2}, \chi_{\scriptscriptstyle d_{\scriptscriptstyle 0}}\!\big)^{\!\scriptscriptstyle 3}
= \, \frac{q^{\scriptscriptstyle D}}{\zeta(2)} Q(D, q)\, + \, q^{\scriptscriptstyle \frac{3}{4} D} R(D, q)
\, + \, O_{{\scriptstyle \delta}, \, q}\Big(q^{\scriptscriptstyle D \left(\! \frac{2}{3} + \delta \!\right)}\Big)
\end{equation*} 
for explicitly computable $Q(D, q)$ and $R(D, q).$
\end{thm}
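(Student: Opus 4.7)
The plan is to deduce Theorem \ref{Main Theorem B} from Theorem \ref{Main Theorem A} by a standard Cauchy/residue argument for the generating series $\mathscr{W}(\xi)$. Writing $a_{\scriptscriptstyle D}$ for the $D$-th coefficient of $\mathscr{W}$, Cauchy's formula gives
\begin{equation*}
a_{\scriptscriptstyle D} \; = \; \frac{1}{2\pi i}\oint_{|\xi| = r} \frac{\mathscr{W}(\xi)}{\xi^{\scriptscriptstyle D+1}}\, d\xi
\end{equation*}
for any $r$ smaller than the radius of convergence. The strategy is to push the contour outward from a small circle (where the series defining $\mathscr{W}$ converges absolutely) to a circle of radius $r = q^{\scriptscriptstyle -2/3 - \delta'}$ just inside the disk of meromorphic continuation supplied by Theorem \ref{Main Theorem A}, picking up residues at the poles of $\mathscr{W}(\xi)/\xi^{\scriptscriptstyle D+1}$ lying between the two contours.

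By Theorem \ref{Main Theorem A}, the only poles crossed are at $\xi = \pm q^{\scriptscriptstyle -1}$, each of order seven, and the four simple poles at $\xi = \pm q^{\scriptscriptstyle -3/4}, \, \pm i\, q^{\scriptscriptstyle -3/4}$. The residues of $\mathscr{W}(\xi)/\xi^{\scriptscriptstyle D+1}$ at $\pm q^{\scriptscriptstyle -1}$ yield terms of size $q^{\scriptscriptstyle D}$ multiplied by a polynomial in $D$ of degree six (with possible $(-1)^{\scriptscriptstyle D}$ oscillation from the pole at $-q^{\scriptscriptstyle -1}$); these principal parts are computed as in \cite[Section~3.2]{DGH} and can be assembled into the main term $q^{\scriptscriptstyle D} Q(D,q) / \zeta(2)$. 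The four simple poles at the fourth roots of $q^{\scriptscriptstyle -3}$ contribute residues of size $q^{\scriptscriptstyle \frac{3}{4}D}$, weighted by the appropriate fourth roots of unity raised to the $D$-th power; combining these four contributions produces the secondary term $q^{\scriptscriptstyle \frac{3}{4} D} R(D, q)$, with $R(D,q)$ explicitly expressible in terms of the residues computed in \eqref{eq: generic-residue-3/4}.

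The remaining integral on the shifted contour $|\xi| = q^{\scriptscriptstyle -2/3 - \delta'}$ has to be estimated to give the error term. Using a uniform bound $|\mathscr{W}(\xi)| = O_{\scriptscriptstyle q, \delta}(1)$ on this circle (which follows from Theorem \ref{Main Theorem A} away from the finite list of poles, after possibly indenting the contour around them), the integral is bounded by
\begin{equation*}
\frac{1}{2\pi} \cdot 2\pi \, q^{\scriptscriptstyle -2/3 - \delta'} \cdot \frac{O_{\scriptscriptstyle q, \delta}(1)}{q^{\scriptscriptstyle (-2/3 - \delta')(D+1)}} \; = \; O_{\scriptscriptstyle \delta, q}\!\left( q^{\scriptscriptstyle D(2/3 + \delta)} \right)
\end{equation*}
after adjusting $\delta'$ in terms of $\delta$. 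Choosing $\delta' = \delta$ gives precisely the claimed remainder.

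The main obstacle is securing a uniform polynomial bound on $|\mathscr{W}(\xi)|$ on the circle $|\xi| = q^{\scriptscriptstyle -2/3 - \delta}$; once Theorem \ref{Main Theorem A} is established together with a convexity-type estimate for $\mathscr{W}$ near the edge of the region of meromorphicity, the extraction of $Q(D,q)$ and $R(D,q)$ via residues is routine, and the asymptotic formula follows by combining the contributions as described.
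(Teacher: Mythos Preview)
Your proposal is correct and follows essentially the same approach as the paper: extract the $D$-th coefficient of $\mathscr{W}(\xi)$ by Cauchy's formula on a small circle, move the contour out to $|\xi| = q^{-2/3-\delta}$, and collect the residues at $\pm q^{-1}$ and at the four fourth roots of $q^{-3}$ to produce the two main terms. The only cosmetic difference is in the bound on the outer circle: the paper cites the explicit series representation \eqref{eq: sqfree-non-sqfree1/2} and the estimate \eqref{eq: basic-estimate-zh}, whereas you invoke boundedness of $\mathscr{W}$ on a compact circle avoiding the poles, which already follows from Theorem \ref{Main Theorem A} alone (no additional convexity input is needed, since $\mathscr{W}$ is a function of the single complex variable $\xi$).
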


An expression for $Q(D, q)$ can be easily obtained from the principal parts of $\mathscr{W}(\xi)$ at 
$\xi = \pm \, q^{\scriptscriptstyle - 1}\!.$ We will not pursue this calculation here, as there are alternative ways to 
compute $Q(D, q)$ (see \cite[Section~8 (a)]{RW} and \cite[Section~5.3]{AK}). 
The computation of $R(D, q)$ will be given in Section \ref{proof Thm B}. 
However, for the convenience of the reader, we give here the expression of $R(D, q);$ if   
\begin{equation*}
P(x) : = (1 - x)^{\scriptscriptstyle 5} (1 + x)  (1 + 4 \, x + 11 x^{\scriptscriptstyle 2} + 10 \, x^{\scriptscriptstyle 3} 
- 11 x^{\scriptscriptstyle 4} + 11 x^{\scriptscriptstyle 6} - 4 \, x^{\scriptscriptstyle 7} - x^{\scriptscriptstyle 8}) 
= 1 - 14 \, x^{\scriptscriptstyle 3} - x^{\scriptscriptstyle 4} + 78 \, x^{\scriptscriptstyle 5} + \cdots
\end{equation*} 
is the polynomial appearing in \cite{Zha}, then 
\begin{equation*} 
\begin{split}
R(D, q) = & \; \tfrac{1}{4} \small{(1 + q^{\scriptscriptstyle 1\slash 4} + 10 \, q^{\scriptscriptstyle 1\slash 2} 
+ 7  q^{\scriptscriptstyle 3 \slash 4} 
\, + 20 \, q + 7  q^{\scriptscriptstyle 5\slash 4} + 10 \, q^{\scriptscriptstyle 3\slash 2} + q^{\scriptscriptstyle 7\slash 4} + q^{\scriptscriptstyle 2})} \,  \zeta\big(\tfrac{1}{2}\big)^{\! \scriptscriptstyle 7}
\cdot \, \prod_{p}  P\, \Big(1\slash \sqrt{|p|} \, \Big) \\
& + \tfrac{(- 1)^{\scriptscriptstyle D}}{4} \small{(1 - q^{\scriptscriptstyle 1\slash 4} + 10 \, q^{\scriptscriptstyle 1\slash 2} 
- 7  q^{\scriptscriptstyle 3 \slash 4} \, + 20 \, q - 7  q^{\scriptscriptstyle 5\slash 4} + 10 \, q^{\scriptscriptstyle 3\slash 2} - q^{\scriptscriptstyle 7\slash 4} + q^{\scriptscriptstyle 2})} \,  \zeta\big(\tfrac{1}{2}\big)^{\! \scriptscriptstyle 7}
\cdot \, \prod_{p}  P\, \Big(1 \slash \sqrt{|p|} \, \Big) \\
& + \tfrac{1}{2}\, \Re\big(i^{\, \scriptscriptstyle D} \small{(1 - i \, q^{\scriptscriptstyle 1\slash 4}\! 
- 4 \, q^{\scriptscriptstyle 1\slash 2} 
+ 7 i  \, q^{\scriptscriptstyle 3 \slash 4} + 6 \, q - 7 i \, q^{\scriptscriptstyle 5\slash 4}\! 
- 4 \, q^{\scriptscriptstyle 3\slash 2} + i \, q^{\scriptscriptstyle 7\slash 4} + q^{\scriptscriptstyle 2})}\big)\, 
L\big(\tfrac{1}{2}, \chi_{{\scriptscriptstyle \theta_{\scriptscriptstyle 0}}}\big)^{\! \scriptscriptstyle 7}
\cdot \prod_{p}  P\, \Big((- 1)^{\deg \, p}\slash \sqrt{|p|} \, \Big)
\end{split}
\end{equation*} 
the products in the right-hand side being over all monic irreducibles of $\mathbb{F}[x],$ and where 
\begin{equation*} 
\zeta\big(\tfrac{1}{2}\big) = \frac{1}{1 - \sqrt{q}} \;\;\; \text{and} \;\;\;  
L\big(\tfrac{1}{2}, \chi_{{\scriptscriptstyle \theta_{\scriptscriptstyle 0}}}\big) = \frac{1}{1 + \sqrt{q}}. 
\end{equation*} 
Similar results over the rationals will appear in the forthcoming manuscript \cite{DW}. 

\vskip5pt
{\bf Relation to previous work.} Understanding the moments of various families of {$L$}-functions at the center 
of symmetry is a very important problem in analytic number theory. A classical example is the family of quadratic 
{D}irichlet {$L$}-series\\ whose moments attracted considerable attention over the years. Heuristics to determine 
the main terms in the asymptotic formula for the moments of this family were given in \cite{CFKRS} and \cite{DGH}. 
\!Besides the conjectural main terms, this asymptotic formula should also have a finer part consisting generically of 
infinitely many lower order terms. The first\footnote{The lower order terms we are referring to are all of magnitude 
larger than the threshold $x^{\scriptscriptstyle \frac{1}{2}}.$ \!The additional term noticed in \cite{Florea1}, besides 
being certainly of different origin, it is a special feature of the first moment.} instance when such a lower 
order term occurs is the cubic moment, and to justify this assertion is the subject of this note and \cite{DW}; 
so far, the evidence supporting the existence of this additional term was limited to the conditional result in 
\cite{Zha}, and the extensive computations and experiments in \cite{AR}. This particular moment is the 
highest over the rationals for which an asymptotic formula has been established, see \cite{Sound}, \cite{DGH} and \cite{Young}. In the rational function field case, the corresponding asymptotics for the third and fourth moments 
have been established in \cite{Florea2} and \cite{Florea3}\footnote{In this case, the asymptotic formula proved 
contains only the leading three terms.}, respectively. \!It is by no means a coincidence that the error terms in both \cite{Young} and \cite{Florea2} are of size comparable to the size of the corresponding secondary terms asserted.

The approach we take is based on Weyl group multiple {D}irichlet series. \!\!These are series associated to 
root systems over global fields (containing sufficient roots of unity) of the form
\begin{equation*} 
Z(\mathrm{s}; \mathrm{m}, \Psi) \;\; = \sum_{\mathrm{n} = (n_{\scriptscriptstyle 1}\!, \ldots, n_{r})}\,  
\frac{H(\mathrm{n}; \mathrm{m}) \Psi(\mathrm{n})}
{\prod |n_{\scriptscriptstyle i}|^{s_{\scriptscriptstyle i}}}
\end{equation*} 
$\mathrm{m} = (m_{\scriptscriptstyle 1}, \ldots, m_{r})$ being a twisting parameter, satisfying (Weyl) groups 
of functional equations; see for details \cite{Bump}, \cite{BBFH} and \cite{CG2}. 
If $m_{\scriptscriptstyle i} = 1$ for all $i,$ the series is said to be {\it untwisted}. 
The most important part of $Z(\mathrm{s}; \mathrm{m}, \Psi)$ is the function $H,$ 
giving the structure of the multiple {D}irichlet series. 
Via a twisted multiplicativity (see, for example, \cite{BBFH} and \cite{CG2}), 
this function is determined by its values on prime powers. \!Equivalently, the multiple {D}irichlet series is determined 
by its $p$-parts\footnote{In \cite[Corollary~5.8]{CFG} it is shown that the $p$-parts of untwisted Weyl group multiple 
{D}irichlet series constructed from quadratic characters are uniquely determined. \!This implies the remarkable fact 
that untwisted quadratic Weyl group multiple {D}irichlet series over rational function fields coincide, 
after a simple change of variables, with their own $p$-parts.}, i.e., 
\begin{equation*} 
\sum_{k_{\scriptscriptstyle 1}\!, \ldots, \, k_{r} \, \ge \, 0}  
H\big(p^{k_{\scriptscriptstyle 1}}\!, \ldots, p^{k_{r}}\!;  p^{l_{\scriptscriptstyle 1}}\!, \ldots, p^{l_{r}}\big) \,
|p|^{- \, k_{\scriptscriptstyle 1} s_{\scriptscriptstyle 1} - \, \cdots \, - \, k_{r} s_{r}}
\; \qquad \; {(\text{with $p^{l_{\scriptscriptstyle i}} \!\parallel m_{\scriptscriptstyle i},$ $p$ prime})}.
\end{equation*} 
There are several different methods of representing the $p$-parts of multiple {D}irichlet series, namely, 
\begin{itemize}
  \item Definition by the ``averaging method", sometimes known as the Chinta-Gunnells method, see \cite{CG1} and  
  \cite{CG2}.  
  \item Definition as spherical $p$-adic {W}hittaker functions, see \cite{BBF2} and \cite{BBF3}.
 \item Definition as sums over crystal bases, see \cite{BBF2} and \cite{McN1}. 
 \item Definition as partition functions of statistical-mechanical lattice models, see \cite{BBB} and \cite{BBBF}.
\end{itemize}
The equivalence of the Chinta-Gunnells method with the Whittaker definition was established by McNamara 
\cite{McN2}. 

The Chinta-Gunnells method and the Whittaker definition were recently extended to infinite root systems 
in \cite{Lee-Zh} and \cite{PP}, respectively, and in \cite{DV} the author, in joint work with Pa\c sol, applied 
Deligne's theory of weights in the context of moduli spaces of admissible double covers to express the 
coefficients of the $p$-parts of untwisted multiple {D}irichlet series associated to arbitrary moments of 
quadratic {D}irichlet {$L$}-series in terms of $q$-Weil numbers, where $q = |p|.$ The axiomatic approach 
introduced in \cite{DV} has also been applied in \cite{White1} and \cite{White2} to construct untwisted 
Weyl group multiple {D}irichlet series associated to affine root systems.

\vskip5pt
{\bf Overview of the argument.} \!The main ideas involved in the proof of Theorem \ref{Main Theorem A} can be summarized 
as follows. \!As in \cite{DGH}, we first write  
\begin{equation*} 
\mathscr{W}(q^{- w}) \, = 
\sum_{h - \mathrm{monic}}\; \underbrace{\mu(h)}_{\textrm{\textcolor{MyDarkBlue}
{${\scriptstyle{\text{M\"obius function on $\mathbb{F}[x]$}}}$}}} \hskip-15pt Z\big(\tfrac{1}{2}, \tfrac{1}{2}, \tfrac{1}{2}, w, 1\!; h\big) 
\qquad \text{(for $\Re(w) > 1$)}
\end{equation*} 
where $Z$ is a multiple {D}irichlet series with a certain congruence condition. For every monic and square-free polynomial $h,$ 
this function will be expressed in terms of twisted (in the sense of \cite{DGH}) multiple {D}irichlet series. \!Unlike \cite{DGH}, 
the formula we use (see Section \ref{different-form}) is a finite sum of terms of the form 
\begin{equation*}
|h|^{- 2 w}\,  \chi_{c_{\scriptscriptstyle 2}}\!(c_{\scriptscriptstyle 1}) 
\, \underbrace{Z^{(h)}\big(\tfrac{1}{2}, \tfrac{1}{2}, \tfrac{1}{2}, w; 
\chi_{c_{\scriptscriptstyle 2}}, \chi_{c_{\scriptscriptstyle 1}}\big)}_{\textrm{\textcolor{MyDarkBlue}
{${\scriptstyle{\text{multiple {D}irichlet series}}}$}}}\,  
\prod_{p \, \mid \, c_{\scriptscriptstyle 1}} F(|p|^{\,\scriptscriptstyle - \frac{1}{2}}\!, \ldots, \, |p|^{\, - w}; \, |p|)\,  |p|^{\, - w}
\, \cdot \, \prod_{p \, \mid \, c_{\scriptscriptstyle 2} c_{\scriptscriptstyle 3}} 
G^{\scriptscriptstyle (\varepsilon_{\scriptscriptstyle p})}(|p|^{\,\scriptscriptstyle - \frac{1}{2}}\!, \ldots, \, |p|^{\, - w}; \, |p|)
\end{equation*} 
with $c_{\scriptscriptstyle i}$ monic, $h = c_{\scriptscriptstyle 1}  c_{\scriptscriptstyle 2}  c_{\scriptscriptstyle 3},$ and for each 
monic irreducible $p \, \mid \, c_{\scriptscriptstyle 2} c_{\scriptscriptstyle 3},$ the quantity 
$\varepsilon_{\scriptscriptstyle p} = 0$ or $1$ according as $p$ divides $c_{\scriptscriptstyle 3}$ 
or $p$ divides $c_{\scriptscriptstyle 2}.$ \!The functions $F$ and $G^{(\varepsilon_{\scriptscriptstyle p})}$ represent 
a (normalized) partition of the local $p$-part of the $D_{4}$-untwisted {W}eyl group multiple {D}irichlet series 
(associated to the cubic moment) corresponding to odd and even weighted monomials, and with negative degree terms 
in $|p|^{- w}$ removed. 

We will prove that the above series representation of $\mathscr{W}(q^{- w})$ converges absolutely and uniformly on every 
compact subset of the half-plane $\Re(w) > 2 \slash 3,$ away from the points $w \in \mathbb{C}$ for which 
$q^{- w} = \pm \, q^{\scriptscriptstyle - 1}\!,$ or 
$q^{- w} = \pm \, q^{\scriptscriptstyle - 3\slash 4}\!, \, \pm\,  i \, q^{\scriptscriptstyle - 3\slash 4}.$ To show this, 
we will exhibit additional decay of the function $Z\big(\tfrac{1}{2}, \tfrac{1}{2}, \tfrac{1}{2}, w, 1\!; h\big)$ in $|h|.$ This will 
be done in two steps: 
\begin{itemize}
  \item[\textasteriskcentered] We first show that, for $\Re(w) \ge \tfrac{1}{2},$ the functions   
$F$ and $G^{\scriptscriptstyle (0)}$ are bounded, independent of $w,\,  |p|,$ and 
\begin{equation*}
G^{\scriptscriptstyle (1)}(|p|^{\,\scriptscriptstyle - \frac{1}{2}}\!, \, |p|^{\,\scriptscriptstyle - \frac{1}{2}}\!,  
\, |p|^{\,\scriptscriptstyle - \frac{1}{2}}\!, \, |p|^{\, - w}; \, |p|) \ll |p|^{\scriptscriptstyle - \frac{1}{2}}
\end{equation*}
see Lemma \ref{Estimate Zloc}. \!These estimates provide sufficient decay in the parameters $c_{\scriptscriptstyle 1}$ 
and $c_{\scriptscriptstyle 2}.$ 

\item[\textasteriskcentered] To obtain the required decay in the remaining parameter, we use again the properties 
of the $p$-parts of the $D_{4}$-untwisted {W}eyl group multiple {D}irichlet series combined with an inductive argument, 
to improve upon the convexity bound \eqref{eq: basic-initial-estimate} of 
$
Z^{(h)}\big(\tfrac{1}{2}, \tfrac{1}{2}, \tfrac{1}{2}, w; \chi_{c_{\scriptscriptstyle 2}}, \chi_{c_{\scriptscriptstyle 1}}\big)
$ 
in the $c_{\scriptscriptstyle 3}$-aspect, see Proposition \ref{key-proposition}.
\end{itemize}

The reader will no doubt have noticed the special role played by the $p$-parts in the argument. In \cite{DV} 
it is shown that the coefficients of these generating series can be expressed in terms of the eigenvalues of 
Frobenius acting on the $\ell$-adic \'etale cohomology of moduli of admissible double covers of genus zero 
{\it stable} curves with marked points, hence in terms of Weil algebraic integers. \!Thus, the more conceptual 
reason behind the asymptotics and estimates discussed in Section \ref{Estim} is precisely the {\it dominance} 
condition (see \cite{DV}) satisfied by the $p$-parts of the untwisted multiple {D}irichlet series associated to any 
(not just cubic) moment of quadratic {D}irichlet {$L$}-functions. \!However, in the present context we take 
advantage of the completely explicit nature of the  $D_{4}$-{W}eyl group multiple 
{D}irichlet series (see Appendix \ref{Appendix B}) to deduce the relevant facts about its $p$-parts.

\vskip5pt
{\bf Acknowledgements.} I would like to thank the organizers of the {\it fifth Bucharest number theory day} 
conference, which motivated me to write this note.

\section{Notation and preliminaries} \label{prelim}  
Let $\mathbb{F}$ be a finite field with $q\, \equiv 1 \!\!\pmod 4$ elements. \!For a non-zero $m\in \mathbb{F}[x],$ 
we define its norm by $|m| = q^{\deg \, m}.$ For polynomials $d, m \in \mathbb{F}[x],$ with $m$ monic, let 
$(d \slash m)$ denote the Kronecker symbol, defined as a completely multiplicative function of $m,$ for every 
fixed $d,$ and if $m = p$ is irreducible then $(d \slash p) = 0$ if $p\mid d$ and $(d \slash p) = \pm 1$ if $p\nmid d,$ 
the $+$ or $-$ sign being determined according to whether $d$ is congruent to a square modulo $p$ or not; we take 
$(d \slash 1) = 1.$ The symbol $(d \slash m)$ is also completely multiplicative as a function of $d,$ for every $m.$ Since 
we are assuming that $q\equiv 1 \!\!\pmod 4,$ we have the simpler quadratic reciprocity law:  
\begin{equation*} 
\left(\frac{d}{m} \right) =  \left(\frac{m}{d} \right) \qquad 
\text{(for coprime non-constant monic polynomials $d, m \in \mathbb{F}[x]$).}
\end{equation*} 
In addition, if $b \in \mathbb{F}^{\times}$ then $\left(\frac{b}{m}\right) = \mathrm{sgn}(b)^{\deg \, m}$ for all non-constant 
$m \in \mathbb{F}[x],$ where, for 
$
d(x) = b_{\scriptscriptstyle 0}\, x^{n} + b_{\scriptscriptstyle 1}\,  x^{n - 1} + \cdots +  b_{\scriptscriptstyle n}
\! \in \mathbb{F}[x]
$ 
($b_{\scriptscriptstyle 0} \ne 0$), we define $\mathrm{sgn}(d) = 1$ if 
$
b_{\scriptscriptstyle 0} \in (\mathbb{F}^{\times})^{\scriptscriptstyle 2}
$ 
and $\mathrm{sgn}(d) = - 1$ if 
$
b_{\scriptscriptstyle 0} \notin (\mathbb{F}^{\times})^{\scriptscriptstyle 2}.
$ 

For $d = d_{\scriptscriptstyle 0}\footnote{Very often in this work, a monic polynomial $d$ will be expressed as 
$d = d_{\scriptscriptstyle 0}^{} d_{\scriptscriptstyle 1}^{\, 2}$ with $d_{\scriptscriptstyle 0}^{}$ monic and square-free, 
which justifies the notation.}$ square-free, let   
$
\chi_{d_{\scriptscriptstyle 0}}\!(m) = (d_{\scriptscriptstyle 0} \slash m).
$ 
The {$L$}-series attached to the character $\chi_{d_{\scriptscriptstyle 0}}$ is defined by 
\begin{equation*}
L(s, \chi_{d_{\scriptscriptstyle 0}}) \;\; = \sum_{\substack{m \in \mathbb{F}[x] \\ m - \text{monic}}} 
\chi_{d_{\scriptscriptstyle 0}}\!(m)  |m|^{-s}
= \, \prod_{p - \text{monic \& irred.}} (1 - \chi_{d_{\scriptscriptstyle 0}}\!(p)  |p|^{-s})^{\scriptscriptstyle -1}
\qquad \text{(for complex $s$ with $\Re(s) > 1$).}
\end{equation*} 
It is well-known that $L(s, \chi_{d_{\scriptscriptstyle 0}})$ is a polynomial in $q^{-s}$ of degree $\deg \, d_{\scriptscriptstyle 0} - 1$ 
when $d_{\scriptscriptstyle 0}$ is non-constant; if $d_{\scriptscriptstyle 0} \in \mathbb{F}^{\times}$ then   
\begin{equation*}
L(s, \chi_{d_{\scriptscriptstyle 0}}) = \, \zeta(s) \, = \frac{1}{1 - q^{{\scriptscriptstyle 1} - s}} 
\;\;\;\; \text{(when $\mathrm{sgn}(d_{\scriptscriptstyle 0}) = 1$)}
\;\;\; \text{and} \;\;\;  L(s, \chi_{d_{\scriptscriptstyle 0}}) \, = \frac{1}{1 + q^{{\scriptscriptstyle 1} - s}} 
\;\;\;\; \text{(when $\mathrm{sgn}(d_{\scriptscriptstyle 0}) = - 1$).}
\end{equation*} 
Moreover, if we define $\gamma_{\scriptscriptstyle q}(s,\,  d)$ by 
\begin{equation} \label{eq: gamma-s-d}
\gamma_{\scriptscriptstyle q}(s,\,  d) : = 
q^{\frac{1}{2}{\scriptscriptstyle \left(3 \, + \, (-1)^{\deg \, d} \right)} \left(s - \frac{1}{2}\right)}\, 
\big(\! 1  -  \mathrm{sgn}(d) q^{- s} \big)^{\scriptscriptstyle \left(1 \, + \, (-1)^{\deg \, d}\right)\slash 2}\, 
\big(\! 1 - \mathrm{sgn}(d) q^{s - 1} \big)^{\scriptscriptstyle - \,  \left(1 \, + \, (-1)^{\deg \, d}\right)\slash 2}
\end{equation}
then the function $L(s, \chi_{d_{\scriptscriptstyle 0}})$ satisfies the functional equation 
\begin{equation} \label{eq: funct-eq-L}
L(s, \chi_{d_{\scriptscriptstyle 0}}) = \gamma_{\scriptscriptstyle q}(s,\,  d_{\scriptscriptstyle 0}) 
|d_{\scriptscriptstyle 0}|^{\frac{1}{2} - s} L(1 - s, \chi_{d_{\scriptscriptstyle 0}}).
\end{equation}

\subsection{The Chinta-Gunnells action} \label{Chi-Gunn} 
We shall now recall an important technique developed by Chinta and Gunnells \cite{CG1} to produce certain rational functions 
associated to classical root systems, which they subsequently used as {\it building blocks} to construct {W}eyl group 
multiple {D}irichlet series (over any global field) twisted by quadratic characters. \!Strictly speaking, we shall apply this 
construction only when the underlying root system is $D_{4},$ and therefore, the material included in Appendix 
\ref{Appendix B} suffices for our purposes. \!However, we feel that the approach taken here is applicable to similar problems 
in other contexts, and for this reason, we opted to present this background material in some generality.

Let $\Phi$ be a rank $r$ irreducible simply-laced root system, and let $W = W(\Phi)$ denote the Weyl group of 
$\Phi.$ Fix an ordering of the roots and decompose $\Phi = \Phi^{+} \cup\,  \Phi^{-}$ into positive and negative roots. 
Let $\alpha_{\scriptscriptstyle 1}, \alpha_{\scriptscriptstyle 2}, \ldots, \alpha_{r}$ be the simple roots and let 
$\sigma_{\scriptscriptstyle i} \in W$ be the simple reflection through the hyperplane perpendicular to 
$\alpha_{\scriptscriptstyle i}.$ The simple reflections generate the Weyl group and satisfy the relations 
$
(\sigma_{\scriptscriptstyle i}  \, \sigma_{\! \scriptscriptstyle j})^{r_{\scriptscriptstyle i j}} = 1 
$ 
with $r_{\scriptscriptstyle i i} = 1$ for all $i,$ and $r_{\scriptscriptstyle i j} \in \{2, 3 \}$ if $i \ne \!  j.$ The indices $i$ and 
$j$ are said to be {\it adjacent} if $i \ne \! j$ and $r_{\scriptscriptstyle i j} = 3.$ The action of the simple reflections on the roots 
is given by 
\begin{equation*}
\sigma_{\scriptscriptstyle i} \, \alpha_{\scriptscriptstyle j} =\,  
\begin{cases} 
\alpha_{\scriptscriptstyle i} + \alpha_{\scriptscriptstyle j} &\mbox{if $i$ and \!$j$ are adjacent} \\
- \alpha_{\scriptscriptstyle j} & \mbox{if $i = \! j$} \\
\alpha_{\scriptscriptstyle j} & \mbox{otherwise.}
\end{cases}  
\end{equation*} 
Let $\Lambda(\Phi)$ denote the root lattice of $\Phi.$ Every element $\lambda$ of the root lattice has a unique representation 
as an integral linear combination of the simple roots 
\begin{equation*}
\lambda \, = \sum_{i = 1}^{r} \, k_{\scriptscriptstyle i} \, \alpha_{\scriptscriptstyle i}. 
\end{equation*} 
Let 
$
d(\lambda) : = \sum_{i = 1}^{r} k_{\scriptscriptstyle i}  
$ 
be the height function on $\Lambda(\Phi).$ 

In this setting, Chinta and Gunnells \cite{CG1} introduced a Weyl group action on the field of rational functions 
$\mathbb{C}(z_{\scriptscriptstyle 1}, \ldots, z_{r})$ in $r$ variables and used it to construct multiple {D}irichlet series over 
global fields having analytic continuation to $\mathbb{C}^{r}$ and satisfying a group of functional equations 
isomorphic to $W.$

To define this group action, denote ${\bf{\mathrm{z}}} = (z_{\scriptscriptstyle 1}, \ldots, z_{r}),$ and 
for $\lambda \in \Lambda(\Phi),$ set 
$
{\bf{\mathrm{z}}}^{\lambda}: = z_{\scriptscriptstyle 1}^{k_{\scriptscriptstyle 1}} \cdots z_{r}^{k_{r}} 
$ 
with $k_{\scriptscriptstyle i}$ determined by $\lambda$ as above. Following \cite{CG1}, define 
$
^{{\scriptscriptstyle \epsilon_{\scriptscriptstyle i}}}{\bf{\mathrm{z}}} =  {\bf{\mathrm{z}}}',
$ 
where  
\begin{equation*}
z_{\! \scriptscriptstyle j}' = \, 
\begin{cases} 
- \, z_{\! \scriptscriptstyle j} &\mbox{if $i$ and \!$j$ are adjacent} \\
\; z_{\! \scriptscriptstyle j} & \mbox{otherwise}
\end{cases}  
\end{equation*} 
and 
$
^{{\scriptscriptstyle \sigma_{\scriptscriptstyle i}}}{\bf{\mathrm{z}}} =  {\bf{\mathrm{z}}}',
$ 
where 
\begin{equation*}
z_{\! \scriptscriptstyle j}' = \, 
\begin{cases} 
\sqrt{q} \, z_{\scriptscriptstyle i} \, z_{\! \scriptscriptstyle j} 
&\mbox{if $i$ and \!$j$ are adjacent} \\
1\slash (q \, z_{\! \scriptscriptstyle j}) & \mbox{if $i = \! j$} \\
\; z_{\! \scriptscriptstyle j} & \mbox{otherwise.}
\end{cases}  
\end{equation*} 
Here $q \ge 1$ is a fixed parameter. One checks easily that 
$
^{{\scriptscriptstyle \epsilon_{\scriptscriptstyle i}^{\scriptscriptstyle 2}}}\!{\bf{\mathrm{z}}} = {\bf{\mathrm{z}}},
$ 
$
^{{\scriptscriptstyle \epsilon_{\scriptscriptstyle i} \, \epsilon_{\! \scriptscriptstyle j}}}{\bf{\mathrm{z}}} \, = \, 
^{{\scriptscriptstyle \epsilon_{\! \scriptscriptstyle j} \, \epsilon_{\scriptscriptstyle i}}}{\bf{\mathrm{z}}}, 
$ 
and that  
\begin{equation*}
^{{\scriptscriptstyle \sigma_{\scriptscriptstyle i} \, \epsilon_{\! \scriptscriptstyle j}}}{\bf{\mathrm{z}}} = \, 
\begin{cases} 
^{{\scriptscriptstyle \epsilon_{\scriptscriptstyle i} \, \epsilon_{\! \scriptscriptstyle j}\, \sigma_{\scriptscriptstyle i}}}{\bf{\mathrm{z}}}
&\mbox{if $i$ and \!$j$ are adjacent} \\
^{{\scriptscriptstyle \epsilon_{\! \scriptscriptstyle j}\, \sigma_{\scriptscriptstyle i}}}{\bf{\mathrm{z}}} & \mbox{otherwise.}
\end{cases}  
\end{equation*} 
Letting 
$
f_{\scriptscriptstyle i}^{\scriptscriptstyle \pm}({\bf{\mathrm{z}}}) = \left(f({\bf{\mathrm{z}}}) 
\pm  f(^{{\scriptscriptstyle \epsilon_{\scriptscriptstyle i}}}{\bf{\mathrm{z}}})\right)\slash 2,
$ 
for $f \in \mathbb{C}({\bf{\mathrm{z}}}),$ one defines the action of a simple reflection 
$\sigma_{\scriptscriptstyle i}$ on $\mathbb{C}({\bf{\mathrm{z}}})$ by 
\begin{equation*}
(f\, \vert \, \sigma_{\scriptscriptstyle i})({\bf{\mathrm{z}}}) =
- \, \frac{1 - q z_{\scriptscriptstyle i}}{q z_{\scriptscriptstyle i}(1 - z_{\scriptscriptstyle i})}
f_{\scriptscriptstyle i}^{\scriptscriptstyle +}(^{{\scriptscriptstyle \sigma_{\scriptscriptstyle i}}}{\bf{\mathrm{z}}}) \, + 
\frac{1}{\sqrt{q} \, z_{\scriptscriptstyle i}} 
f_{\scriptscriptstyle i}^{\scriptscriptstyle -}(^{{\scriptscriptstyle \sigma_{\scriptscriptstyle i}}}{\bf{\mathrm{z}}}). 
\end{equation*} 
In \cite[Lemma~3.2]{CG1} it has been verified that this action extends to a $W$-action on 
$\mathbb{C}({\bf{\mathrm{z}}}).$   

Using this Weyl group action, one can construct a $W$-invariant rational function $f \in \mathbb{C}({\bf{\mathrm{z}}})$ such that 
$
f(0, \ldots, 0; q) = 1, 
$ 
and satisfying the following limiting condition: 
\begin{equation} \label{eq: limiting condition}
\text{for each $i = 1, \ldots r,$ the function 
$
\left. (1 - z_{\scriptscriptstyle i}) \cdot f({\bf{\mathrm{z}}}; q)
\right\vert_{\text{$z_{\! \scriptscriptstyle j}  = 0$ for all $j$ adjacent to $i$}}\,
$ 
is independent of $z_{\scriptscriptstyle i}.$}
\end{equation} 
The rational function satisfying these conditions is unique. When the root system is $D_{4},$ the uniqueness of this 
function follows easily from \cite[Theorem~3.7]{BD} by a simple specialization, and in the general case, it follows 
similarly from the results in \cite{White1} and \cite{White2}. To construct this function, let $\Delta({\bf{\mathrm{z}}})$ 
be defined by   
\begin{equation*}
\Delta({\bf{\mathrm{z}}}) \;\, = \prod_{\alpha \, \in \, \Phi^{+}} \big(1 - \, q^{d(\alpha)} {\bf{\mathrm{z}}}^{2 \alpha} \big)
\end{equation*} 
and, for $\sigma \in  W,$ put
\begin{equation*}
j(\sigma, \, {\bf{\mathrm{z}}}) = \frac{\Delta({\bf{\mathrm{z}}})}{\Delta(^{{\scriptscriptstyle \sigma}}{\bf{\mathrm{z}}})}.
\end{equation*} 
Note that  
$
j(\sigma_{\scriptscriptstyle i}, \, {\bf{\mathrm{z}}}) = - \, q z_{\scriptscriptstyle i}^{2}
$ 
for each simple reflection $\sigma_{\scriptscriptstyle i},$ and that this function satisfies the one-cocycle relation 
\begin{equation*}
j(\sigma' \sigma, \, {\bf{\mathrm{z}}}) = j(\sigma'\!,\, ^{{\scriptscriptstyle \sigma}}{\bf{\mathrm{z}}})j(\sigma, \, {\bf{\mathrm{z}}})
\qquad \text{(for all $\sigma, \sigma' \in W$).}
\end{equation*} 
Finally, we define the rational function $f({\bf{\mathrm{z}}}; q)$ by 
\begin{equation} \label{eq: CG-average}
f({\bf{\mathrm{z}}}; q) = \Delta({\bf{\mathrm{z}}})^{\scriptscriptstyle -1} 
\cdot \sum_{\sigma\, \in \, W} j(\sigma, \, {\bf{\mathrm{z}}})(1\vert \, \sigma)({\bf{\mathrm{z}}}).
\end{equation} 
The fact that this function satisfies the required conditions is established in \cite[Theorem~3.4]{CG1}. 

The rational function \eqref{eq: CG-average} corresponding to the root system $D_{4}$ is further discussed in 
Appendix \ref{Appendix B}, \!and will be used in the next section to construct a family of multiple 
{D}irichlet series over rational function fields satisfying the usual analytic properties.

\section{Multiple {D}irichlet series}  \label{Three expressions of Multiple {D}irichlet series} 
Consider the rational function $f$ defined in Appendix \ref{Appendix B}, Eqs. \!\eqref{rat-funct-Z-D4}, 
\eqref{rational-function-D4}, and expand it in a power series     
\begin{equation*} 
f(z_{\scriptscriptstyle 1}^{}\!, \ldots, z_{r}^{}, z_{r + \scriptscriptstyle 1}^{}; q) \;\; =  
\sum_{k_{\scriptscriptstyle 1}\!, \ldots, \, k_{r}\!, \, l \, \ge \, 0}  
\, a(k_{\scriptscriptstyle 1}\!, \ldots,  k_{r}, l; q) \, z_{\scriptscriptstyle 1}^{\scriptscriptstyle k_{\scriptscriptstyle 1}} 
\! \cdots \,  z_{r}^{\scriptscriptstyle k_{r}} z_{r + \scriptscriptstyle 1}^{\scriptscriptstyle l} \qquad \text{(with $r = 3$)}
\footnote{We shall assume throughout that $r = 3.$ However, since most of the functions (and other quantities) involved 
can be defined for other values of $r$ as well, we prefer (in such instances) to denote this value by $r$ -- rather than 
taking it to be $3.$}.
\end{equation*}

We now use the coefficients of $f$ to construct the relevant family of multiple {D}irichlet series.

Let $c\in \mathbb{F}[x]$ be monic and square-free, and fix a factorization 
$c = c_{\scriptscriptstyle 1}c_{\scriptscriptstyle 2}c_{\scriptscriptstyle 3}$ 
(with $c_{\scriptscriptstyle i} \in \mathbb{F}[x]$ monic). \!Choose a  
$
\theta_{\scriptscriptstyle 0} \in \mathbb{F}^{\times} \setminus (\mathbb{F}^{\times})^{\scriptscriptstyle 2},
$ 
and let $a_{\scriptscriptstyle 1}, \, a_{\scriptscriptstyle 2} \in \{1, \theta_{\scriptscriptstyle 0} \}.$ For 
$
\mathbf{s} = (s_{\scriptscriptstyle 1}, \ldots, s_{r + \scriptscriptstyle 1})
$ 
with $\Re(s_{\scriptscriptstyle i})$ sufficiently large, we define the multiple {D}irichlet series 
$
Z^{(c)}(\mathbf{s};  \chi_{a_{\scriptscriptstyle 2} c_{\scriptscriptstyle 2}}, \chi_{a_{\scriptscriptstyle 1} c_{\scriptscriptstyle 1}}) 
$ 
by the absolutely convergent series   
\begin{equation} \label{eq: MDS-vers0}
Z^{(c)}(\mathbf{s};  \chi_{a_{\scriptscriptstyle 2} c_{\scriptscriptstyle 2}}, \chi_{a_{\scriptscriptstyle 1} 
c_{\scriptscriptstyle 1}}) 
\;\; : = \sum_{\substack{m_{\scriptscriptstyle 1}\!, \ldots, \, m_{r}, \, d - \mathrm{monic} \\  
d = d_{\scriptscriptstyle 0}^{} d_{\scriptscriptstyle 1}^{2}, \; d_{\scriptscriptstyle 0}^{} \; \mathrm{sq. \; free} \\ 
(m_{\scriptscriptstyle 1} \cdots \, m_{r} \, d, \, c) = 1}}  \, \frac{\chi_{a_{\scriptscriptstyle 1} c_{\scriptscriptstyle 1} d_{\scriptscriptstyle 0}}\!(\widehat{m}_{\scriptscriptstyle 1}) \, \cdots \, 
\chi_{a_{\scriptscriptstyle 1} c_{\scriptscriptstyle 1}d_{\scriptscriptstyle 0}}\!(\widehat{m}_{r})\,
\chi_{a_{\scriptscriptstyle 2} c_{\scriptscriptstyle 2}}\!(d_{\scriptscriptstyle 0})}
{|m_{\scriptscriptstyle 1}|^{s_{\scriptscriptstyle 1}} \cdots \, |m_{r}|^{s_{r}} |d|^{s_{r + \scriptscriptstyle 1}}} \cdot 
A(m_{\scriptscriptstyle 1}, \ldots, m_{r}, d)
\end{equation} 
where $\widehat{m}_{\scriptscriptstyle i}$ ($i = 1, \ldots, r$) denotes the part of 
$m_{\scriptscriptstyle i}$ coprime to $d_{\scriptscriptstyle 0},$ and the coefficients
$
A(m_{\scriptscriptstyle 1}, m_{\scriptscriptstyle 2}, m_{\scriptscriptstyle 3}, \ldots, m_{r}, d)
$ 
are defined as follows:
\begin{enumerate}[label=(\roman*)]
\item If $p \in \mathbb{F}[x]$ is a monic irreducible, put 
\begin{equation*}  
A\big(p^{\, k_{\scriptscriptstyle 1}}\!, \ldots, p^{\, k_{r}}\!, p^{\, l} \big)
= a\big(k_{\scriptscriptstyle 1}, \ldots, k_{r}, l; q^{\deg \, p}\big)
\end{equation*}   

\item For monic $m_{\scriptscriptstyle 1}, \ldots, \, m_{r}, \, d$ with 
$(m_{\scriptscriptstyle 1} \cdots \, m_{r} \, d, c) = 1,$ 
we have  
\begin{equation*}   
A(m_{\scriptscriptstyle 1}, \ldots, m_{r}, d)
\;\, = \prod_{\substack{p^{\, k_{\scriptscriptstyle i}} \parallel \, m_{\scriptscriptstyle i}\\ p^{\, l} \parallel \, d}} 
A\big(p^{\, k_{\scriptscriptstyle 1}}\!, \ldots, p^{\, k_{r}}\!, p^{\, l} \big)
\end{equation*} 
the product being taken over monic irreducibles $p  \in  \mathbb{F}[x].$
\end{enumerate}

The series \eqref{eq: MDS-vers0} has two alternative expressions allowing us to show that 
$
Z^{(c)}(\mathbf{s};  \chi_{a_{\scriptscriptstyle 2} c_{\scriptscriptstyle 2}}, \chi_{a_{\scriptscriptstyle 1} c_{\scriptscriptstyle 1}}) 
$ 
admits meromorphic continuation and satisfies a group of functional equations. Indeed, for every 
monic polynomial $d = d_{\scriptscriptstyle 0}^{} d_{\scriptscriptstyle 1}^{2}$ coprime to $c,$ we can express 
\begin{equation*} 
\begin{split}
&\sum_{\substack{m_{\scriptscriptstyle 1}\!, \ldots, \, m_{r} - \mathrm{monic}\\ 
(m_{\scriptscriptstyle 1} \cdots \, m_{r}, \, c) = 1}}  
\,\frac{\chi_{a_{\scriptscriptstyle 1} c_{\scriptscriptstyle 1} d_{\scriptscriptstyle 0}}\!(\widehat{m}_{\scriptscriptstyle 1}) \, \cdots \, 
\chi_{a_{\scriptscriptstyle 1} c_{\scriptscriptstyle 1}d_{\scriptscriptstyle 0}}\!(\widehat{m}_{r})}
{|m_{\scriptscriptstyle 1}|^{s_{\scriptscriptstyle 1}} \cdots \, |m_{r}|^{s_{r}}} \cdot 
A(m_{\scriptscriptstyle 1}, \ldots, m_{r}, d) \\ 
& = \prod_{\substack{p^{\, l} \parallel \, d \\ l-\text{odd}}} \, 
\left(\sum_{k_{\scriptscriptstyle 1}\!, \ldots, \, k_{r}\ge 0}
\frac{A\big(p^{\, k_{\scriptscriptstyle 1}}\!, \ldots, p^{\, k_{r}}\!, p^{\, l} \big)}
{|p|^{\, k_{\scriptscriptstyle 1}s_{\scriptscriptstyle 1} + \cdots + k_{r}s_{r}}}\right)
\prod_{\substack{p \nmid c \\ p^{\, l} \parallel \, d \\ l-\text{even}}} \,
\left(\sum_{k_{\scriptscriptstyle 1}\!, \ldots, \, k_{r}\ge 0}
\frac{\chi_{a_{\scriptscriptstyle 1} c_{\scriptscriptstyle 1} d_{\scriptscriptstyle 0}}\!(p)^{\, k_{\scriptscriptstyle 1} + \cdots + k_{r}}
A\big(p^{\, k_{\scriptscriptstyle 1}}\!, \ldots, p^{\, k_{r}}\!, p^{\, l} \big)}
{|p|^{\, k_{\scriptscriptstyle 1}s_{\scriptscriptstyle 1} + \cdots + k_{r}s_{r}}} \right).
\end{split}
\end{equation*} 
If $l = 0,$ we have  
\begin{equation*}  
A\big(p^{\, k_{\scriptscriptstyle 1}}\!, \ldots, p^{\, k_{r}}\!, 1 \big)
= a\big(k_{\scriptscriptstyle 1}, \ldots, k_{r}, 0; q^{\deg \, p}\big) = 1
\end{equation*} 
(see Appendix \ref{Appendix B}, \eqref{initial-conditions-f}) and thus 
\begin{equation*} 
\prod_{p \nmid cd} \,
\left(\sum_{k_{\scriptscriptstyle 1}\!, \ldots, \, k_{r}\ge 0}
\frac{\chi_{a_{\scriptscriptstyle 1} c_{\scriptscriptstyle 1} d_{\scriptscriptstyle 0}}\!(p)^{\, k_{\scriptscriptstyle 1} + \cdots + k_{r}}
A\big(p^{\, k_{\scriptscriptstyle 1}}\!, \ldots, p^{\, k_{r}}\!, 1 \big)}
{|p|^{\, k_{\scriptscriptstyle 1}s_{\scriptscriptstyle 1} + \cdots + k_{r}s_{r}}} \right) \,
= \, \prod_{i = 1}^{r}\, L^{\scriptscriptstyle (c_{\scriptscriptstyle 2}c_{\scriptscriptstyle 3} d_{\scriptscriptstyle 1})}
(s_{\scriptscriptstyle i}, \chi_{a_{\scriptscriptstyle 1} c_{\scriptscriptstyle 1}  d_{\scriptscriptstyle 0}}).
\end{equation*} 
The remaining part ($l \ne 0$) of the two products can be expressed as 
\begin{equation*} 
\begin{split}
&\prod_{\substack{p^{\, l} \parallel \, d \\ l \, \equiv \, 1 \!\!\!\!\!\!\!\pmod 2}} \,
P_{\scriptscriptstyle l}(|p|^{\, - s_{\scriptscriptstyle 1}}\!, \ldots, \, |p|^{\, - s_{r}}; \; q^{\deg \, p}) \\
& \cdot \prod_{\substack{p \, \mid \, d_{\scriptscriptstyle 1} \\ p^{\, l} \parallel \, d \\ l \, \equiv \, 0 \!\!\!\!\!\!\!\pmod 2}} \,
\left(\prod_{i = 1}^{r}\, 
\big(1 \, - \, \chi_{a_{\scriptscriptstyle 1} c_{\scriptscriptstyle 1} d_{\scriptscriptstyle 0}}\!(p)
|p|^{\, - s_{\scriptscriptstyle i}} \big)^{-1}
\cdot \; P_{\scriptscriptstyle l}(\chi_{a_{\scriptscriptstyle 1} c_{\scriptscriptstyle 1} d_{\scriptscriptstyle 0}}\!(p) |p|^{\, - s_{\scriptscriptstyle 1}}\!, \ldots, \, 
\chi_{a_{\scriptscriptstyle 1} c_{\scriptscriptstyle 1} d_{\scriptscriptstyle 0}}\!(p)|p|^{\, - s_{r}}; \; q^{\deg \, p}) \right).
\end{split}
\end{equation*} 
Thus, if we define the {D}irichlet polynomial 
\begin{equation} \label{eq: polyPd}
\begin{split}
P_{\scriptscriptstyle d}(s_{\scriptscriptstyle 1}\!, \ldots, \, s_{r}; 
\chi_{a_{\scriptscriptstyle 1} c_{\scriptscriptstyle 1} d_{\scriptscriptstyle 0}}) \;\; = \!
& \prod_{\substack{p^{\, l} \parallel \, d \\ l \, \equiv \, 1 \!\!\!\!\!\!\!\pmod 2}} \,
P_{\scriptscriptstyle l}(|p|^{\, - s_{\scriptscriptstyle 1}}\!, \ldots, \, |p|^{\, - s_{r}}; \; q^{\deg \, p}) \\
& \cdot \prod_{\substack{p \, \mid \, d_{\scriptscriptstyle 1} \\ p^{\, l} \parallel \, d \\ l \, \equiv \, 0 \!\!\!\!\!\!\!\pmod 2}} \,
P_{\scriptscriptstyle l}(\chi_{a_{\scriptscriptstyle 1} c_{\scriptscriptstyle 1} d_{\scriptscriptstyle 0}}\!(p) 
|p|^{\, - s_{\scriptscriptstyle 1}}\!, \ldots, \, 
\chi_{a_{\scriptscriptstyle 1} c_{\scriptscriptstyle 1} d_{\scriptscriptstyle 0}}\!(p)|p|^{\, - s_{r}}; \; q^{\deg \, p})
\end{split}
\end{equation} 
then we can write   
\begin{equation}  \label{eq: MDS-vers1}
Z^{(c)}(\mathbf{s};  \chi_{a_{\scriptscriptstyle 2} c_{\scriptscriptstyle 2}}, \chi_{a_{\scriptscriptstyle 1} c_{\scriptscriptstyle 1}}) 
\; = \sum_{\substack{(d, \, c) = 1 \\  d = d_{\scriptscriptstyle 0}^{} d_{\scriptscriptstyle 1}^{2}}}  \, 
\frac{\prod_{i = 1}^{r}\, L^{\scriptscriptstyle (c_{\scriptscriptstyle 2}c_{\scriptscriptstyle 3})}
(s_{\scriptscriptstyle i}, \chi_{a_{\scriptscriptstyle 1} c_{\scriptscriptstyle 1}  d_{\scriptscriptstyle 0}}) \, \cdot 
\,\chi_{a_{\scriptscriptstyle 2} c_{\scriptscriptstyle 2}}\!(d_{\scriptscriptstyle 0})\, 
P_{\scriptscriptstyle d}(s_{\scriptscriptstyle 1}\!, \ldots, \, s_{r}; \chi_{a_{\scriptscriptstyle 1} c_{\scriptscriptstyle 1} d_{\scriptscriptstyle 0}})}{|d|^{s_{r + \scriptscriptstyle 1}}}.
\end{equation} 
Now fix monics $m_{\scriptscriptstyle 1}, \ldots, \, m_{r}$ coprime to $c,$ and write 
$
m_{\scriptscriptstyle 1} \cdots \, m_{r} = n_{\scriptscriptstyle 0}^{}  n_{\scriptscriptstyle 1}^{2}  
$ 
with $n_{\scriptscriptstyle 0}^{}$ square-free. \!As
\begin{equation*}  
A\big(p^{\, k_{\scriptscriptstyle 1}}\!, \ldots, p^{\, k_{r}}\!, p^{\, l} \big)
= a\big(k_{\scriptscriptstyle 1}, \ldots, k_{r}, l; q^{\deg \, p}\big) = 0 
\end{equation*} 
if $\sum k_{\scriptscriptstyle i} \equiv l \equiv 1 \!\!\pmod 2$ (see Appendix \ref{Appendix B}, 
\eqref{odd-l-ranked-coeff}), we have 
\begin{equation*} 
\begin{split}
&\sum_{\substack{(d, \, c) = 1\\  
d = d_{\scriptscriptstyle 0}^{} d_{\scriptscriptstyle 1}^{2}, \; d_{\scriptscriptstyle 0}^{} \; \mathrm{sq. \; free}}}  \, \frac{\chi_{a_{\scriptscriptstyle 1} c_{\scriptscriptstyle 1} d_{\scriptscriptstyle 0}}\!(\widehat{m}_{\scriptscriptstyle 1}) \, \cdots \, 
\chi_{a_{\scriptscriptstyle 1} c_{\scriptscriptstyle 1}d_{\scriptscriptstyle 0}}\!(\widehat{m}_{r})\,
\chi_{a_{\scriptscriptstyle 2} c_{\scriptscriptstyle 2}}\!(d_{\scriptscriptstyle 0})}
{|d|^{s_{r + \scriptscriptstyle 1}}} \cdot A(m_{\scriptscriptstyle 1}, \ldots, m_{r}, d)\\
& = \chi_{a_{\scriptscriptstyle 1} c_{\scriptscriptstyle 1}}\!(n_{\scriptscriptstyle 0}^{}) \;\; \cdot
\prod_{\substack{p^{\, k_{\scriptscriptstyle i}} \parallel \, m_{\scriptscriptstyle i}
\\ \sum k_{\scriptscriptstyle i} \, \equiv \, 1 \!\!\!\!\!\!\!\pmod 2}} \, 
\left(\, \sum_{\substack{l = 0 \\  l \, \equiv \, 0 \!\!\!\!\!\!\!\pmod 2}}^{\infty}
\frac{A\big(p^{\, k_{\scriptscriptstyle 1}}\!, \ldots, p^{\, k_{r}}\!, p^{\, l} \big)}
{|p|^{\, l s_{r + \scriptscriptstyle 1}}}\right)\\
& \cdot \prod_{\substack{p \nmid c \\ p^{\, k_{\scriptscriptstyle i}} \parallel \, m_{\scriptscriptstyle i}
\\ \sum k_{\scriptscriptstyle i} \, \equiv \, 0 \!\!\!\!\!\!\pmod 2}} \,
\left(\, \sum_{\substack{l = 1 \\  l \, \equiv \, 1 \!\!\!\!\!\!\!\pmod 2}}^{\infty}
\frac{\chi_{a_{\scriptscriptstyle 2} c_{\scriptscriptstyle 2} n_{\scriptscriptstyle 0}}\!(p)^{l}
A\big(p^{\, k_{\scriptscriptstyle 1}}\!, \ldots, p^{\, k_{r}}\!, p^{\, l} \big)}{|p|^{\, l s_{r + \scriptscriptstyle 1}}} \, + \,   
\frac{A\big(p^{\, k_{\scriptscriptstyle 1}}\!, \ldots, p^{\, k_{r}}\!, p^{\, l - 1} \big)}{|p|^{\, (l - 1) s_{r + \scriptscriptstyle 1}}} \right).
\end{split}
\end{equation*} 
Since 
\begin{equation*} 
A(1\!, \ldots, 1\!, p^{\, l}) = a(0, \ldots, 0, l; q^{\deg \, p}) = 1
\end{equation*} 
(see Appendix \ref{Appendix B}, \eqref{initial-conditions-f}) we can again compute the part corresponding to $k_{\scriptscriptstyle 1} = \cdots = k_{r} = 0$ 
as 
\begin{equation*} 
\prod_{p \nmid c n_{\scriptscriptstyle 0}n_{\scriptscriptstyle 1}} \,
\left(\, \sum_{l = 0}^{\infty} \, 
\frac{\chi_{a_{\scriptscriptstyle 2} c_{\scriptscriptstyle 2} n_{\scriptscriptstyle 0}}\!(p)^{l}}{|p|^{\, l s_{r + \scriptscriptstyle 1}}}\right) \; = \prod_{p \nmid  c_{\scriptscriptstyle 1}c_{\scriptscriptstyle 3}n_{\scriptscriptstyle 1}} \,
\big(1 \, - \, \chi_{a_{\scriptscriptstyle 2} c_{\scriptscriptstyle 2} n_{\scriptscriptstyle 0}}\!(p)
|p|^{\, - s_{r + \scriptscriptstyle 1}} \big)^{-1}
= \, L^{\scriptscriptstyle (c_{\scriptscriptstyle 1} c_{\scriptscriptstyle 3} n_{\scriptscriptstyle 1})}
(s_{r + \scriptscriptstyle 1}, \chi_{a_{\scriptscriptstyle 2} c_{\scriptscriptstyle 2} n_{\scriptscriptstyle 0}}).
\end{equation*} 
The remaining part of the expression is 
\begin{equation*} 
\chi_{a_{\scriptscriptstyle 1}  c_{\scriptscriptstyle 1}}\!(n_{\scriptscriptstyle 0}^{}) \;\; \cdot
\prod_{\substack{p^{\, k_{\scriptscriptstyle i}} \parallel \, m_{\scriptscriptstyle i}
\\ |\underline{k}| \, \equiv \, 1 \!\!\!\!\!\!\!\pmod 2}} \, 
Q_{\underline{k}}(|p|^{\, - s_{r + \scriptscriptstyle 1}}; \, q^{\deg \, p}) \; \cdot 
\prod_{\substack{p \, \mid \, n_{\scriptscriptstyle 1} \\ p^{\, k_{\scriptscriptstyle i}} \parallel \, m_{\scriptscriptstyle i} 
\\ |\underline{k}| \, \equiv \, 0 \!\!\!\!\!\!\pmod 2}} \, 
\big(1 \, - \, \chi_{a_{\scriptscriptstyle 2} c_{\scriptscriptstyle 2} n_{\scriptscriptstyle 0}}\!(p)
|p|^{\, - s_{r + \scriptscriptstyle 1}} \big)^{-1}
Q_{\underline{k}}(\chi_{a_{\scriptscriptstyle 2} c_{\scriptscriptstyle 2} n_{\scriptscriptstyle 0}}\!(p)
|p|^{\, - s_{r + \scriptscriptstyle 1}}; \, q^{\deg \, p}). 
\end{equation*} 

As before, for 
$
\underline{m} = (m_{\scriptscriptstyle 1}, \ldots, \, m_{r}),
$ 
we define the {D}irichlet polynomial 
\begin{equation} \label{eq: polyQm}
Q_{\underline{m}}(s_{r + \scriptscriptstyle 1}; 
\chi_{a_{\scriptscriptstyle 2}  c_{\scriptscriptstyle 2} n_{\scriptscriptstyle 0}}) \;\; = \!
\prod_{\substack{p^{\, k_{\scriptscriptstyle i}} \parallel \, m_{\scriptscriptstyle i}
\\ |\underline{k}| \, \equiv \, 1 \!\!\!\!\!\!\!\pmod 2}} \, 
Q_{\underline{k}}(|p|^{\, - s_{r + \scriptscriptstyle 1}}; \, q^{\deg \, p}) \; \cdot
\prod_{\substack{p \, \mid \, n_{\scriptscriptstyle 1} \\ p^{\, k_{\scriptscriptstyle i}} \parallel \, m_{\scriptscriptstyle i} 
\\ |\underline{k}| \, \equiv \, 0 \!\!\!\!\!\!\pmod 2}} \, 
Q_{\underline{k}}(\chi_{a_{\scriptscriptstyle 2} c_{\scriptscriptstyle 2} n_{\scriptscriptstyle 0}}\!(p)
|p|^{\, - s_{r + \scriptscriptstyle 1}}; \, q^{\deg \, p}).
\end{equation}
Accordingly, we can also write
\begin{equation} \label{eq: MDS-vers2}
Z^{(c)}(\mathbf{s};  \chi_{a_{\scriptscriptstyle 2} c_{\scriptscriptstyle 2}}, \chi_{a_{\scriptscriptstyle 1} c_{\scriptscriptstyle 1}}) 
\;\; = \sum_{\substack{(m_{\scriptscriptstyle 1} \cdots \, m_{r}, \, c) = 1 \\ 
m_{\scriptscriptstyle 1} \cdots \, m_{r} \, = \, n_{\scriptscriptstyle 0}^{}  n_{\scriptscriptstyle 1}^{2}}}  \, 
\frac{L^{\scriptscriptstyle (c_{\scriptscriptstyle 1} c_{\scriptscriptstyle 3})}
(s_{r + \scriptscriptstyle 1}, \chi_{a_{\scriptscriptstyle 2} c_{\scriptscriptstyle 2} n_{\scriptscriptstyle 0}})\, 
\chi_{a_{\scriptscriptstyle 1}  c_{\scriptscriptstyle 1}}\!(n_{\scriptscriptstyle 0}^{})\,
Q_{\underline{m}}(s_{r + \scriptscriptstyle 1}; \chi_{a_{\scriptscriptstyle 2} c_{\scriptscriptstyle 2} n_{\scriptscriptstyle 0}})}
{|m_{\scriptscriptstyle 1}|^{s_{\scriptscriptstyle 1}} \cdots \, |m_{r}|^{s_{r}}}.
\end{equation} 
The expressions \eqref{eq: MDS-vers1} and \eqref{eq: MDS-vers2} will be used in the next two sections to establish the analytic properties of 
$Z^{(c)}(\mathbf{s};  \chi_{a_{\scriptscriptstyle 2} c_{\scriptscriptstyle 2}}, \chi_{a_{\scriptscriptstyle 1} c_{\scriptscriptstyle 1}})$ and compute the 
residues at some of its poles.

\subsection{Functional equations and analytic continuation of multiple {D}irichlet series}
The polynomials 
$
P_{\scriptscriptstyle d}(s_{\scriptscriptstyle 1}\!, \ldots, \, s_{r}; 
\chi_{a_{\scriptscriptstyle 1} c_{\scriptscriptstyle 1} d_{\scriptscriptstyle 0}})
$ 
are symmetric in $s_{\scriptscriptstyle 1}, \ldots, \, s_{r},$ and by \eqref{eq: polynomials-P-Q-func-eq} we have
\begin{equation} \label{eq: polynomials-P-func-eq-in-s}
P_{\scriptscriptstyle d}(s_{\scriptscriptstyle 1}, \ldots, \, s_{r}; 
\chi_{a_{\scriptscriptstyle 1} c_{\scriptscriptstyle 1} d_{\scriptscriptstyle 0}}) 
= |d_{\scriptscriptstyle 1}|^{1 - 2 s_{\scriptscriptstyle 1}}
P_{\scriptscriptstyle d}(1 - s_{\scriptscriptstyle 1}, \ldots, \, s_{r}; 
\chi_{a_{\scriptscriptstyle 1} c_{\scriptscriptstyle 1} d_{\scriptscriptstyle 0}}). 
\end{equation} 
The polynomials 
$
Q_{\underline{m}}(s_{r + \scriptscriptstyle 1}; 
\chi_{a_{\scriptscriptstyle 2}  c_{\scriptscriptstyle 2} n_{\scriptscriptstyle 0}})
$ 
satisfy the functional equation    
\begin{equation} \label{eq: polynomials-Q-func-eq-in-s}
Q_{\underline{m}}(s_{r + \scriptscriptstyle 1}; 
\chi_{a_{\scriptscriptstyle 2}  c_{\scriptscriptstyle 2} n_{\scriptscriptstyle 0}}) = 
|n_{\scriptscriptstyle 1}|^{1 - 2 s_{r + \scriptscriptstyle 1}}
Q_{\underline{m}}(1 - s_{r + \scriptscriptstyle 1}; 
\chi_{a_{\scriptscriptstyle 2}  c_{\scriptscriptstyle 2} n_{\scriptscriptstyle 0}}) 
\end{equation} 
where, for 
$
\underline{m} = (m_{\scriptscriptstyle 1}, \ldots, \, m_{r}),
$ 
we write 
$
m_{\scriptscriptstyle 1} \cdots \, m_{r} = n_{\scriptscriptstyle 0}^{}  n_{\scriptscriptstyle 1}^{2}  
$ 
with $n_{\scriptscriptstyle 0}^{}$ square-free. 

We now apply \eqref{eq: funct-eq-L}, 
\eqref{eq: polynomials-P-func-eq-in-s} and \eqref{eq: polynomials-Q-func-eq-in-s} to describe the 
functional equations of  
$
Z^{(c)}(\mathbf{s};  \chi_{a_{\scriptscriptstyle 2} c_{\scriptscriptstyle 2}}, \chi_{a_{\scriptscriptstyle 1} 
c_{\scriptscriptstyle 1}}).$ 
We shall follow here \cite{DGH}; see also \cite{CG1}.

First assume that $\deg c_{\scriptscriptstyle 2}$ is even. \!We split the sum in \eqref{eq: MDS-vers2} 
into two parts according as $\deg n_{\scriptscriptstyle 0}^{}$ is even or odd. \!By applying 
\eqref{eq: polynomials-P-func-eq-in-s}, \eqref{eq: polynomials-Q-func-eq-in-s}, and \eqref{eq: funct-eq-L} 
in the form 
\begin{equation*}
L^{\scriptscriptstyle (c_{\scriptscriptstyle 1} c_{\scriptscriptstyle 3})}
(s_{r + \scriptscriptstyle 1}, \chi_{a_{\scriptscriptstyle 2} c_{\scriptscriptstyle 2} n_{\scriptscriptstyle 0}}) = 
\gamma_{\scriptscriptstyle q}(s_{r + \scriptscriptstyle 1},\,  
a_{\scriptscriptstyle 2} c_{\scriptscriptstyle 2} n_{\scriptscriptstyle 0}) \, 
|c_{\scriptscriptstyle 2} n_{\scriptscriptstyle 0}|^{\frac{1}{2} - s_{r + \scriptscriptstyle 1}} 
L^{\scriptscriptstyle (c_{\scriptscriptstyle 1} c_{\scriptscriptstyle 3})}
(1 - s_{r + \scriptscriptstyle 1}, \chi_{a_{\scriptscriptstyle 2} c_{\scriptscriptstyle 2} n_{\scriptscriptstyle 0}})\, 
\frac{L_{{\scriptscriptstyle c_{\scriptscriptstyle 1} c_{\scriptscriptstyle 3}}}\!(1 - s_{r + \scriptscriptstyle 1}, \chi_{a_{\scriptscriptstyle 2} c_{\scriptscriptstyle 2} n_{\scriptscriptstyle 0}})}
{L_{{\scriptscriptstyle c_{\scriptscriptstyle 1} c_{\scriptscriptstyle 3}}}\!(s_{r + \scriptscriptstyle 1}, 
\chi_{a_{\scriptscriptstyle 2} c_{\scriptscriptstyle 2} n_{\scriptscriptstyle 0}})}
\end{equation*} 
we find that 
\begin{equation*}
\begin{split}
&Z^{(c)}(\mathbf{s};  \chi_{a_{\scriptscriptstyle 2} c_{\scriptscriptstyle 2}}, 
\chi_{a_{\scriptscriptstyle 1} c_{\scriptscriptstyle 1}})\\ 
& = \, \gamma_{\scriptscriptstyle q}^{+}(s_{r + \scriptscriptstyle 1}; \, a_{\scriptscriptstyle 2})\, 
|c_{\scriptscriptstyle 2}|^{\frac{1}{2} - s_{r + \scriptscriptstyle 1}} 
\!\sum_{\substack{(m_{\scriptscriptstyle 1} \cdots \, m_{r}, \, c) = 1 \\ 
m_{\scriptscriptstyle 1} \cdots \, m_{r} \, = \, n_{\scriptscriptstyle 0}^{}  n_{\scriptscriptstyle 1}^{2} \\ 
\deg \, n_{\scriptscriptstyle 0}^{} - \mathrm{even}}}  \, 
\frac{L^{\scriptscriptstyle (c_{\scriptscriptstyle 1} c_{\scriptscriptstyle 3})}
(1 - s_{r + \scriptscriptstyle 1}, \chi_{a_{\scriptscriptstyle 2} c_{\scriptscriptstyle 2} n_{\scriptscriptstyle 0}})
\frac{L_{{\scriptscriptstyle c_{\scriptscriptstyle 1} c_{\scriptscriptstyle 3}}}\!(1 - s_{r + \scriptscriptstyle 1}, \, \chi_{a_{\scriptscriptstyle 2} c_{\scriptscriptstyle 2} n_{\scriptscriptstyle 0}})}
{L_{{\scriptscriptstyle c_{\scriptscriptstyle 1} c_{\scriptscriptstyle 3}}}\!(s_{r + \scriptscriptstyle 1}, \,  
\chi_{a_{\scriptscriptstyle 2} c_{\scriptscriptstyle 2} n_{\scriptscriptstyle 0}})}\, 
\chi_{c_{\scriptscriptstyle 1}}\!(n_{\scriptscriptstyle 0}^{})\,
Q_{\underline{m}}(1 - s_{r + \scriptscriptstyle 1}; \chi_{a_{\scriptscriptstyle 2} c_{\scriptscriptstyle 2} n_{\scriptscriptstyle 0}})}
{|m_{\scriptscriptstyle 1}|^{s_{\scriptscriptstyle 1} + s_{r + \scriptscriptstyle 1} - \frac{1}{2}} \cdots \, 
|m_{r}|^{s_{r} + s_{r + \scriptscriptstyle 1} - \frac{1}{2}}} \\ 
& + \, \gamma_{\scriptscriptstyle q}^{-}(s_{r + \scriptscriptstyle 1})\, 
|c_{\scriptscriptstyle 2}|^{\frac{1}{2} - s_{r + \scriptscriptstyle 1}} 
\!\sum_{\substack{(m_{\scriptscriptstyle 1} \cdots \, m_{r}, \, c) = 1 \\ 
m_{\scriptscriptstyle 1} \cdots \, m_{r} \, = \, n_{\scriptscriptstyle 0}^{}  n_{\scriptscriptstyle 1}^{2} \\ 
\deg \, n_{\scriptscriptstyle 0}^{} - \mathrm{odd}}}  \, 
\frac{L^{\scriptscriptstyle (c_{\scriptscriptstyle 1} c_{\scriptscriptstyle 3})}
(1 - s_{r + \scriptscriptstyle 1}, \chi_{a_{\scriptscriptstyle 2} c_{\scriptscriptstyle 2} n_{\scriptscriptstyle 0}})
\frac{L_{{\scriptscriptstyle c_{\scriptscriptstyle 1} c_{\scriptscriptstyle 3}}}\!(1 - s_{r + \scriptscriptstyle 1}, \,  \chi_{a_{\scriptscriptstyle 2} c_{\scriptscriptstyle 2} n_{\scriptscriptstyle 0}})}
{L_{{\scriptscriptstyle c_{\scriptscriptstyle 1} c_{\scriptscriptstyle 3}}}\!(s_{r + \scriptscriptstyle 1}, \, 
\chi_{a_{\scriptscriptstyle 2} c_{\scriptscriptstyle 2} n_{\scriptscriptstyle 0}})}\, 
\chi_{a_{\scriptscriptstyle 1}  c_{\scriptscriptstyle 1}}\!(n_{\scriptscriptstyle 0}^{})\,
Q_{\underline{m}}(1 - s_{r + \scriptscriptstyle 1}; \chi_{a_{\scriptscriptstyle 2} c_{\scriptscriptstyle 2} n_{\scriptscriptstyle 0}})}
{|m_{\scriptscriptstyle 1}|^{s_{\scriptscriptstyle 1} + s_{r + \scriptscriptstyle 1} - \frac{1}{2}} \cdots \, 
|m_{r}|^{s_{r} + s_{r + \scriptscriptstyle 1} - \frac{1}{2}}}
\end{split}
\end{equation*} 
where we put
\begin{equation*} 
\gamma_{\scriptscriptstyle q}^{+}(s_{r + \scriptscriptstyle 1}; \, a_{\scriptscriptstyle 2}) : = 
\frac{q^{2 s_{r + \scriptscriptstyle 1} - 1}(1 \, - \, \mathrm{sgn}(a_{\scriptscriptstyle 2}) q^{ - s_{r + \scriptscriptstyle 1}})}
{1 \, - \, \mathrm{sgn}(a_{\scriptscriptstyle 2}) q^{s_{r + \scriptscriptstyle 1} - 1}}
\;\;\; \mathrm{and} \;\;\; 
\gamma_{\scriptscriptstyle q}^{-}(s_{r + \scriptscriptstyle 1}) : = 
q^{s_{r + \scriptscriptstyle 1} - \frac{1}{2}}.
\end{equation*} 
Notice that in the first sum we have also used the fact that 
$
\chi_{a_{\scriptscriptstyle 1}  c_{\scriptscriptstyle 1}}\!(n_{\scriptscriptstyle 0}^{})
= \chi_{c_{\scriptscriptstyle 1}}\!(n_{\scriptscriptstyle 0}^{})
$ 
when $n_{\scriptscriptstyle 0}^{}$ has even degree.

To simplify this expression, multiply 
$ 
Z^{(c)}(\mathbf{s};  \chi_{a_{\scriptscriptstyle 2} c_{\scriptscriptstyle 2}}, \chi_{a_{\scriptscriptstyle 1} c_{\scriptscriptstyle 1}}) 
$
by the product
$  
\prod_{p \, \mid \, c_{\scriptscriptstyle 1}  c_{\scriptscriptstyle 3}} \big(\! 1 \, - \, |p|^{2 s_{r + \scriptscriptstyle 1} - 2}\big).
$ 
If we define $U_{ \scriptscriptstyle m}(s_{r + \scriptscriptstyle 1}) = 1$ for $m \in \mathbb{F}^{\times}\!,$ and 
\begin{equation*}
U_{ \scriptscriptstyle m}(s_{r + \scriptscriptstyle 1}) = \prod_{p \, \mid \, m} 
\frac{|p|^{s_{r + \scriptscriptstyle 1} - 1}\big(1 - |p|^{1 - 2 s_{r + \scriptscriptstyle 1}} \big)}
{1 - |p|^{-  \scriptscriptstyle 1}}
\end{equation*} 
for $m$ square-free of positive degree (the product being over the monic irreducible divisors of $m$), 
we can express: 
\begin{equation*}
\begin{split}
& \frac{L_{c_{\scriptscriptstyle 1} c_{\scriptscriptstyle 3}}
(1 - s_{r + \scriptscriptstyle 1}, \chi_{a_{\scriptscriptstyle 2} c_{\scriptscriptstyle 2} n_{\scriptscriptstyle 0}})}
{L_{c_{\scriptscriptstyle 1} c_{\scriptscriptstyle 3}}
(s_{r + \scriptscriptstyle 1}, \chi_{a_{\scriptscriptstyle 2} c_{\scriptscriptstyle 2} n_{\scriptscriptstyle 0}})} \, \cdot
\prod_{p \, \mid \, c_{\scriptscriptstyle 1} c_{\scriptscriptstyle 3}} \big(\! 1 \, - \, |p|^{2 s_{r + \scriptscriptstyle 1} - 2}\big) 
\, = \! \prod_{p \, \mid \, c_{\scriptscriptstyle 1} c_{\scriptscriptstyle 3}} \!
\big(\! 1 \, + \, 
\chi_{a_{\scriptscriptstyle 2} c_{\scriptscriptstyle 2} n_{\scriptscriptstyle 0}}\!(p)|p|^{s_{r + \scriptscriptstyle 1} - 1} \big)  
\big(\! 1 \, - \, \chi_{a_{\scriptscriptstyle 2} c_{\scriptscriptstyle 2} n_{\scriptscriptstyle 0}}\!(p)
|p|^{- s_{r + \scriptscriptstyle 1}} \big)\\
& = \prod_{p \, \mid \, c_{\scriptscriptstyle 1} c_{\scriptscriptstyle 3}} \! \big(1 \, - \, |p|^{-  \scriptscriptstyle 1} \big) 
\left(1 \, + \, \chi_{a_{\scriptscriptstyle 2} c_{\scriptscriptstyle 2} n_{\scriptscriptstyle 0}}\!(p)\, 
U_{ \scriptscriptstyle p}(s_{r + \scriptscriptstyle 1}) \right) 
= \frac{\varphi(c_{\scriptscriptstyle 1} c_{\scriptscriptstyle 3})}{|c_{\scriptscriptstyle 1} c_{\scriptscriptstyle 3}|}
\sum_{m \, \mid \, c_{\scriptscriptstyle 1} c_{\scriptscriptstyle 3}} \!
\chi_{a_{\scriptscriptstyle 2} c_{\scriptscriptstyle 2} n_{\scriptscriptstyle 0}}\!(m)\, 
U_{ \scriptscriptstyle m}(s_{r + \scriptscriptstyle 1}).
\end{split}
\end{equation*} 
Here $\varphi$ is Euler's totient function over $\mathbb{F}[x].$ Letting 
\begin{equation*} 
^{{\scriptscriptstyle \sigma_{\scriptscriptstyle i}}}\mathbf{s} :
= \big(s_{\scriptscriptstyle 1}, \ldots, 1 - s_{\scriptscriptstyle i}, \ldots, s_{r}, s_{i} + s_{r + \scriptscriptstyle 1} - \tfrac{1}{2}\big)\;\; 
\; \text{for $i\le r,$ and} \;\;\,  
^{{\scriptscriptstyle \sigma_{r + \scriptscriptstyle 1}}}\mathbf{s} 
= \big(s_{\scriptscriptstyle 1} + s_{r + \scriptscriptstyle 1} - \tfrac{1}{2}, \ldots, s_{r} + s_{r + \scriptscriptstyle 1} - \tfrac{1}{2}, 
1 - s_{r + \scriptscriptstyle 1}\big) 
\end{equation*} 
it follows that 
\begin{equation*}
\begin{split}
&\prod_{p \, \mid \, c_{\scriptscriptstyle 1}  c_{\scriptscriptstyle 3}} \big(\! 1 \, - \, |p|^{2 s_{r + \scriptscriptstyle 1} - 2}\big)
\, \cdot \, Z^{(c)}(\mathbf{s};  \chi_{a_{\scriptscriptstyle 2} c_{\scriptscriptstyle 2}}, 
\chi_{a_{\scriptscriptstyle 1} c_{\scriptscriptstyle 1}})\\ 
& = \, \tfrac{1}{2}\, \gamma_{\scriptscriptstyle q}^{+}(s_{r + \scriptscriptstyle 1}; \, a_{\scriptscriptstyle 2})\, 
|c_{\scriptscriptstyle 2}|^{\frac{1}{2} - s_{r + \scriptscriptstyle 1}} 
\frac{\varphi(c_{\scriptscriptstyle 1} c_{\scriptscriptstyle 3})}{|c_{\scriptscriptstyle 1} c_{\scriptscriptstyle 3}|}
\!\!\!\sum_{\substack{m \, \mid \, c_{\scriptscriptstyle 1} c_{\scriptscriptstyle 3} \\ (c_{\scriptscriptstyle 1}\!, \, m) = e}}
\chi_{a_{\scriptscriptstyle 2} c_{\scriptscriptstyle 2}}\!(m) \, 
U_{ \scriptscriptstyle m}(s_{r + \scriptscriptstyle 1}) 
\left\{Z^{(c)}(^{{\scriptscriptstyle \sigma_{r + \scriptscriptstyle 1}}}\mathbf{s}; 
\chi_{a_{\scriptscriptstyle 2} c_{\scriptscriptstyle 2}}, 
\chi_{c_{\scriptscriptstyle 1} m \slash e^{\scriptscriptstyle 2}})  \, + \, 
Z^{(c)}(^{{\scriptscriptstyle \sigma_{r + \scriptscriptstyle 1}}}\mathbf{s}; 
\chi_{a_{\scriptscriptstyle 2} c_{\scriptscriptstyle 2}}, 
\chi_{{\scriptscriptstyle \theta_{\scriptscriptstyle 0}}c_{\scriptscriptstyle 1} m \slash e^{\scriptscriptstyle 2}}) \right\}\\ 
& + \, \tfrac{1}{2}\, \gamma_{\scriptscriptstyle q}^{-}(s_{r + \scriptscriptstyle 1})\, 
|c_{\scriptscriptstyle 2}|^{\frac{1}{2} - s_{r + \scriptscriptstyle 1}} 
\frac{\varphi(c_{\scriptscriptstyle 1} c_{\scriptscriptstyle 3})}{|c_{\scriptscriptstyle 1} c_{\scriptscriptstyle 3}|}
\!\!\!\sum_{\substack{m \, \mid \, c_{\scriptscriptstyle 1} c_{\scriptscriptstyle 3} \\ (c_{\scriptscriptstyle 1}\!, \, m) = e}} 
\chi_{a_{\scriptscriptstyle 2} c_{\scriptscriptstyle 2}}\!(m) \, 
U_{ \scriptscriptstyle m}(s_{r + \scriptscriptstyle 1}) 
\left\{Z^{(c)}(^{{\scriptscriptstyle \sigma_{r + \scriptscriptstyle 1}}}\mathbf{s}; 
\chi_{a_{\scriptscriptstyle 2} c_{\scriptscriptstyle 2}}, 
\chi_{a_{\scriptscriptstyle 1} c_{\scriptscriptstyle 1} m\slash e^{\scriptscriptstyle 2}})  \, - \, 
Z^{(c)}(^{{\scriptscriptstyle \sigma_{r + \scriptscriptstyle 1}}}\mathbf{s}; 
\chi_{a_{\scriptscriptstyle 2} c_{\scriptscriptstyle 2}}, 
\chi_{{\scriptscriptstyle \theta_{\scriptscriptstyle 0}} 
a_{\scriptscriptstyle 1} c_{\scriptscriptstyle 1} m\slash e^{\scriptscriptstyle 2}}) \right\}\!. 
\end{split}
\end{equation*} 
In this formula, the two sums are over all divisors $m$ of $c_{\scriptscriptstyle 1} c_{\scriptscriptstyle 3},$ and $e$ 
denotes the gcd of $m$ and $c_{\scriptscriptstyle 1}.$

When $\deg c_{\scriptscriptstyle 2}$ is odd, the functional equation is obtained by just switching the factors     
$\gamma_{\scriptscriptstyle q}^{+}(s_{r + \scriptscriptstyle 1}; \, a_{\scriptscriptstyle 2})$ and 
$\gamma_{\scriptscriptstyle q}^{-}(s_{r + \scriptscriptstyle 1}).$ 

We can combine the two cases, and write this functional equation as  
\begin{equation} \label{eq: functional-eq-Z-sigma_r+1}
\begin{split}
& Z^{(c)}(\mathbf{s};  \chi_{a_{\scriptscriptstyle 2} c_{\scriptscriptstyle 2}}, 
\chi_{a_{\scriptscriptstyle 1} c_{\scriptscriptstyle 1}})\,  
= \,  \tfrac{1}{2}\,  |c_{\scriptscriptstyle 2}|^{\frac{1}{2} - s_{r + \scriptscriptstyle 1}} 
\frac{\varphi(c_{\scriptscriptstyle 1} c_{\scriptscriptstyle 3})}{|c_{\scriptscriptstyle 1} c_{\scriptscriptstyle 3}|}
\prod_{p \, \mid \, c_{\scriptscriptstyle 1}  c_{\scriptscriptstyle 3}} \big(\! 1 \, - \, 
|p|^{2 s_{r + \scriptscriptstyle 1} - 2}\big)^{\scriptscriptstyle -1}\\
&\cdot \sum_{\vartheta \in \{1, \, \theta_{\scriptscriptstyle 0} \}}                
\chi_{a_{\scriptscriptstyle 1} \!{\scriptscriptstyle \vartheta}}(c_{\scriptscriptstyle 2})
\!\left\{\gamma_{\scriptscriptstyle q}^{+}(s_{r + \scriptscriptstyle 1}; \, a_{\scriptscriptstyle 2}) \, + \, 
\mathrm{sgn}(a_{\scriptscriptstyle 1} \!\vartheta)\, \gamma_{\scriptscriptstyle q}^{-}(s_{r + \scriptscriptstyle 1})\right\} 
\!\!\!\!\sum_{\substack{m \, \mid \, c_{\scriptscriptstyle 1} c_{\scriptscriptstyle 3} \\ (c_{\scriptscriptstyle 1}\!, \, m) = e}}
\chi_{a_{\scriptscriptstyle 2} c_{\scriptscriptstyle 2}}\!(m) \, 
U_{ \scriptscriptstyle m}(s_{r + \scriptscriptstyle 1})\,  
Z^{(c)}(^{{\scriptscriptstyle \sigma_{r + \scriptscriptstyle 1}}}\mathbf{s}; 
\chi_{a_{\scriptscriptstyle 2} c_{\scriptscriptstyle 2}}, 
\chi_{{\scriptscriptstyle \vartheta} c_{\scriptscriptstyle 1} m \slash e^{\scriptscriptstyle 2}}).
\end{split}
\end{equation}

Similarly, we can use the expression \eqref{eq: MDS-vers1} of   
$
Z^{(c)}(\mathbf{s};  \chi_{a_{\scriptscriptstyle 2} c_{\scriptscriptstyle 2}}, \chi_{a_{\scriptscriptstyle 1} c_{\scriptscriptstyle 1}}), 
$ 
and the functional equations \eqref{eq: polynomials-P-func-eq-in-s} and   
\begin{equation*}
L^{\scriptscriptstyle (c_{\scriptscriptstyle 2} c_{\scriptscriptstyle 3})}
(s_{\scriptscriptstyle 1}, \chi_{a_{\scriptscriptstyle 1} c_{\scriptscriptstyle 1} d_{\scriptscriptstyle 0}}) = 
\gamma_{\scriptscriptstyle q}(s_{\scriptscriptstyle 1},\,  
a_{\scriptscriptstyle 1} c_{\scriptscriptstyle 1} d_{\scriptscriptstyle 0}) \, 
|c_{\scriptscriptstyle 1} d_{\scriptscriptstyle 0}|^{\frac{1}{2} - s_{\scriptscriptstyle 1}} 
L^{\scriptscriptstyle (c_{\scriptscriptstyle 2} c_{\scriptscriptstyle 3})}
(1 - s_{\scriptscriptstyle 1}, \chi_{a_{\scriptscriptstyle 1} c_{\scriptscriptstyle 1} d_{\scriptscriptstyle 0}})\, 
\frac{L_{{\scriptscriptstyle c_{\scriptscriptstyle 2} c_{\scriptscriptstyle 3}}}\!(1 - s_{\scriptscriptstyle 1}, 
\chi_{a_{\scriptscriptstyle 1} c_{\scriptscriptstyle 1} d_{\scriptscriptstyle 0}})}
{L_{{\scriptscriptstyle c_{\scriptscriptstyle 2} c_{\scriptscriptstyle 3}}}\!(s_{\scriptscriptstyle 1}, 
\chi_{a_{\scriptscriptstyle 1} c_{\scriptscriptstyle 1} d_{\scriptscriptstyle 0}})}
\end{equation*} 
to get: 
\begin{equation} \label{eq: functional-eq-Z-sigma1}
\begin{split}
& Z^{(c)}(\mathbf{s};  \chi_{a_{\scriptscriptstyle 2} c_{\scriptscriptstyle 2}}, 
\chi_{a_{\scriptscriptstyle 1} c_{\scriptscriptstyle 1}})\, = \, \tfrac{1}{2}\,  
|c_{\scriptscriptstyle 1}|^{\frac{1}{2} - s_{\scriptscriptstyle 1}}
\frac{\varphi(c_{\scriptscriptstyle 2} c_{\scriptscriptstyle 3})}{|c_{\scriptscriptstyle 2} c_{\scriptscriptstyle 3}|}
\prod_{p \, \mid \, c_{\scriptscriptstyle 2}  c_{\scriptscriptstyle 3}} 
\big(\! 1 \, - \, |p|^{2 s_{\scriptscriptstyle 1} - 2}\big)^{\scriptscriptstyle -1}\\ 
& \cdot \sum_{\vartheta' \in \{1, \, \theta_{\scriptscriptstyle 0} \}} 
\chi_{a_{\scriptscriptstyle 2} {\scriptscriptstyle \vartheta'}}(c_{\scriptscriptstyle 1})
\!\left\{\gamma_{\scriptscriptstyle q}^{+}(s_{\scriptscriptstyle 1}; \, a_{\scriptscriptstyle 1})\, + \, 
\mathrm{sgn}(a_{\scriptscriptstyle 2} \vartheta')\, 
\gamma_{\scriptscriptstyle q}^{-}(s_{\scriptscriptstyle 1})\right\} 
\!\!\!\!\sum_{\substack{\ell \, \mid \, c_{\scriptscriptstyle 2} c_{\scriptscriptstyle 3} \\ (c_{\scriptscriptstyle 2}, \, \ell) = b}}
\chi_{a_{\scriptscriptstyle 1} c_{\scriptscriptstyle 1}}\!(\ell) \, 
U_{ \scriptscriptstyle \ell}(s_{\scriptscriptstyle 1}) 
Z^{(c)}(^{{\scriptscriptstyle \sigma_{\scriptscriptstyle 1}}}\mathbf{s}; 
\chi_{{\scriptscriptstyle \vartheta'} \!c_{\scriptscriptstyle 2} \ell \slash b^{\scriptscriptstyle 2}}, 
\chi_{a_{\scriptscriptstyle 1} c_{\scriptscriptstyle 1}}). 
\end{split}
\end{equation} 
Of course, by symmetry, \!we have similar functional equations in the variables $s_{\scriptscriptstyle 2}, \ldots, s_{r}.$ \!Writing 
explicitly now $r = 3$ and taking $s_{\scriptscriptstyle 1} \! = s_{\scriptscriptstyle 2} \! = s_{\scriptscriptstyle 3} = s,$ we can express the functional equation $\sigma_{\scriptscriptstyle 1} \sigma_{\scriptscriptstyle 2} \sigma_{\scriptscriptstyle 3}$ as 
\begin{equation} \label{eq: functional-eq-Z-sigma1sigma2sigma3}
\begin{split}
& Z^{(c)}(\mathbf{s};  \chi_{a_{\scriptscriptstyle 2} c_{\scriptscriptstyle 2}}, 
\chi_{a_{\scriptscriptstyle 1} c_{\scriptscriptstyle 1}})\\ 
& \! = \, \tfrac{1}{2}\, |c_{\scriptscriptstyle 1}|^{\frac{3}{2} - 3 s}\, 
\frac{\varphi(c_{\scriptscriptstyle 2} c_{\scriptscriptstyle 3})^{\scriptscriptstyle 3} 
V_{\! c_{\scriptscriptstyle 2} c_{\scriptscriptstyle 3}}\!(s)}
{|c_{\scriptscriptstyle 2} c_{\scriptscriptstyle 3}|^{\scriptscriptstyle 3}} 
\prod_{p \, \mid \, c_{\scriptscriptstyle 2}  c_{\scriptscriptstyle 3}} \big(\! 1 \, - \, |p|^{2 s - 2}\big)^{\! \scriptscriptstyle - 3}\\
& \cdot \sum_{\vartheta' \in \{1, \, \theta_{\scriptscriptstyle 0} \}} 
\chi_{a_{\scriptscriptstyle 2} {\scriptscriptstyle \vartheta'}}(c_{\scriptscriptstyle 1})
\!\left\{\gamma_{\scriptscriptstyle q}^{+}(s; a_{\scriptscriptstyle 1})^{\scriptscriptstyle 3}\, + \, 
\mathrm{sgn}(a_{\scriptscriptstyle 2} \vartheta')\,
\gamma_{\scriptscriptstyle q}^{-}(s)^{\scriptscriptstyle 3} \right\} 
\!\!\!\!\sum_{\substack{\ell \, \mid \, c_{\scriptscriptstyle 2} c_{\scriptscriptstyle 3} \\ (c_{\scriptscriptstyle 2}, \, \ell) = b}}
\chi_{a_{\scriptscriptstyle 1} c_{\scriptscriptstyle 1}}\!(\ell) \, W_{\scriptscriptstyle \ell}(s)\,
Z^{(c)}(^{{\scriptscriptstyle \sigma_{\scriptscriptstyle 1}\sigma_{\scriptscriptstyle 2}
\sigma_{\scriptscriptstyle 3}}}\mathbf{s}; \chi_{{\scriptscriptstyle \vartheta'} 
\!c_{\scriptscriptstyle 2} \ell \slash b^{\scriptscriptstyle 2}}, 
\chi_{a_{\scriptscriptstyle 1} c_{\scriptscriptstyle 1}})
\end{split}
\end{equation} 
where $V_{\scriptscriptstyle \ell}(s) = W_{\scriptscriptstyle \ell}(s) = 1$ for $\ell \in \mathbb{F}^{\times}\!,$ and 
\begin{equation*} 
V_{\scriptscriptstyle \ell}(s) = \prod_{p \, \mid \, \ell} \left(1 + 3 U_{\! p}^{2}(s) \right) 
\;\;\; \mathrm{and} \;\;\; 
W_{\scriptscriptstyle \ell}(s) = \prod_{p \, \mid \, \ell} 
\frac{U_{\! p}^{}(s)\left(3 +  U_{\! p}^{2}(s) \right)}{1 + 3 U_{\! p}^{2}(s)}
\end{equation*}  
for $\ell$ square-free of positive degree. This functional equation will be used in the next section.  

As in \cite{CG1} and \cite{DGH}, by applying the above functional equations and Bochner's theorem \cite{Boh}, it 
follows that 
$
Z^{(c)}(\mathbf{s};  \chi_{a_{\scriptscriptstyle 2} c_{\scriptscriptstyle 2}}, 
\chi_{a_{\scriptscriptstyle 1} c_{\scriptscriptstyle 1}})
$ 
can be meromorphically continued to $\mathbb{C}^{r + \scriptscriptstyle 1}$ ($r = 3$). Moreover, as in 
\cite[Proposition~4.11]{DGH}, the function 
\begin{equation} \label{eq: central-value-Z-var-w}
\big(1\, - \, q^{3 - 4w} \big)
\big(1 \, - \, q^{2 - 2 w} \big)^{\! \scriptscriptstyle 7} 
Z^{(c)}\big(\tfrac{1}{2}, \tfrac{1}{2}, \tfrac{1}{2}, w; \, \chi_{a_{\scriptscriptstyle 2} c_{\scriptscriptstyle 2}}, 
\chi_{a_{\scriptscriptstyle 1} c_{\scriptscriptstyle 1}}\big)
\end{equation} 
is entire and has order one. \!This function is, in fact, a polynomial in $q^{\scriptscriptstyle - w},$ but we shall not need 
this piece of information.

\subsection{Convexity bound} 
We shall now obtain a convexity bound for the function \eqref{eq: central-value-Z-var-w} analogous to that 
proved in \cite[Proposition~4.12]{DGH} over the rationals.

To obtain this estimate, we first note that by Proposition \ref{poly-P-estimate} and \eqref{eq: polyPd} we have 
\begin{equation*}
\big|P_{\scriptscriptstyle d}\big(\tfrac{1}{2}, \tfrac{1}{2}, \tfrac{1}{2}; 
\chi_{a_{\scriptscriptstyle 1} c_{\scriptscriptstyle 1} d_{\scriptscriptstyle 0}}\big)\big| \, \le  \, 
\big(\! \tfrac{843}{1 - 5^{\scriptscriptstyle - 2\,  \eta}}\!\big)^{\scriptscriptstyle \omega(d_{\scriptscriptstyle 1})} 
\, |d_{\scriptscriptstyle 1}|^{\scriptscriptstyle \frac{1}{2} + \eta} 
\end{equation*} 
for every small positive $\eta.$ Here $\omega(d_{\scriptscriptstyle 1})$ denotes the number of distinct monic irreducible factors of 
$d_{\scriptscriptstyle 1}.$ Choosing, for example, $\eta = 1\slash 5,$ we find easily that 
\begin{equation*}
\begin{split}
\big|Z^{(c)}\big(\tfrac{1}{2}, \tfrac{1}{2}, \tfrac{1}{2}, w; \chi_{a_{\scriptscriptstyle 2} c_{\scriptscriptstyle 2}}, 
\chi_{a_{\scriptscriptstyle 1} c_{\scriptscriptstyle 1}}\big)\big| 
& \, \le \, \big(\tfrac{3}{2}\big)^{\!{\scriptscriptstyle 3} \omega(c_{\scriptscriptstyle 2}c_{\scriptscriptstyle 3})} \; \cdot
\!\sum_{\substack{(d_{\scriptscriptstyle 0}^{}, \, c) = 1 \\  d_{\scriptscriptstyle 0}^{}-\mathrm{monic\, \& \, sq. \; free}}}\, 
\frac{L({\scriptstyle \frac{1}{2}}, \chi_{a_{\scriptscriptstyle 1} c_{\scriptscriptstyle 1}  d_{\scriptscriptstyle 0}})^{\scriptscriptstyle 3}}
{|d_{\scriptscriptstyle 0}|^{\scriptscriptstyle \Re(w)}}
\sum_{d_{\scriptscriptstyle 1}-\mathrm{monic}} \, 
\frac{(1776)^{\scriptscriptstyle \omega(d_{\scriptscriptstyle 1})}}
{|d_{\scriptscriptstyle 1}|^{\scriptscriptstyle 13\slash 10}} \\
& < \, \big(\tfrac{3}{2}\big)^{\!{\scriptscriptstyle 3} \omega(c_{\scriptscriptstyle 2}c_{\scriptscriptstyle 3})} 
\, \zeta\big(\tfrac{13}{10}\big)^{\! \scriptscriptstyle 1776}\; \cdot
\!\sum_{\substack{(d_{\scriptscriptstyle 0}^{}, \, c) = 1 \\  d_{\scriptscriptstyle 0}^{}-\mathrm{monic\, \& \, sq. \; free}}}\, 
\frac{L({\scriptstyle \frac{1}{2}}, \chi_{a_{\scriptscriptstyle 1} c_{\scriptscriptstyle 1}  d_{\scriptscriptstyle 0}})^{\scriptscriptstyle 3}}
{|d_{\scriptscriptstyle 0}|^{\scriptscriptstyle \Re(w)}}
\end{split}
\end{equation*} 
for all $w \in \mathbb{C}$ with $\Re(w) > 1.$ By Theorem \ref{estimate-L-realpart-w=1}, the last series is convergent. Moreover, 
for $w > 1$ and small $0 < \delta < w - 1,$ we have  
\begin{equation*} 
\sum_{\substack{(d_{\scriptscriptstyle 0}^{}, \, c) = 1 \\  
d_{\scriptscriptstyle 0}^{}-\mathrm{monic\, \& \, sq. \; free}}}  \, 
\frac{L({\scriptstyle \frac{1}{2}}, \chi_{a_{\scriptscriptstyle 1} c_{\scriptscriptstyle 1}  d_{\scriptscriptstyle 0}})^{\scriptscriptstyle 3}}
{|d_{\scriptscriptstyle 0}|^{w}} \, 
\ll_{\scriptscriptstyle \delta, \, q} \, |c_{\scriptscriptstyle 1}|^{\delta}\!\! \sum_{d-\mathrm{monic}}\,  \frac{1}{|d|^{w - \delta}}
\, \ll_{\scriptscriptstyle \delta, \, q} \, \frac{|c_{\scriptscriptstyle 1}|^{\delta}}{1 - q^{1 + \delta - w}}.
\end{equation*} 
The implied constant can be taken to be 
$
64 \, q^{{\scriptscriptstyle \delta} \, e^{\scriptscriptstyle 30 \slash \delta}}.
$ 
It follows that 
\begin{equation} \label{eq: EstimateZ1}
\big|Z^{(c)}\big(\tfrac{1}{2}, \tfrac{1}{2}, \tfrac{1}{2}, w; \chi_{a_{\scriptscriptstyle 2} c_{\scriptscriptstyle 2}}, 
\chi_{a_{\scriptscriptstyle 1} c_{\scriptscriptstyle 1}}\big)\big| 
\, < \, 64 \, \zeta\big(\tfrac{13}{10}\big)^{\! \scriptscriptstyle 1776} 
\big(\tfrac{3}{2}\big)^{\!{\scriptscriptstyle 3} \omega(c_{\scriptscriptstyle 2}c_{\scriptscriptstyle 3})} 
\cdot
\frac{q^{{\scriptscriptstyle \delta} \, e^{\scriptscriptstyle 30 \slash \delta}}\, |c_{\scriptscriptstyle 1}|^{\delta}}
{1 - q^{1 + \delta - {\scriptstyle \Re(w)}}}.
\end{equation} 
We shall now establish a similar estimate when $w \in \mathbb{C}$ with $\Re(w) = - \delta,$ for small positive $\delta.$ To do so, 
we shall apply the functional equation corresponding to the Weyl group element 
$ 
\tau : = \sigma_{\scriptscriptstyle 4}
\sigma_{\scriptscriptstyle 1}\sigma_{\scriptscriptstyle 2}\sigma_{\scriptscriptstyle 3}\sigma_{\scriptscriptstyle 4}
\sigma_{\scriptscriptstyle 1}\sigma_{\scriptscriptstyle 2}\sigma_{\scriptscriptstyle 3}\sigma_{\scriptscriptstyle 4}
$ 
relating the values of  
$
Z^{(c)}(\mathbf{s}; \chi_{a_{\scriptscriptstyle 2} c_{\scriptscriptstyle 2}}, 
\chi_{a_{\scriptscriptstyle 1} c_{\scriptscriptstyle 1}})
$ 
to the values of a linear combination of similar multiple {D}irichlet series at 
$
^{{\scriptscriptstyle \tau}}\mathbf{s},  
$ 
and then make use of \eqref{eq: EstimateZ1}. Note that 
\begin{equation*}
^{\tau}\big(\tfrac{1}{2}, \tfrac{1}{2}, \tfrac{1}{2}, w \big)
= \big(\tfrac{1}{2}, \tfrac{1}{2}, \tfrac{1}{2}, 1 - w \big).
\end{equation*} 
Following \cite{CD} and \cite{DGH}, we write the functional equations \eqref{eq: functional-eq-Z-sigma_r+1}, 
\eqref{eq: functional-eq-Z-sigma1} in the matrix notation. If we denote by $\vec{Z}^{(c)}(\mathbf{s})$ the column vector whose 
entries are the multiple {D}irichlet series 
$
Z^{(c)}(\mathbf{s};  \chi_{a_{\scriptscriptstyle 2} c_{\scriptscriptstyle 2}}, 
\chi_{a_{\scriptscriptstyle 1} c_{\scriptscriptstyle 1}}),
$ 
then there are matrices $\mathrm{X}_{c}(s_{\scriptscriptstyle 4})$ and $\mathrm{Y}_{\! c}(s)$ such that  
\begin{equation*}
\vec{Z}^{(c)}(\mathbf{s}) = \mathrm{X}_{c}(s_{\scriptscriptstyle 4}) \cdot 
\vec{Z}^{(c)}(^{{\scriptscriptstyle \sigma_{\scriptscriptstyle 4}}}\mathbf{s}) \;\;\;\;  \text{and} \;\;\;\; 
\vec{Z}^{(c)}(\mathbf{s}) = \mathrm{Y}_{\! c}(s_{\scriptscriptstyle i}) \cdot
\vec{Z}^{(c)}(^{{\scriptscriptstyle \sigma_{\scriptscriptstyle i}}}\mathbf{s}) \qquad \text{(for $i = 1, 2, 3$).}
\end{equation*} 
Taking $\mathbf{s} = \big(\tfrac{1}{2}, \tfrac{1}{2}, \tfrac{1}{2}, w \big)$ and applying successively the functional equations corresponding to 
$ 
\sigma_{\scriptscriptstyle 4}, \sigma_{\scriptscriptstyle 1}, \sigma_{\scriptscriptstyle 2}, \ldots, \sigma_{\scriptscriptstyle 4},
$ 
\!we obtain:
\begin{equation*}
\vec{Z}^{(c)}\big(\tfrac{1}{2}, \tfrac{1}{2}, \tfrac{1}{2}, w \big) = \mathscr{M}(w) \cdot 
\vec{Z}^{(c)}\big(\tfrac{1}{2}, \tfrac{1}{2}, \tfrac{1}{2}, 1 - w \big)
\end{equation*} 
where the matrix $\mathscr{M}(w)$ is given by
\begin{equation*}
\mathscr{M}(w) = \mathrm{X}_{c}(w) \mathrm{Y}_{\! c}(w)^{\scriptscriptstyle 3} \, \mathrm{X}_{c}\big(2 w - \tfrac{1}{2}\big)\, 
\mathrm{Y}_{\! c}(w)^{\scriptscriptstyle 3} \, \mathrm{X}_{c}(w).
\end{equation*} 
We shall now estimate the entries of the matrix $\mathscr{M}(w).$ Let 
$s_{\scriptscriptstyle 4} \in \mathbb{C}$ with $\Re(s_{\scriptscriptstyle 4}) < 0.$ Since
\begin{equation*}
|\gamma_{\scriptscriptstyle q}^{+}(s_{\scriptscriptstyle 4}; \, a_{\scriptscriptstyle 2})| \, + \, 
|\gamma_{\scriptscriptstyle q}^{-}(s_{\scriptscriptstyle 4})| < 4
\;\;\;\;\;\; |U_{\scriptscriptstyle m}(s_{\scriptscriptstyle 4})| \le 3^{\omega(m)}|m|^{- \Re(s_{\scriptscriptstyle 4})}
\end{equation*} 
and 
\begin{equation*}
\prod_{p \, \mid \, c_{\scriptscriptstyle 1}  c_{\scriptscriptstyle 3}} \big(\! 1 \, - \, 
|p|^{2 \Re(s_{\scriptscriptstyle 4}) - 2}\big)^{\scriptscriptstyle -1}  \le \, 
\big(\!\tfrac{25}{24}\!\big)^{\!\omega(c_{\scriptscriptstyle 1}  c_{\scriptscriptstyle 3})} 
\end{equation*} 
we have by \eqref{eq: functional-eq-Z-sigma_r+1} that 
\begin{equation} \label{eq: intermediate-estimate1}
|Z^{(c)}(\mathbf{s};  \chi_{a_{\scriptscriptstyle 2} c_{\scriptscriptstyle 2}}, 
\chi_{a_{\scriptscriptstyle 1} c_{\scriptscriptstyle 1}})|\,  
\le \, 4^{\omega(c_{\scriptscriptstyle 1} c_{\scriptscriptstyle 3}) + 1}\, 
|c_{\scriptscriptstyle 2}|^{\frac{1}{2} - \Re(s_{\scriptscriptstyle 4})} 
\!\!\sum_{\substack{m \, \mid \, c_{\scriptscriptstyle 1} c_{\scriptscriptstyle 3} \\ (c_{\scriptscriptstyle 1}\!, \, m) = e}} 
|m|^{- \Re(s_{\scriptscriptstyle 4})} \cdot \tfrac{1}{2}\sum_{\vartheta \in \{1, \, \theta_{\scriptscriptstyle 0} \}} 
|Z^{(c)}(^{{\scriptscriptstyle \sigma_{\scriptscriptstyle 4}}}\mathbf{s}; 
\chi_{a_{\scriptscriptstyle 2} c_{\scriptscriptstyle 2}}, 
\chi_{{\scriptscriptstyle \vartheta} c_{\scriptscriptstyle 1} m \slash e^{\scriptscriptstyle 2}})|.
\end{equation} 
Note that for every divisor $m$ of $c_{\scriptscriptstyle 1} c_{\scriptscriptstyle 3},$ the monic polynomial 
$c_{\scriptscriptstyle 1} m \slash e^{\scriptscriptstyle 2}$ is also a divisor of $c_{\scriptscriptstyle 1} c_{\scriptscriptstyle 3}.$ Conversely, to every 
pair $(l_{\scriptscriptstyle 1}, e')$ with $l_{\scriptscriptstyle 1}  \! \mid  c_{\scriptscriptstyle 1}$ and $e'  \mid c_{\scriptscriptstyle 3}$ there corresponds $m : = (c_{\scriptscriptstyle 1} \slash l_{\scriptscriptstyle 1}) e'.$ 

Similarly, for $s_{\scriptscriptstyle i} \in \mathbb{C}$ with $\Re(s_{\scriptscriptstyle i}) < 0$ ($i = 1, 2, 3$), we have 
\begin{equation} \label{eq: intermediate-estimate2}
|Z^{(c)}(\mathbf{s};  \chi_{a_{\scriptscriptstyle 2} c_{\scriptscriptstyle 2}}, 
\chi_{a_{\scriptscriptstyle 1} c_{\scriptscriptstyle 1}})| \, 
\le \, 4^{\omega(c_{\scriptscriptstyle 2} c_{\scriptscriptstyle 3}) + 1} 
|c_{\scriptscriptstyle 1}|^{\frac{1}{2} - \Re(s_{\scriptscriptstyle i})}
\!\!\sum_{\substack{\ell \, \mid \, c_{\scriptscriptstyle 2} c_{\scriptscriptstyle 3} \\ (c_{\scriptscriptstyle 2}, \, \ell) = b}} 
|\ell|^{- \Re(s_{\scriptscriptstyle i})} \cdot \tfrac{1}{2} \sum_{\vartheta' \in \{1, \, \theta_{\scriptscriptstyle 0} \}}  
|Z^{(c)}(^{{\scriptscriptstyle \sigma_{\scriptscriptstyle i}}}\mathbf{s}; 
\chi_{{\scriptscriptstyle \vartheta'} \!c_{\scriptscriptstyle 2} \ell \slash b^{\scriptscriptstyle 2}}, 
\chi_{a_{\scriptscriptstyle 1} c_{\scriptscriptstyle 1}})|. 
\end{equation} 
To estimate the entries of the matrix $\mathscr{M}(w)$ when $\Re(w) = - \delta,$ we need to estimate an expression $E$ of the form 
\begin{equation*}
E = |c|^{\scriptscriptstyle 10 \delta}\,
|c_{\scriptscriptstyle 2}|^{\scriptscriptstyle \frac{1}{2}} |b_{\scriptscriptstyle 2}|^{\scriptscriptstyle \frac{3}{2}} 
\Big(|b_{\scriptscriptstyle 3}|\,  |m|^{\scriptscriptstyle \frac{1}{2}} \!\Big)  
\, |b_{\scriptscriptstyle 4}|^{\scriptscriptstyle \frac{3}{2}} |b_{\scriptscriptstyle 5}|^{\scriptscriptstyle \frac{1}{2}}
\end{equation*}
where $m, b_{\scriptscriptstyle 2}, \ldots, b_{\scriptscriptstyle 5} \in \mathbb{F}[x]$ are (monic) divisors of $c$ such that 
$
(c_{\scriptscriptstyle 2}, \, b_{\scriptscriptstyle 2}) = (b_{\scriptscriptstyle 3}, \, m) = (b_{\scriptscriptstyle i}, \, b_{\scriptscriptstyle i + 1}) = 1,
$ 
\!and $b_{\scriptscriptstyle 4}$ is $b_{\scriptscriptstyle 2} m$ modulo squares. \!Let $p$ be a monic irreducible divisor of $c.$ 
If $p \mid c_{\scriptscriptstyle 2}$ then $p \nmid b_{\scriptscriptstyle 2},$ \!and the power of $p$ dividing 
$ 
b_{\scriptscriptstyle 3}^{\scriptscriptstyle 2}  m  \, b_{\scriptscriptstyle 4}^{\scriptscriptstyle 3} b_{\scriptscriptstyle 5}
$
\!cannot exceed 4; it is 4 if and only if $p \mid m,$ and hence $p \mid b_{\scriptscriptstyle 4}.$ If $p \nmid c_{\scriptscriptstyle 2}$ then 
the power of $p$ dividing  
$
b_{\scriptscriptstyle 2}^{\scriptscriptstyle 3} b_{\scriptscriptstyle 3}^{\scriptscriptstyle 2}   m \,   
b_{\scriptscriptstyle 4}^{\scriptscriptstyle 3} b_{\scriptscriptstyle 5}
$
cannot exceed 6; it is 6 if and only if $p \mid b_{\scriptscriptstyle 2}$ and $p \mid b_{\scriptscriptstyle 4}$ (hence $p \nmid m$). Thus 
\begin{equation*}
E \le |c_{\scriptscriptstyle 2}|^{\scriptscriptstyle \frac{5}{2} + 10 \delta} \, |c\slash c_{\scriptscriptstyle 2}|^{\, \scriptscriptstyle 3 + 10 \delta}.
\end{equation*} 
Since the dimension of the matrices $\mathrm{X}_{c}$ and $\mathrm{Y}_{\! c}$ is $4 \cdot 3^{\omega(c)}\!,$ it follows from \eqref{eq: EstimateZ1} that, 
\!for $w \in \mathbb{C}$ with $\Re(w) = - \delta,$ \!we have  
\begin{equation*}
\big|Z^{(c)}\big(\tfrac{1}{2}, \tfrac{1}{2}, \tfrac{1}{2}, w;  \chi_{a_{\scriptscriptstyle 2} c_{\scriptscriptstyle 2}}, 
\chi_{a_{\scriptscriptstyle 1} c_{\scriptscriptstyle 1}}\big)\big|\,  
\ll_{\scriptscriptstyle \delta, \, q}  18^{{\scriptscriptstyle 9} \, \omega(c)} |c_{\scriptscriptstyle 2}|^{\scriptscriptstyle \frac{5}{2} + 11 \delta} 
\, |c\slash c_{\scriptscriptstyle 2}|^{\, \scriptscriptstyle 3 + 11 \delta}.
\end{equation*} 
Thus by applying the Phragmen-Lindel\"of principle, for every $\delta > 0,$ we have the estimate 
\begin{equation} \label{eq: basic-initial-estimate}
\big|\big(1\, - \, q^{3 - 4w} \big)
\big(1 \, - \, q^{2 - 2 w} \big)^{\! \scriptscriptstyle 7} 
Z^{(c)}\big(\tfrac{1}{2}, \tfrac{1}{2}, \tfrac{1}{2}, w; \, \chi_{a_{\scriptscriptstyle 2} c_{\scriptscriptstyle 2}}, 
\chi_{a_{\scriptscriptstyle 1} c_{\scriptscriptstyle 1}}\big)\big|\,  \ll_{\scriptscriptstyle \delta, \, q} 
20^{\, {\scriptscriptstyle 9} \, \omega(c)}
|c_{\scriptscriptstyle 2}|^{\scriptscriptstyle \frac{5}{2}(1 - \Re(w)) + \delta} 
\, |c\slash c_{\scriptscriptstyle 2}|^{\, \scriptscriptstyle 3 (1 - \Re(w)) + \delta}
\end{equation} 
for all $w$ with $0 \le \Re(w) \le 1.$

As noted in the introduction, one of the main ingredients in the proof of Theorem \ref{Main Theorem A} is an 
improvement of \eqref{eq: basic-initial-estimate} in the $c_{\scriptscriptstyle 3}$-aspect. This will be established 
in Proposition \ref{key-proposition}.

\section{Poles of multiple {D}irichlet series and their residues} 
Throughout this section we are assuming that $r = 3.$ \!By \eqref{eq: MDS-vers2}, 
\!the multiple {D}irichlet series 
$
Z^{(c)}(\mathbf{s};  \chi_{a_{\scriptscriptstyle 2} c_{\scriptscriptstyle 2}}, \chi_{a_{\scriptscriptstyle 1} c_{\scriptscriptstyle 1}}) 
$ 
has a (simple) pole at $s_{r + \scriptscriptstyle 1} \! = s_{\scriptscriptstyle 4} = 1$ only if 
$a_{\scriptscriptstyle 2} = c_{\scriptscriptstyle 2} = 1,$ and the part of 
$
Z^{(c_{\scriptscriptstyle 1} c_{\scriptscriptstyle 3})}(\mathbf{s};  1, \chi_{a_{\scriptscriptstyle 1} c_{\scriptscriptstyle 1}}) 
$ 
that contributes to this pole is 
\begin{equation*}
\zeta^{\scriptscriptstyle (c_{\scriptscriptstyle 1} c_{\scriptscriptstyle 3})}(s_{r + \scriptscriptstyle 1}) 
\; \cdot \sum_{\substack{(m_{\scriptscriptstyle 1} \cdots \, m_{r}, \, c_{\scriptscriptstyle 1} c_{\scriptscriptstyle 3}) = 1 
\\ m_{\scriptscriptstyle 1} \cdots \, m_{r} \, = \, n_{\scriptscriptstyle 1}^{2}}}  \, 
\frac{Q_{\underline{m}}(s_{r + \scriptscriptstyle 1}; 1)}
{|m_{\scriptscriptstyle 1}|^{s_{\scriptscriptstyle 1}} \cdots \, |m_{r}|^{s_{r}}} 
\, = \, \zeta^{\scriptscriptstyle (c_{\scriptscriptstyle 1} c_{\scriptscriptstyle 3})}(s_{r + \scriptscriptstyle 1}) \, \cdot
\prod_{p \nmid c_{\scriptscriptstyle 1} c_{\scriptscriptstyle 3}}
\bigg(\, \sum_{|\underline{k}| \, \equiv \, 0 \!\!\!\!\!\!\pmod 2} 
\frac{Q_{\underline{k}}(|p|^{\, - s_{r + \scriptscriptstyle 1}}; \,  |p|)}
{|p|^{k_{\scriptscriptstyle 1} s_{\scriptscriptstyle 1} + \cdots + k_{r}s_{r}}} \bigg).
\end{equation*} 
From the definition of the polynomials $Q_{\underline{k}}(t_{r + \scriptscriptstyle 1}; q)$ 
(see Appendix \ref{Appendix B}) it is straightforward to check that 
\begin{equation} \label{eq: part1-residue-w=1}
\left.\frac{Z^{(c_{\scriptscriptstyle 1} c_{\scriptscriptstyle 3})}
(\mathbf{s};  1, \chi_{a_{\scriptscriptstyle 1} c_{\scriptscriptstyle 1}})}
{\zeta(s_{\scriptscriptstyle 4})} 
\right\vert_{s_{\scriptscriptstyle 4} = 1} 
= \; \frac{\zeta^{\scriptscriptstyle (c_{\scriptscriptstyle 1} c_{\scriptscriptstyle 3})}(2 s_{\scriptscriptstyle 1} 
+ 2 s_{\scriptscriptstyle 2} + 2 s_{\scriptscriptstyle 3} - 1)}{\zeta_{\scriptscriptstyle c_{\scriptscriptstyle 1} 
c_{\scriptscriptstyle 3}}\!(1)} \, \prod_{i = 1}^{3}
\zeta^{\scriptscriptstyle (c_{\scriptscriptstyle 1} c_{\scriptscriptstyle 3})}(2 s_{\scriptscriptstyle i}) \; \cdot
\prod_{1\le i < j \le 3} \zeta^{\scriptscriptstyle (c_{\scriptscriptstyle 1} c_{\scriptscriptstyle 3})}
(s_{\scriptscriptstyle i} + s_{\! \scriptscriptstyle j}).
\end{equation} 
This ``modified'' residue of 
$
Z^{(c_{\scriptscriptstyle 1} c_{\scriptscriptstyle 3})}(\mathbf{s};  1, \chi_{a_{\scriptscriptstyle 1} c_{\scriptscriptstyle 1}}) 
$ 
(to which we shall refer as {\sl residue}) is more convenient to work with in our context. We also have that  
\begin{equation} \label{eq: part2-residue-w=1} 
\left.\frac{Z^{(c_{\scriptscriptstyle 1} c_{\scriptscriptstyle 3})}
(\mathbf{s};  \chi_{\scriptscriptstyle \theta_{\scriptscriptstyle 0}}, \chi_{a_{\scriptscriptstyle 1} c_{\scriptscriptstyle 1}})}
{L(s_{\scriptscriptstyle 4}, \chi_{\scriptscriptstyle \theta_{\scriptscriptstyle 0}})}
\right\vert_{q^{ - s_{\scriptscriptstyle 4}} = - q^{-1}} = \; 
\left.\frac{Z^{(c_{\scriptscriptstyle 1} c_{\scriptscriptstyle 3})}
(\mathbf{s};  1, \chi_{a_{\scriptscriptstyle 1} c_{\scriptscriptstyle 1}})}
{\zeta(s_{\scriptscriptstyle 4})} 
\right\vert_{s_{\scriptscriptstyle 4} = 1}.
\end{equation}

For our purposes, it will suffice to compute the residues at the remaining poles of 
$
Z^{(c)}(\mathbf{s};  \chi_{a_{\scriptscriptstyle 2} c_{\scriptscriptstyle 2}}, \chi_{a_{\scriptscriptstyle 1} c_{\scriptscriptstyle 1}})
$ 
only when $s_{\scriptscriptstyle 1} \! = s_{\scriptscriptstyle 2}$ $= s_{\scriptscriptstyle 3}  = \tfrac{1}{2},$ and 
$a_{\scriptscriptstyle 1} \! = 1.$\!\footnote{As in \cite{DGH}, the only possible poles of the function 
$
Z^{(c)}(\mathbf{s};  \chi_{a_{\scriptscriptstyle 2} c_{\scriptscriptstyle 2}}, \chi_{c_{\scriptscriptstyle 1}})
$ 
(with   
$
s_{\scriptscriptstyle 1} \! = s_{\scriptscriptstyle 2}$ $= s_{\scriptscriptstyle 3}  = {\small 1\slash 2}
$) 
may occur when $q^{- s_{\scriptscriptstyle 4}} = \pm \, 1\slash q$ of order at most seven, and 
$q^{- s_{\scriptscriptstyle 4}} = \pm \, q^{\scriptscriptstyle - 3\slash 4}, \, \pm\,  i \, q^{\scriptscriptstyle - 3\slash 4}$ of order 
at most one.} Fix $\vartheta' \in \{1, \, \theta_{\scriptscriptstyle 0} \},$ and let $\rho(\vartheta')$ be such that 
$\rho(\vartheta') \in \{\pm 1\}$ if $\vartheta' \! = 1$ or $\rho(\vartheta') \in \{\pm i\}$ if 
$\vartheta' \!= \theta_{\scriptscriptstyle 0}.$ We define 
\begin{equation} \label{eq: comb-Gamma} 
\Gamma(a_{\scriptscriptstyle 2}, \vartheta'; \rho(\vartheta')) = 
\left.\sum_{\vartheta \in \{1, \, \theta_{\scriptscriptstyle 0} \}}  
\left\{\gamma_{\scriptscriptstyle q}^{+}(s_{\scriptscriptstyle 4}; \, a_{\scriptscriptstyle 2}) \, + \, 
\mathrm{sgn}(\vartheta)\, \gamma_{\scriptscriptstyle q}^{-}(s_{\scriptscriptstyle 4})\right\} 
\left\{\gamma_{\scriptscriptstyle q}^{+}(s_{\scriptscriptstyle 4}; \vartheta)^{3} 
+ \,  \mathrm{sgn}(a_{\scriptscriptstyle 2} \vartheta')\,
\gamma_{\scriptscriptstyle q}^{-}(s_{\scriptscriptstyle 4})^{3} \right\}
\right \vert_{q^{ - s_{\scriptscriptstyle 4}} = \, \overline{\rho(\vartheta')}\, q^{\scriptscriptstyle - \frac{3}{4}}}.
\end{equation}  
For the reader's convenience, we give the explicit values of $\Gamma(a_{\scriptscriptstyle 2}, \vartheta'; \rho(\vartheta'))$ 
in the following table. 
\begin{table}[h] 
\centering
\begin{tabular}{|ccc|}
\hline 
$a_{\scriptscriptstyle 2}$ & \small{$\rho(\vartheta')$} & \small{$\Gamma(a_{\scriptscriptstyle 2}, \vartheta' \!; \rho(\vartheta'))$} \\
\hline
$1$  & $1$   & \textcolor{MyDarkBlue}{\small{$2 \, (1 + q^{\scriptscriptstyle 1\slash 4} + 10 \, q^{\scriptscriptstyle 1\slash 2} + 7  q^{\scriptscriptstyle 3 \slash 4} 
\, + 20 \, q + 7  q^{\scriptscriptstyle 5\slash 4} + 10 \, q^{\scriptscriptstyle 3\slash 2} + q^{\scriptscriptstyle 7\slash 4} + q^{\scriptscriptstyle 2})$}} \\  
$\theta_{\scriptscriptstyle 0}$ & $- 1$ & \small{$2 \, (1 + q^{\scriptscriptstyle 1\slash 4} + 10 \, q^{\scriptscriptstyle 1\slash 2} + 7  q^{\scriptscriptstyle 3 \slash 4}\, + 20 \, q + 7  q^{\scriptscriptstyle 5\slash 4} + 10 \, q^{\scriptscriptstyle 3\slash 2} + q^{\scriptscriptstyle 7\slash 4} + q^{\scriptscriptstyle 2})$} \\ 
$1$ & $-1$ & \textcolor{MyDarkGreen}{\small{$2 \, (1 - q^{\scriptscriptstyle 1\slash 4} + 10 \, q^{\scriptscriptstyle 1\slash 2} - 7  q^{\scriptscriptstyle 3 \slash 4} 
\, + 20 \, q - 7  q^{\scriptscriptstyle 5\slash 4} + 10 \, q^{\scriptscriptstyle 3\slash 2} - q^{\scriptscriptstyle 7\slash 4} + q^{\scriptscriptstyle 2})$}} \\
$\theta_{\scriptscriptstyle 0}$ & $1$ & \small{$2 \, (1 - q^{\scriptscriptstyle 1\slash 4} + 10 \, q^{\scriptscriptstyle 1\slash 2} - 7  q^{\scriptscriptstyle 3 \slash 4} 
\, + 20 \, q - 7  q^{\scriptscriptstyle 5\slash 4} + 10 \, q^{\scriptscriptstyle 3\slash 2} - q^{\scriptscriptstyle 7\slash 4} + q^{\scriptscriptstyle 2})$} \\ 
$1$ & $i$ & \textcolor{MyDarkRed}{\small{$2 \, (1 - i \, q^{\scriptscriptstyle 1\slash 4}\! - 4 \, q^{\scriptscriptstyle 1\slash 2} 
+ 7 i  \, q^{\scriptscriptstyle 3 \slash 4} + 6 \, q - 7 i \, q^{\scriptscriptstyle 5\slash 4}\! - 4 \, q^{\scriptscriptstyle 3\slash 2} + i \, q^{\scriptscriptstyle 7\slash 4} + q^{\scriptscriptstyle 2})$}} \\
$\theta_{\scriptscriptstyle 0}$ & $-i$ & \small{$2 \, (1 - i \, q^{\scriptscriptstyle 1\slash 4}\! - 4 \, q^{\scriptscriptstyle 1\slash 2} 
+ 7 i  \, q^{\scriptscriptstyle 3 \slash 4} + 6 \, q - 7 i \, q^{\scriptscriptstyle 5\slash 4}\! - 4 \, q^{\scriptscriptstyle 3\slash 2} + i \, q^{\scriptscriptstyle 7\slash 4} + q^{\scriptscriptstyle 2})$} \\ 
$1$ & $- i$ & \textcolor{violet}{\small{$2 \, (1 + i \, q^{\scriptscriptstyle 1\slash 4}\! - 4 \, q^{\scriptscriptstyle 1\slash 2} 
- 7 i  \, q^{\scriptscriptstyle 3 \slash 4} + 6 \, q + 7 i \, q^{\scriptscriptstyle 5\slash 4}\! - 4 \, q^{\scriptscriptstyle 3\slash 2} - i \, q^{\scriptscriptstyle 7\slash 4} + q^{\scriptscriptstyle 2})$}} \\
$\theta_{\scriptscriptstyle 0}$ & $i$ & \small{$2 \, (1 + i \, q^{\scriptscriptstyle 1\slash 4}\! - 4 \, q^{\scriptscriptstyle 1\slash 2} 
- 7 i  \, q^{\scriptscriptstyle 3 \slash 4} + 6 \, q + 7 i \, q^{\scriptscriptstyle 5\slash 4}\! - 4 \, q^{\scriptscriptstyle 3\slash 2} - i \, q^{\scriptscriptstyle 7\slash 4} + q^{\scriptscriptstyle 2})$}\\
\hline
\end{tabular} 
\end{table} 
{\vskip-7pt} 
Note that there are four distinct values of $\Gamma(a_{\scriptscriptstyle 2}, \vartheta'; \rho(\vartheta'))$ in total, indicated with four different colors.  
\vskip10pt
\begin{prop}\label{MDS-residue-3/4} --- Let $c\in \mathbb{F}[x]$ be monic and square-free, and let $a_{\scriptscriptstyle 2}, 
\vartheta' \in \{1, \theta_{\scriptscriptstyle 0} \}.$ Suppose that $c = c_{\scriptscriptstyle 1}c_{\scriptscriptstyle 2}c_{\scriptscriptstyle 3}$ 
with $c_{\scriptscriptstyle i} \in \mathbb{F}[x]$ monic for all $i.$ Then, for $\rho(\vartheta')$ as above, \!we have
\begin{equation*}
\begin{split}
& \left.\Big(\! 1 - \rho(\vartheta') q^{{\scriptscriptstyle \frac{3}{4} - s_{\scriptscriptstyle 4}}} \!\Big)\, 
Z^{(c)}\big(\tfrac{1}{2}, \tfrac{1}{2}, \tfrac{1}{2}, \, s_{\scriptscriptstyle 4}; 
\chi_{a_{\scriptscriptstyle 2} c_{\scriptscriptstyle 2}}, \chi_{c_{\scriptscriptstyle 1}}\! \big)
\right\vert_{q^{ - s_{\scriptscriptstyle 4}} \, = \; \overline{\rho(\vartheta')}\, q^{\scriptscriptstyle - \frac{3}{4}}}\\
& = \frac{\chi_{a_{\scriptscriptstyle 2} c_{\scriptscriptstyle 2}}\!(c_{\scriptscriptstyle 1})}{8}\,
\Gamma(a_{\scriptscriptstyle 2}, \vartheta'; \rho(\vartheta'))\, 
L\big(\tfrac{1}{2}, \chi_{{\scriptscriptstyle \vartheta'}}\! \big)^{\scriptscriptstyle 7}\\
& \cdot \;  \rho(\vartheta')^{\deg \, c_{\scriptscriptstyle 1}}\,  |c_{\scriptscriptstyle 1}|^{\scriptscriptstyle -  1 \slash 4} 
\prod_{p \, \mid \, c_{\scriptscriptstyle 1}} 
\!\big(1 - \chi_{{\scriptscriptstyle \vartheta'}}(p) |p|^{\scriptscriptstyle - 1 \slash 2} \big)^{\! \scriptscriptstyle 8}
\big(1 + \chi_{{\scriptscriptstyle \vartheta'}}(p) |p|^{\scriptscriptstyle - 1 \slash 2} \big)^{\! \scriptscriptstyle 2} 
\big(1 + 6 \, \chi_{{\scriptscriptstyle \vartheta'}}(p) |p|^{\scriptscriptstyle - 1 \slash 2}  +   |p|^{\scriptscriptstyle -1} \big)\\
& \cdot \; |c_{\scriptscriptstyle 2}|^{\scriptscriptstyle -  1 \slash 2} \prod_{p \, \mid \, c_{\scriptscriptstyle 2}}
\!\big(1 - \chi_{{\scriptscriptstyle \vartheta'}}(p) |p|^{\scriptscriptstyle - 1 \slash 2} \big)^{\! \scriptscriptstyle 8}
\big(1 + \chi_{{\scriptscriptstyle \vartheta'}}(p) |p|^{\scriptscriptstyle - 1 \slash 2} \big)\, 
\big(3 + 7 \chi_{{\scriptscriptstyle \vartheta'}}(p) |p|^{\scriptscriptstyle - 1 \slash 2} + 3\,  |p|^{\scriptscriptstyle -1} \big)\\
& \cdot \; \prod_{p \, \mid \, c_{\scriptscriptstyle 3}} 
\!\big(1 - \chi_{{\scriptscriptstyle \vartheta'}}(p) |p|^{\scriptscriptstyle - 1 \slash 2} \big)^{\! \scriptscriptstyle 8}
\big(1 + \chi_{{\scriptscriptstyle \vartheta'}}(p) |p|^{\scriptscriptstyle - 1 \slash 2} \big)\, 
\big(1 + 7 \chi_{{\scriptscriptstyle \vartheta'}}(p) |p|^{\scriptscriptstyle - 1 \slash 2} 
+ 13 \, |p|^{\scriptscriptstyle -1} + 7 \chi_{{\scriptscriptstyle \vartheta'}}(p) |p|^{\scriptscriptstyle - 3 \slash 2}  + |p|^{\scriptscriptstyle - 2}\big).
\end{split}
\end{equation*} 
\end{prop}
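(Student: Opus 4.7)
The plan is to chain two functional equations from Section \ref{Three expressions of Multiple {D}irichlet series} --- \eqref{eq: functional-eq-Z-sigma_r+1} ($\sigma_{r + \scriptscriptstyle 1}$) followed by \eqref{eq: functional-eq-Z-sigma1sigma2sigma3} ($\sigma_{\scriptscriptstyle 1} \sigma_{\scriptscriptstyle 2} \sigma_{\scriptscriptstyle 3}$) --- in order to reduce the residue at $q^{- s_{\scriptscriptstyle 4}} = \overline{\rho(\vartheta')}\, q^{\scriptscriptstyle - 3\slash 4}$ of $Z^{(c)}\big(\tfrac{1}{2}, \tfrac{1}{2}, \tfrac{1}{2}, s_{\scriptscriptstyle 4};\, \chi_{a_{\scriptscriptstyle 2} c_{\scriptscriptstyle 2}}, \chi_{c_{\scriptscriptstyle 1}}\!\big)$ to the already established residue formulas \eqref{eq: part1-residue-w=1}, \eqref{eq: part2-residue-w=1} at the classical poles of the inner multiple Dirichlet series in its last variable.

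Concretely, applying \eqref{eq: functional-eq-Z-sigma_r+1} with $s_{\scriptscriptstyle 1} \! = s_{\scriptscriptstyle 2} \! = s_{\scriptscriptstyle 3} = \tfrac{1}{2}$ and $a_{\scriptscriptstyle 1} = 1$ expresses $Z^{(c)}\big(\tfrac{1}{2}, \tfrac{1}{2}, \tfrac{1}{2}, s_{\scriptscriptstyle 4};\, \chi_{a_{\scriptscriptstyle 2} c_{\scriptscriptstyle 2}}, \chi_{c_{\scriptscriptstyle 1}}\!\big)$ as a double sum, over $\vartheta \in \{1, \theta_{\scriptscriptstyle 0}\}$ and divisors $m \mid c_{\scriptscriptstyle 1} c_{\scriptscriptstyle 3},$ of $Z^{(c)}\big(s_{\scriptscriptstyle 4}, s_{\scriptscriptstyle 4}, s_{\scriptscriptstyle 4}, 1 - s_{\scriptscriptstyle 4};\, \chi_{a_{\scriptscriptstyle 2} c_{\scriptscriptstyle 2}}, \chi_{{\scriptscriptstyle \vartheta} c_{\scriptscriptstyle 1} m / e^{\scriptscriptstyle 2}}\big),$ weighted by the $\gamma$-factor $\gamma_{\scriptscriptstyle q}^{+}(s_{\scriptscriptstyle 4}; a_{\scriptscriptstyle 2}) + \mathrm{sgn}(\vartheta)\, \gamma_{\scriptscriptstyle q}^{-}(s_{\scriptscriptstyle 4}),$ the norm $|c_{\scriptscriptstyle 2}|^{1\slash 2 - s_{\scriptscriptstyle 4}},$ and the auxiliary pieces $U_{\scriptscriptstyle m}(s_{\scriptscriptstyle 4})$ and $\prod_{p \mid c_{\scriptscriptstyle 1} c_{\scriptscriptstyle 3}}(1 - |p|^{2 s_{\scriptscriptstyle 4} - 2})^{\scriptscriptstyle - 1}.$ I then apply \eqref{eq: functional-eq-Z-sigma1sigma2sigma3} with $s = s_{\scriptscriptstyle 4}$ to each resulting inner series; this introduces another sum over $\vartheta' \in \{1, \theta_{\scriptscriptstyle 0}\}$ and divisors $\ell,$ the $\gamma$-factor $\gamma_{\scriptscriptstyle q}^{+}(s_{\scriptscriptstyle 4}; \vartheta)^{\scriptscriptstyle 3} + \mathrm{sgn}(a_{\scriptscriptstyle 2} \vartheta')\, \gamma_{\scriptscriptstyle q}^{-}(s_{\scriptscriptstyle 4})^{\scriptscriptstyle 3},$ the norm $|c_{\scriptscriptstyle 1} m / e^{\scriptscriptstyle 2}|^{3\slash 2 - 3 s_{\scriptscriptstyle 4}},$ and the polynomials $V$ and $W,$ ultimately writing the original $Z^{(c)}$ as a linear combination of series of the form $Z^{(c)}\big(1 - s_{\scriptscriptstyle 4}, 1 - s_{\scriptscriptstyle 4}, 1 - s_{\scriptscriptstyle 4},\, 2 s_{\scriptscriptstyle 4} - \tfrac{1}{2};\, \chi_{{\scriptscriptstyle \vartheta'} c_{\scriptscriptstyle 2} \ell / b^{\scriptscriptstyle 2}}, \chi_{{\scriptscriptstyle \vartheta} c_{\scriptscriptstyle 1} m / e^{\scriptscriptstyle 2}}\big).$

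The key observation is the following coincidence: if $q^{- s_{\scriptscriptstyle 4}} = \overline{\rho(\vartheta')}\, q^{\scriptscriptstyle - 3\slash 4},$ then the fourth argument of the inner series satisfies $q^{-(2 s_{\scriptscriptstyle 4} - 1\slash 2)} = \rho(\vartheta')^{\scriptscriptstyle - 2}\, q^{- 1},$ which equals $q^{- 1}$ when $\vartheta' = 1$ (so $\rho(\vartheta') = \pm 1$) and $- q^{- 1}$ when $\vartheta' = \theta_{\scriptscriptstyle 0}$ (so $\rho(\vartheta') = \pm i$). These are precisely the locations of the classical simple pole of the inner series in its fourth variable covered by \eqref{eq: part1-residue-w=1}, \eqref{eq: part2-residue-w=1}. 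The residue there is a product of seven $\zeta^{(\cdot)}$-factors at the arguments $2 s_{\scriptscriptstyle 1} \! + 2 s_{\scriptscriptstyle 2} \! + 2 s_{\scriptscriptstyle 3} - 1,$ $2 s_{\scriptscriptstyle i},$ and $s_{\scriptscriptstyle i} \! + s_{\! \scriptscriptstyle j};$ at $s_{\scriptscriptstyle 1} \! = s_{\scriptscriptstyle 2} \! = s_{\scriptscriptstyle 3} = 1 - s_{\scriptscriptstyle 4}$ with $q^{- s_{\scriptscriptstyle 4}} = \overline{\rho(\vartheta')}\, q^{\scriptscriptstyle - 3\slash 4},$ every one of these seven arguments equals $\tfrac{1}{2}$ up to the harmless imaginary shift encoded in $\rho(\vartheta'),$ and each factor collapses to $\zeta\big(\tfrac{1}{2}\big) = 1\slash(1 - \sqrt{q})$ when $\vartheta' = 1$ or to $L\big(\tfrac{1}{2}, \chi_{{\scriptscriptstyle \theta_{\scriptscriptstyle 0}}}\big) = 1\slash(1 + \sqrt{q})$ when $\vartheta' = \theta_{\scriptscriptstyle 0},$ producing the asserted $L\big(\tfrac{1}{2}, \chi_{{\scriptscriptstyle \vartheta'}}\big)^{\! \scriptscriptstyle 7}.$

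It remains to collect prefactors. Summing the product of the two $\gamma$-factor combinations over $\vartheta$ at $q^{- s_{\scriptscriptstyle 4}} = \overline{\rho(\vartheta')}\, q^{\scriptscriptstyle - 3\slash 4}$ yields exactly $\Gamma(a_{\scriptscriptstyle 2}, \vartheta'; \rho(\vartheta'))$ of \eqref{eq: comb-Gamma}; the character $\chi_{a_{\scriptscriptstyle 2} c_{\scriptscriptstyle 2}}(c_{\scriptscriptstyle 1})$ together with the factors $|c_{\scriptscriptstyle 2}|^{\scriptscriptstyle - 1\slash 2},$ $|c_{\scriptscriptstyle 1}|^{\scriptscriptstyle - 1\slash 4},$ and $\rho(\vartheta')^{\deg c_{\scriptscriptstyle 1}}$ arise from the evaluation of the norm powers $|c_{\scriptscriptstyle 2}|^{1\slash 2 - s_{\scriptscriptstyle 4}}$ and $|c_{\scriptscriptstyle 1} m / e^{\scriptscriptstyle 2}|^{3\slash 2 - 3 s_{\scriptscriptstyle 4}},$ together with the characters $\chi_{\vartheta}(c_{\scriptscriptstyle 2}),$ $\chi_{a_{\scriptscriptstyle 2} c_{\scriptscriptstyle 2}}(m),$ $\chi_{a_{\scriptscriptstyle 2} \vartheta'}(c_{\scriptscriptstyle 1} m / e^{\scriptscriptstyle 2}),$ $\chi_{{\scriptscriptstyle \vartheta} c_{\scriptscriptstyle 1} m / e^{\scriptscriptstyle 2}}(\ell),$ after an application of quadratic reciprocity; and the remaining contributions, supported at primes $p \mid c_{\scriptscriptstyle 1} c_{\scriptscriptstyle 2} c_{\scriptscriptstyle 3},$ assemble into the asserted Euler products by re-summing over the divisibility patterns of $m$ and $\ell.$ The main obstacle is precisely this local bookkeeping: at each prime $p$ dividing a component $c_{\scriptscriptstyle i},$ one must enumerate the pairs $(m, \ell)$ compatible with the constraints $(c_{\scriptscriptstyle 1}, m) = e,$ $(c_{\scriptscriptstyle 2}, \ell) = b,$ and verify that the resulting polynomial in $\chi_{{\scriptscriptstyle \vartheta'}}(p)\, |p|^{\scriptscriptstyle - 1\slash 2}$ reduces to the stated local factor. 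The high-power piece $(1 - \chi_{{\scriptscriptstyle \vartheta'}}(p)\, |p|^{\scriptscriptstyle - 1\slash 2})^{\scriptscriptstyle 8}$ appearing at every $p \mid c$ is particularly delicate: it reflects the compounding of several $\gamma$-factor zeros with the Euler factors that are stripped out in passing between the complete $L$-function and its incomplete analog $L^{(\cdot)}.$
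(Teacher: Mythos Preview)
Your approach is essentially the paper's: apply \eqref{eq: functional-eq-Z-sigma_r+1} followed by \eqref{eq: functional-eq-Z-sigma1sigma2sigma3}, observe that the transformed fourth variable lands precisely at the classical pole governed by \eqref{eq: part1-residue-w=1}--\eqref{eq: part2-residue-w=1}, and then assemble the $\Gamma$-factor, the $L(\tfrac12,\chi_{\vartheta'})^{7}$, and the local Euler products. One point you leave implicit that the paper makes explicit: after the second functional equation, only the single term $\ell = b = c_{2}$ (so that $c_{2}\ell/b^{2}=1$) actually contributes to the residue, since the inner series has a pole in its last variable only when the first twisting character has trivial polynomial part; once that collapse is noted, the remaining sum is over $m\mid c_{1}c_{3}$ alone and factors cleanly over primes, which is what produces the three displayed local products.
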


\begin{proof} \!We first apply the functional equation \eqref{eq: functional-eq-Z-sigma_r+1} (recall that $r = 3$), and write  
\begin{equation*}
\begin{split}
& Z^{(c)}(\mathbf{s};  \chi_{a_{\scriptscriptstyle 2} c_{\scriptscriptstyle 2}}, 
\chi_{c_{\scriptscriptstyle 1}})\,  
= \,  \tfrac{1}{2}\,  |c_{\scriptscriptstyle 2}|^{\frac{1}{2} - s_{\scriptscriptstyle 4}} 
\frac{\varphi(c_{\scriptscriptstyle 1} c_{\scriptscriptstyle 3})}{|c_{\scriptscriptstyle 1} c_{\scriptscriptstyle 3}|}
\prod_{p \, \mid \, c_{\scriptscriptstyle 1}  c_{\scriptscriptstyle 3}} \big(\! 1 \, - \, 
|p|^{2 s_{\scriptscriptstyle 4} - 2}\big)^{\scriptscriptstyle -1}\\
&\cdot \sum_{\vartheta \in \{1, \, \theta_{\scriptscriptstyle 0} \}}                
\chi_{{\scriptscriptstyle \vartheta}}(c_{\scriptscriptstyle 2})
\!\left\{\gamma_{\scriptscriptstyle q}^{+}(s_{\scriptscriptstyle 4}; \, a_{\scriptscriptstyle 2}) \, + \, 
\mathrm{sgn}(\vartheta)\, \gamma_{\scriptscriptstyle q}^{-}(s_{\scriptscriptstyle 4})\right\} 
\!\!\!\!\sum_{\substack{m \, \mid \, c_{\scriptscriptstyle 1} c_{\scriptscriptstyle 3} \\ (c_{\scriptscriptstyle 1}\!, \, m) = e}}
\chi_{a_{\scriptscriptstyle 2} c_{\scriptscriptstyle 2}}\!(m) \, 
U_{\scriptscriptstyle m}(s_{\scriptscriptstyle 4})\,  
Z^{(c)}(^{{\scriptscriptstyle \sigma_{\scriptscriptstyle 4}}}\mathbf{s}; 
\chi_{a_{\scriptscriptstyle 2} c_{\scriptscriptstyle 2}}, 
\chi_{{\scriptscriptstyle \vartheta} c_{\scriptscriptstyle 1} m \slash e^{\scriptscriptstyle 2}}).
\end{split}
\end{equation*} 
Letting $s_{\scriptscriptstyle 1} \! = s_{\scriptscriptstyle 2} \! = s_{\scriptscriptstyle 3} = s,$ we have by \eqref{eq: functional-eq-Z-sigma1sigma2sigma3} 
that
 \begin{equation*}
\begin{split}
& Z^{(c)}(^{{\scriptscriptstyle \sigma_{\scriptscriptstyle 4}}}\mathbf{s};  \chi_{a_{\scriptscriptstyle 2} c_{\scriptscriptstyle 2}}, 
\chi_{{\scriptscriptstyle \vartheta} c_{\scriptscriptstyle 1} m \slash e^{\scriptscriptstyle 2}})\\ 
& \! = \, \frac{1}{2}\, \left|\frac{c_{\scriptscriptstyle 1} m}{e^{\scriptscriptstyle 2}}\right|^{\, 3(1 - s - s_{\scriptscriptstyle 4})}\, 
\frac{\varphi\left(\! \frac{c e^{\scriptscriptstyle 2}}{c_{\scriptscriptstyle 1} m} \!\right)^{\!\! \scriptscriptstyle 3} 
V_{\!\!\!\! \frac{c e^{\scriptscriptstyle 2}}{c_{\scriptscriptstyle 1} m}}\!(s + s_{\scriptscriptstyle 4} - {\scriptscriptstyle \frac{1}{2}})}
{\left| \frac{c e^{\scriptscriptstyle 2}}{c_{\scriptscriptstyle 1} m} \right|^{\scriptscriptstyle 3}} 
\prod_{p \, \mid \frac{c e^{\scriptscriptstyle 2}}{c_{\scriptscriptstyle 1} m}} 
\big(\! 1 \, - \, |p|^{2 s + 2 s_{\scriptscriptstyle 4} - 3}\big)^{\! \scriptscriptstyle - 3}\\
& \cdot \sum_{\vartheta' \in \{1, \, \theta_{\scriptscriptstyle 0} \}} 
\chi_{a_{\scriptscriptstyle 2} {\scriptscriptstyle \vartheta'}}\left(\!\frac{c_{\scriptscriptstyle 1} m}{e^{\scriptscriptstyle 2}}\!\right)
\left\{\gamma_{\scriptscriptstyle q}^{+}(s + s_{\scriptscriptstyle 4} - {\scriptscriptstyle \frac{1}{2}}; \vartheta)^{\scriptscriptstyle 3} 
+ \,  \mathrm{sgn}(a_{\scriptscriptstyle 2} \vartheta')\,
\gamma_{\scriptscriptstyle q}^{-}(s + s_{\scriptscriptstyle 4} - {\scriptscriptstyle \frac{1}{2}})^{\scriptscriptstyle 3} \right\} \\
&\cdot \sum_{\substack{\ell \, \mid \frac{c_{\scriptscriptstyle 2} c_{\scriptscriptstyle 3}e^{\scriptscriptstyle 2}}{m}
\\ (c_{\scriptscriptstyle 2}, \, \ell) = b}}
\chi_{{\scriptscriptstyle \vartheta} c_{\scriptscriptstyle 1} m \slash e^{\scriptscriptstyle 2}}\!(\ell) \, 
W_{\scriptscriptstyle \ell}(s + s_{\scriptscriptstyle 4} - {\scriptscriptstyle \frac{1}{2}})\,
Z^{(c)}(^{{\scriptscriptstyle \sigma_{\scriptscriptstyle 1}\sigma_{\scriptscriptstyle 2}
\sigma_{\scriptscriptstyle 3}\sigma_{\scriptscriptstyle 4}}} \mathbf{s}; \chi_{{\scriptscriptstyle \vartheta'} 
\!c_{\scriptscriptstyle 2} \ell \slash b^{\scriptscriptstyle 2}}, 
\chi_{{\scriptscriptstyle \vartheta} c_{\scriptscriptstyle 1} m \slash e^{\scriptscriptstyle 2}})
\end{split}
\end{equation*} 
and thus we can write 
\begin{equation*}
\begin{split}
& Z^{(c)}(\mathbf{s};  \chi_{a_{\scriptscriptstyle 2} c_{\scriptscriptstyle 2}}, 
\chi_{c_{\scriptscriptstyle 1}})\,  
= \,  \frac{\chi_{a_{\scriptscriptstyle 2}}\!(c_{\scriptscriptstyle 1})}{4}\,  |c_{\scriptscriptstyle 2}|^{\frac{1}{2} - s_{\scriptscriptstyle 4}} 
\frac{\varphi(c_{\scriptscriptstyle 1} c_{\scriptscriptstyle 3})}{|c_{\scriptscriptstyle 1} c_{\scriptscriptstyle 3}|}
\prod_{p \, \mid \, c_{\scriptscriptstyle 1}  c_{\scriptscriptstyle 3}} \big(\! 1 \, - \, 
|p|^{2 s_{\scriptscriptstyle 4} - 2}\big)^{\scriptscriptstyle -1}\\
&\cdot \sum_{\vartheta, \, \vartheta' \in \{1, \, \theta_{\scriptscriptstyle 0} \}}                
\chi_{{\scriptscriptstyle \vartheta}}(c_{\scriptscriptstyle 2}) \, 
\chi_{{\scriptscriptstyle \vartheta'}}(c_{\scriptscriptstyle 1})
\!\left\{\gamma_{\scriptscriptstyle q}^{+}(s_{\scriptscriptstyle 4}; \, a_{\scriptscriptstyle 2}) \, + \, 
\mathrm{sgn}(\vartheta)\, \gamma_{\scriptscriptstyle q}^{-}(s_{\scriptscriptstyle 4})\right\} 
\left\{\gamma_{\scriptscriptstyle q}^{+}(s + s_{\scriptscriptstyle 4} - {\scriptscriptstyle \frac{1}{2}}; \vartheta)^{\scriptscriptstyle 3} 
+ \,  \mathrm{sgn}(a_{\scriptscriptstyle 2} \vartheta')\,
\gamma_{\scriptscriptstyle q}^{-}(s + s_{\scriptscriptstyle 4} - {\scriptscriptstyle \frac{1}{2}})^{\scriptscriptstyle 3} \right\}\\
&\cdot \sum_{\substack{m \, \mid \, c_{\scriptscriptstyle 1} c_{\scriptscriptstyle 3} \\ (c_{\scriptscriptstyle 1}\!, \, m) = e}} 
\chi_{{\scriptscriptstyle \vartheta'} \!c_{\scriptscriptstyle 2}}\!(m)\, 
U_{\scriptscriptstyle m}(s_{\scriptscriptstyle 4})  
\left|\frac{c_{\scriptscriptstyle 1} m}{e^{\scriptscriptstyle 2}}\right|^{\, 3(1 - s - s_{\scriptscriptstyle 4})}\, 
\frac{\varphi\left(\! \frac{c e^{\scriptscriptstyle 2}}{c_{\scriptscriptstyle 1} m} \!\right)^{\!\! \scriptscriptstyle 3} 
V_{\!\!\!\! \frac{c e^{\scriptscriptstyle 2}}{c_{\scriptscriptstyle 1} m}}\!(s + s_{\scriptscriptstyle 4} - {\scriptscriptstyle \frac{1}{2}})}
{\left| \frac{c e^{\scriptscriptstyle 2}}{c_{\scriptscriptstyle 1} m} \right|^{\scriptscriptstyle 3}} 
\prod_{p \, \mid \frac{c e^{\scriptscriptstyle 2}}{c_{\scriptscriptstyle 1} m}} 
\big(\! 1 \, - \, |p|^{2 s + 2 s_{\scriptscriptstyle 4} - 3}\big)^{\! \scriptscriptstyle - 3} \\
&\cdot \sum_{\substack{\ell \, \mid \frac{c_{\scriptscriptstyle 2} c_{\scriptscriptstyle 3}e^{\scriptscriptstyle 2}}{m}
\\ (c_{\scriptscriptstyle 2}, \, \ell) = b}}
\chi_{{\scriptscriptstyle \vartheta} c_{\scriptscriptstyle 1} m \slash e^{\scriptscriptstyle 2}}\!(\ell) \, 
W_{\scriptscriptstyle \ell}(s + s_{\scriptscriptstyle 4} - {\scriptscriptstyle \frac{1}{2}})\,
Z^{(c)}(^{{\scriptscriptstyle \sigma_{\scriptscriptstyle 1}\sigma_{\scriptscriptstyle 2}
\sigma_{\scriptscriptstyle 3}\sigma_{\scriptscriptstyle 4}}} \mathbf{s}; \chi_{{\scriptscriptstyle \vartheta'} 
\!c_{\scriptscriptstyle 2} \ell \slash b^{\scriptscriptstyle 2}}, 
\chi_{{\scriptscriptstyle \vartheta} c_{\scriptscriptstyle 1} m \slash e^{\scriptscriptstyle 2}}).
\end{split}
\end{equation*} 
The multiple {D}irichlet series 
$
Z^{(c)}(^{{\scriptscriptstyle \sigma_{\scriptscriptstyle 1}\sigma_{\scriptscriptstyle 2}
\sigma_{\scriptscriptstyle 3}\sigma_{\scriptscriptstyle 4}}} \mathbf{s}; \chi_{{\scriptscriptstyle \vartheta'} 
\!c_{\scriptscriptstyle 2} \ell \slash b^{\scriptscriptstyle 2}}, 
\chi_{{\scriptscriptstyle \vartheta} c_{\scriptscriptstyle 1} m \slash e^{\scriptscriptstyle 2}})
$
that contribute to the {\sl residue} we are interested in occur whenever 
$
c_{\scriptscriptstyle 2} \ell = b^{\scriptscriptstyle 2},
$ 
i.e., $\ell = b = c_{\scriptscriptstyle 2}.$ Thus, for $s \in \mathbb{C}$ and $\vartheta' \in \{1, \, \theta_{\scriptscriptstyle 0}\},$ 
we have
\begin{equation*}
\begin{split} 
& \left.\big(1 - \rho(\vartheta')\, q^{(3 - 3 s - 2 s_{\scriptscriptstyle 4})\slash 2} \big)\, 
Z^{(c)}(\mathbf{s};  \chi_{a_{\scriptscriptstyle 2} c_{\scriptscriptstyle 2}}, \chi_{c_{\scriptscriptstyle 1}}\!)
\right\vert_{q^{ - s_{\scriptscriptstyle 4}}  \, =\;  \overline{\rho(\vartheta')}\, q^{3(s - 1)\slash 2}}\\
& = \,  \frac{\chi_{{\scriptscriptstyle \vartheta'} \! a_{\scriptscriptstyle 2} c_{\scriptscriptstyle 2}}\!(c_{\scriptscriptstyle 1})}{8}\,  
|c|^{\, 3(1 - s - s_{\scriptscriptstyle 4})}\, 
|c_{\scriptscriptstyle 2}|^{\frac{1}{2} - s_{\scriptscriptstyle 4}}\,
W_{\! c_{\scriptscriptstyle 2}}\!(s + s_{\scriptscriptstyle 4} - {\scriptscriptstyle \frac{1}{2}})
\frac{\varphi(c_{\scriptscriptstyle 1} c_{\scriptscriptstyle 3})}{|c_{\scriptscriptstyle 1} c_{\scriptscriptstyle 3}|}
\prod_{p \, \mid \, c_{\scriptscriptstyle 1}  c_{\scriptscriptstyle 3}} \big(\! 1 \, - \, 
|p|^{2 s_{\scriptscriptstyle 4} - 2}\big)^{\scriptscriptstyle -1}
\cdot \frac{\zeta^{\scriptscriptstyle (c)}(8 - 6 s - 6 s_{\scriptscriptstyle 4})\, 
\zeta^{\scriptscriptstyle (c)}(3 - 2 s - 2 s_{\scriptscriptstyle 4})^{\scriptscriptstyle 6}}{\zeta_{c}(1)}\\
&\cdot \sum_{\vartheta \in \{1, \, \theta_{\scriptscriptstyle 0} \}}  
\left\{\gamma_{\scriptscriptstyle q}^{+}(s_{\scriptscriptstyle 4}; \, a_{\scriptscriptstyle 2}) \, + \, 
\mathrm{sgn}(\vartheta)\, \gamma_{\scriptscriptstyle q}^{-}(s_{\scriptscriptstyle 4})\right\} 
\left\{\gamma_{\scriptscriptstyle q}^{+}(s + s_{\scriptscriptstyle 4} - {\scriptscriptstyle \frac{1}{2}}; \vartheta)^{\scriptscriptstyle 3} 
+ \,  \mathrm{sgn}(a_{\scriptscriptstyle 2} \vartheta')\,
\gamma_{\scriptscriptstyle q}^{-}(s + s_{\scriptscriptstyle 4} - {\scriptscriptstyle \frac{1}{2}})^{\scriptscriptstyle 3} \right\}\\
&\cdot \sum_{\substack{m \, \mid \, c_{\scriptscriptstyle 1} c_{\scriptscriptstyle 3} \\ (c_{\scriptscriptstyle 1}\!, \, m) = e}} 
\chi_{{\scriptscriptstyle \vartheta'}}\!(m)\, 
U_{\scriptscriptstyle m}(s_{\scriptscriptstyle 4}) 
\left|\frac{c e^{\scriptscriptstyle 2}}{c_{\scriptscriptstyle 1} m}\right|^{\, 3(s + s_{\scriptscriptstyle 4} - 2)}\! 
\varphi\left(\! \frac{c e^{\scriptscriptstyle 2}}{c_{\scriptscriptstyle 1} m} \!\right)^{\!\! \scriptscriptstyle 3} 
V_{\!\!\!\! \frac{c e^{\scriptscriptstyle 2}}{c_{\scriptscriptstyle 1} m}}\!(s + s_{\scriptscriptstyle 4} - {\scriptscriptstyle \frac{1}{2}}) 
\prod_{p \, \mid \frac{c e^{\scriptscriptstyle 2}}{c_{\scriptscriptstyle 1} m}} 
\big(\! 1 \, - \, |p|^{2 s + 2 s_{\scriptscriptstyle 4} - 3}\big)^{\! \scriptscriptstyle - 3}
\end{split}
\end{equation*} 
where $\rho(\vartheta') \in \{\pm 1\}$ if $\vartheta' = 1$ or $\rho(\vartheta') \in \{\pm i\}$ if $\vartheta' = \theta_{\scriptscriptstyle 0}.$ 
Here $s_{\scriptscriptstyle 4}$ is such that $q^{ - s_{\scriptscriptstyle 4}} = \overline{\rho(\vartheta')}\, q^{3(s - 1)\slash 2}.$ 
Letting $S$ temporarily denote the inner sum over $m,$ we can write  
\begin{equation*}
\begin{split}
S \, & = \sum_{e\, \mid \, c_{\scriptscriptstyle 1}} \, \sum_{e' \, \mid \, c_{\scriptscriptstyle 3}}  
\chi_{{\scriptscriptstyle \vartheta'}}\!(e e')\, 
U_{\! e e'}(s_{\scriptscriptstyle 4}) 
\left|\frac{c_{\scriptscriptstyle 3}}{e'} e c_{\scriptscriptstyle 2} \right|^{\, 3(s + s_{\scriptscriptstyle 4} - 2)}\! 
\varphi\left(\! \frac{c_{\scriptscriptstyle 3}}{e'} e c_{\scriptscriptstyle 2}\!\right)^{\!\! \scriptscriptstyle 3} 
V_{\!{\scriptscriptstyle \frac{c_{\scriptscriptstyle 3}}{e'}} e c_{\scriptscriptstyle 2}}\!(s + s_{\scriptscriptstyle 4} - {\scriptscriptstyle \frac{1}{2}}) 
\!\!\prod_{p \, \mid \frac{c_{\scriptscriptstyle 3}}{e'} e c_{\scriptscriptstyle 2}} 
\big(\! 1 \, - \, |p|^{2 s + 2 s_{\scriptscriptstyle 4} - 3}\big)^{\! \scriptscriptstyle - 3}\\
& =\,  |c_{\scriptscriptstyle 2}|^{\, 3(s + s_{\scriptscriptstyle 4} - 2)}
\varphi(c_{\scriptscriptstyle 2})^{\scriptscriptstyle 3} \, 
V_{\! c_{\scriptscriptstyle 2}}\!(s + s_{\scriptscriptstyle 4} - {\scriptscriptstyle \frac{1}{2}}) 
\!\prod_{p \, \mid \, c_{\scriptscriptstyle 2}} 
\big(\! 1 \, - \, |p|^{2 s + 2 s_{\scriptscriptstyle 4} - 3}\big)^{\! \scriptscriptstyle - 3}\\
& \cdot \prod_{p\, \mid \, c_{\scriptscriptstyle 1}} \!\left\{1 +    
\chi_{{\scriptscriptstyle \vartheta'}}\!(p)\, |p|^{\, 3(s + s_{\scriptscriptstyle 4} - 2)}\, \varphi(p)^{\scriptscriptstyle 3}\, 
U_{\! p}(s_{\scriptscriptstyle 4})\,V_{\! p}(s + s_{\scriptscriptstyle 4} - {\scriptscriptstyle \frac{1}{2}}) \, 
\big(\! 1 \, - \, |p|^{2 s + 2 s_{\scriptscriptstyle 4} - 3}\big)^{\! \scriptscriptstyle - 3}\right\}\\
& \cdot \prod_{p \, \mid \, c_{\scriptscriptstyle 3}}\!\!\left\{\chi_{{\scriptscriptstyle \vartheta'}}\!(p)\, 
U_{\! p}(s_{\scriptscriptstyle 4}) \, + \,  |p|^{\, 3(s + s_{\scriptscriptstyle 4} - 2)} \, \varphi(p)^{\scriptscriptstyle 3} \,
V_{\! p}(s + s_{\scriptscriptstyle 4} - {\scriptscriptstyle \frac{1}{2}})\,  
\big(\! 1 \, - \, |p|^{2 s + 2 s_{\scriptscriptstyle 4} - 3}\big)^{\! \scriptscriptstyle - 3} \right\}
\end{split}
\end{equation*}
the products being over monic irreducibles. \!Taking $s = \frac{1}{2}$ and 
$
q^{ - s_{\scriptscriptstyle 4}} \!= \overline{\rho(\vartheta')}\, q^{\scriptscriptstyle - \frac{3}{4}}\!,
$ 
it follows easily from the definitions of $U_{ \scriptscriptstyle \ell}(s), V_{\scriptscriptstyle \ell}(s)$ 
and $W_{\scriptscriptstyle \ell}(s)$ that 
\begin{equation*}
\begin{split}
& \left.\Big(\! 1 - \rho(\vartheta') q^{{\scriptscriptstyle \frac{3}{4} - s_{\scriptscriptstyle 4}}} \!\Big)\, 
Z^{(c)}\big(\tfrac{1}{2}, \tfrac{1}{2}, \tfrac{1}{2}, \, s_{\scriptscriptstyle 4}; 
\chi_{a_{\scriptscriptstyle 2} c_{\scriptscriptstyle 2}}, \chi_{c_{\scriptscriptstyle 1}}\! \big)
\right\vert_{q^{ - s_{\scriptscriptstyle 4}} \, = \; \overline{\rho(\vartheta')}\, q^{\scriptscriptstyle - \frac{3}{4}}}\\
& = \frac{\chi_{a_{\scriptscriptstyle 2} c_{\scriptscriptstyle 2}}\!(c_{\scriptscriptstyle 1})}{8}\, 
\Gamma(a_{\scriptscriptstyle 2}, \vartheta'; \rho(\vartheta'))\, 
L\big(\tfrac{1}{2}, \chi_{{\scriptscriptstyle \vartheta'}}\! \big)^{\scriptscriptstyle 7}\\
& \cdot \;  \rho(\vartheta')^{\deg \, c_{\scriptscriptstyle 1}}\,  |c_{\scriptscriptstyle 1}|^{\scriptscriptstyle -  1 \slash 4} 
\prod_{p \, \mid \, c_{\scriptscriptstyle 1}} 
\!\big(1 - \chi_{{\scriptscriptstyle \vartheta'}}(p) |p|^{\scriptscriptstyle - 1 \slash 2} \big)^{\! \scriptscriptstyle 8}
\big(1 + \chi_{{\scriptscriptstyle \vartheta'}}(p) |p|^{\scriptscriptstyle - 1 \slash 2} \big)^{\! \scriptscriptstyle 2} 
\big(1 + 6 \, \chi_{{\scriptscriptstyle \vartheta'}}(p) |p|^{\scriptscriptstyle - 1 \slash 2}  +   |p|^{\scriptscriptstyle -1} \big)\\
& \cdot \; |c_{\scriptscriptstyle 2}|^{\scriptscriptstyle -  1 \slash 2} \prod_{p \, \mid \, c_{\scriptscriptstyle 2}}
\!\big(1 - \chi_{{\scriptscriptstyle \vartheta'}}(p) |p|^{\scriptscriptstyle - 1 \slash 2} \big)^{\! \scriptscriptstyle 8}
\big(1 + \chi_{{\scriptscriptstyle \vartheta'}}(p) |p|^{\scriptscriptstyle - 1 \slash 2} \big)\, 
\big(3 + 7 \chi_{{\scriptscriptstyle \vartheta'}}(p) |p|^{\scriptscriptstyle - 1 \slash 2} + 3 \, |p|^{\scriptscriptstyle -1} \big)\\
& \cdot \; \prod_{p \, \mid \, c_{\scriptscriptstyle 3}} 
\!\big(1 - \chi_{{\scriptscriptstyle \vartheta'}}(p) |p|^{\scriptscriptstyle - 1 \slash 2} \big)^{\! \scriptscriptstyle 8}
\big(1 + \chi_{{\scriptscriptstyle \vartheta'}}(p) |p|^{\scriptscriptstyle - 1 \slash 2} \big)\, 
\big(1 + 7 \chi_{{\scriptscriptstyle \vartheta'}}(p) |p|^{\scriptscriptstyle - 1 \slash 2} 
+ 13 \, |p|^{\scriptscriptstyle -1} + 7 \chi_{{\scriptscriptstyle \vartheta'}}(p) |p|^{\scriptscriptstyle - 3 \slash 2}  + |p|^{\scriptscriptstyle - 2}\big)
\end{split}
\end{equation*} 
as asserted. 
\end{proof}

In particular, if $c = a_{\scriptscriptstyle 2} = 1,$ we have 
\begin{equation*}
\left.\Big(\! 1 - \rho(\vartheta') q^{{\scriptscriptstyle \frac{3}{4} - s_{\scriptscriptstyle 4}}} \!\Big)\, 
Z\big(\tfrac{1}{2}, \tfrac{1}{2}, \tfrac{1}{2}, \, s_{\scriptscriptstyle 4}; 1, 1 \!\big)
\right\vert_{q^{ - s_{\scriptscriptstyle 4}} \, = \; \overline{\rho(\vartheta')}\, q^{\scriptscriptstyle - \frac{3}{4}}}
= \, \frac{1}{8} \, \Gamma(1, \vartheta'; \rho(\vartheta'))\, 
L\big(\tfrac{1}{2}, \chi_{{\scriptscriptstyle \vartheta'}}\! \big)^{\scriptscriptstyle 7}
\end{equation*} 
equality which can also be verified directly from the explicit expression of $Z(\mathbf{s}; 1, 1)$ given in the second appendix.

\section{Sieving} \label{different-form}
For $h\in \mathbb{F}[x]$ square-free monic and 
$
a_{\scriptscriptstyle 2} \in \{1, \theta_{\scriptscriptstyle 0} \},
$ 
put 
\begin{equation*}
Z(\mathbf{s}, \chi_{a_{\scriptscriptstyle 2}}\!; h)\;\;  = 
\sum_{\substack{m_{\scriptscriptstyle 1}\!, \ldots, \, m_{r}, \, d - \mathrm{monic} \\  
d = d_{\scriptscriptstyle 0}^{} d_{\scriptscriptstyle 1}^{2}, \; d_{\scriptscriptstyle 0}^{} \; \mathrm{sq. \; free} \\ d_{\scriptscriptstyle 1}^{} \equiv \, 0 \!\!\!\pmod h}}  \, 
\frac{\chi_{d_{\scriptscriptstyle 0}}\!(\widehat{m}_{\scriptscriptstyle 1})\, \cdots \, 
\chi_{d_{\scriptscriptstyle 0}}\! (\widehat{m}_{r})\,\chi_{a_{\scriptscriptstyle 2}}\!(d_{\scriptscriptstyle 0})}
{|m_{\scriptscriptstyle 1}|^{s_{\scriptscriptstyle 1}}
\cdots \, |m_{r}|^{s_{r}} |d|^{s_{r + \scriptscriptstyle 1}}} \cdot A(m_{\scriptscriptstyle 1}, \ldots, m_{r}, d).
\end{equation*} 
The series in the right-hand side is absolutely convergent if $s_{\scriptscriptstyle 1}, \ldots, s_{r + \scriptscriptstyle 1}$ 
are complex numbers with sufficiently large real parts. \!Let $\mu(h)$ denote the M\"obius function defined for non-zero polynomials over $\mathbb{F}$ by $\mu(h) = (- 1)^{\scriptscriptstyle \omega(h)}$ if $h$ is square-free, and $h$ is a constant times a product of $\omega(h)$ distinct monic irreducibles, and $\mu(h) = 0$ if $h$ is {\it not} square-free; it is understood that $\mu(h) = 1$ if $h \in \mathbb{F}^{\times}.$ We have the usual property of M\"obius functions:
\begin{equation*}
\sum_{\substack{h \, \mid \, d \\ h - \mathrm{monic}}} \mu(h) = 
\begin{cases}
1 & \mbox{if}  \;\,  \deg \, d = 0 \\ 
0 & \mbox{if}  \;\,  \deg \, d \ge 1.
\end{cases}
\end{equation*} 

We have the following simple lemma:
\vskip10pt
\begin{lem}\label{sieving} --- For 
$
a_{\scriptscriptstyle 2} \in \{1, \theta_{\scriptscriptstyle 0} \}
$ 
and 
$
\mathbf{s} = (s_{\scriptscriptstyle 1}, \ldots, s_{r + \scriptscriptstyle 1}) \in \mathbb{C}^{r + \scriptscriptstyle 1}
$ 
with $\Re(s_{\scriptscriptstyle i})$ sufficiently large, define 
\begin{equation*}
Z_{\scriptscriptstyle 0}(\mathbf{s}, \chi_{a_{\scriptscriptstyle 2}}) \;\; = 
\sum_{d_{\scriptscriptstyle 0} - \mathrm{monic \; \& \; sq. \; free}} \, 
L(s_{\scriptscriptstyle 1}, \chi_{\scriptscriptstyle d_{\scriptscriptstyle 0}}) \, \cdots \,  
L(s_{r}, \chi_{\scriptscriptstyle d_{\scriptscriptstyle 0}})\, 
\chi_{a_{\scriptscriptstyle 2}}\!(d_{\scriptscriptstyle 0})\,  
|d_{\scriptscriptstyle 0}|^{ - s_{r + \scriptscriptstyle 1}}.
\end{equation*} 
Then we have the equality 
\begin{equation} \label{eq: sum-sq-free-vs-MDS}
Z_{\scriptscriptstyle 0}(\mathbf{s}, \chi_{a_{\scriptscriptstyle 2}})\; = 
\sum_{h - \mathrm{monic}}\,  \mu(h) Z(\mathbf{s}, \chi_{a_{\scriptscriptstyle 2}}\!; h).
\end{equation} 
\end{lem}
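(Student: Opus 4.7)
The plan is to apply M\"obius inversion directly on the right-hand side of \eqref{eq: sum-sq-free-vs-MDS}. First I would substitute the definition of $Z(\mathbf{s}, \chi_{a_{\scriptscriptstyle 2}}\!; h)$ and interchange the two summations (justified in the region of absolute convergence), so as to bring the $h$-sum to the inside. The congruence $d_{\scriptscriptstyle 1} \equiv 0 \!\pmod{h}$ then couples $h$ and $d_{\scriptscriptstyle 1},$ and the inner sum becomes $\sum_{h \mid d_{\scriptscriptstyle 1}} \mu(h),$ which equals $1$ when $d_{\scriptscriptstyle 1} = 1$ and $0$ otherwise. This collapses the double sum to one over $d = d_{\scriptscriptstyle 0}$ square-free.

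Next, matching with the definition of $Z_{\scriptscriptstyle 0}(\mathbf{s}, \chi_{a_{\scriptscriptstyle 2}})$ reduces the assertion to the identity
$$
\sum_{m_{\scriptscriptstyle 1}, \ldots, m_{r} - \mathrm{monic}} \,
\frac{\chi_{d_{\scriptscriptstyle 0}}\!(\widehat{m}_{\scriptscriptstyle 1}) \cdots \chi_{d_{\scriptscriptstyle 0}}\!(\widehat{m}_{r})}
{|m_{\scriptscriptstyle 1}|^{s_{\scriptscriptstyle 1}} \cdots |m_{r}|^{s_{r}}} \,
A(m_{\scriptscriptstyle 1}, \ldots, m_{r}, d_{\scriptscriptstyle 0})
\; = \; \prod_{i=1}^{r} L(s_{\scriptscriptstyle i}, \chi_{d_{\scriptscriptstyle 0}}),
$$
to be established for every monic square-free $d_{\scriptscriptstyle 0}.$ I would verify it prime-by-prime, using the multiplicativity of $A.$ At a monic irreducible $p \nmid d_{\scriptscriptstyle 0}$ (so $l = 0$ and $\widehat{p^{\, k_{\scriptscriptstyle i}}} = p^{k_{\scriptscriptstyle i}}$), the local factor equals $\sum_{k_{\scriptscriptstyle 1}, \ldots, k_{r} \ge 0} \chi_{d_{\scriptscriptstyle 0}}\!(p)^{k_{\scriptscriptstyle 1} + \cdots + k_{r}} \, |p|^{- \sum k_{\scriptscriptstyle i} s_{\scriptscriptstyle i}} \, a(k_{\scriptscriptstyle 1}, \ldots, k_{r}, 0; |p|);$ by the initial condition $a(k_{\scriptscriptstyle 1}, \ldots, k_{r}, 0; q) = 1$ recorded in Appendix \ref{Appendix B}, this collapses to $\prod_{i=1}^{r}\big(1 - \chi_{d_{\scriptscriptstyle 0}}\!(p) |p|^{-s_{\scriptscriptstyle i}}\big)^{-1},$ matching the local Euler factor of $\prod_{i} L(s_{\scriptscriptstyle i}, \chi_{d_{\scriptscriptstyle 0}}).$

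At a monic irreducible $p \mid d_{\scriptscriptstyle 0}$ (so $l = 1$), every $\widehat{p^{\, k_{\scriptscriptstyle i}}}$ equals $1,$ so the local factor simplifies to $\sum_{k_{\scriptscriptstyle 1}, \ldots, k_{r} \ge 0} a(k_{\scriptscriptstyle 1}, \ldots, k_{r}, 1; |p|) \, |p|^{- \sum k_{\scriptscriptstyle i} s_{\scriptscriptstyle i}},$ i.e.~to $P_{\scriptscriptstyle 1}(|p|^{-s_{\scriptscriptstyle 1}}, \ldots, |p|^{-s_{r}}; |p|).$ Since the Euler factor of $\prod_{i} L(s_{\scriptscriptstyle i}, \chi_{d_{\scriptscriptstyle 0}})$ at a prime dividing $d_{\scriptscriptstyle 0}$ is trivial, the required matching reduces to the vanishing statement $P_{\scriptscriptstyle 1} \equiv 1,$ equivalently $a(k_{\scriptscriptstyle 1}, \ldots, k_{r}, 1; q) = 0$ for $(k_{\scriptscriptstyle 1}, \ldots, k_{r}) \neq (0, \ldots, 0)$ together with the normalization $a(0, \ldots, 0, 1; q) = 1.$ This is the one substantive point: I expect to read it off directly from the explicit form of the $D_{\scriptscriptstyle 4}$ Chinta-Gunnells rational function given in Appendix \ref{Appendix B} (it amounts to the statement that the coefficient of $z_{r + \scriptscriptstyle 1}$ in $f$ is the constant $1$). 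Once this last fact is verified, assembling the local factors and summing over $d_{\scriptscriptstyle 0}$ with weight $\chi_{a_{\scriptscriptstyle 2}}\!(d_{\scriptscriptstyle 0}) |d_{\scriptscriptstyle 0}|^{-s_{r + \scriptscriptstyle 1}}$ recovers $Z_{\scriptscriptstyle 0}(\mathbf{s}, \chi_{a_{\scriptscriptstyle 2}})$ and completes the argument.
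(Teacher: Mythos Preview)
Your proposal is correct and follows essentially the same route as the paper: M\"obius inversion collapses the $h$-sum to a sum over square-free $d_{\scriptscriptstyle 0}$, and the identification with $Z_{\scriptscriptstyle 0}$ then rests on the two local facts $a(k_{\scriptscriptstyle 1},\ldots,k_{r},0;q)=1$ and $a(k_{\scriptscriptstyle 1},\ldots,k_{r},1;q)=0$ for $(k_{\scriptscriptstyle 1},\ldots,k_{r})\neq(0,\ldots,0)$, together with $a(0,\ldots,0,1;q)=1$. The paper records the $l=1$ vanishing explicitly in Appendix~\ref{Appendix B} (just after \eqref{initial-conditions-f}), so your ``one substantive point'' is indeed available, and the organization (Euler-factor-by-Euler-factor versus the paper's phrasing via the coprimality $(m_{\scriptscriptstyle 1}\cdots m_{r},d_{\scriptscriptstyle 0})=1$) is only cosmetic.
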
 

\begin{proof} \!The right-hand side of the equality is 
\begin{equation*} 
\sum_{\substack{m_{\scriptscriptstyle 1}\!, \ldots, \, m_{r}, \, d_{\scriptscriptstyle 0}^{} - \mathrm{monic} \\  
d_{\scriptscriptstyle 0}^{} \; \mathrm{sq. \; free}}}  \, 
\frac{\chi_{d_{\scriptscriptstyle 0}}\!(\widehat{m}_{\scriptscriptstyle 1})\, \cdots \, 
\chi_{d_{\scriptscriptstyle 0}}\! (\widehat{m}_{r})\,\chi_{a_{\scriptscriptstyle 2}}\!(d_{\scriptscriptstyle 0})}
{|m_{\scriptscriptstyle 1}|^{s_{\scriptscriptstyle 1}}
\cdots \, |m_{r}|^{s_{r}} |d_{\scriptscriptstyle 0}|^{s_{r + \scriptscriptstyle 1}}} \cdot 
A(m_{\scriptscriptstyle 1}, \ldots, m_{r}, d_{\scriptscriptstyle 0})
\end{equation*} 
where, as before, $\widehat{m}_{\scriptscriptstyle i}$ is the part of $m_{\scriptscriptstyle i}$ coprime to 
$d_{\scriptscriptstyle 0}.$ Recall that the coefficients 
$
A(m_{\scriptscriptstyle 1}, \ldots, m_{r}, d_{\scriptscriptstyle 0})
$ 
are multiplicative and that, for every monic irreducible $p,$ 
$A\big(p^{\, k_{\scriptscriptstyle 1}}\!, \ldots, p^{\, k_{r}}\!\!, p \big) = 0,$
unless $k_{\scriptscriptstyle 1} = \cdots = k_{r} = 0$ in which case 
$A(1, \ldots, 1, p) = 1.$ It follows that the above sum equals   
\begin{equation*} 
\sum_{\substack{m_{\scriptscriptstyle 1}\!, \ldots, \, m_{r}, \, d_{\scriptscriptstyle 0}^{} - \mathrm{monic} \\  
d_{\scriptscriptstyle 0}^{} \; \mathrm{sq. \; free}\\ (m_{\scriptscriptstyle 1} \cdots \, m_{r},  \, d_{\scriptscriptstyle 0}) = 1}}  \, 
\frac{\chi_{d_{\scriptscriptstyle 0}}\!(m_{\scriptscriptstyle 1})\, \cdots \, 
\chi_{d_{\scriptscriptstyle 0}}\! (m_{r})\,\chi_{a_{\scriptscriptstyle 2}}\!(d_{\scriptscriptstyle 0})}
{|m_{\scriptscriptstyle 1}|^{s_{\scriptscriptstyle 1}}
\cdots \, |m_{r}|^{s_{r}} |d_{\scriptscriptstyle 0}|^{s_{r + \scriptscriptstyle 1}}} \cdot 
A(m_{\scriptscriptstyle 1}, \ldots, m_{r}, 1)
\end{equation*} 
and our assertion follows from the fact that
$  
A(m_{\scriptscriptstyle 1}, \ldots, m_{r}, 1) = 1.
$
\end{proof}

We can express the function $Z(\mathbf{s}, \chi_{a_{\scriptscriptstyle 2}}\!; h)$ in terms of the multiple {D}irichlet series 
$
Z^{(c)}(\mathbf{s};  \chi_{a_{\scriptscriptstyle 2} c_{\scriptscriptstyle 2}}, \chi_{a_{\scriptscriptstyle 1} c_{\scriptscriptstyle 1}}), 
$ 
discussed in the previous sections, as follows. \!Let $c$ be a monic divisor of $h,$ and write 
$h = c c'.$ Decompose  
\begin{equation*} 
c = p_{\scriptscriptstyle 1}^{} \cdots \, p_{\scriptscriptstyle l}^{} 
\;\;\, \text{and}\;\;\, 
c' \! = p_{\scriptscriptstyle l + 1}' \cdots \, p_{n}'
\end{equation*} 
into monic irreducibles. \!Consider 
\begin{equation} \label{eq: term}
\frac{\chi_{c d_{\scriptscriptstyle 0}}\!(\widehat{m}_{\scriptscriptstyle 1})\, \cdots \, 
\chi_{c d_{\scriptscriptstyle 0}}\!(\widehat{m}_{r})
\,\chi_{a_{\scriptscriptstyle 2}}\!(c d_{\scriptscriptstyle 0})}{|m_{\scriptscriptstyle 1}|^{s_{\scriptscriptstyle 1}}
\cdots \, |m_{r}|^{s_{r}} |c d_{\scriptscriptstyle 0}^{} d_{\scriptscriptstyle 1}^{\scriptscriptstyle 2}|^{s_{r + \scriptscriptstyle 1}}} \cdot A(m_{\scriptscriptstyle 1}, \ldots, m_{r}, c d_{\scriptscriptstyle 0}^{} d_{\scriptscriptstyle 1}^{\scriptscriptstyle 2})
\qquad \text{(with $d_{\scriptscriptstyle 0}^{}$ square-free coprime to $h$)}
\end{equation} 
representing a term of $Z(\mathbf{s}, \chi_{a_{\scriptscriptstyle 2}}; h).$ From this expression, 
we can factor out a piece corresponding to $h$ (i.e., to $p_{\scriptscriptstyle 1}^{}, \ldots,\\ p_{n}').$ Let 
$
p_{\scriptscriptstyle i}^{\alpha_{\scriptscriptstyle i k}^{}}\!, (p_{\scriptscriptstyle j}')^{\alpha_{\! \scriptscriptstyle j k}'}
\! \parallel m_{\scriptscriptstyle k}^{}
$ 
(for $1 \le i \le l,$ $l + 1 \le j \le n$ and $1 \le k \le r$), and 
$
p_{\scriptscriptstyle i}^{\beta_{\scriptscriptstyle i}^{}}\!, (p_{\scriptscriptstyle j}')^{\beta_{\! \scriptscriptstyle j}'}
\! \parallel c d_{\scriptscriptstyle 1}^{2} 
$ 
with $\beta_{\scriptscriptstyle i} \ge 3$ odd and $\beta_{\scriptscriptstyle j}' \ge 2$ even. Since 
$p_{\scriptscriptstyle i}^{} \mid c d_{\scriptscriptstyle 0}$ (hence 
$p_{\scriptscriptstyle i}^{} \nmid \widehat{m}_{\scriptscriptstyle k}$), we can factor out from \eqref{eq: term} the product
\begin{equation*}
\prod_{i = 1}^{l}
\frac{A(p_{\scriptscriptstyle i}^{\alpha_{\scriptscriptstyle i 1}^{}}\!, \ldots, 
p_{\scriptscriptstyle i}^{\alpha_{\scriptscriptstyle i r}^{}}\!, p_{\scriptscriptstyle i}^{\beta_{\scriptscriptstyle i}^{}})}
{|p_{\scriptscriptstyle i}^{}|^{\alpha_{\scriptscriptstyle i 1} s_{\scriptscriptstyle 1} + \cdots + 
\alpha_{\scriptscriptstyle i r} s_{\scriptscriptstyle r} + \beta_{\scriptscriptstyle i} s_{\scriptscriptstyle r + 1}}} 
\qquad \text{(with $\beta_{\scriptscriptstyle i} \ge 3$ odd)}.
\end{equation*} 
To isolate the remaining irreducibles, we note that 
$
(p_{\scriptscriptstyle j}')^{\alpha_{\! \scriptscriptstyle j k}'}
\! \parallel \widehat{m}_{\scriptscriptstyle k}^{}
$ 
for all $j$ and $k.$ Thus we can also factor out from \eqref{eq: term} the product 
\begin{equation*}
\prod_{j = l + 1}^{n}
\frac{\chi_{c}(p_{\scriptscriptstyle j}')^{\alpha_{\scriptscriptstyle j 1}'^{} + \cdots + \alpha_{\scriptscriptstyle j r}'^{}} 
A((p_{\scriptscriptstyle j}')^{\alpha_{\scriptscriptstyle j 1}'^{}}\!, \ldots, 
(p_{\scriptscriptstyle j}')^{\alpha_{\scriptscriptstyle j r}'^{}}\!, (p_{\scriptscriptstyle j}')^{\beta_{\scriptscriptstyle j}'^{}})}
{|p_{\scriptscriptstyle j}'^{}|^{\alpha_{\scriptscriptstyle j 1}' s_{\scriptscriptstyle 1} + \cdots + 
\alpha_{\scriptscriptstyle j r}' s_{\scriptscriptstyle r} + \beta_{\scriptscriptstyle j}' s_{\scriptscriptstyle r + 1}}} 
\qquad \text{(with $\beta_{\scriptscriptstyle j}' \ge 2$ even)}.
\end{equation*} 
Consequently, we can write the expression \eqref{eq: term} as 
\begin{equation} \label{eq: factored-term}
\begin{split}
&\frac{\chi_{c d_{\scriptscriptstyle 0}}\!(\hat{n}_{\scriptscriptstyle 1})\, \cdots \, 
\chi_{c d_{\scriptscriptstyle 0}}\! (\hat{n}_{r})\,\chi_{a_{\scriptscriptstyle 2}}\!(d_{\scriptscriptstyle 0})
\, A(n_{\scriptscriptstyle 1}, \ldots, n_{r}, d_{\scriptscriptstyle 0}^{} d_{\scriptscriptstyle 1}'^{\, \scriptscriptstyle 2})}
{|n_{\scriptscriptstyle 1}|^{s_{\scriptscriptstyle 1}}\cdots \, |n_{r}|^{s_{r}} 
|d_{\scriptscriptstyle 0}^{} d_{\scriptscriptstyle 1}'^{\, \scriptscriptstyle 2}|^{s_{r + \scriptscriptstyle 1}}}
\prod_{j = l + 1}^{n}
\chi_{d_{\scriptscriptstyle 0}}\!(p_{\scriptscriptstyle j}')^{\alpha_{\scriptscriptstyle j 1}'^{} 
+ \cdots + \alpha_{\scriptscriptstyle j   r}'^{}}\\
& \cdot \chi_{a_{\scriptscriptstyle 2}}\!(c) \prod_{i = 1}^{l}
\frac{A(p_{\scriptscriptstyle i}^{\alpha_{\scriptscriptstyle i 1}^{}}\!, \ldots, 
p_{\scriptscriptstyle i}^{\alpha_{\scriptscriptstyle i r}^{}}\!, p_{\scriptscriptstyle i}^{\beta_{\scriptscriptstyle i}^{}})}
{|p_{\scriptscriptstyle i}^{}|^{\alpha_{\scriptscriptstyle i 1} s_{\scriptscriptstyle 1} + \cdots + 
\alpha_{\scriptscriptstyle i r} s_{\scriptscriptstyle r} + \beta_{\scriptscriptstyle i} s_{\scriptscriptstyle r + 1}}} 
\prod_{j = l + 1}^{n}
\frac{\chi_{c}(p_{\scriptscriptstyle j}')^{\alpha_{\scriptscriptstyle j 1}'^{} + \cdots + \alpha_{\scriptscriptstyle j r}'^{}} 
A((p_{\scriptscriptstyle j}')^{\alpha_{\scriptscriptstyle j 1}'^{}}\!, \ldots, 
(p_{\scriptscriptstyle j}')^{\alpha_{\scriptscriptstyle j r}'^{}}\!, (p_{\scriptscriptstyle j}')^{\beta_{\scriptscriptstyle j}'^{}})}
{|p_{\scriptscriptstyle j}'^{}|^{\alpha_{\scriptscriptstyle j 1}' s_{\scriptscriptstyle 1} + \cdots + 
\alpha_{\scriptscriptstyle j r}' s_{\scriptscriptstyle r} + \beta_{\scriptscriptstyle j}' s_{\scriptscriptstyle r + 1}}}.
\end{split}
\end{equation} 
Here $n_{\scriptscriptstyle 1}, \ldots, n_{r}, \, d_{\scriptscriptstyle 0}^{}, \, d_{\scriptscriptstyle 1}'$ are coprime to $h.$ 
Let $\varepsilon = (\varepsilon_{\! \scriptscriptstyle j})_{\scriptscriptstyle l + 1 \le j \le n}$ with 
$
\varepsilon_{\! \scriptscriptstyle j} \in \{0, 1\}
$ 
be defined by 
\begin{equation*}
\alpha_{\scriptscriptstyle j 1}'^{} + \cdots + \alpha_{\scriptscriptstyle j   r}'^{} \equiv \varepsilon_{\! \scriptscriptstyle j} 
\!\!\!\pmod 2.
\end{equation*} 
If we put 
$
c_{\varepsilon}' = \prod_{\scriptscriptstyle l + 1 \le j \le n}(p_{\scriptscriptstyle j}')^{\varepsilon_{\! \scriptscriptstyle j}},
$ 
we have (by the quadratic reciprocity law) that 
\begin{equation*} 
\prod_{j = l + 1}^{n}
\chi_{d_{\scriptscriptstyle 0}}\!(p_{\scriptscriptstyle j}')^{\alpha_{\scriptscriptstyle j 1}'^{} 
+ \cdots + \alpha_{\scriptscriptstyle j   r}'^{}} 
= \, \chi_{c_{\varepsilon}'}(d_{\scriptscriptstyle 0}).
\end{equation*} 
Accordingly, if we let 
\begin{equation*} 
F\!(z_{\scriptscriptstyle 1}, \ldots, z_{r + \scriptscriptstyle 1}; q) : = 
z_{r + \scriptscriptstyle 1}^{\scriptscriptstyle - 3}\,
f_{\scriptscriptstyle \mathrm{odd}}(z_{\scriptscriptstyle 1}, \ldots, z_{r + \scriptscriptstyle 1}; q) 
\, - \, z_{r + \scriptscriptstyle 1}^{\scriptscriptstyle - 2}
\end{equation*} 
and, for $a\in \{0, 1\},$ 
\begin{equation*} 
\begin{split}
G^{\scriptscriptstyle (a)}(z_{\scriptscriptstyle 1}, \ldots, z_{r + \scriptscriptstyle 1}; q) : & = 
\frac{1}{2}
\left\{f_{\scriptscriptstyle \mathrm{even}}(z_{\scriptscriptstyle 1}, \ldots, z_{\scriptscriptstyle r}, 
z_{r + \scriptscriptstyle 1}; q) 
\, - \, \prod_{k = 1}^{r}(1 - z_{\scriptscriptstyle k})^{\scriptscriptstyle -1} \right\}z_{r + \scriptscriptstyle 1}^{\scriptscriptstyle - 2} \\
& + \frac{(- 1)^{a + 2}}{2}
\left\{f_{\scriptscriptstyle \mathrm{even}}(- \, z_{\scriptscriptstyle 1}, \ldots, - \, z_{\scriptscriptstyle r}, 
z_{r + \scriptscriptstyle 1}; q) 
\, - \, \prod_{k = 1}^{r}(1 + z_{\scriptscriptstyle k})^{\scriptscriptstyle -1} \right\}z_{r + \scriptscriptstyle 1}^{\scriptscriptstyle - 2}
\end{split}
\end{equation*} 
with $f_{\scriptscriptstyle \mathrm{odd}}$ and $f_{\scriptscriptstyle \mathrm{even}}$ defined in Appendix \ref{Appendix B}, 
we obtain the key equality:  
\begin{equation} \label{eq: fundamental-eq-h}
\begin{split}
Z(\mathbf{s}, \chi_{a_{\scriptscriptstyle 2}}; h) = \,
|h|^{- 2 s_{r + \scriptscriptstyle 1}} \!\!\sum_{h = c c'} \;
\sum_{\varepsilon = (\varepsilon_{\scriptscriptstyle p'})_{p' \, \mid \, c'}} 
\, Z^{(h)}(\mathbf{s};  \chi_{a_{\scriptscriptstyle 2} c_{\varepsilon}'}, \chi_{c}) 
& \prod_{p \, \mid \, c}
F(|p|^{\, - s_{\scriptscriptstyle 1}}\!, \ldots, \, |p|^{\, - s_{r + \scriptscriptstyle 1}}; \, 
|p|)\,  |p|^{\, - s_{r + \scriptscriptstyle 1}}\\
& \cdot \chi_{a_{\scriptscriptstyle 2} c_{\varepsilon}'}(c)
\prod_{p' \, \mid \, c'} G^{(\varepsilon_{\scriptscriptstyle p'})}
(|p'|^{\, - s_{\scriptscriptstyle 1}}\!, \ldots, \, |p'|^{\, - s_{r + \scriptscriptstyle 1}}; \, |p'|).
\end{split}
\end{equation} 
Notice that the right-hand side gives the analytic continuation of 
$Z(\mathbf{s}, \chi_{a_{\scriptscriptstyle 2}}; h).$ Our main goal is to show that, for 
$\mathbf{s} = \big(\tfrac{1}{2},$ $\tfrac{1}{2}, \tfrac{1}{2}, w\big),$ 
the series obtained by substituting \eqref{eq: fundamental-eq-h} into \eqref{eq: sum-sq-free-vs-MDS} converges absolutely 
and uniformly on every compact subset of the half-plane $\Re(w) > 2 \slash 3,$ away from the points $w \in \mathbb{C}$ for 
which $q^{- w} = \pm \, q^{\scriptscriptstyle - 1}\!,$ or 
$q^{- w} = \pm \, q^{\scriptscriptstyle - 3\slash 4}\!, \, \pm\,  i \, q^{\scriptscriptstyle - 3\slash 4}.$

\section{Estimates} \label{Estim}
To prove Theorem \ref{Main Theorem A}, we will use \eqref{eq: fundamental-eq-h} in conjunction with the estimates 
of the local parts of the untwisted multiple {D}irichlet series $Z(\mathbf{s}; 1, 1)$ provided by the following elementary lemmas.
\vskip10pt
\begin{lem}\label{Estimate Zloc} --- For $|z| \le q^{\scriptscriptstyle - \frac{1}{2}}\!,$ we have the asymptotics 
\begin{equation*} 
F\big(q^{\scriptscriptstyle -\frac{1}{2}}\!, \, q^{\scriptscriptstyle -\frac{1}{2}}\!, \, 
q^{\scriptscriptstyle -\frac{1}{2}}\!, \, z  ; \, q \big) = 14 \, + \,  q \, z^{\scriptscriptstyle 2} \, + \, O(z^{\scriptscriptstyle 2})
\end{equation*} 
\phantom{}
\begin{equation*} 
G^{\scriptscriptstyle (0)}
\big(q^{\scriptscriptstyle -\frac{1}{2}}\!, \, q^{\scriptscriptstyle -\frac{1}{2}}\!, \, q^{\scriptscriptstyle -\frac{1}{2}}\!, \, z  ; \, q \big) 
= 14 \, + \,  q \, z^{\scriptscriptstyle 2} \, + \, O(q^{\scriptscriptstyle -1}),
\end{equation*} 
and the estimate
\begin{equation*} 
G^{\scriptscriptstyle (1)}
\big(q^{\scriptscriptstyle -\frac{1}{2}}\!, \, q^{\scriptscriptstyle -\frac{1}{2}}\!, \, q^{\scriptscriptstyle -\frac{1}{2}}\!, \, z  ; \, q \big) 
= O\big(q^{\scriptscriptstyle - \frac{1}{2}}\big)
\end{equation*}
the implied constants in the $O$-symbols being independent on $z$ and $q.$
\end{lem}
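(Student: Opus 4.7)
The plan is to reduce everything to a direct computation on the explicit expression for the $D_4$ Chinta--Gunnells function $f(z_1, z_2, z_3, z_4; q)$ recorded in Appendix~\ref{Appendix B}, together with its parity decomposition $f = f_{\mathrm{odd}} + f_{\mathrm{even}}$ given there. After substituting $z_1 = z_2 = z_3 = q^{-1/2}$ and $z_4 = z$ into $f_{\mathrm{odd}}$ and $f_{\mathrm{even}}$, one obtains univariate rational functions of $z$ whose dependence on $q$ is explicit; the quantities $F$, $G^{(0)}$, $G^{(1)}$ are then assembled from the definitions given immediately before the lemma.

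For $F$, the core task is to check that the normalization $z^{-3} f_{\mathrm{odd}} - z^{-2}$ is regular at $z = 0$ after the symmetric specialization, and then to Taylor expand. The claim $F = 14 + qz^2 + O(z^2)$ reduces to identifying constant term $14$, coefficient of $z^2$ equal to $q + O(1)$, and a uniform bound $\ll 1$ on all higher Taylor coefficients; combined with $|z| \le q^{-1/2}$, the latter yields the $O(z^2)$ remainder via Cauchy's estimate applied in a slightly enlarged disc. The argument for $G^{(0)}$ proceeds in parallel: the subtractions of $\prod_k(1-z_k)^{-1}$ and $\prod_k(1+z_k)^{-1}$ are precisely designed to cancel the $l=0$ part of $f_{\mathrm{even}}|_{z_4=0}$, which equals $\prod_k(1-z_k)^{-1}$ by the limiting condition~\eqref{eq: limiting condition}, so the resulting function is regular at $z = 0$; one then reads off constant term $14$ and coefficient $q + O(1)$ at $z^2$, and the improved error $O(q^{-1})$ follows from $|z|^2 \le q^{-1}$.

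For $G^{(1)}$, the antisymmetric combination under $z_k \mapsto -z_k$ ($k = 1,2,3$) isolates those monomials in $f_{\mathrm{even}}$ with odd total degree in $(z_1, z_2, z_3)$. Under the specialization $z_1 = z_2 = z_3 = q^{-1/2}$, each such surviving monomial carries at least one factor of $q^{-1/2}$; after the normalization by $z^{-2}$ and the principal-part subtraction, this translates into the uniform bound $O(q^{-1/2})$. The main obstacle in all three cases is the bulk of the explicit expression for $f$ produced by the Chinta--Gunnells averaging over the $192$-element Weyl group $W(D_4)$; the proof ultimately hinges on the specific simplifications forced by the symmetric central specialization $z_1 = z_2 = z_3 = q^{-1/2}$ (equivalently, $s_1 = s_2 = s_3 = \tfrac{1}{2}$), which collapses the $W(D_4)$-average into a tractable form from which the constants $14$, $q$ and the lower-order behavior of the three normalized pieces can be extracted directly.
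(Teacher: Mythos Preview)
Your approach is the paper's: specialize the explicit $D_4$ rational function to $z_1=z_2=z_3=q^{-1/2}$, $z_4=z$, and read off the asymptotics of $F$, $G^{(0)}$, $G^{(1)}$ from the resulting one-variable expressions. One caveat: your reduction for $F$ to ``a uniform bound $\ll 1$ on all higher Taylor coefficients'' is not literally true---the pole of $(1-qz^4)^{-1}$ at $|z|=q^{-1/4}$ forces the $z^{2n}$-coefficient to grow like $q^{\lfloor n/2\rfloor}$, so no individual Cauchy bound on coefficients gives what you want. The paper sidesteps this entirely by writing out the closed form
\[
F\big(q^{-1/2},q^{-1/2},q^{-1/2},z;q\big)=\frac{1+7z^2+7z^4+z^6}{(1-z^2)^7(1-qz^4)\,z^2}-\frac{1}{z^2}
\]
(and analogous closed forms for $G^{(0)}$, $G^{(1)}$), then bounds $|F-14-qz^2|$ by an explicit rational function of $q^{-1}$ times $|z|^2$, directly from the algebra, with no Taylor/Cauchy step. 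Once you have the explicit formulas your plan produces, this direct algebraic estimate is the clean way to finish.
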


\begin{proof} \!Using the formulas in the Appendix \ref{Appendix B} and the definitions of 
$F$ and $G^{\scriptscriptstyle (a)}$ ($a\in \{0, 1\}$), one finds that 
\begin{equation*} 
F\big(q^{\scriptscriptstyle -\frac{1}{2}}\!, \, q^{\scriptscriptstyle -\frac{1}{2}}\!, \, 
q^{\scriptscriptstyle -\frac{1}{2}}\!, \, z  ; \, q \big) 
\, = \, \frac{1 + 7 z^{\scriptscriptstyle 2} + 7 z^{\scriptscriptstyle 4} + z^{\scriptscriptstyle 6}}
{(1 - z^{\scriptscriptstyle 2})^{\scriptscriptstyle 7}  (1 - q \, z^{\scriptscriptstyle 4}) \, z^{\scriptscriptstyle 2}} 
\, - \, \frac{1}{z^{\scriptscriptstyle 2}}
\end{equation*} 
\begin{equation*} 
\begin{split}
& G^{\scriptscriptstyle (0)}
\big(q^{\scriptscriptstyle -\frac{1}{2}}\!, \, q^{\scriptscriptstyle -\frac{1}{2}}\!, \, q^{\scriptscriptstyle -\frac{1}{2}}\!, \, z; \, q \big)\\
& = \, \frac{1}{2}\big(1 - q^{\scriptscriptstyle - \frac{1}{2}} \big)^{\scriptscriptstyle - 3} \cdot
\frac{1 + \big(7  - 14 \, q^{\scriptscriptstyle - \frac{1}{2}} + 6 \, q^{\scriptscriptstyle - 1} 
- q^{\scriptscriptstyle - \frac{3}{2}} \big)\, z^{\scriptscriptstyle 2} + 
7\big(1 - 4\, q^{\scriptscriptstyle - \frac{1}{2}} + 4\, q^{\scriptscriptstyle -1} 
- q^{\scriptscriptstyle - \frac{3}{2}} \big)\, z^{\scriptscriptstyle 4} 
+ \big(1 - 6 \, q^{\scriptscriptstyle - \frac{1}{2}} + 14\, q^{\scriptscriptstyle -1} 
- 7\, q^{\scriptscriptstyle - \frac{3}{2}} \big)\, z^{\scriptscriptstyle 6} 
- q^{\scriptscriptstyle - \frac{3}{2}} z^{\scriptscriptstyle 8}}
{(1 - z^{\scriptscriptstyle 2})^{\scriptscriptstyle 7} (1 - q\,  z^{\scriptscriptstyle 4}) \, z^{\scriptscriptstyle 2}}\\
& + \, \frac{1}{2}\big(1 + q^{\scriptscriptstyle - \frac{1}{2}} \big)^{\scriptscriptstyle - 3} \cdot
\frac{1 + \big(7  + 14 \, q^{\scriptscriptstyle - \frac{1}{2}} + 6 \, q^{\scriptscriptstyle - 1} 
+ q^{\scriptscriptstyle - \frac{3}{2}} \big)\, z^{\scriptscriptstyle 2} + 
7\big(1 + 4\, q^{\scriptscriptstyle - \frac{1}{2}} + 4\, q^{\scriptscriptstyle -1} 
+ q^{\scriptscriptstyle - \frac{3}{2}} \big)\, z^{\scriptscriptstyle 4} 
+ \big(1 + 6 \, q^{\scriptscriptstyle - \frac{1}{2}} + 14\, q^{\scriptscriptstyle -1} 
+ 7\, q^{\scriptscriptstyle - \frac{3}{2}} \big)\, z^{\scriptscriptstyle 6} 
+ q^{\scriptscriptstyle - \frac{3}{2}} z^{\scriptscriptstyle 8}}
{(1 - z^{\scriptscriptstyle 2})^{\scriptscriptstyle 7} (1 - q\,  z^{\scriptscriptstyle 4}) \, z^{\scriptscriptstyle 2}}\\
& - \, \left\{\big(1 - q^{\scriptscriptstyle - \frac{1}{2}} \big)^{\scriptscriptstyle - 3} 
+ \;  \big(1 + q^{\scriptscriptstyle - \frac{1}{2}} \big)^{\scriptscriptstyle - 3} \right\} \frac{1}{2\, z^{\scriptscriptstyle 2}}
\end{split}
\end{equation*} 
and 
\begin{equation*} 
\begin{split}
& G^{\scriptscriptstyle (1)}
\big(q^{\scriptscriptstyle -\frac{1}{2}}\!, \, q^{\scriptscriptstyle -\frac{1}{2}}\!, \, q^{\scriptscriptstyle -\frac{1}{2}}\!, \, 
z; \, q \big) \\
& = \, \frac{1}{2}\big(1 - q^{\scriptscriptstyle - \frac{1}{2}} \big)^{\scriptscriptstyle - 3} \cdot
\frac{1 + \big(7  - 14 \, q^{\scriptscriptstyle - \frac{1}{2}} + 6 \, q^{\scriptscriptstyle - 1} 
- q^{\scriptscriptstyle - \frac{3}{2}} \big)\, z^{\scriptscriptstyle 2} + 
7\big(1 - 4\, q^{\scriptscriptstyle - \frac{1}{2}} + 4\, q^{\scriptscriptstyle -1} 
- q^{\scriptscriptstyle - \frac{3}{2}} \big)\, z^{\scriptscriptstyle 4} 
+ \big(1 - 6 \, q^{\scriptscriptstyle - \frac{1}{2}} + 14\, q^{\scriptscriptstyle -1} 
- 7\, q^{\scriptscriptstyle - \frac{3}{2}} \big)\, z^{\scriptscriptstyle 6} 
- q^{\scriptscriptstyle - \frac{3}{2}} z^{\scriptscriptstyle 8}}
{(1 - z^{\scriptscriptstyle 2})^{\scriptscriptstyle 7} (1 - q\,  z^{\scriptscriptstyle 4}) \, z^{\scriptscriptstyle 2}}\\
& - \, \frac{1}{2}\big(1 + q^{\scriptscriptstyle - \frac{1}{2}} \big)^{\scriptscriptstyle - 3} \cdot
\frac{1 + \big(7  + 14 \, q^{\scriptscriptstyle - \frac{1}{2}} + 6 \, q^{\scriptscriptstyle - 1} 
+ q^{\scriptscriptstyle - \frac{3}{2}} \big)\, z^{\scriptscriptstyle 2} + 
7\big(1 + 4\, q^{\scriptscriptstyle - \frac{1}{2}} + 4\, q^{\scriptscriptstyle -1} 
+ q^{\scriptscriptstyle - \frac{3}{2}} \big)\, z^{\scriptscriptstyle 4} 
+ \big(1 + 6 \, q^{\scriptscriptstyle - \frac{1}{2}} + 14\, q^{\scriptscriptstyle -1} 
+ 7\, q^{\scriptscriptstyle - \frac{3}{2}} \big)\, z^{\scriptscriptstyle 6} 
+ q^{\scriptscriptstyle - \frac{3}{2}} z^{\scriptscriptstyle 8}}
{(1 - z^{\scriptscriptstyle 2})^{\scriptscriptstyle 7} (1 - q\,  z^{\scriptscriptstyle 4}) \, z^{\scriptscriptstyle 2}}\\
& - \, \left\{\big(1 - q^{\scriptscriptstyle - \frac{1}{2}} \big)^{\scriptscriptstyle - 3} 
- \;  \big(1 + q^{\scriptscriptstyle - \frac{1}{2}} \big)^{\scriptscriptstyle - 3} \right\} \frac{1}{2 \, z^{\scriptscriptstyle 2}}.
\end{split}
\end{equation*}

From this explicit formulas we see easily that 
\begin{equation*} 
\left|F\big(q^{\scriptscriptstyle -\frac{1}{2}}\!, \, q^{\scriptscriptstyle -\frac{1}{2}}\!, \, 
q^{\scriptscriptstyle -\frac{1}{2}}\!, \, z; \, q \big) - 14  - q \, z^{\scriptscriptstyle 2}\right| \, \le  \, 
\left(\frac{15 \, q^{\scriptscriptstyle -7} + 119 \, q^{\scriptscriptstyle -6} + 412 \, q^{\scriptscriptstyle -5} 
+ 812 \, q^{\scriptscriptstyle - 4} + 994 \, q^{\scriptscriptstyle -3}  + 770 \, q^{\scriptscriptstyle -2} 
+ 363 \, q^{\scriptscriptstyle -1} + 99}
{(1 - q^{\scriptscriptstyle -1})^{8}}\right)\, |z|^{\scriptscriptstyle 2}
\end{equation*} 
\begin{equation*} 
\begin{split}
& \hskip-51pt \left|G^{\scriptscriptstyle (0)}
\big(q^{\scriptscriptstyle -\frac{1}{2}}\!, \, q^{\scriptscriptstyle -\frac{1}{2}}\!, \, q^{\scriptscriptstyle -\frac{1}{2}}\!, 
\, z; \, q \big) - 14  - q \, z^{\scriptscriptstyle 2}\right|\\ 
& \le \, \frac{(1 + q^{\scriptscriptstyle -1})^{\scriptscriptstyle 3}(15 \, q^{\scriptscriptstyle -7} + 120 \, q^{\scriptscriptstyle - 6} 
+ 420 \, q^{\scriptscriptstyle -5} + 843 \, q^{\scriptscriptstyle -4} + 1064 \, q^{\scriptscriptstyle -3} 
+ 866 \, q^{\scriptscriptstyle -2} + 427 \, q^{\scriptscriptstyle -1} + 153)}
{q \, (1- q^{\scriptscriptstyle -1})^{\scriptscriptstyle 11}}
\end{split}
\end{equation*} 
and
\begin{equation*} 
\hskip-58pt \left|G^{\scriptscriptstyle (1)}\big(q^{\scriptscriptstyle -\frac{1}{2}}\!, \, q^{\scriptscriptstyle -\frac{1}{2}}\!, 
\, q^{\scriptscriptstyle -\frac{1}{2}}\!, \, z  ; \, q \big)\right| \, \le  \, 
\left(\frac{q^{\scriptscriptstyle -7} + 10\, q^{\scriptscriptstyle -6} + 36\, q^{\scriptscriptstyle -5} + 65\, q^{\scriptscriptstyle -4} 
+ 121\, q^{\scriptscriptstyle -3} + 134\, q^{\scriptscriptstyle -2} + 70\, q^{\scriptscriptstyle -1} + 31}
{(1- q^{\scriptscriptstyle -1})^{\scriptscriptstyle 10}}\right)\frac{1}{\sqrt{q}}
\end{equation*} 
from which the lemma follows.
\end{proof} 
The estimates in the above lemma show that there is additional decay in \eqref{eq: fundamental-eq-h} in the conductors 
of the characters $\chi_{c}$ and $\chi_{a_{\scriptscriptstyle 2} c_{\varepsilon}'}.$ However, this is not sufficient, as 
we would need enough decay in $|h|.$

To this end, define 
\begin{equation*} 
f_{\scriptscriptstyle \mathrm{even}}^{\, \pm}(z_{\scriptscriptstyle 1}, \ldots, z_{r + \scriptscriptstyle 1}; q) \, = \,
(f_{\scriptscriptstyle \mathrm{even}}(z_{\scriptscriptstyle 1}, \ldots, z_{\scriptscriptstyle r}, z_{r + \scriptscriptstyle 1}; q) 
\pm  f_{\scriptscriptstyle \mathrm{even}}(- \, z_{\scriptscriptstyle 1}, \ldots, - \, z_{\scriptscriptstyle r}, z_{r + \scriptscriptstyle 1}; q))\slash 2 \qquad \text{(with $r = 3$)}
\end{equation*} 
where $f_{\scriptscriptstyle \mathrm{even}}$ is as defined in Appendix \ref{Appendix B}. We will use the next lemma and 
an inductive argument to improve the convex bound \eqref{eq: basic-initial-estimate}, precisely in the 
$c_{\scriptscriptstyle 3}$-aspect.

\vskip10pt
\begin{lem}\label{Estimate inverse even part zeven} --- For every real $q \ge 5$ and 
$|z| \le q^{\scriptscriptstyle - \frac{1}{2}}\!,$ 
we have the estimates 
\begin{equation*} 
\big| f_{\scriptscriptstyle \mathrm{odd}}\big(q^{\scriptscriptstyle -\frac{1}{2}}\!, \, q^{\scriptscriptstyle -\frac{1}{2}}\!, \, 
q^{\scriptscriptstyle -\frac{1}{2}}\!, \, z  ; \, q \big) \big| < 17 |z|
\end{equation*} 
\phantom{}
\begin{equation*} 
\big| f_{\scriptscriptstyle \mathrm{even}}^{\, -}\big(q^{\scriptscriptstyle -\frac{1}{2}}\!, \, q^{\scriptscriptstyle -\frac{1}{2}}\!, \, 
q^{\scriptscriptstyle -\frac{1}{2}}\!, \, z  ; \, q \big) \big | <  58\, q^{\scriptscriptstyle -\frac{1}{2}}
\end{equation*} 
and if $q\equiv 1 \!\pmod 4$ is a prime power, we have the inequality
\begin{equation*} 
\frac{1}{\big| f_{\scriptscriptstyle \mathrm{even}}^{\, +}\big(q^{\scriptscriptstyle -\frac{1}{2}}\!, \, 
q^{\scriptscriptstyle -\frac{1}{2}}\!, \, q^{\scriptscriptstyle -\frac{1}{2}}\!, \, z  ; \, q \big) \big|} < 20.
\end{equation*} 
\end{lem}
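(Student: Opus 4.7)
The plan is to argue by direct computation starting from the explicit formulas for $f_{\scriptscriptstyle\mathrm{odd}}$ and $f_{\scriptscriptstyle\mathrm{even}}$ recorded in Appendix \ref{Appendix B}. First I specialize all three of $z_{\scriptscriptstyle 1}, z_{\scriptscriptstyle 2}, z_{\scriptscriptstyle 3}$ to $q^{\scriptscriptstyle -1/2}$; because of the $D_{4}$-symmetry (exchanging $z_{\scriptscriptstyle 1}, z_{\scriptscriptstyle 2}, z_{\scriptscriptstyle 3}$) the resulting rational function of $(z,q)$ becomes a one-variable rational function in $z$ over $\QQ(q^{\scriptscriptstyle 1/2})$ with a simple denominator of the form $(1-z^{\scriptscriptstyle 2})^{\scriptscriptstyle 7}(1 - q\,z^{\scriptscriptstyle 4})$ times powers of $(1 \pm q^{\scriptscriptstyle -1/2})$, exactly as it already appeared in the proof of Lemma \ref{Estimate Zloc}. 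This is simply the calculation carried out there, adapted to $f_{\scriptscriptstyle\mathrm{odd}}$ and $f_{\scriptscriptstyle\mathrm{even}}^{\,\pm}$ rather than to $F, G^{\scriptscriptstyle (0)}, G^{\scriptscriptstyle (1)}$.

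The first estimate on $f_{\scriptscriptstyle\mathrm{odd}}$ is the easiest: after specialization, $f_{\scriptscriptstyle\mathrm{odd}}$ is odd as a function of $z$, so it is divisible by $z$ in the polynomial numerator; factoring out $z$ and estimating the remaining rational function on the closed disk $|z|\le q^{\scriptscriptstyle -1/2}$ by the triangle inequality in the numerator and $(1-q^{\scriptscriptstyle -1})^{\scriptscriptstyle -k}$ bounds in the denominator (using $q \ge 5$) produces a constant, which a routine arithmetic check shows is smaller than $17$. The second estimate on $f_{\scriptscriptstyle\mathrm{even}}^{\,-}$ is handled similarly: from its definition, $f_{\scriptscriptstyle\mathrm{even}}^{\,-}$ is the odd-in-$(q^{\scriptscriptstyle -1/2})$ part of $f_{\scriptscriptstyle\mathrm{even}}$ specialized. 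Writing the explicit formula as a sum of two terms with respective prefactors $(1 - q^{\scriptscriptstyle -1/2})^{\scriptscriptstyle -3}$ and $(1 + q^{\scriptscriptstyle -1/2})^{\scriptscriptstyle -3}$, their difference visibly carries an extra factor of $q^{\scriptscriptstyle -1/2}$, and again the triangle inequality on $|z|\le q^{\scriptscriptstyle -1/2}$, together with $q \ge 5$, yields the claim after a direct arithmetic estimate.

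The third, lower bound on $f_{\scriptscriptstyle\mathrm{even}}^{\,+}$ is the main obstacle, because one must verify a non-vanishing statement on a disk rather than just integrate a power-series estimate. The plan is to write
\begin{equation*}
f_{\scriptscriptstyle\mathrm{even}}^{\,+}\big(q^{\scriptscriptstyle -1/2},q^{\scriptscriptstyle -1/2},q^{\scriptscriptstyle -1/2},z;q\big) \; = \; \frac{N(z;q)}{(1-z^{\scriptscriptstyle 2})^{\scriptscriptstyle 7}(1-q\,z^{\scriptscriptstyle 4})}
\end{equation*}
with $N(z;q)\in\QQ(q^{\scriptscriptstyle 1/2})[z]$ given by an explicit polynomial (as in Lemma \ref{Estimate Zloc}) and split $N = N_{\infty} + (N - N_{\infty})$, where $N_{\infty} = \lim_{q \to \infty} N$ is the leading behavior. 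By direct inspection $N_{\infty}(0) \ne 0$ and $N_{\infty}$ is bounded away from zero on the disk $|z| \le q^{\scriptscriptstyle -1/2}$; the difference $N - N_{\infty}$ is $O(q^{\scriptscriptstyle -1/2})$ uniformly on that disk by the same kind of crude bound used above. Using $q\equiv 1\!\pmod 4$ with $q \ge 5$ (so $q^{\scriptscriptstyle -1/2} \le 1/\sqrt{5}$) one checks that $|N(z;q)| \ge c\, (1-q^{\scriptscriptstyle -1})^{\scriptscriptstyle 7}$ for an explicit absolute constant $c$, combining with the upper bound $|(1-z^{\scriptscriptstyle 2})^{\scriptscriptstyle 7}(1-q\,z^{\scriptscriptstyle 4})| \le 2$ on $|z|\le q^{\scriptscriptstyle -1/2}$ to give a lower bound on $|f_{\scriptscriptstyle\mathrm{even}}^{\,+}|$ exceeding $1/20$. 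The real work is numerical: one must evaluate $N_{\infty}(z)$, verify by elementary means (for instance, Schur–Cohn or a direct estimate of coefficients against $(1/\sqrt{5})^{k}$) that it has no zeros in the disk, and then absorb the $O(q^{\scriptscriptstyle -1/2})$ perturbation. The congruence hypothesis $q\equiv 1\!\pmod 4$ enters only through the constraint $q\ge 5$; the rest of the argument is a uniform real-analytic estimate.
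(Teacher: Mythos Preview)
Your treatment of the first two inequalities is essentially the paper's: specialize, pull out the factor of $z$ (respectively $q^{-1/2}$), and bound what remains by the triangle inequality on $|z|\le q^{-1/2}$, checking the worst case $q=5$ numerically.

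For the third inequality, however, your perturbative plan has a genuine gap. With your normalization, the limiting numerator is
\[
N_\infty(z)=1+7z^{2}+7z^{4}+z^{6}=(1+z^{2})(1+6z^{2}+z^{4}),
\]
whose zeros lie at $z=\pm i$ and $z=\pm i(\sqrt{2}\pm 1)$. In particular $N_\infty$ vanishes at $|z|=\sqrt{2}-1\approx 0.4142$, which is \emph{inside} the disk $|z|\le 5^{-1/2}\approx 0.4472$. So for $q=5$ (the case forced by $q\equiv 1\pmod 4$, $q\ge 5$) you cannot bound $N_\infty$ away from zero on the relevant disk, and neither Schur--Cohn nor the coefficient estimate $1-7\cdot 5^{-1}-7\cdot 5^{-2}-5^{-3}<0$ will rescue you. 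Treating $N-N_\infty$ as a small perturbation then proves nothing about $|N|$ near those zeros. (Your argument does work once $q\ge 9$, since then $q^{-1/2}\le 1/3<\sqrt{2}-1$; this is exactly the threshold the paper finds.)

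The paper avoids this by never passing to the $q\to\infty$ limit. It bounds the full $q$-dependent denominator
\[
1+3q^{-1}+(7-15q^{-1}+\cdots)z^{2}+\cdots
\]
directly: for $q\ge 9$ a crude triangle-inequality lower bound exceeds $2/9$, while for $q=5$ it substitutes $q=5$ explicitly and bounds the resulting polynomial in $z$ on $|z|\le 5^{-1/2}$ by hand. The point is that at $q=5$ the finite-$q$ corrections (notably the $+3q^{-1}$ in the constant term) are what keep the numerator away from zero near $z=\pm i(\sqrt{2}-1)$; throwing them away as you do is precisely what makes the argument fail. A minor side remark: your bound $|(1-z^{2})^{7}(1-qz^{4})|\le 2$ is also off --- the correct uniform bound is $(1+q^{-1})^{8}\le (6/5)^{8}$ --- but that is easily repaired.
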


\begin{proof} \!We have the explicit expressions:  
\begin{equation*}
f_{\scriptscriptstyle \mathrm{odd}}\big(q^{\scriptscriptstyle -\frac{1}{2}}\!, \, q^{\scriptscriptstyle -\frac{1}{2}}\!, \, 
q^{\scriptscriptstyle -\frac{1}{2}}\!, \, z  ; \, q \big) \, = \, 
\frac{z \, (1 + 7 z^{\scriptscriptstyle 2} + 7 z^{\scriptscriptstyle 4} + z^{\scriptscriptstyle 6})}
{(1 - z^{\scriptscriptstyle 2})^{\scriptscriptstyle 7}  (1 - q \, z^{\scriptscriptstyle 4})} 
\end{equation*} 
\phantom{} 
\begin{equation*}
f_{\scriptscriptstyle \mathrm{even}}^{\, -}\big(q^{\scriptscriptstyle -\frac{1}{2}}\!, \, q^{\scriptscriptstyle -\frac{1}{2}}\!, \, 
q^{\scriptscriptstyle -\frac{1}{2}}\!, \, z  ; \, q \big) \, = \, 
\frac{3 + q^{\scriptscriptstyle - 1} + (10 - 17\, q^{\scriptscriptstyle - 1} \! + 3 \, q^{\scriptscriptstyle - 2})\, z^{\scriptscriptstyle 2}  
+ (3  - 17\, q^{\scriptscriptstyle - 1} \! + 10 \, q^{\scriptscriptstyle - 2})\, z^{\scriptscriptstyle 4}  
+ (q^{\scriptscriptstyle  -1} \! + 3 \, q^{\scriptscriptstyle - 2})\,  z^{\scriptscriptstyle 6}}{\sqrt{q}\, 
(1 - q^{\scriptscriptstyle - 1})^{\scriptscriptstyle 3}(1 - z^{\scriptscriptstyle 2})^{\scriptscriptstyle 6} 
(1 - q\,  z^{\scriptscriptstyle 4})}
\end{equation*} 
and 
\begin{equation*}
\begin{split}
& 1\slash f_{\scriptscriptstyle \mathrm{even}}^{\, +}\big(q^{\scriptscriptstyle -\frac{1}{2}}\!, \, 
q^{\scriptscriptstyle -\frac{1}{2}}\!, \, q^{\scriptscriptstyle -\frac{1}{2}}\!, \, z  ; \, q \big)  \\ 
& = \frac{(1 - q^{\scriptscriptstyle - 1})^{\scriptscriptstyle 3}(1 - z^{\scriptscriptstyle 2})^{\scriptscriptstyle 7} 
(1 - q\,  z^{\scriptscriptstyle 4})}
{1 + 3\, q^{\scriptscriptstyle - 1} + (7 - 15\, q^{\scriptscriptstyle - 1} \! + q^{\scriptscriptstyle - 2} 
- q^{\scriptscriptstyle - 3})\, z^{\scriptscriptstyle 2} + (7  - 35\, q^{\scriptscriptstyle - 1} + 35 \, q^{\scriptscriptstyle - 2} 
- 7 \, q^{\scriptscriptstyle - 3})\, z^{\scriptscriptstyle 4} + (1 - q^{\scriptscriptstyle  -1} + 15 \, q^{\scriptscriptstyle - 2} - 7 \, q^{\scriptscriptstyle - 3})\,  z^{\scriptscriptstyle 6} - (3 \, q^{\scriptscriptstyle - 2} + q^{\scriptscriptstyle - 3})
\,  z^{\scriptscriptstyle 8}}.
\end{split}
\end{equation*} 
It follows that 
\begin{equation*} 
\big|f_{\scriptscriptstyle \mathrm{odd}}\big(q^{\scriptscriptstyle -\frac{1}{2}}\!, \, q^{\scriptscriptstyle -\frac{1}{2}}\!, \, 
q^{\scriptscriptstyle -\frac{1}{2}}\!, \, z  ; \, q \big) \big| \, \le \, 
\frac{1 + 7 |z|^{\scriptscriptstyle 2} + 7 |z|^{\scriptscriptstyle 4} + |z|^{\scriptscriptstyle 6}}
{(1 - |z|^{\scriptscriptstyle 2})^{\scriptscriptstyle 7}  (1 - q \, |z|^{\scriptscriptstyle 4})}  \cdot |z| \, \le \, 
\frac{1 + 7 \, q^{\scriptscriptstyle - 1} + 7 \, q^{\scriptscriptstyle - 2} + q^{\scriptscriptstyle - 3}}
{(1 - q^{\scriptscriptstyle - 1})^{\scriptscriptstyle 8}}  \cdot |z|.
\end{equation*} 
The expression 
\begin{equation*}  
\frac{1 + 7 \, q^{\scriptscriptstyle - 1} + 7 \, q^{\scriptscriptstyle - 2} + q^{\scriptscriptstyle - 3}}
{(1 - q^{\scriptscriptstyle - 1})^{\scriptscriptstyle 8}} \qquad \text{(for $q \ge 5$)}
\end{equation*} 
is increasing as a function of $q^{\scriptscriptstyle - 1}\!,$ and its value when $q = 5$ is $16.0217... < 17.$ 
We have similarly    
\begin{equation*} 
\big| f_{\scriptscriptstyle \mathrm{even}}^{\, -}\big(q^{\scriptscriptstyle -\frac{1}{2}}\!, \, q^{\scriptscriptstyle -\frac{1}{2}}\!, \, 
q^{\scriptscriptstyle -\frac{1}{2}}\!, \, z  ; \, q \big) \big| 
\, \le \, \frac{3 + 11\, q^{\scriptscriptstyle - 1} + 20\, q^{\scriptscriptstyle - 2} + 20\, q^{\scriptscriptstyle - 3}  
+ 11\, q^{\scriptscriptstyle - 4} + 3\, q^{\scriptscriptstyle - 5}}
{(1 - q^{\scriptscriptstyle - 1})^{\scriptscriptstyle 10}} \cdot q^{\scriptscriptstyle - \frac{1}{2}} <  
58\, q^{\scriptscriptstyle - \frac{1}{2}}
\end{equation*} 
as we had asserted.

Now the numerator of 
$
1\slash \big| f_{\scriptscriptstyle \mathrm{even}}^{\, +}\big(q^{\scriptscriptstyle -\frac{1}{2}}\!, \, 
q^{\scriptscriptstyle -\frac{1}{2}}\!, \, q^{\scriptscriptstyle -\frac{1}{2}}\!, \, z  ; \, q \big) \big|
$ 
is 
\begin{equation*}
(1 - q^{\scriptscriptstyle - 1})^{\scriptscriptstyle 3}\, |1 - z^{\scriptscriptstyle 2}|^{\scriptscriptstyle 7} \, 
|1 - q\,  z^{\scriptscriptstyle 4}| < (1 + q^{\scriptscriptstyle - 1})^{\scriptscriptstyle 8} \le 
(6\slash 5)^{\scriptscriptstyle 8}. 
\end{equation*} 
To obtain a lower bound for the denominator, we first assume that $q \ge 9.$ In this case we have 
\begin{equation*} 
\begin{split} 
& \big| 1 + 3\, q^{\scriptscriptstyle - 1} + (7 - 15\, q^{\scriptscriptstyle - 1} + q^{\scriptscriptstyle - 2} 
- q^{\scriptscriptstyle - 3})\, z^{\scriptscriptstyle 2} + (7  - 35\, q^{\scriptscriptstyle - 1} + 35 \, q^{\scriptscriptstyle - 2} 
- 7 \, q^{\scriptscriptstyle - 3})\, z^{\scriptscriptstyle 4} + (1 - q^{\scriptscriptstyle  -1} + 15 \, q^{\scriptscriptstyle - 2} - 7 \, q^{\scriptscriptstyle - 3})\,  z^{\scriptscriptstyle 6} - (3 \, q^{\scriptscriptstyle - 2} + q^{\scriptscriptstyle - 3})
\,  z^{\scriptscriptstyle 8} \big| \\ 
& \ge 1 + 3\, q^{\scriptscriptstyle - 1} - \big|(7 - 15\, q^{\scriptscriptstyle - 1} + q^{\scriptscriptstyle - 2} 
- q^{\scriptscriptstyle - 3})\, z^{\scriptscriptstyle 2} + (7  - 35\, q^{\scriptscriptstyle - 1} + 35 \, q^{\scriptscriptstyle - 2} 
- 7 \, q^{\scriptscriptstyle - 3})\, z^{\scriptscriptstyle 4} + (1 - q^{\scriptscriptstyle  -1} + 15 \, q^{\scriptscriptstyle - 2} - 7 \, q^{\scriptscriptstyle - 3})\,  z^{\scriptscriptstyle 6} - (3 \, q^{\scriptscriptstyle - 2} + q^{\scriptscriptstyle - 3})
\,  z^{\scriptscriptstyle 8} \big| \\
& \ge 1 - 4\, q^{\scriptscriptstyle - 1} - 22\, q^{\scriptscriptstyle - 2} - 37\, q^{\scriptscriptstyle - 3} - 
37\, q^{\scriptscriptstyle - 4} - 22\, q^{\scriptscriptstyle - 5} - 10\, q^{\scriptscriptstyle - 6} - q^{\scriptscriptstyle - 7} 
> 2 \slash 9.
\end{split}
\end{equation*} 

When $q = 5$ we have   
\begin{equation*} 
\left|\frac{8}{5} - \frac{8 \, z^{\scriptscriptstyle 2}}{125} 
(2 z^{\scriptscriptstyle 6} - 21 z^{\scriptscriptstyle 4} - 21 z^{\scriptscriptstyle 2} - 63) \right| \ge 
\frac{8}{5} - \frac{8 \, |z|^{\scriptscriptstyle 2}}{125} 
\big| 2 z^{\scriptscriptstyle 6} - 21 z^{\scriptscriptstyle 4} - 21 z^{\scriptscriptstyle 2} - 63 \big| 
\ge \frac{8}{5} - \frac{8}{625}\bigg(63 + \frac{21}{5} + \frac{21}{25} + \frac{2}{125}\bigg) > 
\frac{2}{9}.
\end{equation*} 
The last assertion follows from these inequalities.  
\end{proof}

For ease of notation, we let 
$
\mathscr{Z}^{(c)}(w; \chi_{a_{\scriptscriptstyle 2} c_{\scriptscriptstyle 2}}, \chi_{a_{\scriptscriptstyle 1} c_{\scriptscriptstyle 1}}) = 
Z^{(c)}\big(\tfrac{1}{2}, \tfrac{1}{2}, \tfrac{1}{2}, w; \chi_{a_{\scriptscriptstyle 2} c_{\scriptscriptstyle 2}}, \chi_{a_{\scriptscriptstyle 1}c_{\scriptscriptstyle 1}}\big)
$ 
with $a_{\scriptscriptstyle 1}, \,  a_{\scriptscriptstyle 2} \in \{1, \theta_{\scriptscriptstyle 0} \}.$ 
\vskip10pt
\begin{prop}\label{key-proposition} --- Let $c_{\scriptscriptstyle 1}, c_{\scriptscriptstyle 2}$ and 
$c_{\scriptscriptstyle 3}$ be monic polynomials such that 
$
c = c_{\scriptscriptstyle 1}c_{\scriptscriptstyle 2}c_{\scriptscriptstyle 3}
$ 
is square-free, \!and let $\omega(c)$ denote the number of irreducible factors of 
$c.$ If we define 
\begin{equation*} 
\tilde{\mathscr{Z}}^{(c)}
(w; \chi_{a_{\scriptscriptstyle 2}  c_{\scriptscriptstyle 2}}, \chi_{a_{\scriptscriptstyle 1} c_{\scriptscriptstyle 1}}) =
\big(1\, - \, q^{3 - 4w} \big)
\big(1 \, - \, q^{2 - 2w} \big)^{\! \scriptscriptstyle 7} 
\mathscr{Z}^{(c)}
(w; \chi_{a_{\scriptscriptstyle 2} c_{\scriptscriptstyle 2}}, \chi_{a_{\scriptscriptstyle 1}  c_{\scriptscriptstyle 1}})  
\qquad \text{$a_{\scriptscriptstyle 1}, \, a_{\scriptscriptstyle 2} \in \{1, \theta_{\scriptscriptstyle 0} \}$}
\end{equation*} 
then, for every $\delta > 0,$ we have the estimate 
\begin{equation}  \label{eq: fundamental-estimate-optimized} 
\tilde{\mathscr{Z}}^{(c)}
(w; \chi_{a_{\scriptscriptstyle 2} c_{\scriptscriptstyle 2}}, \chi_{a_{\scriptscriptstyle 1}  c_{\scriptscriptstyle 1}})  
\, \ll_{\scriptscriptstyle \delta, \, q} \, A_{0}^{\omega(c_{\scriptscriptstyle 1}c_{\scriptscriptstyle 2})} A_{1}^{\omega(c_{\scriptscriptstyle 3})}
|c_{\scriptscriptstyle 1}|^{\scriptscriptstyle 3 (1 - \Re(w)) + \delta} 
\, |c_{\scriptscriptstyle 2}|^{\scriptscriptstyle \frac{5}{2}(1 - \Re(w)) + \delta} 
\, |c_{\scriptscriptstyle 3}|^{\scriptscriptstyle \mathrm{max}\left\{3 \, - \, 4 \Re(w), \, 2 \, - \frac{5 \Re(w)}{2}\right\} + \delta}
\end{equation} 
with $A_{0} \!= \!20^{\, \scriptscriptstyle 9}$ \!and $A_{1}\! = 20 + 1500 \cdot 20^{\, \scriptscriptstyle 9}\!,$ for all $w$ with 
$\frac{1}{2} \le \Re(w) \le \frac{4}{5}.$
\end{prop}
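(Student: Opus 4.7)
I will prove \eqref{eq: fundamental-estimate-optimized} by induction on $\omega(c_{\scriptscriptstyle 3})$. The base case $\omega(c_{\scriptscriptstyle 3})=0$ corresponds to $c=c_{\scriptscriptstyle 1}c_{\scriptscriptstyle 2}$, in which the factor $|c_{\scriptscriptstyle 3}|^{\max\{\ldots\}}$ is trivial and the required estimate reduces to \eqref{eq: basic-initial-estimate}, absorbed into the choice $A_{0}=20^{\,\scriptscriptstyle 9}$ via $20^{\,\scriptscriptstyle 9\,\omega(c)}=A_{0}^{\omega(c_{\scriptscriptstyle 1}c_{\scriptscriptstyle 2})}$.

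For the inductive step, fix a prime $p\mid c_{\scriptscriptstyle 3}$, write $c=p\tilde{c}$ with $\tilde{c}=c_{\scriptscriptstyle 1}c_{\scriptscriptstyle 2}\tilde{c}_{\scriptscriptstyle 3}$ and $c_{\scriptscriptstyle 3}=p\tilde{c}_{\scriptscriptstyle 3}$, and aim to express $\mathscr{Z}^{(c)}(w;\chi_{a_{\scriptscriptstyle 2} c_{\scriptscriptstyle 2}},\chi_{a_{\scriptscriptstyle 1} c_{\scriptscriptstyle 1}})$ as a finite linear combination of the four ``neighboring'' series $\mathscr{Z}^{(\tilde{c})}(w;\chi_{a_{\scriptscriptstyle 2} c_{\scriptscriptstyle 2} p^{\epsilon_{2}}},\chi_{a_{\scriptscriptstyle 1} c_{\scriptscriptstyle 1} p^{\epsilon_{1}}})$ with $(\epsilon_{1},\epsilon_{2})\in\{0,1\}^{2}$, the coefficients being explicit rational functions of $|p|^{\scriptscriptstyle -1/2}$ and $|p|^{\scriptscriptstyle -w}$ built out of the $p$-local Chinta--Gunnells data. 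Such an identity is obtained by specializing \eqref{eq: fundamental-eq-h} at $h=p$ together with the multiplicativity of the coefficients $A$ and the quadratic reciprocity law of Section~\ref{prelim}: the four parity classes of the local exponents of $p$ are parametrized by the four local pieces $F$, $G^{\scriptscriptstyle(0)}$, $G^{\scriptscriptstyle(1)}$ together with the ``diagonal'' contribution, and solving the $4\times 4$ linear system for the diagonal term recovers the identity. The inversion hinges on Lemma~\ref{Estimate inverse even part zeven}, whose uniform lower bound $|1/f_{\text{even}}^{\,+}|<20$ controls the principal coefficient, while the estimates on $f_{\text{odd}}$ and $f_{\text{even}}^{\,-}$ in the same lemma show that the ``off-diagonal'' coefficients are of size $O(|p|^{-1/2})$.

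Once the identity is in place, applying the inductive hypothesis to each of the four terms yields an upper bound in which $p$ contributes $|p|^{3(1-\Re(w))+\delta}$ (resp.\ $|p|^{5(1-\Re(w))/2+\delta}$) when migrated into $c_{\scriptscriptstyle 1}$ (resp.\ $c_{\scriptscriptstyle 2}$), modulated by the size of the corresponding coefficient in the identity; the ``diagonal'' contribution is controlled by the inductive exponent $\max\{3-4\Re(w),2-5\Re(w)/2\}$ for $\tilde{c}_{\scriptscriptstyle 3}$. A direct bookkeeping shows that the dominant per-prime exponent is exactly $\max\{3-4\Re(w),2-5\Re(w)/2\}$, with the two competing branches arising from different parity sectors: $3-4\Re(w)$ dominates for $\Re(w)\le 2/3$, $2-5\Re(w)/2$ for $\Re(w)\ge 2/3$, with equality at $\Re(w)=2/3$. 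The constant $A_{1}=20+1500\cdot 20^{\,\scriptscriptstyle 9}$ is large enough to absorb the factor $20$ from inverting $f_{\text{even}}^{\,+}$ together with the factor $A_{0}^{\,\scriptscriptstyle 9}$ coming from the base constants in the shifted inductive bounds.

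The principal technical obstacle is the derivation of the $p$-local identity and the verification that its principal coefficient is invertible with the uniform bound $20$: one must track how $\chi_{d_{\scriptscriptstyle 0}}$ transforms at $p$ depending on the parity of the exponent of $p$ in $d$ (which determines whether $p$ belongs to $d_{\scriptscriptstyle 0}$ or to $d_{\scriptscriptstyle 1}^{\,2}$), and then confirm that the principal coefficient agrees, up to the expected power of $|p|$, with $f_{\text{even}}^{\,+}$ so that Lemma~\ref{Estimate inverse even part zeven} applies uniformly in $|p|\ge q\ge 5$. Once this step is complete, the exponent analysis and constant tracking are routine.
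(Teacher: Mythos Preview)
Your overall strategy --- induction on $\omega(c_{3})$, base case from the convexity bound \eqref{eq: basic-initial-estimate}, inductive step via a $p$-local identity inverted using Lemma~\ref{Estimate inverse even part zeven} --- is exactly the paper's. But your description of the key identity is wrong in ways that, as stated, prevent the argument from going through.

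The identity is \emph{not} obtained by specializing \eqref{eq: fundamental-eq-h} (that concerns the congruence series $Z(\mathbf{s},\chi_{a_{2}};h)$, a different object), and there is no $4\times4$ system. It comes from a direct Euler-factor decomposition: for $p\nmid\tilde c=c_{1}c_{2}\tilde c_{3}$ one reinstates the local factor at $p$ in $Z^{(\tilde c)}$ and splits by the parity of the $p$-exponents in $d$ and in $m_{1}m_{2}m_{3}$. Because of the vanishing \eqref{odd-l-ranked-coeff} there are only \emph{three} sectors, governed by $f_{\mathrm{odd}},\,f_{\mathrm{even}}^{\,-},\,f_{\mathrm{even}}^{\,+}$ (not $F,G^{(0)},G^{(1)}$ plus a fourth). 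The single resulting equation is
\begin{equation*}
\begin{split}
\tilde{\mathscr{Z}}^{(\tilde c)}(w;\chi_{a_{2}c_{2}},\chi_{a_{1}c_{1}})
&=\mathbf{f}_{p}\,\chi_{a_{2}c_{2}}(p)\,\tilde{\mathscr{Z}}^{(c)}(w;\chi_{a_{2}c_{2}},\chi_{a_{1}c_{1}p})
+\mathbf{f}_{p}^{\,-}\,\chi_{a_{1}c_{1}}(p)\,\tilde{\mathscr{Z}}^{(c)}(w;\chi_{a_{2}c_{2}p},\chi_{a_{1}c_{1}})\\
&\quad+\mathbf{f}_{p}^{\,+}\,\tilde{\mathscr{Z}}^{(c)}(w;\chi_{a_{2}c_{2}},\chi_{a_{1}c_{1}}),
\end{split}
\end{equation*}
and one simply divides by $\mathbf{f}_{p}^{\,+}$. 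Crucially, only the left-hand side sits at level $\tilde c$; the two off-diagonal terms stay at the \emph{same} level $c$ as the target, with $p$ migrated into the $c_{1}$- or $c_{2}$-slot. Your version puts all four series at level $\tilde c$ with $p$ in the character conductors; such series violate $c_{1},c_{2}\mid(\text{level})$ and lie outside the framework, so no inductive bound is available for them. The induction closes because all three right-hand terms carry $\tilde c_{3}$ (not $c_{3}=p\tilde c_{3}$) in the third slot --- not because the level drops.

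A second point: the two branches of the $\max$ come from \emph{different} sizes of the off-diagonal coefficients. One needs $|\mathbf{f}_{p}|<17\,|p|^{-\Re(w)}$ (not merely $O(|p|^{-1/2})$) together with the $c_{1}$-exponent $3(1-\Re(w))$ to produce $3-4\Re(w)$; the bound $|\mathbf{f}_{p}^{\,-}|<58\,|p|^{-1/2}$ together with the $c_{2}$-exponent $\tfrac{5}{2}(1-\Re(w))$ produces $2-\tfrac{5}{2}\Re(w)$. Quoting a uniform $O(|p|^{-1/2})$ for both, as you do, would yield $\tfrac{5}{2}-3\Re(w)$ in place of $3-4\Re(w)$, which is strictly larger for $\Re(w)>\tfrac{1}{2}$ and does not recover \eqref{eq: fundamental-estimate-optimized}.
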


\begin{proof} \!\!We proceed by induction on $\omega(c_{\scriptscriptstyle 3}).$ If $c_{\scriptscriptstyle 3} = 1,$ 
our estimate was established in \eqref{eq: basic-initial-estimate}; \!in other words, \!for every 
$\delta > 0,$ $c_{\scriptscriptstyle 1}, c_{\scriptscriptstyle 2}$ monics such that 
$
c_{\scriptscriptstyle 1}c_{\scriptscriptstyle 2}
$ 
is square-free, and $w$ with $\frac{1}{2} \le \Re(w) \le \frac{4}{5},$ we have 
\begin{equation*}
\big|\tilde{\mathscr{Z}}^{(c_{\scriptscriptstyle 1} c_{\scriptscriptstyle 2})}
(w; \chi_{a_{\scriptscriptstyle 2} c_{\scriptscriptstyle 2}}, \chi_{a_{\scriptscriptstyle 1} c_{\scriptscriptstyle 1}})\big|
\, \le  \, B(\delta, q) \, 20^{\, {\scriptscriptstyle 9} \, \omega(c_{\scriptscriptstyle 1}c_{\scriptscriptstyle 2})}\,
|c_{\scriptscriptstyle 2}|^{\scriptscriptstyle \frac{5}{2}(1 - \Re(w)) + \delta} 
\, |c_{\scriptscriptstyle 1}|^{\, \scriptscriptstyle 3 (1 - \Re(w)) + \delta}
\end{equation*} 
for some positive constant $B(\delta, q).$

Let $c_{\scriptscriptstyle 1}, c_{\scriptscriptstyle 2}$ \!and $c_{\scriptscriptstyle 3}$ be monic polynomials with  
$
c_{\scriptscriptstyle 1}c_{\scriptscriptstyle 2}c_{\scriptscriptstyle 3}
$ 
square-free. \!For 
$
\mathbf{s} = (s_{\scriptscriptstyle 1}, \ldots, s_{\scriptscriptstyle 4}) \in \mathbb{C}^{\scriptscriptstyle 4}
$ 
with $\Re(s_{\scriptscriptstyle i})$ sufficiently large, consider the multiple {D}irichlet series
\begin{equation*} 
Z^{(c_{\scriptscriptstyle 1}c_{\scriptscriptstyle 2}c_{\scriptscriptstyle 3})}(\mathbf{s};  
\chi_{a_{\scriptscriptstyle 2} c_{\scriptscriptstyle 2}}, \chi_{a_{\scriptscriptstyle 1} c_{\scriptscriptstyle 1}}) 
\;\; = \sum_{\substack{m_{\scriptscriptstyle 1}\!,\,  m_{\scriptscriptstyle 2}, \, m_{\scriptscriptstyle 3}, \, d - \mathrm{monic} \\  
d = d_{\scriptscriptstyle 0}^{} d_{\scriptscriptstyle 1}^{2}, \; d_{\scriptscriptstyle 0}^{} \; \mathrm{sq. \; free} \\ 
(m_{\scriptscriptstyle 1} m_{\scriptscriptstyle 2} m_{\scriptscriptstyle 3} d, \, c_{\scriptscriptstyle 1}c_{\scriptscriptstyle 2}c_{\scriptscriptstyle 3}) = 1}}  \, \frac{\chi_{a_{\scriptscriptstyle 1} c_{\scriptscriptstyle 1} d_{\scriptscriptstyle 0}}\!(\widehat{m}_{\scriptscriptstyle 1}) \, 
\chi_{a_{\scriptscriptstyle 1} c_{\scriptscriptstyle 1} d_{\scriptscriptstyle 0}}\!(\widehat{m}_{\scriptscriptstyle 2}) \, 
\chi_{a_{\scriptscriptstyle 1} c_{\scriptscriptstyle 1}d_{\scriptscriptstyle 0}}\!(\widehat{m}_{\scriptscriptstyle 3})\,
\chi_{a_{\scriptscriptstyle 2} c_{\scriptscriptstyle 2}}\!(d_{\scriptscriptstyle 0})}
{|m_{\scriptscriptstyle 1}|^{s_{\scriptscriptstyle 1}} |m_{\scriptscriptstyle 2}|^{s_{\scriptscriptstyle 2}}
|m_{\scriptscriptstyle 3}|^{s_{\scriptscriptstyle 3}} |d|^{s_{\scriptscriptstyle 4}}} \cdot 
A(m_{\scriptscriptstyle 1}, m_{\scriptscriptstyle 2}, m_{\scriptscriptstyle 3}, d).
\end{equation*} 
If $p \in \mathbb{F}[x]$ is a monic irreducible, 
$p \nmid c_{\scriptscriptstyle 1}c_{\scriptscriptstyle 2}c_{\scriptscriptstyle 3},$ we can write (as before): 
\begin{equation*} 
\begin{split}
& Z^{(c_{\scriptscriptstyle 1}c_{\scriptscriptstyle 2}c_{\scriptscriptstyle 3})}(\mathbf{s}; 
\chi_{a_{\scriptscriptstyle 2} c_{\scriptscriptstyle 2}}, \chi_{a_{\scriptscriptstyle 1} c_{\scriptscriptstyle 1}})
= \chi_{a_{\scriptscriptstyle 2} c_{\scriptscriptstyle 2}}\!(p)\,
Z^{(c_{\scriptscriptstyle 1}p \, c_{\scriptscriptstyle 2}c_{\scriptscriptstyle 3})}
(\mathbf{s}; \chi_{a_{\scriptscriptstyle 2} c_{\scriptscriptstyle 2}}, \chi_{a_{\scriptscriptstyle 1} c_{\scriptscriptstyle 1}p}) 
\, f_{\scriptscriptstyle \mathrm{odd}}(\mathbf{t}^{\deg \, p}\!, \, t_{\scriptscriptstyle 4}^{\deg \, p}; \, q^{\deg \, p})\\ 
& + \, 
\frac{\chi_{a_{\scriptscriptstyle 1} c_{\scriptscriptstyle 1}}\!(p)}{2}\,
Z^{(c_{\scriptscriptstyle 1}c_{\scriptscriptstyle 2}p \, c_{\scriptscriptstyle 3})}
(\mathbf{s};  \chi_{a_{\scriptscriptstyle 2} c_{\scriptscriptstyle 2}p}, \chi_{a_{\scriptscriptstyle 1} c_{\scriptscriptstyle 1}})
\left(f_{\scriptscriptstyle \mathrm{even}}(\mathbf{t}^{\deg \, p}\!, \, t_{\scriptscriptstyle 4}^{\deg \, p}; \, q^{\deg \, p}) \, - \,
f_{\scriptscriptstyle \mathrm{even}}(- \, \mathbf{t}^{\deg \, p}\!, \, t_{\scriptscriptstyle 4}^{\deg \, p}; \, q^{\deg \, p}) \right)\\ 
& + \, \frac{1}{2}\, Z^{(c_{\scriptscriptstyle 1} c_{\scriptscriptstyle 2} c_{\scriptscriptstyle 3} p)}
(\mathbf{s};  \chi_{a_{\scriptscriptstyle 2} c_{\scriptscriptstyle 2}}, \chi_{a_{\scriptscriptstyle 1} c_{\scriptscriptstyle 1}})
\left(f_{\scriptscriptstyle \mathrm{even}}(\mathbf{t}^{\deg \, p}\!, \, t_{\scriptscriptstyle 4}^{\deg \, p}; \, q^{\deg \, p}) \, + \,
f_{\scriptscriptstyle \mathrm{even}}(- \, \mathbf{t}^{\deg \, p}\!, \, t_{\scriptscriptstyle 4}^{\deg \, p}; \, q^{\deg \, p}) \right)
\end{split}
\end{equation*} 
where we set $t_{\scriptscriptstyle i} = q^{-s_{\scriptscriptstyle i}}\!,$ and $\pm \, \mathbf{t}^{\deg \, p}$ \!stands for 
$
\big(\! \pm t_{\scriptscriptstyle 1}^{\deg \, p}\!, \pm \, t_{\scriptscriptstyle 2}^{\deg \, p}\!, \pm \, t_{\scriptscriptstyle 3}^{\deg \, p}\big).
$ 
Setting $s_{\scriptscriptstyle i} = \frac{1}{2}$ for $i = 1, 2, 3$ and $s_{\scriptscriptstyle 4} = w,$ 
we have by analytic continuation that 
\begin{equation*} 
\begin{split}
\tilde{\mathscr{Z}}^{(c_{\scriptscriptstyle 1} c_{\scriptscriptstyle 2} c_{\scriptscriptstyle 3})}
(w; \chi_{a_{\scriptscriptstyle 2} c_{\scriptscriptstyle 2}}, \chi_{a_{\scriptscriptstyle 1} c_{\scriptscriptstyle 1}}) 
& = \chi_{a_{\scriptscriptstyle 2} c_{\scriptscriptstyle 2}}\!(p)\, 
\tilde{\mathscr{Z}}^{(c_{\scriptscriptstyle 1}p \, c_{\scriptscriptstyle 2} c_{\scriptscriptstyle 3})}
(w;  \chi_{a_{\scriptscriptstyle 2} c_{\scriptscriptstyle 2}}, 
\chi_{a_{\scriptscriptstyle 1} c_{\scriptscriptstyle 1}p})\, \mathbf{f}_{\scriptscriptstyle p}(w) 
\,  + \, \chi_{a_{\scriptscriptstyle 1} c_{\scriptscriptstyle 1}}\!(p)\, 
\tilde{\mathscr{Z}}^{(c_{\scriptscriptstyle 1} c_{\scriptscriptstyle 2}p \, 
c_{\scriptscriptstyle 3})}(w;  \chi_{a_{\scriptscriptstyle 2} c_{\scriptscriptstyle 2} p}, 
\chi_{a_{\scriptscriptstyle 1} c_{\scriptscriptstyle 1}})\, 
\mathbf{f}_{\scriptscriptstyle p}^{\, -}(w)\\ 
& + \, \tilde{\mathscr{Z}}^{(c_{\scriptscriptstyle 1} c_{\scriptscriptstyle 2} c_{\scriptscriptstyle 3} p)}
(w;  \chi_{a_{\scriptscriptstyle 2} c_{\scriptscriptstyle 2}}, \chi_{a_{\scriptscriptstyle 1} c_{\scriptscriptstyle 1}})\, 
\mathbf{f}_{\scriptscriptstyle p}^{\, +}(w).
\end{split}
\end{equation*} 
Here 
\begin{equation*} 
\mathbf{f}_{\scriptscriptstyle p}(w) : = 
f_{\scriptscriptstyle \mathrm{odd}}
\big(|p|^{\scriptscriptstyle - \frac{1}{2}}\!, \, |p|^{\scriptscriptstyle - \frac{1}{2}}\!, \, |p|^{\scriptscriptstyle - \frac{1}{2}}\!, 
\, |p|^{- w}; \, |p| \big) \;\;\;  \text{and} \;\;\; 
\mathbf{f}_{\scriptscriptstyle p}^{\, \pm}(w) : = f_{\scriptscriptstyle \mathrm{even}}^{\pm}
\big(|p|^{\scriptscriptstyle - \frac{1}{2}}\!,  \, |p|^{\scriptscriptstyle - \frac{1}{2}}\!, \, |p|^{\scriptscriptstyle - \frac{1}{2}}\!, 
\, |p|^{- w}; \, |p| \big).
\end{equation*} 
Applying the inequalities in Lemma \ref{Estimate inverse even part zeven} to 
$|\mathbf{f}_{\scriptscriptstyle p}(w)|$ and $|\mathbf{f}_{\scriptscriptstyle p}^{\, \pm}(w)|,$
it follows that, for $\Re(w) \ge \frac{1}{2},$ 
\begin{equation*}
\begin{split} 
\big|\tilde{\mathscr{Z}}^{(c_{\scriptscriptstyle 1} c_{\scriptscriptstyle 2} c_{\scriptscriptstyle 3} p)}
(w;  \chi_{a_{\scriptscriptstyle 2} c_{\scriptscriptstyle 2}}, 
\chi_{a_{\scriptscriptstyle 1} c_{\scriptscriptstyle 1}})\big|  
& < 20\, \big|\tilde{\mathscr{Z}}^{(c_{\scriptscriptstyle 1} c_{\scriptscriptstyle 2} c_{\scriptscriptstyle 3})}
(w; \chi_{a_{\scriptscriptstyle 2} c_{\scriptscriptstyle 2}}, \chi_{a_{\scriptscriptstyle 1} c_{\scriptscriptstyle 1}}) \big| + 
17\cdot 20\, \big|\tilde{\mathscr{Z}}^{(c_{\scriptscriptstyle 1}p \, c_{\scriptscriptstyle 2} c_{\scriptscriptstyle 3})}
(w;  \chi_{a_{\scriptscriptstyle 2} c_{\scriptscriptstyle 2}}, 
\chi_{a_{\scriptscriptstyle 1} c_{\scriptscriptstyle 1}p})\big|\, |p|^{\scriptscriptstyle - \Re(w)} \\
& + 58\cdot 20\, \big|\tilde{\mathscr{Z}}^{(c_{\scriptscriptstyle 1} c_{\scriptscriptstyle 2}p \, 
c_{\scriptscriptstyle 3})}(w;  \chi_{a_{\scriptscriptstyle 2} c_{\scriptscriptstyle 2}p}, 
\chi_{a_{\scriptscriptstyle 1} c_{\scriptscriptstyle 1}})\big|\, 
|p|^{\scriptscriptstyle -\frac{1}{2}}.
\end{split}
\end{equation*} 
Let $K(c_{\scriptscriptstyle 1}, c_{\scriptscriptstyle 2}, c_{\scriptscriptstyle 3}, w, \delta, q)$ denote the right-hand side of 
\eqref{eq: fundamental-estimate-optimized}, i.e., 
\begin{equation*}
K(c_{\scriptscriptstyle 1}, c_{\scriptscriptstyle 2}, c_{\scriptscriptstyle 3}, w, \delta, q) 
= B(\delta, q) 
\, A_{0}^{\omega(c_{\scriptscriptstyle 1}c_{\scriptscriptstyle 2})} A_{1}^{\omega(c_{\scriptscriptstyle 3})}
|c_{\scriptscriptstyle 1}|^{\scriptscriptstyle 3 (1 - \Re(w)) + \delta} 
\, |c_{\scriptscriptstyle 2}|^{\scriptscriptstyle \frac{5}{2}(1 - \Re(w)) + \delta} 
\, |c_{\scriptscriptstyle 3}|^{\scriptscriptstyle \mathrm{max}\left\{3 \, - \, 4 \Re(w), \, 2 \, - \frac{5 \Re(w)}{2}\right\} + \delta}.
\end{equation*} 
Taking $w$ such that $\frac{1}{2} \le \Re(w) \le \frac{4}{5},$ we have by the induction hypothesis 
\begin{equation*} 
\begin{split}
\big|\tilde{\mathscr{Z}}^{(c p)}(w;  \chi_{a_{\scriptscriptstyle 2} c_{\scriptscriptstyle 2}}, 
\chi_{a_{\scriptscriptstyle 1} c_{\scriptscriptstyle 1}})\big| \, & < \, 
K(c_{\scriptscriptstyle 1}, c_{\scriptscriptstyle 2}, c_{\scriptscriptstyle 3}, w, \delta, q)  
\cdot \Big(20 + 340 \, A_{0} \, |p|^{\scriptscriptstyle 3 \, - \, 4 \Re(w) + \delta} + 1160 \, A_{0} \, |p|^{\scriptscriptstyle 2 \, - \frac{5\Re(w)}{2} + \, \delta}\Big)\\
& < \, K(c_{\scriptscriptstyle 1}, c_{\scriptscriptstyle 2}, c_{\scriptscriptstyle 3}p, w, \delta, q) 
\end{split}
\end{equation*} 
and the proposition follows.
\end{proof}

Using the last proposition, we can now estimate the function
$ 
\mathscr{Z}(w, \chi_{a_{\scriptscriptstyle 2}}\!; h) : = Z\big(\tfrac{1}{2}, \tfrac{1}{2}, \tfrac{1}{2}, w, \chi_{a_{\scriptscriptstyle 2}}; h\big).
$

\vskip10pt
\begin{thm}\label{Main-h-Estimate} --- For $h\in \mathbb{F}[x]$ square-free monic and 
$
a_{\scriptscriptstyle 2} \in \{1, \theta_{\scriptscriptstyle 0} \},
$ 
put 
\begin{equation*}
\tilde{\mathscr{Z}}(w, \chi_{a_{\scriptscriptstyle 2}}\!; h)  = 
\big(1\, - \, q^{3 - 4 w} \big) \big(1 \, - \, q^{2 - 2 w} \big)^{\! \scriptscriptstyle 7}\, 
\mathscr{Z}(w, \chi_{a_{\scriptscriptstyle 2}}\!; h).
\end{equation*} 
Then, for every $\delta > 0,$ \!we have 
\begin{equation} \label{eq: basic-estimate-zh}
\tilde{\mathscr{Z}}(w, \chi_{a_{\scriptscriptstyle 2}}\!; h)\, \ll_{\delta, \, q} \, A^{\omega(h)}  \, 
|h|^{\scriptscriptstyle 2 \, - \frac{9 \Re(w)}{2} + \, \delta} 
\end{equation} 
on the strip $\frac{2}{3} \le \Re(w) \le \frac{4}{5},$ and 
\begin{equation*} 
|h|^{2 w}\,  \tilde{\mathscr{Z}}(w, \chi_{a_{\scriptscriptstyle 2}}\!; h)\, \ll_{\delta, \, q} \, 
A^{\omega(h)}  \, |h|^{\scriptscriptstyle \delta} 
\end{equation*} 
on the strip $\frac{4}{5} \le \Re(w) \le 1+ \delta,$ where $A$ is an explicitly computable constant. 
\end{thm}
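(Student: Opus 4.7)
The plan is to combine the sieved identity \eqref{eq: fundamental-eq-h} with the local estimates of Lemma \ref{Estimate Zloc} and the subconvex bound of Proposition \ref{key-proposition} to obtain \eqref{eq: basic-estimate-zh} on the strip $\tfrac{2}{3} \le \Re(w) \le \tfrac{4}{5}$; the second estimate will then follow by a Phragm\'en--Lindel\"of argument applied to the auxiliary function $|h|^{2w} \tilde{\mathscr{Z}}(w, \chi_{a_{\scriptscriptstyle 2}}\!; h)$ on the strip $\tfrac{4}{5} \le \Re(w) \le 1 + \delta$.

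For the first estimate, I would specialize \eqref{eq: fundamental-eq-h} at $\mathbf{s} = \big(\tfrac{1}{2}, \tfrac{1}{2}, \tfrac{1}{2}, w\big)$ and multiply through by $(1 - q^{3 - 4w})(1 - q^{2 - 2w})^{7}$. Identifying $c_{\varepsilon}'$ with $c_{\scriptscriptstyle 2}$ and $c'/c_{\varepsilon}'$ with $c_{\scriptscriptstyle 3}$ in the notation of $\tilde{\mathscr{Z}}^{(h)}(w; \chi_{a_{\scriptscriptstyle 2} c_{\scriptscriptstyle 2}}, \chi_{c_{\scriptscriptstyle 1}})$, Lemma \ref{Estimate Zloc} bounds the product of the local $F$ and $G^{(\varepsilon_{p'})}$ factors (including the $|p|^{-w}$ coming from $c_{\scriptscriptstyle 1}$) by $C^{\omega(h)} |c_{\scriptscriptstyle 1}|^{-\Re(w)} |c_{\scriptscriptstyle 2}|^{-1/2}$, uniformly for $\Re(w) \ge \tfrac{1}{2}$. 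Plugging in Proposition \ref{key-proposition} and collecting exponents, each summand is bounded by
\begin{equation*}
A^{\omega(h)}\, |h|^{-2\Re(w)}\, |c_{\scriptscriptstyle 1}|^{3 - 4\Re(w) + \delta}\, |c_{\scriptscriptstyle 2}|^{2 - \tfrac{5\Re(w)}{2} + \delta}\, |c_{\scriptscriptstyle 3}|^{M + \delta},
\end{equation*}
where $M := \max\{3 - 4\Re(w),\, 2 - \tfrac{5\Re(w)}{2}\}$. On $[\tfrac{2}{3}, \tfrac{4}{5}]$ one has $M = 2 - \tfrac{5\Re(w)}{2} \ge 0$, while the $|c_{\scriptscriptstyle 1}|$-exponent is at most $M$ when non-negative and otherwise trivially $\le 1$. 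Hence the product of the three $|c_i|$-factors is $\le |h|^{M + \delta}$. Summing over the $3^{\omega(h)}$ decompositions $h = c_{\scriptscriptstyle 1} c_{\scriptscriptstyle 2} c_{\scriptscriptstyle 3}$ and absorbing constants into $A^{\omega(h)}$ yields the first estimate, since $M - 2\Re(w) = 2 - \tfrac{9\Re(w)}{2}$.

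For the second estimate, I would check first that at $\Re(w) = 1 + \delta$ the defining series of $\mathscr{Z}(w, \chi_{a_{\scriptscriptstyle 2}}\!; h)$ is absolutely convergent: writing $d = d_{\scriptscriptstyle 0}^{} h^{2} e^{2}$ (with $d_{\scriptscriptstyle 0}^{}$ square-free coprime to $h$ and $e$ monic) pulls out a factor $|h|^{-2w}$, and what remains is majorized by an Euler product whose local factors at $p \mid h$ are $O_{q}(|p|^{\delta})$, thanks to the explicit shape of $A\big(p^{k_{\scriptscriptstyle 1}}\!, \ldots, p^{k_{r}}\!, p^{l}\big)$ recorded in Appendix \ref{Appendix B}. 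This gives $|h|^{2w} \tilde{\mathscr{Z}}(w, \chi_{a_{\scriptscriptstyle 2}}\!; h) \ll_{\delta, q} A^{\omega(h)} |h|^{\delta}$ on $\Re(w) = 1 + \delta$. On the line $\Re(w) = \tfrac{4}{5}$, the first estimate reads $\tilde{\mathscr{Z}} \ll A^{\omega(h)} |h|^{-8/5 + \delta}$, i.e., $|h|^{2w} \tilde{\mathscr{Z}} \ll A^{\omega(h)} |h|^{\delta}$; the two boundary bounds match exactly. Since the completion factor cancels the only poles of $\mathscr{Z}(w, \chi_{a_{\scriptscriptstyle 2}}\!; h)$ in this strip (at $q^{-w} = \pm q^{-1}$), the function $|h|^{2w} \tilde{\mathscr{Z}}(w, \chi_{a_{\scriptscriptstyle 2}}\!; h)$ is holomorphic and of order one on $\tfrac{4}{5} \le \Re(w) \le 1 + \delta$, and Phragm\'en--Lindel\"of concludes.

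The main obstacle I anticipate lies in the bookkeeping of the first step: the three exponents attached to $|c_{\scriptscriptstyle 1}|, |c_{\scriptscriptstyle 2}|, |c_{\scriptscriptstyle 3}|$ change signs and relative orderings across the strip $[\tfrac{2}{3}, \tfrac{4}{5}]$, and one has to verify uniformly that only the exponent $M = 2 - \tfrac{5\Re(w)}{2}$ survives after taking the worst decomposition of $h$, so that the $|h|^{-2\Re(w)}$ prefactor gives exactly the threshold $2 - \tfrac{9\Re(w)}{2}$. A secondary technical point is ensuring that the local factor bound at primes $p \mid h$, used for absolute convergence at $\Re(w) = 1 + \delta$, is indeed of the form $A^{\omega(h)} |h|^{\delta}$ rather than worse.
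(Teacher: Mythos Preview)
Your proposal is correct and follows essentially the same approach as the paper. The only difference is at the right edge $\Re(w)=1+\delta$ of the Phragm\'en--Lindel\"of argument: rather than estimating the defining series directly (where, incidentally, $d_0$ need not be coprime to $h$---the definition of $Z(\mathbf{s},\chi_{a_2};h)$ only imposes $h\mid d_1$), the paper simply reuses \eqref{eq: fundamental-eq-h} together with the local bounds of Lemma~\ref{Estimate Zloc} and the already-established convexity bound \eqref{eq: EstimateZ1} for $Z^{(h)}\big(\tfrac12,\tfrac12,\tfrac12,w;\,\cdot,\cdot\big)$, which immediately gives $|h|^{2w}\tilde{\mathscr{Z}}(w,\chi_{a_2};h)\ll_{\delta,q}A^{\omega(h)}|h|^{\delta}$ without any further local analysis.
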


\begin{proof} \!By \eqref{eq: fundamental-eq-h} we have 
\begin{equation*} 
\begin{split}
|\tilde{\mathscr{Z}}(w, \chi_{a_{\scriptscriptstyle 2}}\!; h)| \, \le \, 
|h|^{- 2 \Re(w)} \!\sum_{h = c c'} \; \sum_{\varepsilon = (\varepsilon_{\scriptscriptstyle p'})_{p' \, \mid \, c'}} 
\, \big|\tilde{\mathscr{Z}}^{(h)}(w; \chi_{a_{\scriptscriptstyle 2} c_{\varepsilon}'}, \chi_{c}) \big|\, 
&\prod_{p \, \mid \, c}\, \big|F(|p|^{\, - \frac{1}{2}}\!, \ldots, \, |p|^{\, - w}; \, 
|p|)\big|\;  |p|^{\, \scriptscriptstyle - \, \Re(w)}\\
& \cdot \prod_{p' \, \mid \, c'} \big|G^{(\varepsilon_{\scriptscriptstyle p'})}
(|p'|^{\, - \frac{1}{2}}\!, \ldots, \, |p'|^{\, - w}; \, |p'|)\big|.
\end{split}
\end{equation*} 
It follows from Proposition \ref{key-proposition} and Lemma \ref{Estimate Zloc} that, for every $\delta > 0$ 
and $w \in \mathbb{C}$ with $\frac{2}{3} \le \Re(w) \le \frac{4}{5},$
\begin{equation*} 
\tilde{\mathscr{Z}}(w, \chi_{a_{\scriptscriptstyle 2}}\!; h)\, \ll_{\delta,\,  q} \, B^{\omega(h)}  \, 
|h|^{\scriptscriptstyle 2 \, - \frac{9 \Re(w)}{2} + \, \delta} \!
\underbrace{\sum_{h = c c'} \;
\sum_{\varepsilon = (\varepsilon_{\scriptscriptstyle p'})_{p' \, \mid \, c'}}  1}_{\textrm{\textcolor{MyDarkBlue}
{${\scriptstyle{= 3^{\omega(h)}}}$}}} \, 
\ll_{\delta, \, q} \, (3B)^{\omega(h)}  \, |h|^{\scriptscriptstyle 2 \, - \frac{9 \Re(w)}{2} + \, \delta}
\end{equation*} 
for some explicitly computable constant $B.$ 
In particular, if $\Re(w) = \frac{4}{5},$ \!we have  
\begin{equation*} 
|h|^{2 w}\,  \tilde{\mathscr{Z}}(w, \chi_{a_{\scriptscriptstyle 2}}\!; h)\,  \ll_{\delta, \, q} \, (3B)^{\omega(h)}  \, 
|h|^{\scriptscriptstyle \delta}.
\end{equation*}

On the other hand, if $\Re(w) = 1 + \delta$ we have by \eqref{eq: EstimateZ1} that 
\begin{equation*} 
|h|^{2 w}\,  \tilde{\mathscr{Z}}(w, \chi_{a_{\scriptscriptstyle 2}}\!; h)\,  \ll_{\delta, \, q} \,  
(11\,  B)^{\omega(h)}  \, |h|^{\scriptscriptstyle \delta}.
\end{equation*} 
The function 
$
|h|^{2 w}\,  \tilde{\mathscr{Z}}(w, \chi_{a_{\scriptscriptstyle 2}}\!; h)
$ 
is holomorphic on an open neighborhood of the strip $\frac{4}{5} \le \Re(w) \le 1 + \delta.$ This function is also 
of finite order on the strip and thus, the second estimate follows from the Phragmen-Lindel\"of principle. 

This completes the proof of the theorem. 
\end{proof}

To establish the analytic continuation of 
$
Z_{\scriptscriptstyle 0}\big(\tfrac{1}{2}, \tfrac{1}{2}, \tfrac{1}{2}, w, \, \chi_{a_{\scriptscriptstyle 2}}\big)
$ 
to the half-plane $\Re(w) > \frac{2}{3},$ \!we shall need the following elementary lemma:

\vskip10pt
\begin{lem}\label{Elem-Lemma} --- For any $A > 1,$ the {D}irichlet series 
\begin{equation*} 
\mathscr{D}_{A}(s) : = \sum_{h - \mathrm{monic \; \& \; sq. \; free}} 
A^{\omega(h)}\, |h|^{\scriptscriptstyle - s}
\end{equation*} 
is absolutely convergent in the half-plane $\Re(s) > 1.$
\end{lem}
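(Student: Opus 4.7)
\textbf{Proof proposal for Lemma \ref{Elem-Lemma}.} Since the condition on $h$ is that it be monic and square-free, and the restricted function $h \mapsto A^{\omega(h)}$ is multiplicative on square-free arguments, the series $\mathscr{D}_A(s)$ factors as an Euler product
\begin{equation*}
\mathscr{D}_A(s) \;=\; \prod_{p-\mathrm{monic\ \&\ irred.}} \bigl(1 + A\,|p|^{-s}\bigr).
\end{equation*}
So the plan is simply to prove absolute convergence of this product (equivalently, of its logarithm) in the region $\Re(s) > 1$.

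The cleanest route is comparison with a power of $\zeta(s)$. Pick an integer $k \ge A$. For $0 \le t < 1$ one has the elementary inequality $1 + At \le 1 + kt \le (1 + t)^k \le (1-t)^{-k},$ which applied with $t = |p|^{-\Re(s)}$ (valid for every monic irreducible $p$ when $\Re(s) > 0$) gives
\begin{equation*}
\bigl|1 + A|p|^{-s}\bigr| \;\le\; 1 + A|p|^{-\Re(s)} \;\le\; \bigl(1 - |p|^{-\Re(s)}\bigr)^{-k}.
\end{equation*}
Taking the product over all $p$ yields $|\mathscr{D}_A(s)| \le \zeta(\Re(s))^{k}$, and since $\zeta(s) = (1 - q^{1-s})^{-1}$ is finite throughout $\Re(s) > 1$, the absolute convergence of both the Euler product and the original Dirichlet series follows at once.

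If one prefers a self-contained argument that does not invoke the Euler product for $\zeta(s)$, the same conclusion can be reached by taking logarithms: using $|\log(1+z)| \le 2|z|$ for $|z| \le 1/2$, one reduces matters to showing $\sum_p |p|^{-\Re(s)} < \infty$ for $\Re(s) > 1$, and then the trivial bound of at most $q^n$ monic irreducibles of degree $n$ in $\mathbb{F}[x]$ gives
\begin{equation*}
\sum_{p-\mathrm{monic\ \&\ irred.}} |p|^{-\Re(s)} \;\le\; \sum_{n\ge 1} \frac{q^n}{q^{n\Re(s)}} \;=\; \sum_{n\ge 1} q^{-n(\Re(s)-1)},
\end{equation*}
which is a convergent geometric series. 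Either way the argument is routine; there is no serious obstacle, only the bookkeeping of the elementary inequality above.
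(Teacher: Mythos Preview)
Your proof is correct and follows essentially the same approach as the paper: both arguments factor $\mathscr{D}_A(s)$ as the Euler product $\prod_p (1 + A|p|^{-s})$ and then verify convergence by an elementary estimate, the paper using $\log(1+y) < y$ together with $\mathrm{Irr}_q(m) \le q^m/m$, and you using either the comparison $1 + At \le (1-t)^{-k}$ with $\zeta(\sigma)^k$ or the equivalent logarithmic bound. The only cosmetic difference is that the paper first secures a region of absolute convergence (via $A^{\omega(h)} \le |h|^n$) before writing the Euler product, whereas you invoke the product directly; since absolute convergence of the product immediately yields absolute convergence of the expanded series over square-free $h$, this ordering is immaterial.
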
 

\begin{proof} \!First the series is absolutely convergent for $\Re(s)$ sufficiently large. To see this, 
choose $n \ge 1$ such that $A < q^{n}.$ Since $\omega(h) \le \deg \, h$ for any square-free 
polynomial, we have $A^{\omega(h)} \le |h|^{n}.$ Thus, for $\Re(s) = \sigma > n + 1,$
\begin{equation*} 
\sum_{\substack{h - \mathrm{monic \; \& \; sq. \; free}\\ \deg h\,  \le \, k}} A^{\omega(h)}\, |h|^{\scriptscriptstyle - \sigma} \, 
 <  \sum_{h - \mathrm{monic}} |h|^{\scriptscriptstyle n - \sigma} 
\end{equation*} 
the last series being obviously convergent. 

Now $\mathscr{D}_{A}(s)$ has the Euler product expression 
\begin{equation*} 
\mathscr{D}_{A}(s) = \prod_{m = 1}^{\infty} \, (1 + A \, q^{\scriptscriptstyle - m s})^{\mathrm{Irr}_{\scriptscriptstyle q}(m)} \; 
\qquad \; \text{(for $\Re(s) > n + 1$)}
\end{equation*} 
where $\mathrm{Irr}_{\scriptscriptstyle q}(m)$ is the number of (monic) irreducible polynomials of degree $m$ over 
$\mathbb{F}.$ From the well-known formula 
$
\sum_{d \mid m} \, d \, \mathrm{Irr}_{\scriptscriptstyle q}(d) = q^{\scriptscriptstyle m}
$ 
for $m \ge 1,$ we have 
$
\mathrm{Irr}_{\scriptscriptstyle q}(m) \le q^{\scriptscriptstyle m} \slash m.
$ 
By using this estimate and the familiar inequality $\log(1 + y) < y$ for $y > 0,$ we have, 
for $s = \sigma > n + 1,$ 
\begin{equation*} 
\log \, \mathscr{D}_{A}(\sigma) = \sum_{m = 1}^{\infty} \, \mathrm{Irr}_{\scriptscriptstyle q}(m)
\log(1 + A \, q^{\scriptscriptstyle - m \sigma}) \, < \, 
A\sum_{m = 1}^{\infty} \, \frac{q^{\scriptscriptstyle m(1 - \sigma)}}{m}.
\end{equation*} 
Thus the Euler product expression of $\mathscr{D}_{A}(\sigma)$ converges when $\sigma > 1,$ 
from which the lemma follows.
\end{proof}

\section{Proof of Theorem \ref{Main Theorem A}} \label{proof Thm A}
The function 
$ 
\tilde{\mathscr{Z}}_{\scriptscriptstyle 0}(w, \chi_{a_{\scriptscriptstyle 2}}) : = 
\big(1\, - \, q^{3 - 4w} \big) \big(1 \, - \, q^{2 - 2 w} \big)^{\! \scriptscriptstyle 7} 
Z_{\scriptscriptstyle 0}\big(\tfrac{1}{2}, \tfrac{1}{2}, \tfrac{1}{2}, w, \, \chi_{a_{\scriptscriptstyle 2}}\big)
$ 
is holomorphic in the half-plane $\Re(w) > 1,$ and in this region, we have 
(by Lemma \ref{sieving} and analytic continuation) that
\begin{equation} \label{eq: sqfree-non-sqfree1/2}
\tilde{\mathscr{Z}}_{\scriptscriptstyle 0}(w, \chi_{a_{\scriptscriptstyle 2}})\; = \sum_{h - \mathrm{monic}}\,  \mu(h) 
\tilde{\mathscr{Z}}(w, \chi_{a_{\scriptscriptstyle 2}}\!; h).
\end{equation} 
By Theorem \ref{Main-h-Estimate} and Lemma \ref{Elem-Lemma}, the series in the right-hand side 
converges uniformly on every compact subset of the half-plane $\Re(w) > 2 \slash 3,$ and the 
meromorphic continuation of 
$ 
Z_{\scriptscriptstyle 0}\big(\tfrac{1}{2}, \tfrac{1}{2}, \tfrac{1}{2}, w, \, \chi_{a_{\scriptscriptstyle 2}}\big)
$ 
now follows from Weierstrass Theorem. \!The values $w \in \mathbb{C}$ for which $q^{- w} = \pm \, q^{\scriptscriptstyle - 1}\!,$ or
$q^{- w} = \pm \, q^{\scriptscriptstyle - 3\slash 4}\!, \, \pm\,  i \, q^{\scriptscriptstyle - 3\slash 4}$ are the only possible 
poles of this function. \!The principal parts of 
$ 
Z_{\scriptscriptstyle 0}\big(\tfrac{1}{2}, \tfrac{1}{2}, \tfrac{1}{2}, w, \, \chi_{a_{\scriptscriptstyle 2}}\big)
$ 
at $q^{- w} = \pm \, q^{\scriptscriptstyle - 1}$ can be computed following the arguments in \cite[Section~3.2]{DGH}. 

To compute the residues at the remaining poles, fix $\vartheta' \in \{1, \, \theta_{\scriptscriptstyle 0} \},$ and let 
$\rho(\vartheta')$ be such that $\rho(\vartheta') \in \{\pm 1\}$ if $\vartheta' \! = 1$ or $\rho(\vartheta') \in \{\pm i\}$ if 
$\vartheta' \!= \theta_{\scriptscriptstyle 0}.$ Letting
\begin{equation*}
\begin{split}
P(x) & = (1 - x)^{\scriptscriptstyle 5} (1 + x)  (1 + 4 \, x + 11 x^{\scriptscriptstyle 2} + 10 \, x^{\scriptscriptstyle 3} 
- 11 x^{\scriptscriptstyle 4} + 11 x^{\scriptscriptstyle 6} - 4 \, x^{\scriptscriptstyle 7} - x^{\scriptscriptstyle 8}) \\
& = 1 - 14 \, x^{\scriptscriptstyle 3} - x^{\scriptscriptstyle 4} + 78 \, x^{\scriptscriptstyle 5} + \cdots
\end{split}
\end{equation*} 
we have that 
\begin{equation} \label{eq: generic-residue-3/4}
\left.\Big(\! 1 - \rho(\vartheta') q^{{\scriptscriptstyle \frac{3}{4}} - w} \!\Big)\, 
Z_{\scriptscriptstyle 0}\big(\tfrac{1}{2}, \tfrac{1}{2}, \tfrac{1}{2}, w, \, \chi_{a_{\scriptscriptstyle 2}}\big)\right\vert_{q^{ - w} \, = \; \overline{\rho(\vartheta')}\, 
q^{\scriptscriptstyle - \frac{3}{4}}} = \, \frac{1}{8}\,  \Gamma(a_{\scriptscriptstyle 2}, \vartheta'; \rho(\vartheta'))\, 
L\big(\tfrac{1}{2}, \chi_{{\scriptscriptstyle \vartheta'}}\big)^{\! \scriptscriptstyle 7}
\cdot \prod_{p}  P\left(\frac{\chi_{{\scriptscriptstyle \vartheta'}}(p)}{\sqrt{|p|}}\right).
\end{equation} 
the product in the right-hand side being over all monic irreducibles in $\mathbb{F}[x].$ Note that $P(x)$ is precisely the polynomial appearing 
in the analogous calculation of Zhang \cite{Zha} in the context of the cubic moment of quadratic {D}irichlet {$L$}-series over the rationals.

To justify \eqref{eq: generic-residue-3/4}, we first apply Proposition \ref{MDS-residue-3/4} and \eqref{eq: fundamental-eq-h}. Indeed, let 
\begin{equation*}
\begin{split}
&\mathscr{P}_{\! \scriptscriptstyle 1}(c) = \rho(\vartheta')^{\deg \, c}\,  |c|^{\scriptscriptstyle -  1 \slash 4} 
\prod_{p \, \mid \, c} 
\!\big(1 - \chi_{{\scriptscriptstyle \vartheta'}}(p) |p|^{\scriptscriptstyle - 1 \slash 2} \big)^{\! \scriptscriptstyle 8}
\big(1 + \chi_{{\scriptscriptstyle \vartheta'}}(p) |p|^{\scriptscriptstyle - 1 \slash 2} \big)^{\! \scriptscriptstyle 2} 
\big(1 + 6 \, \chi_{{\scriptscriptstyle \vartheta'}}(p) |p|^{\scriptscriptstyle - 1 \slash 2}  +   |p|^{\scriptscriptstyle -1} \big) \\
&\mathscr{P}_{\! \scriptscriptstyle 2}(c_{\varepsilon}') = |c_{\varepsilon}'|^{\scriptscriptstyle -  1 \slash 2} 
\prod_{p \, \mid \, c_{\varepsilon}'}\!\big(1 - \chi_{{\scriptscriptstyle \vartheta'}}(p) |p|^{\scriptscriptstyle - 1 \slash 2} \big)^{\! \scriptscriptstyle 8}
\big(1 + \chi_{{\scriptscriptstyle \vartheta'}}(p) |p|^{\scriptscriptstyle - 1 \slash 2} \big)\, 
\big(3 + 7 \chi_{{\scriptscriptstyle \vartheta'}}(p) |p|^{\scriptscriptstyle - 1 \slash 2} + 3 \, |p|^{\scriptscriptstyle -1} \big) \\
&\mathscr{P}_{\! \scriptscriptstyle 3}\left(\frac{c'}{c_{\varepsilon}'} \right) = \prod_{p \, \mid \, \frac{c'}{c_{\varepsilon}'}} 
\!\big(1 - \chi_{{\scriptscriptstyle \vartheta'}}(p) |p|^{\scriptscriptstyle - 1 \slash 2} \big)^{\! \scriptscriptstyle 8}
\big(1 + \chi_{{\scriptscriptstyle \vartheta'}}(p) |p|^{\scriptscriptstyle - 1 \slash 2} \big)\, 
\big(1 + 7 \chi_{{\scriptscriptstyle \vartheta'}}(p) |p|^{\scriptscriptstyle - 1 \slash 2} 
+ 13 \, |p|^{\scriptscriptstyle -1} + 7 \chi_{{\scriptscriptstyle \vartheta'}}(p) |p|^{\scriptscriptstyle - 3 \slash 2}  + |p|^{\scriptscriptstyle - 2}\big).
\end{split}
\end{equation*} 
In \eqref{eq: fundamental-eq-h} set $s_{\scriptscriptstyle 1} \! = s_{\scriptscriptstyle 2} \! = s_{\scriptscriptstyle 3} \! = \!\frac{1}{2}$ and 
$s_{\scriptscriptstyle 4} \! = w.$ Multiplying the resulting equality by $1 - \rho(\vartheta') q^{{\scriptscriptstyle \frac{3}{4}} - w}$ and then 
taking the value $q^{ - w} \! = \overline{\rho(\vartheta')}\, q^{\scriptscriptstyle - \frac{3}{4}},$ it follows from Proposition \ref{MDS-residue-3/4} that   
\begin{equation*}
\begin{split} 
&\left.\Big(\! 1 - \rho(\vartheta') q^{{\scriptscriptstyle \frac{3}{4}} - w} \!\Big)\, 
Z\big(\tfrac{1}{2}, \tfrac{1}{2}, \tfrac{1}{2}, w, \, \chi_{a_{\scriptscriptstyle 2}}; h\big)
\right\vert_{q^{ - w} \, = \; \overline{\rho(\vartheta')}\, q^{\scriptscriptstyle - \frac{3}{4}}}\\ 
& = \frac{1}{8}\,  \Gamma(a_{\scriptscriptstyle 2}, \vartheta'; \rho(\vartheta'))\, 
L\big(\tfrac{1}{2}, \chi_{{\scriptscriptstyle \vartheta'}}\!\big)^{\! \scriptscriptstyle 7} \\
& \cdot  \chi_{{\scriptscriptstyle \vartheta'}}(h)\, |h|^{\scriptscriptstyle - 3\slash 2}
\!\sum_{h = c c'}  \mathscr{P}_{\! \scriptscriptstyle 1}(c) 
\prod_{p \, \mid \, c} F(|p|^{\, \scriptscriptstyle - 1\slash 2}\!, \, |p|^{\, \scriptscriptstyle - 1\slash 2}\!, \,
|p|^{\, \scriptscriptstyle - 1\slash 2}\!, \, \overline{\rho(\vartheta')}^{\, \deg \, p} |p|^{\scriptscriptstyle - 3\slash 4}; \, |p|) 
\, \overline{\rho(\vartheta')}^{\, \deg \, p} |p|^{\scriptscriptstyle - 3\slash 4}\\
&\sum_{\varepsilon = (\varepsilon_{\scriptscriptstyle p'})_{p' \, \mid \, c'}} 
\mathscr{P}_{\! \scriptscriptstyle 2}(c_{\varepsilon}')\, 
\mathscr{P}_{\! \scriptscriptstyle 3}\left(\frac{c'}{c_{\varepsilon}'} \right)
\prod_{p' \, \mid \, c'} G^{(\varepsilon_{\scriptscriptstyle p'})}
(|p'|^{\, \scriptscriptstyle - 1\slash 2}\!, \, |p'|^{\, \scriptscriptstyle - 1\slash 2}\!, \,
|p'|^{\, \scriptscriptstyle - 1\slash 2}\!, \, \overline{\rho(\vartheta')}^{\, \deg \, p'}\! |p'|^{\scriptscriptstyle - 3 \slash 4}; \, |p'|).
\end{split}
\end{equation*} 
Recalling the explicit expressions of $F$ and $G^{(\varepsilon_{\scriptscriptstyle p'})}$ 
\!(see the proof of Lemma \ref{Estimate Zloc}), the equality \eqref{eq: generic-residue-3/4} follows now from 
\eqref{eq: sqfree-non-sqfree1/2} and a routine computation. This completes the proof of the theorem.

\section{Proof of Theorem \ref{Main Theorem B}}  \label{proof Thm B}
The proof is a standard application of the residue theorem. First replace $q^{- w}$ in 
$ 
Z_{\scriptscriptstyle 0}\big(\tfrac{1}{2}, \tfrac{1}{2}, \tfrac{1}{2}, w, 1\big)
$ 
by $\xi,$ and denote the resulting function by $\mathscr{W}(\xi).$ Thus 
\begin{equation*}
\mathscr{W}(\xi)\, = \sum_{D \ge 0}
\; \Bigg(\, \sum_{\substack{d_{\scriptscriptstyle 0} - \mathrm{monic \; \& \; sq. \; free} \\ \deg \, d_{\scriptscriptstyle 0} \, = \, D}} \, 
L\big(\tfrac{1}{2}, \chi_{\scriptscriptstyle d_{\scriptscriptstyle 0}}\!\big)^{\!\scriptscriptstyle 3} \Bigg)\, \xi^{\scriptscriptstyle D}.
\end{equation*} 
By Theorem \ref{Main Theorem A}, this function is meromorphic in the open disk $|\xi| < q^{\scriptscriptstyle - 2\slash 3}.$ For small positive $\delta,$ let 
$A_{\scriptscriptstyle \delta} = \{\xi \in \mathbb{C} : q^{\scriptscriptstyle - 2} \le |\xi| \le q^{\scriptscriptstyle - 2\slash 3 - \delta} \},$ and for $D \ge 0,$ 
consider the contour integral 
\begin{equation*} 
I(D) = \frac{1}{2 \, \pi \, i} \!\int_{\partial A_{\scriptscriptstyle \delta}} \frac{\mathscr{W}(\xi)}{\xi^{\scriptscriptstyle D + 1}}\, d\xi.
\end{equation*} 
We have 
\begin{equation*} 
\sum_{\substack{d_{\scriptscriptstyle 0} - \mathrm{monic \; \& \; sq. \; free} \\ \deg \, d_{\scriptscriptstyle 0} \, = \, D}} \, 
L\big(\tfrac{1}{2}, \chi_{\scriptscriptstyle d_{\scriptscriptstyle 0}}\!\big)^{\!\scriptscriptstyle 3}
= \frac{1}{2 \, \pi \, i}\;  \int\limits_{|\xi| \, = \, q^{\scriptscriptstyle - 2}} \frac{\mathscr{W}(\xi)}{\xi^{\scriptscriptstyle D + 1}}\, d\xi
\end{equation*} 
and by applying \eqref{eq: sqfree-non-sqfree1/2} and \eqref{eq: basic-estimate-zh},    
\begin{equation*} 
\int\limits_{|\xi| \, = \, q^{\scriptscriptstyle - 2\slash 3 - \delta}} \frac{\mathscr{W}(\xi)}{\xi^{\scriptscriptstyle D + 1}}\, d\xi
\, \ll_{\delta, \, q} \,  q^{\scriptscriptstyle D \left(\! \frac{2}{3} + \delta \!\right)} 
\end{equation*} 
giving the error term in the asymptotic formula. By the residue theorem, $I(D)$ is the sum of the residues at the poles of the function 
$\mathscr{W}(\xi) \slash \xi^{\scriptscriptstyle D + 1}$ in the annulus $A_{\scriptscriptstyle \delta},$ i.e., 
$\xi = \pm \, q^{\scriptscriptstyle - 1}$ \!and 
$\xi = \pm \, q^{\scriptscriptstyle - 3\slash 4}\!, \, \pm\,  i \, q^{\scriptscriptstyle - 3\slash 4}.$ 
The sum corresponding to the poles at $\xi = \pm \, q^{\scriptscriptstyle - 1}$ gives the main contribution to 
the asymptotic formula, and can be computed as in \cite{DGH}; see also \cite[Section~8 (a)]{RW} 
and \cite[Section~5.3]{AK}. \!Now, from the proof of Theorem \ref{Main Theorem A}, the sum of the residues at 
$\xi = \pm \, q^{\scriptscriptstyle - 3\slash 4}, \, \pm\,  i \, q^{\scriptscriptstyle - 3\slash 4}$ of the integrand is given by
\begin{equation*} 
\begin{split}
& - \tfrac{1}{4} \small{(1 + q^{\scriptscriptstyle 1\slash 4} + 10 \, q^{\scriptscriptstyle 1\slash 2} + 7  q^{\scriptscriptstyle 3 \slash 4} 
\, + 20 \, q + 7  q^{\scriptscriptstyle 5\slash 4} + 10 \, q^{\scriptscriptstyle 3\slash 2} + q^{\scriptscriptstyle 7\slash 4} + q^{\scriptscriptstyle 2})} \, 
q^{\scriptscriptstyle \frac{3}{4} D}\,  \zeta\big(\tfrac{1}{2}\big)^{\! \scriptscriptstyle 7}
\cdot \, \prod_{p}  P\, \Big(1\slash \sqrt{|p|} \, \Big) \\
& - \tfrac{(- 1)^{\scriptscriptstyle D}}{4} \small{(1 - q^{\scriptscriptstyle 1\slash 4} + 10 \, q^{\scriptscriptstyle 1\slash 2} 
- 7  q^{\scriptscriptstyle 3 \slash 4} \, + 20 \, q - 7  q^{\scriptscriptstyle 5\slash 4} + 10 \, q^{\scriptscriptstyle 3\slash 2} - q^{\scriptscriptstyle 7\slash 4} 
+ q^{\scriptscriptstyle 2})} \, q^{\scriptscriptstyle \frac{3}{4} D}\,  \zeta\big(\tfrac{1}{2}\big)^{\! \scriptscriptstyle 7}
\cdot \, \prod_{p}  P\, \Big(1 \slash \sqrt{|p|} \, \Big) \\
& - \tfrac{1}{2}\, \Re\big(i^{\, \scriptscriptstyle D} \small{(1 - i \, q^{\scriptscriptstyle 1\slash 4}\! 
- 4 \, q^{\scriptscriptstyle 1\slash 2} 
+ 7 i  \, q^{\scriptscriptstyle 3 \slash 4} + 6 \, q - 7 i \, q^{\scriptscriptstyle 5\slash 4}\! - 4 \, q^{\scriptscriptstyle 3\slash 2} 
+ i \, q^{\scriptscriptstyle 7\slash 4} + q^{\scriptscriptstyle 2})}\big)\, 
q^{\scriptscriptstyle \frac{3}{4} D}  
L\big(\tfrac{1}{2}, \chi_{{\scriptscriptstyle \theta_{\scriptscriptstyle 0}}}\big)^{\! \scriptscriptstyle 7}
\cdot \prod_{p}  P\, \Big((- 1)^{\deg \, p}\slash \sqrt{|p|} \, \Big).
\end{split}
\end{equation*} 
Thus, letting $R(D, q)$ denote the last expression times $ - q^{\scriptscriptstyle - \frac{3}{4} D},$ we have that   
\begin{equation*} 
\sum_{\substack{d_{\scriptscriptstyle 0} - \mathrm{monic \; \& \; sq. \; free} \\ \deg \, d_{\scriptscriptstyle 0} \, = \, D}} \, 
L\big(\tfrac{1}{2}, \chi_{\scriptscriptstyle d_{\scriptscriptstyle 0}}\!\big)^{\!\scriptscriptstyle 3}
= \, \frac{q^{\scriptscriptstyle D}}{\zeta(2)} Q(D, q)\, + \, q^{\scriptscriptstyle \frac{3}{4} D} R(D, q)
\, + \, O_{{\scriptstyle \delta}, \, q}\Big(q^{\scriptscriptstyle D \left(\! \frac{2}{3} + \delta \!\right)}\Big)
\end{equation*} 
which completes the proof.

\appendix
\section{Appendix} \label{Appendix A} 
To obtain the estimate \eqref{eq: EstimateZ1}, we have used the Lindel\"of-type bound established in the following

\vskip10pt
\begin{thm}\label{estimate-L-realpart-w=1} --- Let $\mathbb{F}_{\! q}$ be a finite field of odd characteristic, and let $d$ be a square-free polynomial over $\mathbb{F}_{\! q}$ of degree $D \ge 3.$ Then, for any $t\in \mathbb{R},$ we have
\begin{equation*}
\big|L\big(\tfrac{1}{2} + i t, \chi_{d} \big)\big| \, < \, 4 \, |d|^{\scriptscriptstyle \frac{10}{\log D}}. 
\end{equation*} 
\end{thm}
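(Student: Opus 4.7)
The bound is Lindel\"of-type: for $D \le e^{40}$ it is weaker than the convexity bound $|d|^{1/4}$, while for $D > e^{40}$ it is genuinely subconvex. The proof invokes the Riemann Hypothesis for $L(s,\chi_d)$ (a theorem of Weil over function fields) and adapts Soundararajan's conditional-Lindel\"of method \cite{Sound} to function-field quadratic $L$-functions.

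By Weil, $L(s,\chi_d) = \prod_{j=1}^{D-1}(1 - \alpha_j q^{-s})$ with $|\alpha_j| = \sqrt{q}$, so for $\Re(s) > 1/2$,
\begin{equation*}
\log L(s,\chi_d) \;=\; \sum_{n \ge 1} \frac{b_n(\chi_d)}{n\, q^{ns}}, \qquad b_n(\chi_d) \;=\; -\sum_j \alpha_j^n,
\end{equation*}
with the two complementary estimates $|b_n(\chi_d)| \le \min\!\bigl(q^n,\,(D-1)\,q^{n/2}\bigr)$ (the first from counting monic polynomials of degree $n$, the second from RH). I would then establish a function-field analogue of Soundararajan's inequality: for any integer $N \ge 1$ and real $\lambda > 0$,
\begin{equation*}
\log\bigl|L\bigl(\tfrac{1}{2} + it,\, \chi_d\bigr)\bigr| \;\le\; \Re \sum_{n=1}^{N} \frac{b_n(\chi_d)\, w_N(n)}{n\, q^{n(1/2+\lambda+it)}} \;+\; (D-1)\,\mathcal{E}(\lambda, N),
\end{equation*}
where $w_N$ is a positive weight supported on $[1,N]$ and $\mathcal{E}(\lambda,N) = O\bigl(1/(\lambda N)\bigr)$. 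The derivation integrates a smoothed cutoff against $\log L(s,\chi_d)$ on a line $\Re(s) > 1$ and shifts to $\Re(s) = 1/2$; by RH there are no zeros in the intermediate strip, so the shift only contributes the boundary term $(D-1)\,\mathcal{E}$.

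Bound the Dirichlet polynomial on the right by splitting at the crossover $n_{\ast} = \lfloor 2\log(D-1)/\log q \rfloor$: using $|b_n| \le q^n$ for $n \le n_{\ast}$ and $|b_n| \le (D-1)\,q^{n/2}$ for $n > n_{\ast}$, both pieces contribute $O_q\!\bigl((D-1)^{1-2\lambda}/(\lambda \log D)\bigr)$. Optimize with $\lambda \asymp 1/\log D$ and $N \asymp \log D$: this balances the Dirichlet polynomial against the remainder at size $O\bigl(D \log q/\log D\bigr)$, which after exponentiation gives $|L(\tfrac{1}{2}+it,\chi_d)| \ll |d|^{C/\log D}$ for some absolute $C$; a careful accounting of constants yields the stated values $4$ and $10$.

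\emph{Main obstacle.} The technical heart is the Soundararajan-type inequality displayed above. In the number-field case its proof uses the explicit formula together with a contour shift that handles the Gamma factors via Stirling; in the present function-field case there are no Gamma factors and the zero count is exact ($D-1$ zeros, all on the critical line by Weil), so the analytic setup is simpler. The genuine difficulty is extracting the explicit constants $4$ and $10$ through the final optimization, and verifying the resulting bound uniformly over the full range $D \ge 3$ (so that the trivial regime of moderate $D$ and the subconvex regime of very large $D$ are covered by a single clean estimate).
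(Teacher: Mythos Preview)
Your strategy is sound in outline and shares the same skeleton as the paper's proof: invoke Weil's theorem to place the inverse roots on $|\alpha_j| = \sqrt{q}$, bound a truncated power sum $\sum_j \alpha_j^n$ via the two-sided estimate $\min(q^n,\,(D-1)q^{n/2})$, choose the truncation at the crossover $N \asymp \log D/\log q$, and track constants. Where you differ is in the analytic device used to convert this into a bound on $\log|L|$.

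The paper does not go through a Soundararajan-type contour-shift inequality. Instead it applies an extremal inequality of Carneiro--Vaaler \cite[Theorem~8.1]{CaVaa} directly to the polynomial $P_d(u) = \prod_{m=1}^{2g}(1 - \sqrt{q}\,e^{i\omega_m}u)$: for any monic $F(z) = \prod_{m=1}^{M}(z-\alpha_m)$ with all $|\alpha_m| \le 1$ and any integer $N \ge 0$,
\[
\sup_{|z|\le 1}\,\log|F(z)| \;\le\; \frac{M\log 2}{N+1} \;+\; \sum_{n=1}^{N}\frac{1}{n}\,\Bigl|\sum_{m=1}^{M}\alpha_m^{\,n}\Bigr|.
\]
This single inequality delivers the truncation-plus-remainder structure in one stroke, with the remainder $M\log 2/(N+1)$ already explicit. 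The power sums are then bounded trivially by $q^{n/2}+q^{-n/2}$ (via the character-sum interpretation $q^{n/2}\sum_m e^{in\omega_m} = -\sum_{\theta\in\mathbf{P}^1(\mathbb{F}_{q^n})}\chi_n(d(\theta))$), and the choice $N = \lfloor 2\log D/\log q\rfloor$ produces the constants $4$ and $10$ with essentially no further optimization.

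One caution on your version: with the error term $\mathcal{E}(\lambda,N)=O(1/(\lambda N))$ as you state it, the choice $\lambda\asymp 1/\log D$, $N\asymp\log D$ gives $\lambda N\asymp 1$ and hence a remainder of size $D$, not $D\log q/\log D$; the balance you claim does not occur. The correct function-field adaptation of Soundararajan's inequality (for a polynomial of degree $D-1$ with all zeros on $\Re s=1/2$) produces a remainder of order $(D-1)/N$, with no $\lambda$ in the denominator, and then the optimization does go through. The Carneiro--Vaaler route sidesteps this issue entirely, since its remainder $M\log 2/(N+1)$ is already of the right shape; this is what one buys by exploiting the finite polynomial structure directly rather than importing the number-field analytic framework.
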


\begin{proof} \!We shall follow closely the argument in the proof of \cite[Theorem~5.1]{BCDGL-D}. Let $C_{d}$ denote the 
(elliptic\slash hyperelliptic) curve corresponding to $d,$ and consider the numerator $P_{\! \scriptscriptstyle d}(u)$ 
of the zeta function of $C_{d}.$ Then 
\begin{equation*}
L(s, \chi_{d}) \, = \, (1 \pm q^{- s})^{\scriptscriptstyle \epsilon(D)} P_{\! \scriptscriptstyle d}(q^{ - s})
\end{equation*}
with $\epsilon(D) = (1 + ( - 1)^{\scriptscriptstyle D})\slash 2$ and the $+$ or $-$ sign is determined according to whether 
the leading coefficient of $d$ is a square in $\mathbb{F}_{\! q}^{\times}$ or not. We estimate the factor 
$
(1 \pm q^{- s})^{\scriptscriptstyle \epsilon(D)}
$ 
(for $s = \frac{1}{2} + i t$) trivially:
\begin{equation*}
|\, 1 \pm q^{- s} \, |^{\scriptscriptstyle \epsilon(D)} \le 1 + q^{- 1 \slash 2} \le 1 + \frac{1}{\sqrt{3}}. 
\end{equation*} 
It is well-known (see \cite{Weil}) that
\begin{equation*}
P_{\! \scriptscriptstyle d}(u) = \prod_{m = 1}^{2 g}(1 \, - \, \sqrt{q} \, e^{i \omega_{\scriptstyle m}} u) \in \mathbb{Z}[u] 
\end{equation*} 
with $\omega_{\scriptstyle m}\in \mathbb{R}$ for all $m;$ the genus $g$ of the curve $C_{d}$ is obtained from the degree $D$ of 
the polynomial $d$ by: \!$2 g = D - 1$ if $D$ is odd, and $2 g = D - 2$ if $D$ is even.

Now, by \cite[Theorem~8.1]{CaVaa}, for every non-negative integer $N$ and every monic polynomial  
\begin{equation*}
F(z) = \prod_{m = 1}^{M} (z - \alpha_{m}) \;\;\; \qquad \;\;\; \text{($\alpha_{\scriptstyle1}, \ldots, \alpha_{\scriptscriptstyle M} \in \mathbb{C}$ 
with $|\alpha_{m}| \le 1$ for all $1\le m \le M$)}
\end{equation*} 
we have the estimate 
\begin{equation*}
\underset{|z| \le 1}{\mathrm{sup}}\; \log |F(z)| \, \le \, M\, (N + 1)^{\scriptscriptstyle -1} \log 2 \; + \, 
\sum_{n = 1}^{N} n^{\scriptscriptstyle -1} \bigg| \sum_{m = 1}^{M} \alpha_{\scriptstyle m}^{n} \bigg|.
\end{equation*} 
Fix an algebraic closure $\overline{\mathbb{F}}_{q}$ of $\mathbb{F}_{q}.$ Let $\chi_{\scriptscriptstyle n}$ ($n \ge 1$) denote the 
non-trivial real character of $\mathbb{F}_{\! q^{\scriptscriptstyle n}}^{\times},$ \!extended to $\mathbb{F}_{\! q^{\scriptscriptstyle n}} 
\subset \overline{\mathbb{F}}_{q}$ by setting $\chi_{\scriptscriptstyle n}(0) : = 0.$ 
Applying this bound to $P_{\! \scriptscriptstyle d}(u),$ 
we have  
\begin{equation*}
\log |P_{\! \scriptscriptstyle d}(u)| \, < \, D\, (N + 1)^{\scriptscriptstyle - 1} \log 2 \; + \, 
\sum_{n = 1}^{N} n^{\scriptscriptstyle -1} \bigg|\sum_{m = 1}^{2 g} e^{i n \omega_{\scriptstyle m}}\bigg|
\end{equation*} 
for every $u \in \mathbb{C}$ with $|u| = 1\slash \sqrt{q}.$ Recalling that, for a prime $\ell$ different from the characteristic of 
$\mathbb{F}_{q}$ and $n \ge 1,$
\begin{equation*}
q^{n \slash 2}\! \sum_{m = 1}^{2 g} e^{i n \omega_{\scriptstyle m}} 
=  \, \Tr(F^{*n} \, \vert \; \mathrm{H}_{\text{\'et}}^{\scriptscriptstyle 1}(\bar{C}_{d}, \mathbb{Q}_{\ell}))\,
= \, - \sum_{\theta \in  {\bf{\mathrm{P}}}^{\scriptscriptstyle 1}(\mathbb{F}_{\! q^{\scriptscriptstyle n}}\!)} 
\!\!\chi_{\scriptscriptstyle n} (d(\theta))
\end{equation*} 
where $\bar{C}_{d}: = C_{d} \otimes_{_{\mathbb{F}_{q}}} \! \overline{\mathbb{F}}_{q}$ and $F^{*}$ \!is the endomorphism of the 
$\ell$-adic \'etale cohomology induced by the Frobenius morphism $F : \bar{C}_{d} \rightarrow \bar{C}_{d},$ we have trivially 
\begin{equation*}
\bigg|\sum_{m = 1}^{2 g} e^{i n \omega_{\scriptstyle m}}\bigg| \, \le \, q^{ - n \slash 2} + \, q^{n \slash 2}.
\end{equation*} 
Consequently, if $N \ge 1$ we have 
\begin{equation*} 
\begin{split}
\sum_{n = 1}^{N} n^{\scriptscriptstyle -1} \bigg|\sum_{m = 1}^{2 g} e^{i n \omega_{\scriptstyle m}}\bigg| \, & \le \, 
\sum_{n = 1}^{N} n^{\scriptscriptstyle -1} \big(q^{ - n \slash 2} + \, q^{n \slash 2} \big) \\
&< \, \sum_{n = 1}^{\infty} n^{\scriptscriptstyle -1} q^{ - n \slash 2} \, + \, 2\!\sum_{N\slash 2 \le n \le N} n^{\scriptscriptstyle -1} q^{n \slash 2}\\
& < \, \log\big(1 - 3^{- 1 \slash 2}\big)^{\! \scriptscriptstyle - 1} + \; 4 \, \big(1 - 3^{- 1 \slash 2}\big)^{\! \scriptscriptstyle - 1} 
\cdot\,  \frac{q^{N \slash 2}}{N}.
\end{split}
\end{equation*} 
Thus, for all $N \ge 1,$ we obtain the estimate
\begin{equation*}
\log |P_{\! \scriptscriptstyle d}(u)| \, < \, D\, (N + 1)^{\scriptscriptstyle - 1} \log 2 \; + 
\, \log\big(1 - 3^{- 1 \slash 2}\big)^{\! \scriptscriptstyle - 1} + \; 8 \, \big(1 - 3^{- 1 \slash 2}\big)^{\! \scriptscriptstyle - 1} 
q^{N \slash 2}\,(N + 1)^{\scriptscriptstyle - 1}.
\end{equation*}
Choosing $N = \left \lfloor{\frac{2 \log D}{\log q}}\right \rfloor,$ we see that
\begin{equation*}
|P_{\! \scriptscriptstyle d}(u)| \, < \, \big(1 - 3^{- 1 \slash 2}\big)^{\! \scriptscriptstyle - 1} \cdot \, 
|d|^{\scriptscriptstyle \left[\! \frac{\log 2}{2} \, + \; 4\, \left(1 \, - \, 3^{\scriptscriptstyle - 1 \slash 2}\right)^{\scriptscriptstyle - 1}\right]
\frac{1}{\log D}}
\qquad \text{(if $D \, \ge \sqrt{q}$)}
\end{equation*} 
and 
\begin{equation*}
|P_{\! \scriptscriptstyle d}(u)| \, < \, |d|^{\scriptscriptstyle \frac{\log 2}{2 \log D}}
\qquad \text{(if $D \, < \sqrt{q}$)}
\end{equation*}
from which the theorem follows. 
\end{proof}

\section{Appendix} \label{Appendix B} 
Let $\mathbb{F}_{\! q}$ be a finite field with $q$ elements of odd characteristic. \!In \cite{BD} we have constructed a 
multiple {D}irichlet series associated to the fourth moment of quadratic {D}irichlet {$L$}-series over $\mathbb{F}_{\! q}(x).$ 
By setting one of the first four variables of this multiple {D}irichlet series to zero and by applying the recurrence relations in the 
proof of Theorem 3.7 of loc. cit., one obtains the explicit expression of the multiple {D}irichlet series associated to the cubic moment. Explicitly, this series is, in fact, a rational function            
\begin{equation} \label{rat-funct-Z-D4}
Z(z_{\scriptscriptstyle 1}^{}\!, \, z_{\scriptscriptstyle 2}^{}, \, z_{\scriptscriptstyle 3}^{}, \, z_{\scriptscriptstyle 4}^{}; q) 
= \frac{N(z_{\scriptscriptstyle 1}^{}\!, \, z_{\scriptscriptstyle 2}^{}, \, z_{\scriptscriptstyle 3}^{}, \, z_{\scriptscriptstyle 4}^{}; q)}
{D(z_{\scriptscriptstyle 1}^{}\!, \, z_{\scriptscriptstyle 2}^{}, \, z_{\scriptscriptstyle 3}^{}, \, z_{\scriptscriptstyle 4}^{}; q)}
\end{equation} 
with numerator given by    
\begin{equation*}
\begin{split}
& N(z_{\scriptscriptstyle 1}^{}\!, \, z_{\scriptscriptstyle 2}^{}, \, z_{\scriptscriptstyle 3}^{}, \, z_{\scriptscriptstyle 4}^{}; q) \, = 
1 -  q^{\scriptscriptstyle 2} z_{\scriptscriptstyle 1}^{} z_{\scriptscriptstyle 4}^{} - q^{\scriptscriptstyle 2} z_{\scriptscriptstyle 2}^{} z_{\scriptscriptstyle 4}^{} +  q^{\scriptscriptstyle 3} z_{\scriptscriptstyle 1}^{} z_{\scriptscriptstyle 2}^{} z_{\scriptscriptstyle 4}^{} - q^{\scriptscriptstyle 2} z_{\scriptscriptstyle 3}^{} z_{\scriptscriptstyle 4}^{} + q^{\scriptscriptstyle 3} z_{\scriptscriptstyle 1}^{} z_{\scriptscriptstyle 3}^{} z_{\scriptscriptstyle 4}^{} 
+ q^{\scriptscriptstyle 3} z_{\scriptscriptstyle 2}^{} z_{\scriptscriptstyle 3}^{} z_{\scriptscriptstyle 4}^{}  -  q^{\scriptscriptstyle 4} z_{\scriptscriptstyle 1}^{} z_{\scriptscriptstyle 2}^{} z_{\scriptscriptstyle 3}^{} z_{\scriptscriptstyle 4}^{} \\
& + q^{\scriptscriptstyle 3} z_{\scriptscriptstyle 1}^{} z_{\scriptscriptstyle 2}^{} z_{\scriptscriptstyle 4}^{\scriptscriptstyle 2} - q^{\scriptscriptstyle 4} z_{\scriptscriptstyle 1}^{\scriptscriptstyle 2} z_{\scriptscriptstyle 2}^{} z_{\scriptscriptstyle 4}^{\scriptscriptstyle 2} 
- q^{\scriptscriptstyle 4} z_{\scriptscriptstyle 1}^{} z_{\scriptscriptstyle 2}^{\scriptscriptstyle 2} z_{\scriptscriptstyle 4}^{\scriptscriptstyle 2} 
+ q^{\scriptscriptstyle 3} z_{\scriptscriptstyle 1}^{} z_{\scriptscriptstyle 3}^{} z_{\scriptscriptstyle 4}^{\scriptscriptstyle 2} - q^{\scriptscriptstyle 4} z_{\scriptscriptstyle 1}^{\scriptscriptstyle 2} z_{\scriptscriptstyle 3}^{} z_{\scriptscriptstyle 4}^{\scriptscriptstyle 2} 
+ q^{\scriptscriptstyle 3} z_{\scriptscriptstyle 2}^{} z_{\scriptscriptstyle 3}^{} z_{\scriptscriptstyle 4}^{\scriptscriptstyle 2} 
- 2 q^{\scriptscriptstyle 4} z_{\scriptscriptstyle 1}^{} z_{\scriptscriptstyle 2}^{} z_{\scriptscriptstyle 3}^{} z_{\scriptscriptstyle 4}^{\scriptscriptstyle 2} 
+ q^{\scriptscriptstyle 4} z_{\scriptscriptstyle 1}^{\scriptscriptstyle 2} z_{\scriptscriptstyle 2}^{} z_{\scriptscriptstyle 3}^{} 
z_{\scriptscriptstyle 4}^{\scriptscriptstyle 2} \\ 
&+ q^{\scriptscriptstyle 5} z_{\scriptscriptstyle 1}^{\scriptscriptstyle 2} z_{\scriptscriptstyle 2}^{} z_{\scriptscriptstyle 3}^{} 
z_{\scriptscriptstyle 4}^{\scriptscriptstyle 2} - q^{\scriptscriptstyle 4} z_{\scriptscriptstyle 2}^{\scriptscriptstyle 2} 
z_{\scriptscriptstyle 3}^{} z_{\scriptscriptstyle 4}^{\scriptscriptstyle 2} + q^{\scriptscriptstyle 4} 
z_{\scriptscriptstyle 1}^{} z_{\scriptscriptstyle 2}^{\scriptscriptstyle 2} z_{\scriptscriptstyle 3}^{} 
z_{\scriptscriptstyle 4}^{\scriptscriptstyle 2} 
+ q^{\scriptscriptstyle 5} z_{\scriptscriptstyle 1}^{} z_{\scriptscriptstyle 2}^{\scriptscriptstyle 2} 
z_{\scriptscriptstyle 3}^{} z_{\scriptscriptstyle 4}^{\scriptscriptstyle 2} - q^{\scriptscriptstyle 5} 
z_{\scriptscriptstyle 1}^{\scriptscriptstyle 2} z_{\scriptscriptstyle 2}^{\scriptscriptstyle 2} z_{\scriptscriptstyle 3}^{} 
z_{\scriptscriptstyle 4}^{\scriptscriptstyle 2} - q^{\scriptscriptstyle 4} z_{\scriptscriptstyle 1}^{} 
z_{\scriptscriptstyle 3}^{\scriptscriptstyle 2} z_{\scriptscriptstyle 4}^{\scriptscriptstyle 2} 
- q^{\scriptscriptstyle 4} z_{\scriptscriptstyle 2}^{} z_{\scriptscriptstyle 3}^{\scriptscriptstyle 2} 
z_{\scriptscriptstyle 4}^{\scriptscriptstyle 2} + q^{\scriptscriptstyle 4} z_{\scriptscriptstyle 1}^{} 
z_{\scriptscriptstyle 2}^{} z_{\scriptscriptstyle 3}^{\scriptscriptstyle 2} z_{\scriptscriptstyle 4}^{\scriptscriptstyle 2} 
+ q^{\scriptscriptstyle 5} z_{\scriptscriptstyle 1}^{} z_{\scriptscriptstyle 2}^{} z_{\scriptscriptstyle 3}^{\scriptscriptstyle 2} 
z_{\scriptscriptstyle 4}^{\scriptscriptstyle 2} \\ 
&- q^{\scriptscriptstyle 5} z_{\scriptscriptstyle 1}^{\scriptscriptstyle 2} z_{\scriptscriptstyle 2}^{} 
z_{\scriptscriptstyle 3}^{\scriptscriptstyle 2} z_{\scriptscriptstyle 4}^{\scriptscriptstyle 2} 
- q^{\scriptscriptstyle 5} z_{\scriptscriptstyle 1}^{} z_{\scriptscriptstyle 2}^{\scriptscriptstyle 2} 
z_{\scriptscriptstyle 3}^{\scriptscriptstyle 2} z_{\scriptscriptstyle 4}^{\scriptscriptstyle 2} 
+ q^{\scriptscriptstyle 6} z_{\scriptscriptstyle 1}^{\scriptscriptstyle 2} z_{\scriptscriptstyle 2}^{\scriptscriptstyle 2} 
z_{\scriptscriptstyle 4}^{\scriptscriptstyle 3} 
- q^{\scriptscriptstyle 5} z_{\scriptscriptstyle 1}^{} z_{\scriptscriptstyle 2}^{} z_{\scriptscriptstyle 3}^{} 
z_{\scriptscriptstyle 4}^{\scriptscriptstyle 3} + q^{\scriptscriptstyle 6} z_{\scriptscriptstyle 1}^{\scriptscriptstyle 2} 
z_{\scriptscriptstyle 2}^{} z_{\scriptscriptstyle 3}^{} z_{\scriptscriptstyle 4}^{\scriptscriptstyle 3} 
- q^{\scriptscriptstyle 6} z_{\scriptscriptstyle 1}^{\scriptscriptstyle 3} z_{\scriptscriptstyle 2}^{} 
z_{\scriptscriptstyle 3}^{} z_{\scriptscriptstyle 4}^{\scriptscriptstyle 3} 
+ q^{\scriptscriptstyle 6} z_{\scriptscriptstyle 1}^{} z_{\scriptscriptstyle 2}^{\scriptscriptstyle 2} 
z_{\scriptscriptstyle 3}^{} z_{\scriptscriptstyle 4}^{\scriptscriptstyle 3} - q^{\scriptscriptstyle 6} 
z_{\scriptscriptstyle 1}^{\scriptscriptstyle 2} z_{\scriptscriptstyle 2}^{\scriptscriptstyle 2} 
z_{\scriptscriptstyle 3}^{} z_{\scriptscriptstyle 4}^{\scriptscriptstyle 3} \\ 
& - q^{\scriptscriptstyle 7} z_{\scriptscriptstyle 1}^{\scriptscriptstyle 2} 
z_{\scriptscriptstyle 2}^{\scriptscriptstyle 2} z_{\scriptscriptstyle 3}^{} 
z_{\scriptscriptstyle 4}^{\scriptscriptstyle 3} + q^{\scriptscriptstyle 7} 
z_{\scriptscriptstyle 1}^{\scriptscriptstyle 3} z_{\scriptscriptstyle 2}^{\scriptscriptstyle 2} 
z_{\scriptscriptstyle 3}^{} z_{\scriptscriptstyle 4}^{\scriptscriptstyle 3} - q^{\scriptscriptstyle 6} 
z_{\scriptscriptstyle 1}^{} z_{\scriptscriptstyle 2}^{\scriptscriptstyle 3} z_{\scriptscriptstyle 3}^{} 
z_{\scriptscriptstyle 4}^{\scriptscriptstyle 3} 
+ q^{\scriptscriptstyle 7} z_{\scriptscriptstyle 1}^{\scriptscriptstyle 2} 
z_{\scriptscriptstyle 2}^{\scriptscriptstyle 3} z_{\scriptscriptstyle 3}^{} 
z_{\scriptscriptstyle 4}^{\scriptscriptstyle 3} + q^{\scriptscriptstyle 6} 
z_{\scriptscriptstyle 1}^{\scriptscriptstyle 2} z_{\scriptscriptstyle 3}^{\scriptscriptstyle 2} 
z_{\scriptscriptstyle 4}^{\scriptscriptstyle 3} + q^{\scriptscriptstyle 6} z_{\scriptscriptstyle 1}^{} 
z_{\scriptscriptstyle 2}^{} z_{\scriptscriptstyle 3}^{\scriptscriptstyle 2} z_{\scriptscriptstyle 4}^{\scriptscriptstyle 3} 
- q^{\scriptscriptstyle 6} z_{\scriptscriptstyle 1}^{\scriptscriptstyle 2} z_{\scriptscriptstyle 2}^{} 
z_{\scriptscriptstyle 3}^{\scriptscriptstyle 2} z_{\scriptscriptstyle 4}^{\scriptscriptstyle 3} 
- q^{\scriptscriptstyle 7} z_{\scriptscriptstyle 1}^{\scriptscriptstyle 2} z_{\scriptscriptstyle 2}^{} 
z_{\scriptscriptstyle 3}^{\scriptscriptstyle 2} z_{\scriptscriptstyle 4}^{\scriptscriptstyle 3} \\ 
& + q^{\scriptscriptstyle 7} z_{\scriptscriptstyle 1}^{\scriptscriptstyle 3} z_{\scriptscriptstyle 2}^{} 
z_{\scriptscriptstyle 3}^{\scriptscriptstyle 2} z_{\scriptscriptstyle 4}^{\scriptscriptstyle 3} 
+ q^{\scriptscriptstyle 6} z_{\scriptscriptstyle 2}^{\scriptscriptstyle 2} z_{\scriptscriptstyle 3}^{\scriptscriptstyle 2} 
z_{\scriptscriptstyle 4}^{\scriptscriptstyle 3} - q^{\scriptscriptstyle 6} z_{\scriptscriptstyle1}^{} 
z_{\scriptscriptstyle 2}^{\scriptscriptstyle 2} z_{\scriptscriptstyle 3}^{\scriptscriptstyle 2} z_{\scriptscriptstyle 4}^{\scriptscriptstyle 3} 
- q^{\scriptscriptstyle 7} z_{\scriptscriptstyle 1}^{} z_{\scriptscriptstyle 2}^{\scriptscriptstyle 2} 
z_{\scriptscriptstyle 3}^{\scriptscriptstyle 2} z_{\scriptscriptstyle 4}^{\scriptscriptstyle 3} 
+ 3 q^{\scriptscriptstyle 7} z_{\scriptscriptstyle 1}^{\scriptscriptstyle 2} 
z_{\scriptscriptstyle 2}^{\scriptscriptstyle 2} z_{\scriptscriptstyle 3}^{\scriptscriptstyle 2} 
z_{\scriptscriptstyle 4}^{\scriptscriptstyle 3} - q^{\scriptscriptstyle 8} z_{\scriptscriptstyle 1}^{\scriptscriptstyle 3} 
z_{\scriptscriptstyle 2}^{\scriptscriptstyle 2} z_{\scriptscriptstyle 3}^{\scriptscriptstyle 2} z_{\scriptscriptstyle 4}^{\scriptscriptstyle 3} 
+ q^{\scriptscriptstyle 7} z_{\scriptscriptstyle 1}^{} z_{\scriptscriptstyle 2}^{\scriptscriptstyle 3} 
z_{\scriptscriptstyle 3}^{\scriptscriptstyle 2} z_{\scriptscriptstyle 4}^{\scriptscriptstyle 3} 
- q^{\scriptscriptstyle 8} z_{\scriptscriptstyle 1}^{\scriptscriptstyle 2} 
z_{\scriptscriptstyle 2}^{\scriptscriptstyle 3} z_{\scriptscriptstyle 3}^{\scriptscriptstyle 2} 
z_{\scriptscriptstyle 4}^{\scriptscriptstyle 3} \\ 
& - q^{\scriptscriptstyle 6} z_{\scriptscriptstyle 1}^{} z_{\scriptscriptstyle 2}^{} 
z_{\scriptscriptstyle 3}^{\scriptscriptstyle 3} z_{\scriptscriptstyle 4}^{\scriptscriptstyle 3} 
+ q^{\scriptscriptstyle 7} z_{\scriptscriptstyle 1}^{\scriptscriptstyle 2} z_{\scriptscriptstyle 2}^{} 
z_{\scriptscriptstyle 3}^{\scriptscriptstyle 3} z_{\scriptscriptstyle 4}^{\scriptscriptstyle 3} 
+ q^{\scriptscriptstyle 7} z_{\scriptscriptstyle 1}^{} z_{\scriptscriptstyle 2}^{\scriptscriptstyle 2} 
z_{\scriptscriptstyle 3}^{\scriptscriptstyle 3} z_{\scriptscriptstyle 4}^{\scriptscriptstyle 3}
 -  q^{\scriptscriptstyle 8} z_{\scriptscriptstyle 1}^{\scriptscriptstyle 2} 
 z_{\scriptscriptstyle 2}^{\scriptscriptstyle 2} z_{\scriptscriptstyle 3}^{\scriptscriptstyle 3} 
 z_{\scriptscriptstyle 4}^{\scriptscriptstyle 3} - q^{\scriptscriptstyle 8} 
 z_{\scriptscriptstyle 1}^{\scriptscriptstyle 3} z_{\scriptscriptstyle 2}^{\scriptscriptstyle 3} 
 z_{\scriptscriptstyle 3}^{} z_{\scriptscriptstyle4}^{\scriptscriptstyle 4} + q^{\scriptscriptstyle 8} 
 z_{\scriptscriptstyle 1}^{\scriptscriptstyle 2} z_{\scriptscriptstyle 2}^{\scriptscriptstyle 2} 
 z_{\scriptscriptstyle 3}^{\scriptscriptstyle 2} z_{\scriptscriptstyle 4}^{\scriptscriptstyle 4}
- q^{\scriptscriptstyle 8} z_{\scriptscriptstyle 1}^{\scriptscriptstyle 3} 
z_{\scriptscriptstyle 2}^{\scriptscriptstyle 2} z_{\scriptscriptstyle 3}^{\scriptscriptstyle 2} 
z_{\scriptscriptstyle 4}^{\scriptscriptstyle 4} + q^{\scriptscriptstyle 9} 
z_{\scriptscriptstyle 1}^{\scriptscriptstyle 4} z_{\scriptscriptstyle 2}^{\scriptscriptstyle 2} 
z_{\scriptscriptstyle 3}^{\scriptscriptstyle 2} z_{\scriptscriptstyle 4}^{\scriptscriptstyle 4} \\ 
& - q^{\scriptscriptstyle 8} z_{\scriptscriptstyle 1}^{\scriptscriptstyle 2} 
z_{\scriptscriptstyle 2}^{\scriptscriptstyle 3} z_{\scriptscriptstyle 3}^{\scriptscriptstyle 2} 
z_{\scriptscriptstyle 4}^{\scriptscriptstyle 4} + q^{\scriptscriptstyle 9} 
z_{\scriptscriptstyle 1}^{\scriptscriptstyle 3} z_{\scriptscriptstyle 2}^{\scriptscriptstyle 3} 
z_{\scriptscriptstyle 3}^{\scriptscriptstyle 2} z_{\scriptscriptstyle 4}^{\scriptscriptstyle 4} 
+ q^{\scriptscriptstyle 9} z_{\scriptscriptstyle 1}^{\scriptscriptstyle 2} 
z_{\scriptscriptstyle 2}^{\scriptscriptstyle 4} z_{\scriptscriptstyle 3}^{\scriptscriptstyle 2} 
z_{\scriptscriptstyle 4}^{\scriptscriptstyle 4} 
- q^{\scriptscriptstyle 8} z_{\scriptscriptstyle 1}^{\scriptscriptstyle 3} z_{\scriptscriptstyle 2}^{} 
z_{\scriptscriptstyle 3}^{\scriptscriptstyle 3} z_{\scriptscriptstyle 4}^{\scriptscriptstyle 4} 
- q^{\scriptscriptstyle 8} z_{\scriptscriptstyle 1}^{\scriptscriptstyle 2} 
z_{\scriptscriptstyle 2}^{\scriptscriptstyle 2} z_{\scriptscriptstyle 3}^{\scriptscriptstyle 3} 
z_{\scriptscriptstyle 4}^{\scriptscriptstyle 4} + q^{\scriptscriptstyle 9} 
z_{\scriptscriptstyle 1}^{\scriptscriptstyle 3} z_{\scriptscriptstyle 2}^{\scriptscriptstyle 2} 
z_{\scriptscriptstyle 3}^{\scriptscriptstyle 3} z_{\scriptscriptstyle 4}^{\scriptscriptstyle 4} 
- q^{\scriptscriptstyle 8} z_{\scriptscriptstyle 1}^{} z_{\scriptscriptstyle 2}^{\scriptscriptstyle 3} 
z_{\scriptscriptstyle 3}^{\scriptscriptstyle 3} z_{\scriptscriptstyle 4}^{\scriptscriptstyle 4} 
+ q^{\scriptscriptstyle 9} z_{\scriptscriptstyle 1}^{\scriptscriptstyle 2} 
z_{\scriptscriptstyle 2}^{\scriptscriptstyle 3} z_{\scriptscriptstyle 3}^{\scriptscriptstyle 3} 
z_{\scriptscriptstyle 4}^{\scriptscriptstyle 4} \\ 
& - q^{\scriptscriptstyle 9} z_{\scriptscriptstyle 1}^{\scriptscriptstyle 3} 
z_{\scriptscriptstyle 2}^{\scriptscriptstyle 3} z_{\scriptscriptstyle 3}^{\scriptscriptstyle 3} 
z_{\scriptscriptstyle 4}^{\scriptscriptstyle 4} + q^{\scriptscriptstyle 9} 
z_{\scriptscriptstyle 1}^{\scriptscriptstyle 2} z_{\scriptscriptstyle 2}^{\scriptscriptstyle 2} 
z_{\scriptscriptstyle 3}^{\scriptscriptstyle 4} z_{\scriptscriptstyle 4}^{\scriptscriptstyle 4} 
+ q^{\scriptscriptstyle 9} z_{\scriptscriptstyle 1}^{\scriptscriptstyle 3} 
z_{\scriptscriptstyle 2}^{\scriptscriptstyle 3} z_{\scriptscriptstyle 3}^{} 
z_{\scriptscriptstyle 4}^{\scriptscriptstyle 5} 
- q^{\scriptscriptstyle 9} z_{\scriptscriptstyle 1}^{\scriptscriptstyle 2} 
z_{\scriptscriptstyle 2}^{\scriptscriptstyle 2} z_{\scriptscriptstyle 3}^{\scriptscriptstyle 2} 
z_{\scriptscriptstyle 4}^{\scriptscriptstyle 5} + q^{\scriptscriptstyle 9} 
z_{\scriptscriptstyle 1}^{\scriptscriptstyle 3} z_{\scriptscriptstyle 2}^{\scriptscriptstyle 2} 
z_{\scriptscriptstyle 3}^{\scriptscriptstyle 2} z_{\scriptscriptstyle 4}^{\scriptscriptstyle 5} 
- q^{\scriptscriptstyle 10} z_{\scriptscriptstyle 1}^{\scriptscriptstyle 4} z_{\scriptscriptstyle 2}^{\scriptscriptstyle 2} 
z_{\scriptscriptstyle 3}^{\scriptscriptstyle 2} z_{\scriptscriptstyle 4}^{\scriptscriptstyle 5} 
+ q^{\scriptscriptstyle 9} z_{\scriptscriptstyle 1}^{\scriptscriptstyle 2} 
z_{\scriptscriptstyle 2}^{\scriptscriptstyle 3} z_{\scriptscriptstyle 3}^{\scriptscriptstyle 2} 
z_{\scriptscriptstyle 4}^{\scriptscriptstyle 5} - q^{\scriptscriptstyle 10} z_{\scriptscriptstyle 1}^{\scriptscriptstyle 3} 
z_{\scriptscriptstyle 2}^{\scriptscriptstyle 3} z_{\scriptscriptstyle 3}^{\scriptscriptstyle 2} z_{\scriptscriptstyle 4}^{\scriptscriptstyle 5} \\ 
& - q^{\scriptscriptstyle 10} z_{\scriptscriptstyle 1}^{\scriptscriptstyle 2} z_{\scriptscriptstyle 2}^{\scriptscriptstyle 4} 
z_{\scriptscriptstyle 3}^{\scriptscriptstyle 2} z_{\scriptscriptstyle 4}^{\scriptscriptstyle 5} 
+ q^{\scriptscriptstyle 9} z_{\scriptscriptstyle 1}^{\scriptscriptstyle 3} z_{\scriptscriptstyle 2}^{} 
z_{\scriptscriptstyle 3}^{\scriptscriptstyle 3} z_{\scriptscriptstyle 4}^{\scriptscriptstyle 5} 
+ q^{\scriptscriptstyle 9} z_{\scriptscriptstyle 1}^{\scriptscriptstyle 2} 
z_{\scriptscriptstyle 2}^{\scriptscriptstyle 2} z_{\scriptscriptstyle 3}^{\scriptscriptstyle 3} 
z_{\scriptscriptstyle 4}^{\scriptscriptstyle 5} 
- q^{\scriptscriptstyle 10} z_{\scriptscriptstyle 1}^{\scriptscriptstyle 3} z_{\scriptscriptstyle 2}^{\scriptscriptstyle 2} 
z_{\scriptscriptstyle 3}^{\scriptscriptstyle 3} z_{\scriptscriptstyle 4}^{\scriptscriptstyle 5} 
+ q^{\scriptscriptstyle 9} z_{\scriptscriptstyle 1}^{} z_{\scriptscriptstyle 2}^{\scriptscriptstyle 3} 
z_{\scriptscriptstyle 3}^{\scriptscriptstyle 3} z_{\scriptscriptstyle 4}^{\scriptscriptstyle 5} 
- q^{\scriptscriptstyle 10} z_{\scriptscriptstyle 1}^{\scriptscriptstyle 2} z_{\scriptscriptstyle 2}^{\scriptscriptstyle 3} 
z_{\scriptscriptstyle 3}^{\scriptscriptstyle 3} z_{\scriptscriptstyle 4}^{\scriptscriptstyle 5} 
+ q^{\scriptscriptstyle 10} z_{\scriptscriptstyle 1}^{\scriptscriptstyle 3} z_{\scriptscriptstyle 2}^{\scriptscriptstyle 3} 
z_{\scriptscriptstyle 3}^{\scriptscriptstyle 3} z_{\scriptscriptstyle 4}^{\scriptscriptstyle 5} 
- q^{\scriptscriptstyle 10} z_{\scriptscriptstyle 1}^{\scriptscriptstyle 2} z_{\scriptscriptstyle 2}^{\scriptscriptstyle 2} 
z_{\scriptscriptstyle 3}^{\scriptscriptstyle 4} z_{\scriptscriptstyle 4}^{\scriptscriptstyle 5} \\ 
& - q^{\scriptscriptstyle 10} z_{\scriptscriptstyle 1}^{\scriptscriptstyle 3} z_{\scriptscriptstyle 2}^{\scriptscriptstyle 3} 
z_{\scriptscriptstyle 3}^{\scriptscriptstyle 2} z_{\scriptscriptstyle 4}^{\scriptscriptstyle 6} 
+ q^{\scriptscriptstyle 11} z_{\scriptscriptstyle 1}^{\scriptscriptstyle 4} z_{\scriptscriptstyle 2}^{\scriptscriptstyle 3} 
z_{\scriptscriptstyle 3}^{\scriptscriptstyle 2} z_{\scriptscriptstyle 4}^{\scriptscriptstyle 6} 
+ q^{\scriptscriptstyle11} z_{\scriptscriptstyle 1}^{\scriptscriptstyle 3} z_{\scriptscriptstyle 2}^{\scriptscriptstyle 4} 
z_{\scriptscriptstyle 3}^{\scriptscriptstyle 2} z_{\scriptscriptstyle 4}^{\scriptscriptstyle 6} 
- q^{\scriptscriptstyle 12} z_{\scriptscriptstyle 1}^{\scriptscriptstyle 4} z_{\scriptscriptstyle 2}^{\scriptscriptstyle 4} 
z_{\scriptscriptstyle 3}^{\scriptscriptstyle 2} z_{\scriptscriptstyle 4}^{\scriptscriptstyle 6} 
- q^{\scriptscriptstyle 10} z_{\scriptscriptstyle 1}^{\scriptscriptstyle 3} z_{\scriptscriptstyle 2}^{\scriptscriptstyle 2} 
z_{\scriptscriptstyle 3}^{\scriptscriptstyle 3} z_{\scriptscriptstyle 4}^{\scriptscriptstyle 6}
+ q^{\scriptscriptstyle 11} z_{\scriptscriptstyle 1}^{\scriptscriptstyle 4} z_{\scriptscriptstyle 2}^{\scriptscriptstyle 2} 
z_{\scriptscriptstyle 3}^{\scriptscriptstyle 3} z_{\scriptscriptstyle 4}^{\scriptscriptstyle 6} 
- q^{\scriptscriptstyle 10} z_{\scriptscriptstyle 1}^{\scriptscriptstyle 2} z_{\scriptscriptstyle 2}^{\scriptscriptstyle 3} 
z_{\scriptscriptstyle 3}^{\scriptscriptstyle 3} z_{\scriptscriptstyle 4}^{\scriptscriptstyle 6} + 
3 q^{\scriptscriptstyle 11} z_{\scriptscriptstyle 1}^{\scriptscriptstyle 3} z_{\scriptscriptstyle 2}^{\scriptscriptstyle 3} 
z_{\scriptscriptstyle 3}^{\scriptscriptstyle 3} z_{\scriptscriptstyle 4}^{\scriptscriptstyle 6} \\ 
& - q^{\scriptscriptstyle 11} z_{\scriptscriptstyle 1}^{\scriptscriptstyle 4} z_{\scriptscriptstyle 2}^{\scriptscriptstyle 3} 
z_{\scriptscriptstyle 3}^{\scriptscriptstyle 3} z_{\scriptscriptstyle 4}^{\scriptscriptstyle 6} 
- q^{\scriptscriptstyle 12} z_{\scriptscriptstyle 1}^{\scriptscriptstyle 4} z_{\scriptscriptstyle 2}^{\scriptscriptstyle 3} 
z_{\scriptscriptstyle 3}^{\scriptscriptstyle 3} z_{\scriptscriptstyle 4}^{\scriptscriptstyle 6} + 
q^{\scriptscriptstyle 12} z_{\scriptscriptstyle 1}^{\scriptscriptstyle 5} z_{\scriptscriptstyle 2}^{\scriptscriptstyle 3} 
z_{\scriptscriptstyle 3}^{\scriptscriptstyle 3} z_{\scriptscriptstyle 4}^{\scriptscriptstyle 6} 
+ q^{\scriptscriptstyle 11} z_{\scriptscriptstyle 1}^{\scriptscriptstyle 2} z_{\scriptscriptstyle 2}^{\scriptscriptstyle 4} 
z_{\scriptscriptstyle 3}^{\scriptscriptstyle 3} z_{\scriptscriptstyle 4}^{\scriptscriptstyle 6} - 
q^{\scriptscriptstyle 11} z_{\scriptscriptstyle 1}^{\scriptscriptstyle 3} z_{\scriptscriptstyle 2}^{\scriptscriptstyle 4} 
z_{\scriptscriptstyle 3}^{\scriptscriptstyle 3} z_{\scriptscriptstyle 4}^{\scriptscriptstyle 6} 
- q^{\scriptscriptstyle 12} z_{\scriptscriptstyle 1}^{\scriptscriptstyle 3} z_{\scriptscriptstyle 2}^{\scriptscriptstyle 4} 
z_{\scriptscriptstyle 3}^{\scriptscriptstyle 3} z_{\scriptscriptstyle 4}^{\scriptscriptstyle 6} + 
q^{\scriptscriptstyle 12} z_{\scriptscriptstyle 1}^{\scriptscriptstyle 4} z_{\scriptscriptstyle 2}^{\scriptscriptstyle 4} 
z_{\scriptscriptstyle 3}^{\scriptscriptstyle 3} z_{\scriptscriptstyle 4}^{\scriptscriptstyle 6} 
+ q^{\scriptscriptstyle 12} z_{\scriptscriptstyle 1}^{\scriptscriptstyle 3} z_{\scriptscriptstyle 2}^{\scriptscriptstyle 5} 
z_{\scriptscriptstyle 3}^{\scriptscriptstyle 3} z_{\scriptscriptstyle 4}^{\scriptscriptstyle 6} \\ 
& + q^{\scriptscriptstyle 11} z_{\scriptscriptstyle 1}^{\scriptscriptstyle 3} z_{\scriptscriptstyle 2}^{\scriptscriptstyle 2} 
z_{\scriptscriptstyle 3}^{\scriptscriptstyle 4} z_{\scriptscriptstyle 4}^{\scriptscriptstyle 6} 
- q^{\scriptscriptstyle 12} z_{\scriptscriptstyle 1}^{\scriptscriptstyle 4} z_{\scriptscriptstyle 2}^{\scriptscriptstyle 2} 
z_{\scriptscriptstyle 3}^{\scriptscriptstyle 4} z_{\scriptscriptstyle 4}^{\scriptscriptstyle 6} 
+ q^{\scriptscriptstyle 11} z_{\scriptscriptstyle 1}^{\scriptscriptstyle 2} z_{\scriptscriptstyle 2}^{\scriptscriptstyle 3}
 z_{\scriptscriptstyle 3}^{\scriptscriptstyle 4} z_{\scriptscriptstyle 4}^{\scriptscriptstyle 6} 
 - q^{\scriptscriptstyle 11} z_{\scriptscriptstyle 1}^{\scriptscriptstyle 3} z_{\scriptscriptstyle 2}^{\scriptscriptstyle 3} 
 z_{\scriptscriptstyle 3}^{\scriptscriptstyle 4} z_{\scriptscriptstyle 4}^{\scriptscriptstyle 6} 
- q^{\scriptscriptstyle 12} z_{\scriptscriptstyle 1}^{\scriptscriptstyle 3} z_{\scriptscriptstyle 2}^{\scriptscriptstyle 3} 
z_{\scriptscriptstyle 3}^{\scriptscriptstyle 4} z_{\scriptscriptstyle 4}^{\scriptscriptstyle 6} 
+ q^{\scriptscriptstyle 12} z_{\scriptscriptstyle 1}^{\scriptscriptstyle 4} z_{\scriptscriptstyle 2}^{\scriptscriptstyle 3} 
z_{\scriptscriptstyle 3}^{\scriptscriptstyle 4} z_{\scriptscriptstyle 4}^{\scriptscriptstyle 6} 
- q^{\scriptscriptstyle 12} z_{\scriptscriptstyle 1}^{\scriptscriptstyle 2} z_{\scriptscriptstyle 2}^{\scriptscriptstyle 4} 
z_{\scriptscriptstyle 3}^{\scriptscriptstyle 4} z_{\scriptscriptstyle 4}^{\scriptscriptstyle 6} 
+ q^{\scriptscriptstyle 12} z_{\scriptscriptstyle 1}^{\scriptscriptstyle 3} z_{\scriptscriptstyle 2}^{\scriptscriptstyle 4} 
z_{\scriptscriptstyle 3}^{\scriptscriptstyle 4} z_{\scriptscriptstyle 4}^{\scriptscriptstyle 6} \\ 
& - q^{\scriptscriptstyle13} z_{\scriptscriptstyle 1}^{\scriptscriptstyle 4} z_{\scriptscriptstyle 2}^{\scriptscriptstyle 4} 
z_{\scriptscriptstyle 3}^{\scriptscriptstyle 4} z_{\scriptscriptstyle 4}^{\scriptscriptstyle 6} 
+ q^{\scriptscriptstyle 12} z_{\scriptscriptstyle 1}^{\scriptscriptstyle 3} z_{\scriptscriptstyle 2}^{\scriptscriptstyle 3} 
z_{\scriptscriptstyle 3}^{\scriptscriptstyle 5} z_{\scriptscriptstyle 4}^{\scriptscriptstyle 6} 
- q^{\scriptscriptstyle 13} z_{\scriptscriptstyle 1}^{\scriptscriptstyle 4} z_{\scriptscriptstyle 2}^{\scriptscriptstyle 3} 
z_{\scriptscriptstyle 3}^{\scriptscriptstyle 3} z_{\scriptscriptstyle 4}^{\scriptscriptstyle 7} 
- q^{\scriptscriptstyle 13} z_{\scriptscriptstyle 1}^{\scriptscriptstyle 3} z_{\scriptscriptstyle 2}^{\scriptscriptstyle 4} 
z_{\scriptscriptstyle 3}^{\scriptscriptstyle 3} z_{\scriptscriptstyle 4}^{\scriptscriptstyle 7} 
+ q^{\scriptscriptstyle 13} z_{\scriptscriptstyle 1}^{\scriptscriptstyle 4} z_{\scriptscriptstyle 2}^{\scriptscriptstyle 4} 
z_{\scriptscriptstyle 3}^{\scriptscriptstyle 3} z_{\scriptscriptstyle 4}^{\scriptscriptstyle 7} 
+ q^{\scriptscriptstyle 14} z_{\scriptscriptstyle 1}^{\scriptscriptstyle 4} z_{\scriptscriptstyle 2}^{\scriptscriptstyle 4} 
z_{\scriptscriptstyle 3}^{\scriptscriptstyle 3} z_{\scriptscriptstyle 4}^{\scriptscriptstyle 7} 
- q^{\scriptscriptstyle 14} z_{\scriptscriptstyle 1}^{\scriptscriptstyle 5} z_{\scriptscriptstyle 2}^{\scriptscriptstyle 4} 
z_{\scriptscriptstyle 3}^{\scriptscriptstyle 3} z_{\scriptscriptstyle 4}^{\scriptscriptstyle 7} 
- q^{\scriptscriptstyle 14} z_{\scriptscriptstyle 1}^{\scriptscriptstyle 4} z_{\scriptscriptstyle 2}^{\scriptscriptstyle 5} 
z_{\scriptscriptstyle 3}^{\scriptscriptstyle 3} z_{\scriptscriptstyle 4}^{\scriptscriptstyle 7} \\
& - q^{\scriptscriptstyle 13} z_{\scriptscriptstyle 1}^{\scriptscriptstyle 3} z_{\scriptscriptstyle 2}^{\scriptscriptstyle 3} 
z_{\scriptscriptstyle 3}^{\scriptscriptstyle 4} z_{\scriptscriptstyle 4}^{\scriptscriptstyle 7} 
+ q^{\scriptscriptstyle 13} z_{\scriptscriptstyle 1}^{\scriptscriptstyle  4} z_{\scriptscriptstyle 2}^{\scriptscriptstyle 3} 
z_{\scriptscriptstyle 3}^{\scriptscriptstyle 4} z_{\scriptscriptstyle 4}^{\scriptscriptstyle 7} 
+ q^{\scriptscriptstyle 14} z_{\scriptscriptstyle 1}^{\scriptscriptstyle 4} z_{\scriptscriptstyle 2}^{\scriptscriptstyle 3} 
z_{\scriptscriptstyle 3}^{\scriptscriptstyle 4} z_{\scriptscriptstyle 4}^{\scriptscriptstyle 7} 
- q^{\scriptscriptstyle 14} z_{\scriptscriptstyle 1}^{\scriptscriptstyle 5} z_{\scriptscriptstyle 2}^{\scriptscriptstyle 3} 
z_{\scriptscriptstyle 3}^{\scriptscriptstyle 4} z_{\scriptscriptstyle 4}^{\scriptscriptstyle 7} 
+ q^{\scriptscriptstyle 13} z_{\scriptscriptstyle 1}^{\scriptscriptstyle 3} z_{\scriptscriptstyle 2}^{\scriptscriptstyle 4} 
z_{\scriptscriptstyle 3}^{\scriptscriptstyle 4} z_{\scriptscriptstyle 4}^{\scriptscriptstyle 7} 
+ q^{\scriptscriptstyle 14} z_{\scriptscriptstyle 1}^{\scriptscriptstyle 3} z_{\scriptscriptstyle 2}^{\scriptscriptstyle 4} 
z_{\scriptscriptstyle 3}^{\scriptscriptstyle 4} z_{\scriptscriptstyle 4}^{\scriptscriptstyle 7} 
- 2 q^{\scriptscriptstyle 14} z_{\scriptscriptstyle 1}^{\scriptscriptstyle 4} z_{\scriptscriptstyle 2}^{\scriptscriptstyle 4} 
z_{\scriptscriptstyle 3}^{\scriptscriptstyle 4} z_{\scriptscriptstyle 4}^{\scriptscriptstyle 7} 
+ q^{\scriptscriptstyle 15} z_{\scriptscriptstyle 1}^{\scriptscriptstyle 5} z_{\scriptscriptstyle 2}^{\scriptscriptstyle 4} 
z_{\scriptscriptstyle 3}^{\scriptscriptstyle 4} z_{\scriptscriptstyle 4}^{\scriptscriptstyle 7} \\ 
& - q^{\scriptscriptstyle 14} z_{\scriptscriptstyle 1}^{\scriptscriptstyle 3} z_{\scriptscriptstyle 2}^{\scriptscriptstyle 5} 
z_{\scriptscriptstyle 3}^{\scriptscriptstyle 4} z_{\scriptscriptstyle 4}^{\scriptscriptstyle 7} 
+ q^{\scriptscriptstyle 15} z_{\scriptscriptstyle 1}^{\scriptscriptstyle 4} z_{\scriptscriptstyle 2}^{\scriptscriptstyle 5} 
z_{\scriptscriptstyle 3}^{\scriptscriptstyle 4} z_{\scriptscriptstyle 4}^{\scriptscriptstyle 7} 
- q^{\scriptscriptstyle 14} z_{\scriptscriptstyle 1}^{\scriptscriptstyle 4} z_{\scriptscriptstyle 2}^{\scriptscriptstyle 3} 
z_{\scriptscriptstyle 3}^{\scriptscriptstyle 5} z_{\scriptscriptstyle 4}^{\scriptscriptstyle 7} 
- q^{\scriptscriptstyle 14} z_{\scriptscriptstyle 1}^{\scriptscriptstyle 3} z_{\scriptscriptstyle 2}^{\scriptscriptstyle 4} 
z_{\scriptscriptstyle 3}^{\scriptscriptstyle 5} z_{\scriptscriptstyle 4}^{\scriptscriptstyle 7} 
+ q^{\scriptscriptstyle 15} z_{\scriptscriptstyle 1}^{\scriptscriptstyle 4} z_{\scriptscriptstyle 2}^{\scriptscriptstyle 4} 
z_{\scriptscriptstyle 3}^{\scriptscriptstyle 5} z_{\scriptscriptstyle 4}^{\scriptscriptstyle 7} 
- q^{\scriptscriptstyle 14} z_{\scriptscriptstyle 1}^{\scriptscriptstyle 4} z_{\scriptscriptstyle 2}^{\scriptscriptstyle 4} 
z_{\scriptscriptstyle 3}^{\scriptscriptstyle 4} z_{\scriptscriptstyle 4}^{\scriptscriptstyle 8} 
+ q^{\scriptscriptstyle 15} z_{\scriptscriptstyle 1}^{\scriptscriptstyle 5} z_{\scriptscriptstyle 2}^{\scriptscriptstyle 4} 
z_{\scriptscriptstyle 3}^{\scriptscriptstyle 4} z_{\scriptscriptstyle 4}^{\scriptscriptstyle 8} 
+ q^{\scriptscriptstyle 15} z_{\scriptscriptstyle 1}^{\scriptscriptstyle 4} z_{\scriptscriptstyle 2}^{\scriptscriptstyle 5} 
z_{\scriptscriptstyle 3}^{\scriptscriptstyle 4} z_{\scriptscriptstyle 4}^{\scriptscriptstyle 8} \\ 
& - q^{\scriptscriptstyle 16} z_{\scriptscriptstyle 1}^{\scriptscriptstyle 5} z_{\scriptscriptstyle 2}^{\scriptscriptstyle 5} 
z_{\scriptscriptstyle 3}^{\scriptscriptstyle 4} z_{\scriptscriptstyle 4}^{\scriptscriptstyle 8} + 
q^{\scriptscriptstyle 15} z_{\scriptscriptstyle 1}^{\scriptscriptstyle 4} z_{\scriptscriptstyle 2}^{\scriptscriptstyle 4} 
z_{\scriptscriptstyle 3}^{\scriptscriptstyle 5} z_{\scriptscriptstyle 4}^{\scriptscriptstyle 8} - 
q^{\scriptscriptstyle 16} z_{\scriptscriptstyle 1}^{\scriptscriptstyle 5} z_{\scriptscriptstyle 2}^{\scriptscriptstyle 4}
z_{\scriptscriptstyle 3}^{\scriptscriptstyle 5} z_{\scriptscriptstyle 4}^{\scriptscriptstyle 8} - 
q^{\scriptscriptstyle 16} z_{\scriptscriptstyle 1}^{\scriptscriptstyle 4} z_{\scriptscriptstyle 2}^{\scriptscriptstyle 5} 
z_{\scriptscriptstyle 3}^{\scriptscriptstyle 5} z_{\scriptscriptstyle 4}^{\scriptscriptstyle 8} + 
q^{\scriptscriptstyle 18} z_{\scriptscriptstyle 1}^{\scriptscriptstyle 5} z_{\scriptscriptstyle 2}^{\scriptscriptstyle 5} 
z_{\scriptscriptstyle 3}^{\scriptscriptstyle 5} z_{\scriptscriptstyle 4}^{\scriptscriptstyle 9}
\end{split}
\end{equation*} 
and denominator
\begin{equation*}
\begin{split}
D(z_{\scriptscriptstyle 1}^{}\!, \, z_{\scriptscriptstyle 2}^{}, \, z_{\scriptscriptstyle 3}^{}, \, z_{\scriptscriptstyle 4}^{}; q) = \, 
&\left(1 - q^{} z_{\scriptscriptstyle 1}^{} \!\right) \left(1 - q^{}  z_{\scriptscriptstyle 2}^{} \right)  \left(1 - q^{} z_{\scriptscriptstyle 3}^{} \right) 
\left(1 - q^{} z_{\scriptscriptstyle 4}^{} \right) 
\big(1 - q^{\scriptscriptstyle 3} z_{\scriptscriptstyle 1}^{\scriptscriptstyle 2} z_{\scriptscriptstyle 4}^{\scriptscriptstyle 2} \big) 
\big(1 - q^{\scriptscriptstyle 3} z_{\scriptscriptstyle 2}^{\scriptscriptstyle 2} z_{\scriptscriptstyle 4}^{\scriptscriptstyle 2} \big) 
\big(1 - q^{\scriptscriptstyle 3} z_{\scriptscriptstyle 3}^{\scriptscriptstyle 2} z_{\scriptscriptstyle 4}^{\scriptscriptstyle 2} \big) \\
& \cdot \big(1 - q^{\scriptscriptstyle 4} z_{\scriptscriptstyle 1}^{\scriptscriptstyle 2} z_{\scriptscriptstyle 2}^{\scriptscriptstyle 2} 
z_{\scriptscriptstyle 4}^{\scriptscriptstyle 2} \big) \big(1 - q^{\scriptscriptstyle 4} z_{\scriptscriptstyle 1}^{\scriptscriptstyle 2} 
z_{\scriptscriptstyle 3}^{\scriptscriptstyle 2} z_{\scriptscriptstyle 4}^{\scriptscriptstyle 2} \big) 
\big(1 - q^{\scriptscriptstyle 4} z_{\scriptscriptstyle 2}^{\scriptscriptstyle 2} z_{\scriptscriptstyle 3}^{\scriptscriptstyle 2} 
z_{\scriptscriptstyle 4}^{\scriptscriptstyle 2} \big) 
\big(1 - q^{\scriptscriptstyle 5} z_{\scriptscriptstyle 1}^{\scriptscriptstyle 2} z_{\scriptscriptstyle 2}^{\scriptscriptstyle 2} 
z_{\scriptscriptstyle 3}^{\scriptscriptstyle 2} z_{\scriptscriptstyle 4}^{\scriptscriptstyle 2} \big) 
\big(1 - q^{\scriptscriptstyle 6} z_{\scriptscriptstyle 1}^{\scriptscriptstyle 2} 
z_{\scriptscriptstyle 2}^{\scriptscriptstyle 2} z_{\scriptscriptstyle 3}^{\scriptscriptstyle 2} z_{\scriptscriptstyle 4}^{\scriptscriptstyle 4} \big).
\end{split}
\end{equation*} 
In other words, with notation as in Section \ref{Three expressions of Multiple {D}irichlet series}, Eq. \!\eqref{eq: MDS-vers1}, 
we have that  
\begin{equation*} 
Z\big(q^{- s_{\scriptscriptstyle 1}}\!, \, q^{- s_{\scriptscriptstyle 2}}\!, \, q^{- s_{\scriptscriptstyle 3}}\!, \, q^{- s_{\scriptscriptstyle 4}}; q \big) 
= Z^{(1)}(\mathbf{s}; 1, 1) 
\; = \sum_{d = d_{\scriptscriptstyle 0}^{} d_{\scriptscriptstyle 1}^{2}}  \, 
\frac{\prod_{i = 1}^{3}\, L(s_{\scriptscriptstyle i}, \chi_{d_{\scriptscriptstyle 0}}) \cdot 
P_{\scriptscriptstyle d}(s_{\scriptscriptstyle 1}\!, \, s_{\scriptscriptstyle 2}, \, s_{\scriptscriptstyle 3}; 
\chi_{d_{\scriptscriptstyle 0}})}{|d|^{s_{\scriptscriptstyle 4}}}.
\end{equation*} 
Then the function  
\begin{equation} \label{rational-function-D4} 
f(z_{\scriptscriptstyle 1}^{}\!, \, z_{\scriptscriptstyle 2}^{}, \, z_{\scriptscriptstyle 3}^{}, \, z_{\scriptscriptstyle 4}^{}; q) = 
f_{\scriptscriptstyle D_{\scriptscriptstyle 4}}\!(z_{\scriptscriptstyle 1}^{}\!, \, z_{\scriptscriptstyle 2}^{}, \, z_{\scriptscriptstyle 3}^{}, \, z_{\scriptscriptstyle 4}^{}; q): 
= Z(q z_{\scriptscriptstyle 1}^{}\!, \, q z_{\scriptscriptstyle 2}^{}, \, q z_{\scriptscriptstyle 3}^{}, \, q z_{\scriptscriptstyle 4}^{}; 
1\slash q)
\end{equation} 
is precisely the rational function obtained by considering the Chinta-Gunnells average \eqref{eq: CG-average} 
for the root system $D_{4}$ with central node corresponding to $z_{\scriptscriptstyle 4}^{}.$ 
\!This fact can be checked either by a direct computation of the Chinta-Gunnells average, 
or by simply verifying that the rational function $f$ is $W$-invariant with respect to the Weyl group action defined in 
\ref{Chi-Gunn}, $f(0, \ldots, 0; q) = 1,$ and that it satisfies the condition 
\eqref{eq: limiting condition}. Expanding $f$ in a power series  
\begin{equation*} 
f(z_{\scriptscriptstyle 1}^{}\!, \, z_{\scriptscriptstyle 2}^{}, \, z_{\scriptscriptstyle 3}^{}, \, z_{\scriptscriptstyle 4}^{}; q) \;\; =  
\sum_{k_{\scriptscriptstyle 1}\!, \, k_{\scriptscriptstyle 2}, \, k_{\scriptscriptstyle 3}, \, l \ge 0}  
\, a(k_{\scriptscriptstyle 1}\!,  \, k_{\scriptscriptstyle 2},  \, k_{\scriptscriptstyle 3}, \,  l; q) z_{\scriptscriptstyle 1}^{\scriptscriptstyle k_{\scriptscriptstyle 1}} 
\! \cdots \, z_{\scriptscriptstyle 4}^{\scriptscriptstyle l} 
\end{equation*} 
we see easily that 
\begin{equation} \label{initial-conditions-f} 
a(k_{\scriptscriptstyle 1}\!,  \, k_{\scriptscriptstyle 2},  \, k_{\scriptscriptstyle 3}, \,  0; q) = a(0,  \, 0,  \, 0, \,  l; q) = 1
\qquad \text{(for all $k_{\scriptscriptstyle 1}\!,  \, k_{\scriptscriptstyle 2}, \, k_{\scriptscriptstyle 3}, \, l \ge 0$).}
\end{equation} 
When $l = 1$ these coefficients vanish, unless 
$
k_{\scriptscriptstyle 1} \! = k_{\scriptscriptstyle 2} \! = k_{\scriptscriptstyle 3} \! = 0. 
$ 
Moreover, if $\sum k_{\scriptscriptstyle i} \equiv l \equiv 1 \!\!\pmod 2$ then 
\begin{equation} \label{odd-l-ranked-coeff} 
a(k_{\scriptscriptstyle 1}\!,  \, k_{\scriptscriptstyle 2},  \, k_{\scriptscriptstyle 3}, \,  l; q) = 0. 
\end{equation} 
Now define  
\begin{equation*}
f_{\scriptscriptstyle \mathrm{odd}}(z_{\scriptscriptstyle 1}^{}\!, \, z_{\scriptscriptstyle 2}^{}, \, z_{\scriptscriptstyle 3}^{}, \, z_{\scriptscriptstyle 4}^{}; q) = (f(z_{\scriptscriptstyle 1}^{}\!, \, z_{\scriptscriptstyle 2}^{}, \, z_{\scriptscriptstyle 3}^{}, \, z_{\scriptscriptstyle 4}^{}; q) \, - \, f(z_{\scriptscriptstyle 1}^{}\!, \, z_{\scriptscriptstyle 2}^{}, \, z_{\scriptscriptstyle 3}^{}, \, - z_{\scriptscriptstyle 4}^{}; q)) \slash 2
\end{equation*} 
and 
\begin{equation*}
f_{\scriptscriptstyle \mathrm{even}}(z_{\scriptscriptstyle 1}^{}\!, \, z_{\scriptscriptstyle 2}^{}, \, z_{\scriptscriptstyle 3}^{}, \, z_{\scriptscriptstyle 4}^{}; q) = (f(z_{\scriptscriptstyle 1}^{}\!, \, z_{\scriptscriptstyle 2}^{}, \, z_{\scriptscriptstyle 3}^{}, \, z_{\scriptscriptstyle 4}^{}; q) \, + \, f(z_{\scriptscriptstyle 1}^{}\!, \, z_{\scriptscriptstyle 2}^{}, \, z_{\scriptscriptstyle 3}^{}, \, - z_{\scriptscriptstyle 4}^{}; q)) \slash 2.
\end{equation*} 
One checks that the numerator of 
$
f_{\scriptscriptstyle \mathrm{odd}}(z_{\scriptscriptstyle 1}^{}\!, \, z_{\scriptscriptstyle 2}^{}, \, z_{\scriptscriptstyle 3}^{}, \, z_{\scriptscriptstyle 4}^{}; q)
$ 
is divisible by 
$
(1 - z_{\scriptscriptstyle 1}^{}) (1 - z_{\scriptscriptstyle 2}^{}) (1 - z_{\scriptscriptstyle 3}^{}), 
$ 
and thus we can write 
\begin{equation*}
\begin{split}
f(z_{\scriptscriptstyle 1}^{}\!, \, z_{\scriptscriptstyle 2}^{}, \, z_{\scriptscriptstyle 3}^{}, \, z_{\scriptscriptstyle 4}^{}; q)
& = f_{\scriptscriptstyle \mathrm{even}}(z_{\scriptscriptstyle 1}^{}\!, \, z_{\scriptscriptstyle 2}^{}, \, z_{\scriptscriptstyle 3}^{}, \, z_{\scriptscriptstyle 4}^{}; q) \, + \, f_{\scriptscriptstyle \mathrm{odd}}(z_{\scriptscriptstyle 1}^{}\!, \, z_{\scriptscriptstyle 2}^{}, \, z_{\scriptscriptstyle 3}^{}, \, z_{\scriptscriptstyle 4}^{}; q)  \\ 
& = (1 - z_{\scriptscriptstyle 1}^{})^{\scriptscriptstyle - 1} (1 - z_{\scriptscriptstyle 2}^{})^{\scriptscriptstyle - 1}  
(1 - z_{\scriptscriptstyle 3}^{})^{\scriptscriptstyle - 1} 
\!\!\sum_{l - \text{even}} 
P_{\scriptscriptstyle l}(z_{\scriptscriptstyle 1}\!, \, z_{\scriptscriptstyle 2}, \, z_{\scriptscriptstyle 3}; q) \,
z_{\scriptscriptstyle 4}^{\scriptscriptstyle l} \; + 
\sum_{l - \text{odd}} P_{\scriptscriptstyle l}(z_{\scriptscriptstyle 1}\!, \, z_{\scriptscriptstyle 2}, \, z_{\scriptscriptstyle 3}; q) \,
z_{\scriptscriptstyle 4}^{\scriptscriptstyle l}
\end{split}
\end{equation*} 
for $|z_{\scriptscriptstyle 4}|$ sufficiently small (depending on the other variables). \!The symmetric polynomials 
$
P_{\scriptscriptstyle l}(z_{\scriptscriptstyle 1}\!, \, z_{\scriptscriptstyle 2}, \, z_{\scriptscriptstyle 3}; q) 
$ 
defined by this expression of $f$ were used in Section \ref{Three expressions of Multiple {D}irichlet series} 
to define the {D}irichlet polynomial \eqref{eq: polyPd}. \!Similarly, the polynomials 
$
Q_{\scriptscriptstyle \underline{k}}(z_{\scriptscriptstyle 4}; q)
$ 
are defined by the expansion 
\begin{equation*}
\begin{split}
f(z_{\scriptscriptstyle 1}^{}\!, \, z_{\scriptscriptstyle 2}^{}, \, z_{\scriptscriptstyle 3}^{}, \, z_{\scriptscriptstyle 4}^{}; q)
= (1 - z_{\scriptscriptstyle 4}^{})^{\scriptscriptstyle - 1} \!\!\!\sum_{\substack{\underline{k} 
= (k_{\scriptscriptstyle 1}\!, \, k_{\scriptscriptstyle 2}, \, k_{\scriptscriptstyle 3}) \\ |\underline{k}| - \text{even}}} 
Q_{\scriptscriptstyle \underline{k}}(z_{\scriptscriptstyle 4}; q)\,
z_{\scriptscriptstyle 1}^{\scriptscriptstyle k_{\scriptscriptstyle 1}} 
z_{\scriptscriptstyle 2}^{\scriptscriptstyle k_{\scriptscriptstyle 2}} 
z_{\scriptscriptstyle 3}^{\scriptscriptstyle k_{\scriptscriptstyle 3}}  \;\;  + 
\sum_{\substack{\underline{k} 
= (k_{\scriptscriptstyle 1}\!, \, k_{\scriptscriptstyle 2}, \, k_{\scriptscriptstyle 3}) \\ |\underline{k}| - \text{odd}}} 
Q_{\scriptscriptstyle \underline{k}}(z_{\scriptscriptstyle 4}; q)\,
z_{\scriptscriptstyle 1}^{\scriptscriptstyle k_{\scriptscriptstyle 1}} 
z_{\scriptscriptstyle 2}^{\scriptscriptstyle k_{\scriptscriptstyle 2}} 
z_{\scriptscriptstyle 3}^{\scriptscriptstyle k_{\scriptscriptstyle 3}}. 
\end{split}
\end{equation*} 
From the $W$-invariance of $f$ we deduce the functional equations:       
\begin{equation} \label{eq: polynomials-P-Q-func-eq}
P_{\scriptscriptstyle l}(z_{\scriptscriptstyle 1}, \, z_{\scriptscriptstyle 2}, \, z_{\scriptscriptstyle 3}; q)
= (\sqrt{q} \, z_{\scriptscriptstyle 1})^{l - a_{\scriptscriptstyle l}} \, 
P_{\scriptscriptstyle l}\bigg(\frac{1}{q \, z_{\scriptscriptstyle 1}}, \, z_{\scriptscriptstyle 2}, \, z_{\scriptscriptstyle 3}; q\bigg)
\;\;\;\; \mathrm{and} \;\;\;\; 
Q_{\scriptscriptstyle \underline{k}}(z_{\scriptscriptstyle 4}; q)
= (\sqrt{q} \, z_{\scriptscriptstyle 4})^{|\underline{k}| - a_{\scriptscriptstyle |\underline{k}|}} \, 
Q_{\scriptscriptstyle \underline{k}}\bigg(\frac{1}{q \, z_{\scriptscriptstyle 4}}; q\bigg)
\end{equation} 
with $a_{\scriptscriptstyle n} \!= 0$ or $1$ according as $n$ is even or odd.

For the reader's convenience, we include here the explicit expressions of some specializations of the rational functions 
introduced in this appendix. We have  
\begin{equation*}
f_{\scriptscriptstyle \mathrm{odd}}\big(q^{\scriptscriptstyle -\frac{1}{2}}\!, \, q^{\scriptscriptstyle -\frac{1}{2}}\!, \, 
q^{\scriptscriptstyle -\frac{1}{2}}\!, \, z  ; \, q \big) \, = \, 
\frac{z \, (1 + 7 z^{\scriptscriptstyle 2} + 7 z^{\scriptscriptstyle 4} + z^{\scriptscriptstyle 6})}
{(1 - z^{\scriptscriptstyle 2})^{\scriptscriptstyle 7}  (1 - q \, z^{\scriptscriptstyle 4})} 
\end{equation*} 
\begin{equation*} 
\begin{split}
& f_{\scriptscriptstyle \mathrm{even}}\big(q^{\scriptscriptstyle -\frac{1}{2}}\!, \, q^{\scriptscriptstyle -\frac{1}{2}}\!, \, 
q^{\scriptscriptstyle -\frac{1}{2}}\!, \, z  ; \, q \big) \\
& = \, \big(1 - q^{\scriptscriptstyle - \frac{1}{2}} \big)^{\scriptscriptstyle - 3} \cdot
\frac{1 + \big(7  - 14 \, q^{\scriptscriptstyle - \frac{1}{2}} + 6 \, q^{\scriptscriptstyle - 1} 
- q^{\scriptscriptstyle - \frac{3}{2}} \big)\, z^{\scriptscriptstyle 2} + 
7\big(1 - 4\, q^{\scriptscriptstyle - \frac{1}{2}} + 4\, q^{\scriptscriptstyle -1} 
- q^{\scriptscriptstyle - \frac{3}{2}} \big)\, z^{\scriptscriptstyle 4} 
+ \big(1 - 6 \, q^{\scriptscriptstyle - \frac{1}{2}} + 14\, q^{\scriptscriptstyle -1} 
- 7\, q^{\scriptscriptstyle - \frac{3}{2}} \big)\, z^{\scriptscriptstyle 6} 
- q^{\scriptscriptstyle - \frac{3}{2}} z^{\scriptscriptstyle 8}}
{(1 - z^{\scriptscriptstyle 2})^{\scriptscriptstyle 7} (1 - q\,  z^{\scriptscriptstyle 4})}
\end{split}
\end{equation*} 
and 
\begin{equation*} 
\begin{split}
& f_{\scriptscriptstyle \mathrm{even}}\big( - q^{\scriptscriptstyle -\frac{1}{2}}\!, \, - \, q^{\scriptscriptstyle -\frac{1}{2}}\!, \, 
- \, q^{\scriptscriptstyle -\frac{1}{2}}\!, \, z  ; \, q \big) \\
& = \, \big(1 + q^{\scriptscriptstyle - \frac{1}{2}} \big)^{\scriptscriptstyle - 3} \cdot
\frac{1 + \big(7  + 14 \, q^{\scriptscriptstyle - \frac{1}{2}} + 6 \, q^{\scriptscriptstyle - 1} 
+ q^{\scriptscriptstyle - \frac{3}{2}} \big)\, z^{\scriptscriptstyle 2} + 
7\big(1 + 4\, q^{\scriptscriptstyle - \frac{1}{2}} + 4\, q^{\scriptscriptstyle -1} 
+ q^{\scriptscriptstyle - \frac{3}{2}} \big)\, z^{\scriptscriptstyle 4} 
+ \big(1 + 6 \, q^{\scriptscriptstyle - \frac{1}{2}} + 14\, q^{\scriptscriptstyle -1} 
+ 7\, q^{\scriptscriptstyle - \frac{3}{2}} \big)\, z^{\scriptscriptstyle 6} 
+ q^{\scriptscriptstyle - \frac{3}{2}} z^{\scriptscriptstyle 8}}
{(1 - z^{\scriptscriptstyle 2})^{\scriptscriptstyle 7} (1 - q\,  z^{\scriptscriptstyle 4})}.
\end{split}
\end{equation*} 
One can use these formulas to estimate 
$
P_{\scriptscriptstyle l}\big(\pm  q^{\scriptscriptstyle -\frac{1}{2}}\!, \, \pm \, q^{\scriptscriptstyle -\frac{1}{2}}\!, \, 
\pm \, q^{\scriptscriptstyle -\frac{1}{2}}; q\big).
$ 
Indeed, taking $|z| = q^{\scriptscriptstyle -\frac{1}{4} - \eta}$ for small $\eta > 0,$ we have the inequalities  
\begin{equation*}
\big|f_{\scriptscriptstyle \mathrm{odd}}\big(q^{\scriptscriptstyle -\frac{1}{2}}\!, \, q^{\scriptscriptstyle -\frac{1}{2}}\!, \, 
q^{\scriptscriptstyle -\frac{1}{2}}\!, \, z  ; \, q \big) \, z^{\scriptscriptstyle -1}\big| \, \le \, 
\frac{1 + 7 |z|^{\scriptscriptstyle 2} + 7 |z|^{\scriptscriptstyle 4} + |z|^{\scriptscriptstyle 6}}
{(1 - |z|^{\scriptscriptstyle 2})^{\scriptscriptstyle 7}  (1 - q \, |z|^{\scriptscriptstyle 4})} \, <  \, 
\bigg(\frac{1 + |z|^{\scriptscriptstyle 2}}{1 - |z|^{\scriptscriptstyle 2}}\bigg)^{\!\! \scriptscriptstyle 7} 
\cdot \,  \frac{1}{1 - q \, |z|^{\scriptscriptstyle 4}} \, < \, 
\bigg(\frac{\sqrt{q} + 1}{\sqrt{q} - 1}\bigg)^{\!\! \scriptscriptstyle 7} 
\cdot \,  \frac{1}{1 - q^{\scriptscriptstyle - 4 \, \eta}}.
\end{equation*} 
The same bound holds for 
$
\big(1 \mp q^{\scriptscriptstyle - \frac{1}{2}} \big)^{\scriptscriptstyle 3}
\big|f_{\scriptscriptstyle \mathrm{even}}\big(\pm q^{\scriptscriptstyle -\frac{1}{2}}\!, \, \pm \, q^{\scriptscriptstyle -\frac{1}{2}}\!, \, 
\pm \, q^{\scriptscriptstyle -\frac{1}{2}}\!, \, z  ; \, q \big)\big|,
$ 
since by the maximum principle we have 
\begin{equation*}
\begin{split} 
&\big(1 \mp q^{\scriptscriptstyle - \frac{1}{2}} \big)^{\scriptscriptstyle 3}
\big|(1 - q\,  z^{\scriptscriptstyle 4}) f_{\scriptscriptstyle \mathrm{even}}\big(\pm q^{\scriptscriptstyle -\frac{1}{2}}\!, \, \pm \, q^{\scriptscriptstyle -\frac{1}{2}}\!, \, 
\pm \, q^{\scriptscriptstyle -\frac{1}{2}}\!, \, z  ; \, q \big)\big|  \\ 
& < \, \big(1 \mp q^{\scriptscriptstyle - \frac{1}{2}} \big)^{\scriptscriptstyle 3} \cdot \, 
\underset{|u| = q^{\scriptscriptstyle - 1 \slash 4}}{\mathrm{max}}\; \big|(1 - q\,  u^{\scriptscriptstyle 4}) 
f_{\scriptscriptstyle \mathrm{even}}\big(\pm q^{\scriptscriptstyle -\frac{1}{2}}\!, \, \pm \, q^{\scriptscriptstyle -\frac{1}{2}}\!, \, 
\pm \, q^{\scriptscriptstyle -\frac{1}{2}}\!, \, u  ; \, q \big)\big| \\
& \le \, \bigg(\frac{\sqrt{q} + 1}{\sqrt{q} - 1}\bigg)^{\!\! \scriptscriptstyle 7}. 
\end{split}
\end{equation*} 
If $q \ge 5,$ we have 
\begin{equation*} 
\bigg(\frac{\sqrt{q} + 1}{\sqrt{q} - 1}\bigg)^{\!\! \scriptscriptstyle 7} 
\cdot \,  \frac{1}{1 - q^{\scriptscriptstyle - 4 \, \eta}}  <  \frac{843}{1 - 5^{\scriptscriptstyle - 4 \, \eta}}.
\end{equation*} 
Then by applying Cauchy's inequality we obtain:

\vskip10pt
\begin{prop}\label{poly-P-estimate} --- For every small positive $\eta,$ $q\ge 5$ and $l \ge 1$ we have 
\begin{equation*} 
\big|P_{\scriptscriptstyle l}\big(\pm  q^{\scriptscriptstyle -\frac{1}{2}}\!, \, \pm \, q^{\scriptscriptstyle -\frac{1}{2}}\!, \, 
\pm \, q^{\scriptscriptstyle -\frac{1}{2}}; q\big)\big|  \, <  \frac{843}{1 - 5^{\scriptscriptstyle - 4 \, \eta}} 
\, q^{(l - a_{\scriptscriptstyle l})\left(\scriptscriptstyle \frac{1}{4} + \eta \right)}
\end{equation*} 
where $a_{\scriptscriptstyle l} \!= 0$ or $1$ according as $l$ is even or odd.
\end{prop}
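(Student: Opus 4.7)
The plan is to identify $P_{\scriptscriptstyle l}(z_{\scriptscriptstyle 1}, z_{\scriptscriptstyle 2}, z_{\scriptscriptstyle 3}; q)$ as a Taylor coefficient in $z_{\scriptscriptstyle 4}$ of either $f_{\scriptscriptstyle \mathrm{odd}}$ or $f_{\scriptscriptstyle \mathrm{even}}$ and then apply Cauchy's inequality on a suitable circle. Directly from the defining expansions, when $l$ is odd one has $P_{\scriptscriptstyle l}(z_{\scriptscriptstyle 1}, z_{\scriptscriptstyle 2}, z_{\scriptscriptstyle 3}; q) = [z_{\scriptscriptstyle 4}^{l}]\, f_{\scriptscriptstyle \mathrm{odd}}(z_{\scriptscriptstyle 1}, z_{\scriptscriptstyle 2}, z_{\scriptscriptstyle 3}, z_{\scriptscriptstyle 4}; q)$, and when $l$ is even one has $P_{\scriptscriptstyle l}(z_{\scriptscriptstyle 1}, z_{\scriptscriptstyle 2}, z_{\scriptscriptstyle 3}; q) = (1 - z_{\scriptscriptstyle 1})(1 - z_{\scriptscriptstyle 2})(1 - z_{\scriptscriptstyle 3})\cdot [z_{\scriptscriptstyle 4}^{l}]\, f_{\scriptscriptstyle \mathrm{even}}(z_{\scriptscriptstyle 1}, z_{\scriptscriptstyle 2}, z_{\scriptscriptstyle 3}, z_{\scriptscriptstyle 4}; q)$. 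Specializing each $z_{\scriptscriptstyle i}$ to $\pm q^{\scriptscriptstyle - 1\slash 2}$ gives one-variable rational functions whose only singularities inside $|z| \le q^{\scriptscriptstyle -1 \slash 4}$ are zeros of $1 - q\, z^{\scriptscriptstyle 4}$; in particular, the specialized $f_{\scriptscriptstyle \mathrm{odd}}$ and $f_{\scriptscriptstyle \mathrm{even}}$ are holomorphic in a neighborhood of the closed disk of radius $q^{\scriptscriptstyle - 1\slash 4 - \eta}$ for every $\eta > 0$.

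Next, I invoke Cauchy's inequality on the circle $|z| = q^{\scriptscriptstyle - 1\slash 4 - \eta}$, using the two key bounds established in the paragraph immediately preceding the proposition: for $q \ge 5$ and $|z| = q^{\scriptscriptstyle - 1\slash 4 - \eta}$,
\begin{equation*}
\big| f_{\scriptscriptstyle \mathrm{odd}}\big(\pm q^{\scriptscriptstyle -\frac{1}{2}}, \pm\, q^{\scriptscriptstyle -\frac{1}{2}}, \pm\, q^{\scriptscriptstyle -\frac{1}{2}}, z; q\big) \cdot z^{\scriptscriptstyle -1}\big| \; < \; \tfrac{843}{1 - 5^{\scriptscriptstyle - 4\, \eta}},
\end{equation*}
\begin{equation*}
(1 \mp q^{\scriptscriptstyle -\frac{1}{2}})^{\scriptscriptstyle 3}\,\big| f_{\scriptscriptstyle \mathrm{even}}\big(\pm q^{\scriptscriptstyle -\frac{1}{2}}, \pm\, q^{\scriptscriptstyle -\frac{1}{2}}, \pm\, q^{\scriptscriptstyle -\frac{1}{2}}, z; q\big)\big| \; < \; \tfrac{843}{1 - 5^{\scriptscriptstyle - 4\, \eta}}.
\end{equation*}
For odd $l \ge 1$ the first bound, multiplied by $|z| = q^{\scriptscriptstyle -1\slash 4 - \eta}$, yields a uniform bound on $|f_{\scriptscriptstyle \mathrm{odd}}|$ on the circle; dividing by $(q^{\scriptscriptstyle -1\slash 4 - \eta})^{l}$ gives an exponent $(l - 1)(\tfrac{1}{4} + \eta) = (l - a_{\scriptscriptstyle l})(\tfrac{1}{4} + \eta)$. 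For even $l$ the second bound, inserted directly into Cauchy's estimate for $(1 - z_{\scriptscriptstyle 1})(1 - z_{\scriptscriptstyle 2})(1 - z_{\scriptscriptstyle 3})\cdot [z_{\scriptscriptstyle 4}^{l}] f_{\scriptscriptstyle \mathrm{even}}$ at $z_{\scriptscriptstyle i} = \pm q^{\scriptscriptstyle - 1\slash 2}$, produces the exponent $l (\tfrac{1}{4} + \eta) = (l - a_{\scriptscriptstyle l})(\tfrac{1}{4} + \eta)$. In either case the claimed inequality falls out at once.

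There is essentially no obstacle: the analytic heavy lifting (bounding the rational functions on $|z| = q^{\scriptscriptstyle - 1\slash 4 - \eta}$, via direct estimation in the odd case and via the maximum principle applied to $(1 - q z^{\scriptscriptstyle 4}) f_{\scriptscriptstyle \mathrm{even}}$ in the even case) is already carried out in the excerpt, and the remaining step is only the classical Cauchy coefficient bound. The one point deserving a moment's care is keeping track of the factor $z^{\scriptscriptstyle -1}$ in the odd bound, which is what converts the naive exponent $l(\tfrac{1}{4}+\eta)$ into the sharper $(l - a_{\scriptscriptstyle l})(\tfrac{1}{4}+\eta)$ predicted by the functional equation \eqref{eq: polynomials-P-Q-func-eq}.
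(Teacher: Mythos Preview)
Your proposal is correct and follows essentially the same approach as the paper: the paper likewise reads off $P_{\scriptscriptstyle l}$ as the $z_{\scriptscriptstyle 4}^{\scriptscriptstyle l}$-coefficient of $f_{\scriptscriptstyle \mathrm{odd}}$ (for $l$ odd) or of $(1-z_{\scriptscriptstyle 1})(1-z_{\scriptscriptstyle 2})(1-z_{\scriptscriptstyle 3})f_{\scriptscriptstyle \mathrm{even}}$ (for $l$ even), invokes the two displayed bounds on $|z|=q^{-1/4-\eta}$, and then applies Cauchy's inequality. Your tracking of the extra factor $z^{-1}$ in the odd case, producing the exponent $(l-a_{\scriptscriptstyle l})(\tfrac14+\eta)$, is exactly the point.
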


\vskip1pt

%%%%%%%%%%%%%%%%%%%%%%%%%%%%%%%%%%%
%%%%%%%%%%%%%%%%%%%%%%%%%%%%%%%%%%%
%\begin{verbatim}
% Here is the source code.
%

%\end{verbatim}
%%%%%%%%%%%%%%%%%%%%%%%%%%%%%%%%%%%
%%%%%%%%%%%%%%%%%%%%%%%%%%%%%%%%%%%


\begin{thebibliography}{34} 



\bibitem{AR} 
M.W.~Alderson and M.O.~Rubinstein: 
\emph{Conjectures and experiments concerning the moments of {$L(1/2,\chi_d)$}}. 
Exp. Math. \textbf{21} (2012), no.~3, 307--328. 






\bibitem{AK}
J.C.~Andrade and J.P.~Keating: 
\emph{Conjectures for the integral moments and ratios of {$L$}-functions over function fields}. 
J. Number Theory \textbf{142} (2014), 102--148. 






\bibitem{Boh}
S.~Bochner: 
\emph{A theorem on analytic continuation of functions in several variables}. 
Ann. of Math. \!(2) \textbf{39} (1938), no.~1, 14--19. 






\bibitem{BBB}
B.~Brubaker, V.~Buciumas, and D.~Bump: 
\emph{A Yang-Baxter equation for metaplectic ice}. 
arXiv:1604.02206v4. 






\bibitem{BBBF}
B.~Brubaker, V.~Buciumas, D.~Bump, and S.~Friedberg: 
\emph{Hecke modules from metaplectic ice}. Available at
arXiv:\\1704.00701v2.






\bibitem{BBFH}
B.~Brubaker, D.~Bump, S.~Friedberg, and J.~Hoffstein: 
\emph{Weyl group multiple {D}irichlet series {III}: {E}isenstein series and twisted unstable {$A_r$}}. 
Ann. of Math. (2) \textbf{166} (2007), no.~1, 293--316. 





\bibitem{BBF2}
B.~Brubaker, D.~Bump, and S.~Friedberg: 
\emph{Weyl group multiple {D}irichlet series, \!{E}isenstein series and crystal bases}. 
Ann. of Math. (2) \textbf{173} (2011), no.~2, 1081--1120. 






\bibitem{BBF3}
B.~Brubaker, D.~Bump, and S.~Friedberg: 
\!\emph{Weyl group multiple {D}irichlet series: \!type {A} combinatorial theory}. Annals of Mathematics Studies 175, 
Princeton University Press, Princeton, 2011, xiv+158. 





\bibitem{BCDGL-D} 
A.~Bucur, E.~Costa, C.~David, J.~Guerreiro, and D.~Lowry-Duda: 
\!\emph{Traces, \!high powers and one level density for families of curves over finite fields}. 
arXiv:1610.00164v1. 





\bibitem{BD}
A.~Bucur and A.~Diaconu: 
\emph{Moments of quadratic {D}irichlet {$L$}-functions over rational 
function fields}. Mosc. Math. J. \textbf{10} (2010), no.~3, 485--517. 





\bibitem{Bump} 
D.~Bump: \emph{Introduction: multiple {D}irichlet series}. In: Multiple {D}irichlet series, ${L}$-functions 
and automorphic forms. Progr. Math. 300, pp.~1--36, Birkh\"auser/Springer, New York, 2012.





\bibitem{CaVaa}
E.~Carneiro and J.D.~Vaaler:
\emph{Some extremal functions in {F}ourier analysis. {II}}. 
Trans. Amer. Math. Soc. \textbf{362} (2010), no.~11, 5803--5843. 




\bibitem{CD}
G.~Chinta and A.~Diaconu:
\emph{Determination of a {${\rm GL}_3$} cuspform by twists of central {$L$}-values}. 
Int. Math. Res. Not. \textbf{48} (2005), 2941--2967. 





\bibitem{CFG}
G.~Chinta, S.~Friedberg, and P.E.~Gunnells: 
\emph{On the {$p$}-parts of quadratic {W}eyl group multiple {D}irichlet series}. 
J. Reine Angew. Math. \textbf{623} (2008), 1--23. 






\bibitem{CG1}
G.~Chinta and P.E.~Gunnells: 
\emph{Weyl group multiple {D}irichlet series constructed from quadratic characters}. 
Invent. Math. \textbf{167} (2007), no.~2, 327--353. 




\bibitem{CG2}
G.~Chinta and P.E.~Gunnells: 
\emph{Constructing {W}eyl group multiple {D}irichlet series}. 
J. Amer. Math. Soc. \textbf{23} (2010), no.~1, 189--215. 





\bibitem{CFKRS} 
J.B.~Conrey, D.W.~Farmer, J.P.~Keating, M.O.~Rubinstein and N.C.~Snaith: 
\emph{Integral moments of {$L$}-functions}. 
Proc. London Math. Soc. (3) \textbf{91} (2005), no.~1, 33--104.






\bibitem{DGH}
A.~Diaconu, D.~Goldfeld, and J.~Hoffstein: 
\emph{Multiple {D}irichlet series and moments of zeta and {$L$}-functions}. 
Compositio Math. \textbf{139} (2003), no.~3, 297--360. 




\bibitem{DV} 
A.~Diaconu and V.~Pa\c sol:
\emph{Moduli of hyperelliptic curves and multiple {D}irichlet Series}. 
Preprint. 




\bibitem{DW} 
A.~Diaconu and I.M.~Whitehead: In preparation. 





\bibitem{Florea1} 
A.M.~Florea: 
\emph{Improving the error term in the mean value of {$L(\frac12,\chi)$} in the hyperelliptic ensemble}. 
IMRN (2017), no.~20, 6119--6148. 






\bibitem{Florea2} 
A.M.~Florea: 
\emph{The second and third moment of {$L(1/2,\chi)$} in the hyperelliptic ensemble}. 
Forum Math. \textbf{29} (2017), no.~4, 873--892. 





\bibitem{Florea3} 
A.M.~Florea: 
\emph{The fourth moment of quadratic {D}irichlet {$L$}-functions over function fields}. 
Geom. Funct. Anal. \textbf{27} (2017), no.~3, 541--595. 






\bibitem{Lee-Zh} 
K.-H.~Lee and Y.~Zhang:
\emph{Weyl group multiple {D}irichlet series for symmetrizable Kac-Moody root systems}. 
Trans. Amer. Math. Soc. \textbf{367} (2015), no.~1, 597--625. 






\bibitem{McN1} 
P.J.~McNamara: 
\emph{Metaplectic {W}hittaker functions and crystal bases}. 
Duke Math. J. \textbf{156} (2011), no.~1, 1--31. 






\bibitem{McN2} 
P.J.~McNamara: 
\emph{The metaplectic {C}asselman-{S}halika formula}. 
Trans. Amer. Math. Soc. \textbf{368} (2016), no.~4, 2913--2937. 





\bibitem{PP} 
M.M.~Patnaik and A.~Pusk\'as:
\emph{Metaplectic covers of Kac-Moody groups and {W}hittaker functions}. \!Available at 
arXiv:1703.05265. 







\bibitem{RW} 
M.O.~Rubinstein and K.~Wu:
\emph{Moments of zeta functions associated to hyperelliptic curves over finite fields}. 
Philos. Trans. Roy. Soc. A \textbf{373} (2015), no.~2040, 20140307, 37 pp. 







\bibitem{Sound} 
K.~Soundararajan: 
\emph{Nonvanishing of quadratic {D}irichlet {$L$}-functions at {$s = \frac12$}}. 
Ann. of Math. (2) \textbf{152} (2000), no.~2, 447--488. 







\bibitem{Weil} 
A.~Weil: \!\emph{Sur les courbes alg\'ebriques et les vari\'et\'es qui s'en d\'eduisent}. 
Hermann \& Cie., Paris, 1948. 






\bibitem{White1} 
I.M.~Whitehead: \emph{Affine {W}eyl group multiple {D}irichlet series: \!type {$\widetilde{A}$}}. 
Compos. Math. \textbf{152} (2016), no.~12, 2503--2523.  





\bibitem{White2} 
I.M.~Whitehead: \emph{Multiple {D}irichlet series for affine {W}eyl groups}. 
arXiv:1406.0573. 





\bibitem{Young} 
M.P.~Young: 
\emph{The third moment of quadratic {D}irichlet {$L$}-functions}. 
Selecta Math. (N.S.) \textbf{19} (2013), no.~2, 509--543. 





\bibitem{Zha} 
Q.~Zhang: 
\emph{On the cubic moment of quadratic {D}irichlet {$L$}-functions}. 
Math. Res. Lett. \textbf{12} (2005), no.~2-3, 413--424. 


\end{thebibliography}
\end{document}